\documentclass{article}
\usepackage[utf8]{inputenc}
\usepackage{amsfonts}
\usepackage{amsmath}
\usepackage{amsthm}
\usepackage{graphicx}
\usepackage{float}
\usepackage{xypic}
\usepackage{color}   
\usepackage{hyperref}
\usepackage{tikz-cd}
\usepackage{chngcntr}

\counterwithout{figure}{section}
\newtheorem{lemma}[subsubsection]{Lemma}
\newtheorem*{definition*}{Definition}
\newtheorem*{proposition*}{Proposition}
\newtheorem*{remark*}{Remark}
\newtheorem*{theorem*}{Theorem}
\newtheorem*{lemma*}{Lemma}
\newtheorem*{claim*}{Claim}
\newtheorem{theorem}[subsubsection]{Theorem}
\newtheorem{proposition}[subsubsection]{Proposition}
\newtheorem{definition}[subsubsection]{Definition}
\newtheorem{PandD}[subsubsection]{Proposition-Definition}
\newtheorem{corollary}[subsubsection]{Corollary}
\newtheorem{remark}[subsubsection]{Remark}

\newtheorem{thmx}{Theorem}
\usepackage[textwidth=14cm]{geometry}

\title{Parabolic Components in Cubic Polynomial Slice $Per_1(1)$}
\author{Runze Zhang }
\date{April 2022}

\begin{document}
\maketitle
\begin{abstract}
   We prove that the boundary of every parabolic component in the cubic polynomial slice $Per_1(1)$ is a Jordan curve by  adapting the technique of para-puzzles presented in \cite{Roesch1}. We also give a global description of the connected locus $\mathcal{C}_1$: it is the union of two main parabolic components and the limbs attached on their boundaries.
\end{abstract}

\tableofcontents

\section{Introduction}
Consider the space of unitary cubic polynomials fixing the origin 0:
\[f_{a,\lambda}(z) = z^3+az^2+\lambda z, (a,\lambda)\in\mathbb{C}^2.\]
Let $J_{a,\lambda}$ denote the Julia set of $f_{a,\lambda}$. The connected locus, as the analogue to the Mandelbrot set for the quadratic case, is defined by
\begin{equation*}
\begin{split}
    \mathcal{C} &= \{(a,\lambda)\in \mathbb{C}^2;\,J_{a,\lambda}\text{ is connected}\} \\
       &= \{(a,\lambda)\in \mathbb{C}^2;\text{both critical points of $f_{a,\lambda}$ do not escape to } \infty\}.
\end{split}
\end{equation*}
One of the attempts, proposed by Milnor, to study $\mathcal{C}$ is to restrict ourselves to slices of polynomials by fixing $\lambda$, that is, to consider
\[Per_{1}(\lambda) = \{f_{a,\lambda};\,a\in \mathbb{C}\}\cong\mathbb{C}\]
and its corresponding connected locus $\mathcal{C}_{\lambda} = \{a\in\mathbb{C};\,J_{a,\lambda} \text{ is connected}\}$.

When $|\lambda|\textless 1$, $0$ is always a (super-)attracting fixed point. Define
\[\mathcal{H}^{\lambda} = \{a\in\mathbb{C};\,\text{both critical points of $f_{a,\lambda}$ are attracted by }0\}.\]
This is the union of hyperbolic components of adjacent and capture type, proposed by Milnor in \cite{Milnor}, which are of special interest in this situation. Adjacent means that two critical points belong to the immediate basin of 0; capture means that one in contained in the immediate basin of 0 while the other not (but still attracted by 0). It has been proved in \cite{Faught}, \cite{Roesch1} independently by D. Faught and P. Roesch that the boundary of every component of $\mathcal{H}^{0}$ is a Jordan curve. Combining this with the following result of Petersen-Tan we get that the boundary of every component of $\mathcal{H}^{\lambda}$ is a Jordan curve for $|\lambda|\textless 1$:
\begin{theorem}[\cite{Tanlei}]\label{thm.petertan}
There is a dynamically defined holomorphic motion
\[A: \mathbb{D}\times(\mathbb{C}\setminus\mathcal{H}^0)\longrightarrow\mathbb{C},\,\, (\lambda,a)\mapsto A(\lambda,a) \]
such that $f_{a,0}$ is hybrid conjugated to $f_{A(\lambda,a),\lambda}$ and $A(\lambda,\mathbb{C}\setminus\mathcal{H}^0) = \mathbb{C}\setminus\mathcal{H}^{\lambda}$.
\end{theorem}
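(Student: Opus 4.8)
\emph{Strategy.} The starting observation is that both slices exhibit the same local picture at $0$: for every $\nu\in\mathbb D$ and every $a\in\mathbb C\setminus\mathcal H^\nu$, the immediate basin $\mathcal B$ of $0$ for $f_{a,\nu}$ contains exactly one critical point of $f_{a,\nu}$ --- it cannot be critical-point-free, because an automorphism of a hyperbolic domain has no attracting fixed point, and it cannot contain both critical points, because that would put $a$ in $\mathcal H^\nu$ --- so that $\mathcal B$ is simply connected, $f_{a,\nu}|_{\mathcal B}$ has degree $2$, and $f_{a,\nu}|_{\mathcal B}$ is holomorphically conjugate to the degree-$2$ Blaschke product $B_\nu(w)=w\,\dfrac{w+\nu}{1+\bar\nu w}$, which fixes $0$ with multiplier $\nu$ and satisfies $B_0(w)=w^2$. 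The plan is therefore to pass from the slice $\nu=0$ to the slice $\nu=\lambda$ by a quasiconformal surgery supported in $\mathcal B$ that replaces the local model $w\mapsto w^2$ by $B_\lambda$, and then to recognise the resulting correspondence $a\mapsto A(\lambda,a)$ as a holomorphic motion using the measurable Riemann mapping theorem with holomorphic parameters together with the $\lambda$-lemma of S\l odkowski.

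\emph{The surgery.} Using that $|B_\lambda(w)|<|w|$ on $\mathbb D\setminus\{0\}$, one builds a degree-$2$ branched self-cover $M_\lambda$ of $\mathbb D$ which equals $B_\lambda$ on a round (hence forward-invariant) disk containing $0$ and the critical point of $B_\lambda$, equals $w\mapsto w^2$ near $\partial\mathbb D$, is holomorphic off a thin interpolating annulus $A$ chosen so that $M_\lambda^{n}(A)\cap A=\emptyset$ for all $n\ge 1$, depends holomorphically on $\lambda$, and reduces to $w\mapsto w^2$ at $\lambda=0$. Transporting $M_\lambda$ through the B\"ottcher coordinate $\phi_0\colon \mathcal B\to\mathbb D$ of $f_{a,0}$, set $\tilde f_\lambda=\phi_0^{-1}\circ M_\lambda\circ\phi_0$ on $\mathcal B$ and $\tilde f_\lambda=f_{a,0}$ elsewhere; the two definitions agree near $\partial\mathcal B$, so $\tilde f_\lambda$ is a degree-$3$ branched cover of $\widehat{\mathbb C}$ holomorphic off the single annulus $A\subset\mathcal B$. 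Since each orbit meets $A$ at most once, spreading the dilatation of the interpolation over the grand orbit of $A$ yields a $\tilde f_\lambda$-invariant Beltrami coefficient $\mu_\lambda$ with $\|\mu_\lambda\|_\infty<1$ locally uniformly in $\lambda\in\mathbb D$ and holomorphic in $\lambda$. Let $\psi_\lambda$ solve $\bar\partial\psi_\lambda=\mu_\lambda\,\partial\psi_\lambda$, normalised to fix $0$ and $\infty$ and to be tangent to the identity at $\infty$; then $g_\lambda:=\psi_\lambda\circ\tilde f_\lambda\circ\psi_\lambda^{-1}$ is a monic cubic fixing $0$, and since $\mu_\lambda\equiv 0$ near $0$ the map $\psi_\lambda$ is conformal there, so $g_\lambda'(0)=\lambda$; hence $g_\lambda=f_{A(\lambda,a),\lambda}$ for a unique $A(\lambda,a)$, with $A(0,a)=a$, and $\psi_\lambda$ is a quasiconformal conjugacy, conformal off the grand orbit of $A$ and in particular on $J_{a,0}$, which is the asserted hybrid conjugacy.

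\emph{Holomorphic motion and surjectivity.} Holomorphy of $\lambda\mapsto A(\lambda,a)$ comes from the holomorphic dependence on parameters in the measurable Riemann mapping theorem. Since $\psi_\lambda$ carries the basin of $0$ of $f_{a,0}$ onto that of $f_{A(\lambda,a),\lambda}$, the free critical point of $f_{A(\lambda,a),\lambda}$ inherits the non-attraction to $0$ of that of $f_{a,0}$, so $A(\lambda,a)\in\mathbb C\setminus\mathcal H^\lambda$. For the reverse inclusion I would run the inverse surgery, which replaces $B_\lambda$ by $w\mapsto w^2$ inside $\mathcal B$ and is available for every $a'\in\mathbb C\setminus\mathcal H^\lambda$ precisely because, by the opening observation, $f_{a',\lambda}|_{\mathcal B}$ is holomorphically conjugate to $B_\lambda$; choosing $M_\lambda$ and the inverse model compatibly, this inverse surgery is a two-sided inverse of $a\mapsto A(\lambda,a)$, yielding both injectivity of $A(\lambda,\cdot)$ and $A(\lambda,\mathbb C\setminus\mathcal H^0)=\mathbb C\setminus\mathcal H^\lambda$. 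Finally the $\lambda$-lemma extends $A$ to a holomorphic motion of $\widehat{\mathbb C}$ over $\mathbb D$ and makes each $A(\lambda,\cdot)$ a homeomorphism.

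\emph{Main difficulty.} The heart of the proof is the construction of $M_\lambda$ with all the listed properties --- especially the thin interpolating annulus that avoids its own forward orbit, with dilatation bounded locally uniformly on $\mathbb D$, and with holomorphic dependence on $\lambda$ --- and the attendant check that $\tilde f_\lambda$, the invariant field $\mu_\lambda$, and hence $\psi_\lambda$, vary holomorphically with $\lambda$, so that $A(\lambda,a)$ is genuinely holomorphic. The other delicate point is showing that installing $w\mapsto w^2$ inside $\mathcal B$ genuinely inverts installing $B_\lambda$, which is what upgrades $A(\lambda,\cdot)$ from an injection into $\mathbb C\setminus\mathcal H^\lambda$ to a bijection onto it.
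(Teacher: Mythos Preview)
The paper does not prove this theorem: it is quoted from Petersen--Tan \cite{Tanlei} as background in the introduction, with no argument given. So there is no ``paper's own proof'' to compare against.

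That said, your sketch is essentially the argument of the cited reference: quasiconformal surgery replacing the Blaschke model $w\mapsto w^2$ in the immediate basin by $B_\lambda$, spreading the Beltrami differential, straightening via Ahlfors--Bers with holomorphic parameter dependence, and inverting by the reverse surgery. The outline is sound. Two small points worth tightening if you write this up in full: first, the conjugacy $f_{a',\lambda}|_{\mathcal B}\simeq B_\lambda$ for $\lambda\neq 0$ comes from the K\"onigs linearizer rather than a B\"ottcher coordinate, and you should say explicitly why that linearizer extends to a conformal isomorphism $\mathcal B\to\mathbb D$ (the basin is simply connected and the restriction is a proper degree-$2$ map, so the Riemann map conjugates it to a degree-$2$ Blaschke product fixing $0$ with multiplier $\lambda$, hence to $B_\lambda$ after a rotation); second, the assertion that the forward and backward surgeries are mutual inverses requires a compatible choice of interpolation data on both sides, and the cleanest way to get injectivity of $A(\lambda,\cdot)$ is usually not via an explicit inverse but by observing that a hybrid conjugacy between two maps in the same slice $Per_1(\lambda)$ forces them to be affinely conjugate, hence equal once normalised. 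Your identification of the ``main difficulty'' is accurate.
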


In this paper we investigate what happens when $\lambda = 1$ (for simplicity we will omit the multiplier $\lambda$ in the corresponding notations appearing in the rest of the article). Now $0$ becomes a parabolic fixed point for the family $Per_1(1)$, degenerated (which means that it has two attracting axis) if and only if $a = 0$.

For $a\in \mathbb{C}^*$, let $B_a(0)$ be the (unique) parabolic basin and $B_a^*(0)$ the immediate parabolic basin associated to 0, which is defined to be the unique component of $B_a(0)$ containing an attracting petal. One can define the analogue to $\mathcal{H}^{\lambda},\lambda\in\mathbb{D}$:
\[\mathcal{H} = \{a\in \mathbb{C}^*;\,
\text{Both critical points are contained in } B_a(0)\}.\] 
A connected component of $\mathcal{H}$ is called a \textbf{parabolic component}. In this article we study the boundaries of parabolic components and mainly show that
\begin{thmx}\label{thm.A}
The boundary of every parabolic component is a Jordan curve.
\end{thmx}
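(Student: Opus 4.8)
The plan is to transplant the para-puzzle technique of \cite{Roesch1} to the parabolic slice $Per_1(1)$. The proof divides into three reductions: (i) equip each parabolic component $W$ with a natural holomorphic parametrization by $\mathbb D$; (ii) build a Yoccoz puzzle in the dynamical plane of $f_a$ and prove that the relevant nested puzzle pieces shrink to points; and (iii) transport the puzzle to parameter space, prove that the resulting para-puzzle pieces shrink, and combine this with (i) to conclude that the parametrization extends to a homeomorphism of $\overline{\mathbb D}$ onto $\overline W$, so that $\partial W$ is a Jordan curve.

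First I would split the parabolic components into \emph{adjacent} type, in which both critical points lie in the immediate basin $B_a^*(0)$, and \emph{capture} type, in which one critical point lies in $B_a^*(0)$ and the other in a strict preimage of $B_a^*(0)$. For a capture component one gets a conformal isomorphism $\Phi_W\colon W\to\mathbb D$ by recording the position of the captured critical value in a Böttcher-type coordinate of the Fatou component that first maps onto $B_a^*(0)$; for the two ``main'' adjacent components (the ones in the description of $\mathcal C_1$ in the abstract) one records the relative position of the two critical points inside $B_a^*(0)$ through an attracting Fatou coordinate, again producing a natural proper holomorphic map onto $\mathbb D$, which inspection of the model dynamics shows to be a conformal isomorphism. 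In all cases $\Phi_W$ is proper, so $\partial W\neq\emptyset$, and Theorem~\ref{thm.A} is equivalent to the assertion that $\Phi_W$ extends to a homeomorphism on the closure; for this it suffices to establish that $\partial W$ is locally connected and that the boundary values of $\Phi_W$ are injective.

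For the dynamical puzzle, fix a parameter in $\overline W$: the parabolic fixed point $0$ carries finitely many external rays, and these, together with their iterated preimages, with internal rays of $B_a(0)$ (images of radial segments under the parabolic Fatou coordinate of $B_a^*(0)$), and with a fixed equipotential of the basin of infinity, cut a large disk into puzzle pieces whose $f_a$-pullbacks give pieces of all depths. The key analytic input is that for every parameter in $\overline W$ the nested puzzle pieces around the critical point(s) on $\partial B_a(0)$ -- equivalently around the critical value -- shrink to that point. Here, unlike in \cite{Roesch1} and \cite{Faught} where $0$ is (super)attracting, the parabolic point is structural to the whole slice and cannot be avoided, so the linearization/Böttcher geometric estimates near $0$ must be replaced by their parabolic counterparts (parabolic Fatou coordinates, together with a redone ``bounded geometry'' and covering argument). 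Para-puzzle pieces are then defined by the rule that $a$ lies in a para-puzzle piece of depth $n$ exactly when the free critical point (respectively the captured critical value) of $f_a$ lies in the dynamical piece of depth $n$ with the prescribed label; the bounding rays move holomorphically with $a$ and cross the moving critical point transversally, which makes the para-puzzle pieces topological disks that nest properly, and a Yoccoz-type comparison of moduli (relating the annulus between consecutive para-pieces at $a^*\in\partial W$ to the corresponding dynamical annulus around the critical point, as in \cite{Roesch1} building on \cite{Faught}) upgrades the dynamical shrinking to shrinking of para-puzzle pieces around every $a^*\in\partial W$. This gives local connectivity of $\partial W$ and hence a continuous extension of $\Phi_W$ to $\overline{\mathbb D}$; injectivity of the boundary map follows from combinatorial rigidity, since the landing pattern of $\Phi_W$ at an angle is encoded by the (pre)periodic combinatorial data of $f_{a^*}$, distinct angles give distinct data, and the bifurcation parameters where $\overline W$ meets another piece of $\mathcal C_1$ are simple, so $\partial W$ has no cut points.

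The main obstacle is the shrinking statement in the dynamical plane: proving that the nested puzzle pieces genuinely converge to points near the parabolic fixed point $0$ and near the critical points sitting on $\partial B_a(0)$, uniformly enough to survive the transfer to parameter space. Controlling the geometry of puzzle pieces that accumulate on $0$ requires replacing the hyperbolic-type estimates by parabolic ones, and the usual exceptional combinatorics -- in particular infinitely renormalizable parameters -- must be excluded from $\partial W$ or handled separately (one expects that a renormalizable $f_a$ with $a\in\partial W$ would force the small filled Julia set to contain the parabolic point, which is incompatible with $a$ bounding an adjacent or capture component). That analysis, rather than the transfer to parameter space or the final injectivity argument, is where the real work lies.
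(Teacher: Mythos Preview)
Your overall architecture is right and matches the paper: parametrize each component, build dynamical puzzles, transfer to para-puzzles via holomorphic motion, compare moduli \`a la Shishikura, and deduce local connectivity. But there is a genuine gap in your treatment of the renormalizable case, and a missing case distinction that the paper isolates explicitly.

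Your parenthetical expectation---that a renormalizable $f_a$ with $a\in\partial W$ would force the small filled Julia set to contain the parabolic point $0$, hence cannot occur---is false. Renormalizable parameters \emph{do} sit on $\partial\mathcal U$: a full copy $\mathbf M_{a_0}$ of the Mandelbrot set is attached to $\partial\mathcal U_0$ at each such $a_0$, and the small Julia set of the renormalization is attached to $\partial B^*_{a_0}(0)$ at a repelling periodic point, not at $0$. For these parameters the para-annuli do \emph{not} have divergent modulus sum, so your Yoccoz-type shrinking argument fails outright. The paper handles this by invoking the Douady--Hubbard Mandelbrot-like family machinery to show $\bigcap_n\mathcal P_n(a_0)=\mathbf M_{a_0}$ is a genuine copy of $\mathbf M$, and then proving the nontrivial fact that $\mathbf M_{a_0}\cap\partial\mathcal U=\{a_0\}$ by tracking the limiting angles of the external rays bounding $\mathcal P_n(a_0)$ and showing they land at the same ``root'' parameter (the one mapped to $1/4$ under the straightening). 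This is a separate argument, not a corollary of the modulus estimate.

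You also do not isolate the \emph{Misiurewicz parabolic} parameters (those with $f_a^n(c_-(a))=0$ for some $n$), which the paper treats with a different, simpler graph $\mathcal Y_n$ built only from external rays landing on the inverse orbit of $0$ and equipotentials---no internal rays. The reason is that the dynamical shrinking theorem with internal rays (the paper's Theorem~\ref{thm.dym}) requires $f_a^n(c_-(a))\neq 0$ for all $n$, so the graph $\mathcal X_n$ is simply not available at these cusps; one needs the alternate puzzle and a separate shrinking result (the paper's Theorem~\ref{thm.pascale.parabolic}). These parameters are exactly the ``cusps'' on $\partial\mathcal U$ with no Mandelbrot copy attached, and they have no analogue in the superattracting slice $Per_1(0)$.

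Two smaller points. Your parametrization target should be the parabolic basin $B$ of $z^2+\tfrac14$ rather than $\mathbb D$: the conjugacy $h_a$ between $B^*_a(0)$ and $B$ is what makes the internal rays and equipotentials canonical and compatible across parameters. And your injectivity argument via ``combinatorial rigidity'' is unnecessary: once $\partial\mathcal U$ is locally connected, injectivity of the boundary extension follows in one line from $\partial\mathcal U\subset\partial\mathcal C_1$ (the paper's Lemma~\ref{lemboundaryH}), since two internal rays landing at the same point would enclose a region of $\partial\mathcal U$ in the interior of $\mathcal C_1$.
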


As an analogue to the adjacent and capture hyperbolic components, we define
\begin{definition*}[adjacent and capture parabolic components]
 $\mathcal{H}$ has a natural decomposition $\mathcal{H} = \bigcup_{n\geq0}\mathcal{H}_n$, where $\mathcal{H}_0$ consists of the parameters $a$ such that both critical points belongs to $B^*_a(0)$ and $\mathcal{H}_n,n\geq 1$ consists of parameters such that one critical point is in $B^*_a(0)$ while the other belongs to $f_a^{-n-1}(B^*_a(0))\setminus f_a^{-n}(B^*_a(0))$. A connected component of $\mathcal{H}_0$ is called a parabolic component of \textbf{adjacent type}\footnote{
We will show that there are only two components of this type. See Corollary \ref{cor.setI}.} (or sometimes called a \textbf{main} parabolic component), that of $\mathcal{H}_n,n\geq1$ is called a parabolic component of \textbf{capture type}.
\end{definition*}

Using the technique of puzzles, we also give a complete description of landing property of external rays for parameters on $\partial\mathcal{H}_0\setminus\{0\}$:

\begin{thmx}\label{thm.B}
Let $a_0\in\partial\mathcal{H}_0\setminus\{0\}$. Then if $a_0$ is renormalizable\footnote{refer to Definition \ref{def.renor}.}, there are exactly two external rays $\mathcal{R}_{\infty}(\eta),\mathcal{R}_{\infty}(\eta')$ landing which bound a copy of Mandelbrot set attached at $a_0$; otherwise there is only one external ray $\mathcal{R}_{\infty}(t)$ landing. Moreover
if $a_0$ is Misiurewicz parabolic\footnote{refer to Definition \ref{def.Misiure}.}, then $t$ satisfies $\exists m,\, 3^mt = 1$.
\end{thmx}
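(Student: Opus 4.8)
The plan is to prove Theorem B by combining a local Jordan-curve/landing analysis at $a_0\in\partial\mathcal H_0\setminus\{0\}$ with the para-puzzle machinery used for Theorem A, transferring rigidity statements from the dynamical plane of $f_{a_0}$ to the parameter plane. The backbone is the usual dichotomy: either the critical orbit structure of $f_{a_0}$ forces a ``tuning'' picture (renormalizability), giving a baby Mandelbrot set, or it does not, giving a single parameter ray. So the first step is to set up, for a fixed $a_0\in\partial\mathcal H_0\setminus\{0\}$, the graph/puzzle at the parabolic fixed point $0$: take the union of the finitely many rational rays landing at the parabolic point and its preimages together with a suitable equipotential, thicken it into a puzzle, and verify the standard properties (nested, shrinking when non-renormalizable — this is exactly where the results proven earlier for Theorem A are invoked). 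This yields that $J_{a_0}$ is locally connected and, crucially, that the impression of the relevant dynamical rays at the characteristic points of the critical value is a single point; equivalently the co-landing pattern of dynamical rays at the critical value is rigid and finite.

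Second, I would pass to the parameter plane. The key principle (Roesch's para-puzzle technology, cf.\ \cite{Roesch1}) is that for $a_0\in\partial\mathcal H_0$, the dynamical rays landing at the \emph{critical value} $v_{a_0}$ are in bijection, via the Böttcher/parametrization map, with the \emph{parameter} rays $\mathcal R_\infty(\cdot)$ accumulating at $a_0$; the angles are literally the same. So I would show: the number of parameter rays landing at $a_0$ equals the number of dynamical rays landing at the critical value of $f_{a_0}$, and this number is $2$ precisely when $f_{a_0}$ is renormalizable (the two angles $\eta,\eta'$ being the ones that bound the renormalization window, hence the embedded Mandelbrot copy), and $1$ otherwise. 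The existence of the Mandelbrot copy in the renormalizable case follows from the Douady–Hubbard straightening applied to the parametrized family of quadratic-like restrictions over the renormalization locus; its boundary is accessed exactly by $\mathcal R_\infty(\eta)$ and $\mathcal R_\infty(\eta')$.

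Third, for the Misiurewicz-parabolic case I would compute the angle explicitly. If $a_0$ is Misiurewicz parabolic then, by the definition referenced (Definition~\ref{def.Misiure}), a critical point is strictly preperiodic and lands after finitely many steps on the parabolic cycle at $0$, whose rays are the fixed rays of angle fixed by $t\mapsto 3t$ (here $3$ is the degree) — in $Per_1(1)$ the parabolic fixed point $0$ has combinatorial rotation number $0$, so the ray landing there is the $0$-ray (angle $1\equiv 0$). Pulling back: the critical value's ray angle $t$ satisfies $3^m t \equiv 0 \pmod 1$ for the preperiod $m$, i.e.\ $3^m t = 1$ (in $\mathbb R/\mathbb Z$, reading $1$ as $0$), and by the ray-transfer principle this is also the parameter angle. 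I would make this precise by tracking the angle under the finitely many preimage steps from the critical point to $0$ and invoking that rays at a parabolic fixed point with trivial rotation number are precisely the degree-fixed ones.

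The main obstacle I expect is the ray-transfer step in the parameter plane: proving cleanly that the para-puzzle pieces around $a_0$ shrink to $a_0$ (so that only finitely many parameter rays land, with the claimed count) and that their angles coincide with the dynamical angles at the critical value. This is the technical heart — it requires the para-graph to be well-defined near $a_0$, a holomorphic-motion / wringing argument to compare dynamical and parameter pieces, and control of the non-renormalizable case to rule out extra accumulating rays. The renormalizable sub-case additionally needs the construction of the quadratic-like family over a parameter neighborhood and the verification that straightening is a homeomorphism onto a full Mandelbrot set whose root/tip directions are the two rays $\mathcal R_\infty(\eta),\mathcal R_\infty(\eta')$; I would lean on the analogous statements in \cite{Roesch1} and the earlier sections of this paper, adapting them from the attracting to the parabolic setting.
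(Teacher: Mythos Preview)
Your overall plan---split into renormalizable, non-renormalizable non-Misiurewicz-parabolic, and Misiurewicz parabolic, then transfer the landing pattern of dynamical rays at $v_-(a_0)$ to parameter rays at $a_0$ via the para-puzzle---is exactly the paper's strategy, and your treatment of the first two cases matches it closely (the paper uses Proposition~\ref{prop.mandel} and the homeomorphism $H_n$ of Lemma~\ref{lem.homeo.para.dym} for the renormalizable case, and Theorem~\ref{thm.dym} plus Roesch's Lemma~6.3 for the second).

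The gap is in the Misiurewicz parabolic case. You treat it as the easy case---pull back the angle from $0$ and invoke the ray-transfer principle for uniqueness---but in fact it is the most delicate. The para-puzzle $\mathcal{P}_n(a_0)$ built from the graph $\mathcal{X}_n$ is \emph{not defined} at a Misiurewicz parabolic $a_0$, because such an $a_0$ sits on $\mathcal{X}_n$; so your ``ray-transfer principle'' (parameter rays at $a_0$ $\leftrightarrow$ dynamical rays at $v_-(a_0)$, via $H_n$) is unavailable here. The paper instead uses the other puzzle family $\mathcal{Q}_n^{\pm}$ from the graphs $\mathcal{Y}_n$, and uniqueness requires ruling out a second parameter ray of \emph{irrational} angle $t'$ landing at $a_0$. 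This cannot be seen from the dynamical plane of $f_{a_0}$ alone. The paper's argument picks points $b_n\in\mathcal{R}_\infty(t')\cap\mathcal{Q}_n^{+}$, defines dynamical puzzles $(Q^{b_n}_k)^{+}$ for \emph{all} depths $k$ (possible since $b_n$ lies on an irrational external ray), and then uses a separate claim---that any two parameters on the same parameter ray are quasiconformally conjugate by an angle-preserving map---to show the boundary angles $t_k^n$ are independent of $n$ and converge to $t$, forcing $t'=t$. Your sketch does not contain this mechanism, and without it the uniqueness in the Misiurewicz parabolic case is unproven.
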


This permits us to define \textbf{wakes} at renormalizable parameters on $\partial\mathcal{H}_0\setminus\{0\}$ to be the region between the two landing rays which contains the Mandelbrot set copy attached at $a_0$. The
\textbf{limbs} attached at $a_0\in\partial\mathcal{H}_0\setminus\{0\}$ is defined to be the intersection of $\mathcal{C}_1$ and the closure of the corresponding wake if $a_0$ is renormalizable and otherwise to be $a_0$. The wakes and limbs at parameter 0 are defined manually, see Definition \ref{def.wake0}.
As a direct corollary of Theorem \ref{thm.B}, we get the following global picture for $\mathcal{C}_1$:
\begin{thmx}\label{thm.C}
The connected locus $\mathcal{C}_1$ can be written as the union of $\mathcal{H}_0$ and the limbs attached on its boundary.
\end{thmx}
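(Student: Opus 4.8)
One inclusion is immediate from the definitions: a parabolic component consists of parameters both of whose critical orbits remain bounded, so $\mathcal{H}_0\subseteq\mathcal{C}_1$, and each limb is by construction contained in $\mathcal{C}_1$; moreover $\overline{\mathcal{H}_0}\subseteq\mathcal{H}_0\cup\bigcup(\text{limbs})$, since by Theorem \ref{thm.B} every $a_0\in\partial\mathcal{H}_0\setminus\{0\}$ belongs to the limb attached at $a_0$ itself (this limb is $\{a_0\}$ when $a_0$ is non-renormalizable, and equals $\mathcal{C}_1\cap\overline{W(a_0)}\ni a_0$ when $a_0$ is renormalizable, $W(a_0)$ denoting the wake), while the degenerate parameter $0$ is absorbed by the wakes of Definition \ref{def.wake0}. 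Thus the plan is to reduce Theorem \ref{thm.C} to the single inclusion
\[ \mathcal{C}_1\setminus\overline{\mathcal{H}_0}\ \subseteq\ \bigcup_{a_0}\overline{W(a_0)}, \]
where $a_0$ runs over the renormalizable points of $\partial\mathcal{H}_0\setminus\{0\}$ together with $0$.

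To establish this I would argue by contraposition, proving that the open set $U:=\mathbb{C}\setminus\bigl(\overline{\mathcal{H}_0}\cup\bigcup_{a_0}\overline{W(a_0)}\bigr)$ is disjoint from $\mathcal{C}_1$. First I would record that the closed wakes attached to $\partial\mathcal{H}_0$ are pairwise disjoint — they are cut off by disjoint pairs of parameter rays landing at distinct points of the Jordan curves $\partial\mathcal{H}_0$ (there being exactly two components of $\mathcal{H}_0$ by Corollary \ref{cor.setI}, whose boundaries are Jordan curves by Theorem \ref{thm.A}) — so that $U$ is an open region whose boundary meets $\partial\mathcal{H}_0$ only along non-renormalizable points. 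Then, for $a\in U$, I would feed $a$ into the para-puzzle construction used to prove Theorems \ref{thm.A} and \ref{thm.B}: since $a$ lies in no closed wake, it is separated from $\mathcal{H}_0$ by no ray pair bounding a wake, so the nest of parameter puzzle pieces around $a$ forces, on the dynamical side, the non-captured critical point of $f_a$ to leave every puzzle piece of its nest; tracking this critical orbit exactly as one does for parameters lying outside the para-puzzle pieces of a given depth, one concludes that it converges to $\infty$, hence $a\notin\mathcal{C}_1$. The wakes manually attached at $0$ (Definition \ref{def.wake0}) are precisely what makes the two Jordan curves $\partial\mathcal{H}_0$ together with all the wakes behave uniformly near the degenerate parabolic parameter, where the two main components come together. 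Combining the two inclusions with Theorem \ref{thm.B}: any $a\in\mathcal{C}_1\setminus\overline{\mathcal{H}_0}$ then lies in some $\overline{W(a_0)}$ with $a_0$ renormalizable (or $a_0=0$), hence in a limb attached on $\partial\mathcal{H}_0$, which yields $\mathcal{C}_1=\mathcal{H}_0\cup\bigcup(\text{limbs})$.

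The main obstacle is the disjointness $U\cap\mathcal{C}_1=\emptyset$, which splits into two issues. The combinatorial one is to know that the wakes attached to $\partial\mathcal{H}_0$ are exhaustive — that every parameter ray landing on $\partial\mathcal{H}_0$ is accounted for by Theorem \ref{thm.B}, and that a non-renormalizable landing point carries no region of $\mathcal{C}_1$ beyond the point itself; here one uses that $\partial\mathcal{H}_0$ is a Jordan curve and that renormalizable (in particular satellite) parameters are dense on it, so that no component of $\mathcal{C}_1\setminus\overline{\mathcal{H}_0}$ can be attached at a non-renormalizable point. The genuinely hard part is the dynamical input that a parameter in $U$ has an escaping critical point: this is where the para-puzzle machinery and the control of the free critical orbit along the nest of pieces do the real work, in exact analogy with the way Douady--Hubbard wake theory, together with the Branner--Hubbard and Yoccoz analysis, identify in the quadratic family the parameters outside the main cardioid with the parameters living in the limbs.
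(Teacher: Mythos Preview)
Your strategy diverges from the paper's, and the step you yourself flag as ``the genuinely hard part'' is a real gap that the paper does not fill and that the machinery at hand does not deliver.

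The paper's argument is direct and short. By symmetry work in $\overline{S}\setminus\mathcal{W}(0)$. For each depth $n$, the Misiurewicz parabolic parameters on $\partial\mathcal{U}_0$ together with their (unique, by Theorem~\ref{thm.B}) landing external rays cut the region outside $\overline{\mathcal{U}_0}$ into finitely many open sectors $\mathcal{S}_n$. Given $b\in\mathcal{C}_1\setminus\overline{\mathcal{U}_0}$, the sectors $\mathcal{S}_n(b)$ containing $b$ are nested, their bounding rays $\mathcal{R}_\infty(t_n),\mathcal{R}_\infty(t'_n)$ land at points $a_n,a'_n\in\partial\mathcal{U}_0$ converging (along the Jordan curve $\partial\mathcal{U}_0$) to a common $a_0$. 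If $a_0$ were non-renormalizable, Theorem~\ref{thm.B} gives a \emph{unique} angle landing there, so $t_n,t'_n$ converge to the same angle and $\bigcap_n\overline{\mathcal{S}_n(b)}$ collapses onto the closure of a single ray; since $b\in\mathcal{C}_1$ this forces $b=a_0\in\overline{\mathcal{U}_0}$, a contradiction. Hence $a_0$ is renormalizable and $t_n,t'_n$ converge to the two angles bounding $\mathcal{W}(a_0)$, placing $b$ in that wake. No escape argument, no density of renormalizable parameters, and no new dynamical input beyond Theorem~\ref{thm.B} are used.

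Your contrapositive route asks for much more: you want to show that any $a$ lying outside $\overline{\mathcal{H}_0}$ and outside every closed wake has an escaping critical point. The para-puzzle apparatus of Section~\ref{sec.puzzle} is not set up for this. The puzzles $\mathcal{P}_n(a_0)$ are built around a fixed $a_0\in\partial\mathcal{U}$ and are used to prove shrinking to that boundary point; they do not, as written, yield a statement of the form ``$a$ outside all para-puzzle pieces $\Rightarrow c_-(a)$ escapes''. In fact nothing in the paper shows that leaving the nest of parameter pieces forces $v_-(a)$ out of the filled Julia set rather than merely out of a particular dynamical piece. Your appeal to density of renormalizable (satellite) parameters on $\partial\mathcal{H}_0$ is likewise not available here and is not needed. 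The sectorial squeeze above replaces all of this with a two-line dichotomy coming straight from Theorem~\ref{thm.B}.
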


\begin{figure}[H] 
\centering 
\includegraphics[width=0.7\textwidth]{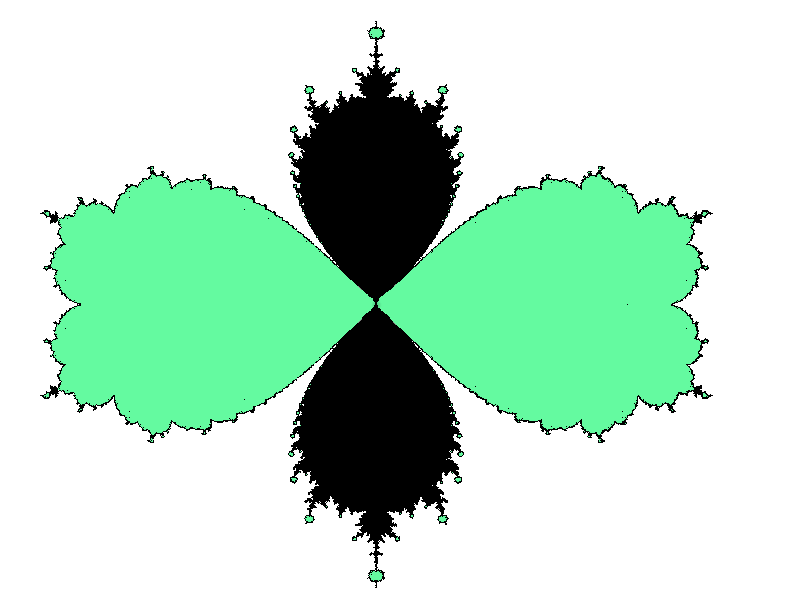} 
\caption{The "Butterfly" $\mathcal{C}_1$. $\mathcal{H}$ is in green. The adjacent type $\mathcal{H}_0$ consists of the two wings of the butterfly. There are two copies of parabolic Mandelbrot sets (in black) attached at the origin, which correspond to parabolic-like maps whose renormalized map has connected Julia set (see L. Lomonaco \cite{lomonaco}, \cite{Lomonaco2012ParameterSF}). The small patches in green attached at the tips are of capture type.} 
\label{Fig.main2} 
\end{figure}

Notice that $\mathcal{C}_1,\mathcal{H}_n,n\geq0$ are symmetric with respect to $x,y-$axis since $\overline{f_a(\overline{z})} = f_{\overline{a}}(z)$, $-f_a(-z)=f_{-a}(z)$ and therefore the dynamics of $f_{\pm a}$ and $f_{\overline{a}}$ can be identified. So essentially, it suffices to study the parameters in $\overline{S}$, where $S = \{x+iy;x\textgreater 0,y\textgreater 0\}$. 

This paper is organized as follows: in Section \ref{sec.parametrisation} we parametrize the parabolic components by locating the free critical value in the basin of the "parabolic model" $z^2+\frac{1}{4}$. A difference from work in the super-attracting case \cite{Roesch1} is that, a priori the free critical point is not marked out, so we need to find out first which critical point appears always on the boundary of the maximal petal. Section \ref{sec.puzzle} is devoted to the construction of parameter puzzles as well as dynamical puzzles. Loosely speaking, parameter puzzles are just "copies" of dynamical puzzles via parametrizations. Conversely they partition the parameter space into pieces on which the dynamical puzzles remain "stable". Moreover one should be careful when dealing with the landing properties of rational parameter rays since $a=0$ might appear on the boundary of connected components of $\mathcal{H}$ while at $a=0$ we lose the stability of repelling petals at the origin. The proof of Theorem \ref{thm.A}, essentially the local connectivity of the boundary of parabolic components $\mathcal{U}$, will be proceeded in Section \ref{sec.loc}. Parameters on $\partial\mathcal{U}$ are divided into three classes: Misiurewicz parabolic type, non-renormalizable type and renormalizable type. The first type corresponds to the "cusps" on which no copy of Mandelbrot set is attached, which do not appear for the boundary of components of $\mathcal{H}^0$, and we will deal with it by puzzles without internal rays (the $\mathcal{Q}_n$ defined at the beginning of subsection \ref{sub.sec.paragraph}). The other two cases left are parallel to those in the super-attracting slice $Per_1(0)$ (see \cite{Roesch1}) and the strategy of the proofs there is adapted. The basic idea is to transfer the "shrinking property" of dynamical puzzles to that of para-puzzles via two key lemmas \ref{lem.homeo.para.dym} and \ref{lem.shishikura}. In Subsection \ref{sub.sec.global} we prove Theorem \ref{thm.B} and Theorem \ref{thm.C}.

\paragraph{Acknowledgements.}
I am grateful to my advisor Pascale Roesch for suggesting me writing this down and for her reviews and comments on this article. I am also grateful to Arnaud Chéritat for his computer program to create pictures of connected locus and Julia sets.

\section{Parametrisation of parabolic components}\label{sec.parametrisation}

We first recall here the following classical result of dynamics associated to parabolic fixed point:
\begin{proposition*}
Let $R:\hat{\mathbb{C}}\longrightarrow\hat{\mathbb{C}}$ be a rational function and $R(0) = 0$, $R'(0) = 1$. Let $B^*$ be an immediate basin of $0$, then there exists
\begin{itemize}
    \item a unique semi-conjugacy $\phi:B^*\longrightarrow\mathbb{C}$ up to translation such that $\phi(R(z)) = \phi(z) +1$.
    \item a unique simply connected open set $\Omega\subset B^*$ whose boundary contains 0 and a critical point, such that it is sent conformally by $\phi$ onto some right-half plane. $\Omega$ is called the \textbf{maximal petal}.
\end{itemize}
\end{proposition*}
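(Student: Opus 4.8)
The plan is to build $\phi$ by the classical Leau--Fatou iteration on one attracting petal, propagate it over $B^{*}$ using the functional equation, and then realise $\Omega$ as the maximal right half-plane on which an inverse branch of $\phi$ stays univalent. After an affine change of coordinate near $0$ one may assume $R(z)=z+z^{k+1}+\cdots$ with $k+1=\operatorname{ord}_{0}(R(z)-z)$; in the coordinate $\zeta=-1/(kz^{k})$ the map becomes $F(\zeta)=\zeta+1+O(|\zeta|^{-1/k})$ near $\infty$, and the flower theorem provides $k$ attracting petals, at least one of which, say $P$, lies in $B^{*}$, with every orbit in $P$ tending to $0$ tangentially to an attracting direction. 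On $P$ one sets $\phi_{n}(\zeta)=F^{n}(\zeta)-n$; the estimate on $F$ makes the differences $\phi_{n+1}-\phi_{n}$ summable, so $\phi_{n}$ converges locally uniformly to a holomorphic $\phi_{P}$ with $\phi_{P}\circ R=\phi_{P}+1$, univalent on a sub-petal which it carries onto a right half-plane $\{\operatorname{Re}w>c\}$.

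Since $B^{*}$ is a forward-invariant Fatou component containing $P$, every $z\in B^{*}$ has $R^{n}(z)\in P$ for all large $n$ (if the orbit enters a different attracting petal contained in $B^{*}$ it is treated identically, and the resulting local coordinates are mutually compatible by the uniqueness statement below). Hence $\phi(z):=\phi_{P}(R^{n}(z))-n$ is well defined independently of large $n$ and gives a holomorphic semi-conjugacy $\phi\colon B^{*}\to\mathbb{C}$, $\phi\circ R=\phi+1$, which is onto because $\phi(B^{*})=\bigcup_{n}(\phi_{P}(P)-n)=\mathbb{C}$. For uniqueness, two such maps $\phi_{1},\phi_{2}$ have difference $h=\phi_{1}-\phi_{2}$ holomorphic and $R$-invariant on $B^{*}$; read in the Fatou coordinate, $h$ is bounded and $1$-periodic, hence a holomorphic function of $q=e^{2\pi i w}$ bounded on $\mathbb{C}^{*}$, therefore constant. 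So $\phi$ is unique up to an additive constant.

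For the maximal petal, let $\psi$ be the conformal inverse of $\phi$ on $\{\operatorname{Re}w>c\}$, mapping it onto a domain in $P\subset B^{*}$ with $\psi(w+1)=R(\psi(w))$. One continues $\psi$ to the left by the same relation: $\psi(w)$ is the $R$-preimage of the already-defined $\psi(w+1)$ along the branch prescribed by analytic continuation. This keeps $\psi$ univalent, a local inverse of $\phi$, and with image in $B^{*}$ (forward invariance of $B^{*}$ prevents the image from reaching $\partial B^{*}$ at a finite $w$); the continuation can stop only when the chosen branch of $R^{-1}$ meets a critical value of $R$. Let $c_{0}$ be the infimum of thresholds reached. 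Because $\hat{\mathbb{C}}\setminus B^{*}$ contains the infinite Julia set, $B^{*}$ is hyperbolic, not conformally $\mathbb{C}$, so $\psi$ cannot be univalent on all of $\mathbb{C}$ and $c_{0}>-\infty$. Put $\Omega:=\psi(\{\operatorname{Re}w>c_{0}\})$: a simply connected domain on which $\phi$ is conformal onto the right half-plane $\{\operatorname{Re}w>c_{0}\}$. Since $\psi(w)=R^{n}(\psi(w-n))\to0$ as $\operatorname{Re}w\to+\infty$ with $\psi(w)\in\Omega$, one gets $0\in\partial\Omega$ (while $0\notin B^{*}\supseteq\Omega$); and by construction the obstruction at level $c_{0}$ is a critical value of $R$ on $\partial\Omega$, whose critical preimage therefore also lies on $\partial\Omega$. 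Finally, any simply connected domain with the stated properties yields a univalent continuation of the same inverse branch to a half-plane, hence lies in $\Omega$ and, by maximality of $c_{0}$, coincides with it.

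The genuinely delicate point is this last step: showing that the failure of univalent continuation of $\phi^{-1}$ at level $c_{0}$ is caused by --- and only by --- the relevant branch of $R^{-1}$ reaching a critical value, so that a critical point of $R$ is forced onto $\partial\Omega$. This rests on the monodromy of $R^{-1}$ over $B^{*}$ together with the simple connectivity of the half-plane and the forward invariance of $B^{*}$; the summability estimates of the first step and the removable-singularity argument for uniqueness are routine.
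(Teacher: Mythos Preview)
The paper does not prove this proposition: it is introduced with ``We first recall here the following classical result'' and left unproved, with an implicit pointer to Milnor's textbook in the next paragraph. So there is no proof in the paper to compare yours against; what you have written is essentially the standard Leau--Fatou argument one finds in the references, and the overall architecture (construct $\phi$ on a petal, globalise by the functional equation, then push the inverse branch leftwards until it breaks) is correct.

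Two points in your write-up deserve tightening. First, in the uniqueness step you assert that $h=\phi_{1}-\phi_{2}$ is bounded, but $R$-invariance alone does not give this; you need the asymptotic normalisation $\phi_{i}(\zeta)=\zeta+o(|\zeta|)$ on the petal (which your construction does yield) to see that $h$ is bounded there, hence everywhere by invariance. A cleaner route is to note that on a petal where both $\phi_{i}$ are univalent, $\phi_{2}\circ\phi_{1}^{-1}$ is a conformal map of right half-planes commuting with $w\mapsto w+1$, hence a translation. Second, your description of the obstruction at level $c_{0}$ is slightly inverted: when you extend by $\psi(w)=R^{-1}(\psi(w+1))$, the input $\psi(w+1)$ lies in the \emph{interior} of $\Omega$ (since $\operatorname{Re}(w+1)>c_{0}$), so what sits on $\partial\Omega$ is the limit of $\psi(w)$ itself --- a \emph{critical point} of $R$ --- while the corresponding critical value lies inside $\Omega$. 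Concretely: if no critical point of $R$ lay on $\partial\Omega$, the component of $R^{-1}(\Omega)$ containing $\Omega$ would map conformally onto $\Omega$, allowing $\psi$ to be pushed one unit further left and contradicting maximality. Your final paragraph already flags this as the delicate step; the fix is just to locate the critical point, not the critical value, on $\partial\Omega$.
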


\paragraph{Holomorphic dependence of Fatou coordinate.}
From the proof of the existence of Fatou coordinate (cf. \cite{milnor2011dynamics}) one may deduce easily holomorphic dependence of Fatou coordinate for an analytic family in the following sense: let $R_{a}:\mathbb{C}\longrightarrow\mathbb{C}$ be an analytic family (parametrized by $a\in \Lambda\subset \mathbb{C}$, $\Lambda$ open) with $R_a(0) = 0,R'_a(0) = e^{2\pi i\frac{p}{q}}$. Write Taylor expansion near 0:
$R^q_a(z) = z + \omega(a)z^{q+1} + o(z^{q+1})$. Suppose that at some $a_0$, $\omega(a_0)\not = 0$. Then for every attracting (resp. repelling) axis of $R_{a_0}$, there exists
\begin{itemize}
    \item a small neighborhood $D_{a_0}$ of $a_0$; a topological disk $V_{att} = \{x+iy;\,x\textgreater c-b|y|\}$ (resp. $V_{rep} = \{x+iy;\,x\textless c-b|y|\}$), where the constants $b,c$ are positive and do not depend on $a$; a family of attracting (resp. repelling) petals $(P_a)_{a\in B(a_0)}$
    \item a family of Fatou coordinates $(\phi_a)_{a\in D_{a_0}}$ of $R_a$ such that $\phi_a: P_a\longrightarrow V_{att}\,\, (\text{resp. }V_{rep})$ is conformal and $\phi^{-1}_a$ is analytic in $a$.
\end{itemize}
An immediate consequence from this and the $\lambda$-Lemma is that $\phi^{-1}_a\circ{\phi_{a_0}}$ (parametrized by $a\in U_{a_0}$) defines a dynamical holomorphic motion of $\overline{P_{a_0}}$ such that $\phi^{-1}_a\circ{\phi_{a_0}(\overline{P_{a_0}}})=\overline{P_{a}}$.

\paragraph{A choice of inverse branch of critical points.}
One can compute explicitly the two critical points of $f_{a}$: 
\[\Tilde{c}_+(a) = \frac{-a+\sqrt{a^2-3}}{3},\,\,\Tilde{c}_-(a) = \frac{-a-\sqrt{a^2-3}}{3} .\]
where the inverse branch $\sqrt{\cdot}$ is defined on $\mathbb{C}\setminus\mathbb{R}^-$. Hence $\Tilde{c}_\pm(a)$ are continuous for $a\in \mathbb{C}\setminus Z$ where $Z = [-\sqrt{3},\sqrt{3}]\cup i\mathbb{R}$. If we define
\begin{equation}\label{eq.def.crit}
c_+(a)=\left\{
\begin{aligned}
&\Tilde{c}_+(a), a\in\mathbb{H}\setminus (0,\sqrt{3}] \\
&\Tilde{c}_-(a), a\in -\mathbb{H}\setminus [-\sqrt{3},0)  \\
\end{aligned}
\right.\quad
c_-(a) = \left\{
\begin{aligned}
&\Tilde{c}_-(a), a\in\mathbb{H}\setminus (0,\sqrt{3}]\ \\
&\Tilde{c}_+(a), a\in -\mathbb{H}\setminus [-\sqrt{3},0)  \\
\end{aligned}
\right.
\end{equation}
where $\mathbb{H} = \{z = x+iy;\,\,x\textgreater 0\}$ is the right-half plan, then $a\mapsto c_\pm(a)$ can be extended continuously $\mathbb{C}\setminus[-\sqrt{3},\sqrt{3}]$. We will use this choice of inverse branch for the rest of the paper. For $a\in\mathbb{C}\setminus[-\sqrt{3},\sqrt{3}]$, let $v_{\pm}(a) = f_a(c_{\pm}(a))$ be the two corresponding critical values. 
An elementary calculation gives the following asymptotic formulas near $\infty$:
\begin{equation}\label{eq.asymp}
    c_-(a) = -\frac{2a}{3}+O(\frac{1}{a}),\,\,v_-(a) = \frac{4a^3}{27} + O(a).
\end{equation}

\subsection{Basic topological descriptions of $\mathcal{H}_\infty$ and $\mathcal{H}_n$}
Define $\mathcal{H}_\infty = \mathbb{C}\setminus \mathcal{C}_1$ to be the complement of the connected locus. We first prove that:
\paragraph{\uppercase\expandafter{\romannumeral1}. $\mathcal{H}_\infty$ is simply connected.}
 Clearly $\mathcal{H}_\infty$ is open since being attracted by $\infty$ is an open property. Combining the asymptotic formula (\ref{eq.asymp}) of $v_-(a)$ with the following lemma, we get immediately that $\mathcal{H}_{\infty}$ has a connected component containing a neighborhood of $\infty$ on which $c_-(a)$ escapes to $\infty$:
\begin{lemma}\label{lemesti}
There exists $M\textgreater 0$ such that for all $a$ satisfying $|a|\textgreater M$, one has \[\{z;\,\,|z|\textgreater |a|^2\}\subset B_a(\infty),\] 
where $B_a(\infty)$ is the attracting basin of $\infty$.
\end{lemma}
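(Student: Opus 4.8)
The plan is to obtain an explicit bound on $M$ by controlling the cubic $f_a(z) = z^3 + az^2 + z$ from below when $|z|$ is large relative to $|a|$. The key observation is that for $|z| > |a|^2$ (with $|a|$ large) the leading term $z^3$ dominates the lower-order terms $az^2$ and $z$, so $|f_a(z)|$ is comparably large, in fact $|f_a(z)| > |z|$, and one can then iterate.

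First I would estimate, for $|a| > M$ and $|z| \geq |a|^2$,
\[
|f_a(z)| \geq |z|^3 - |a||z|^2 - |z| = |z|^2\bigl(|z| - |a| - |z|^{-1}\bigr) \geq |z|^2\bigl(|a|^2 - |a| - 1\bigr).
\]
For $|a| > M$ with $M$ chosen so that $|a|^2 - |a| - 1 > 1$ (e.g. $M = 2$ suffices, but I would keep $M$ unspecified and just impose finitely many inequalities along the way), this already gives $|f_a(z)| > |z|^2 \geq |z|\cdot|a|^2 > |z|$, so the modulus is strictly increasing under iteration. To conclude escape to $\infty$ I would upgrade this to a genuinely expanding estimate: since $|f_a(z)| \geq |z|^2(|a|^2 - |a| - 1)$ and $|z| \geq |a|^2$, one checks $|f_a(z)| \geq |a|^2 \cdot |z|^2 / |z| \cdot (\text{something} > 1)$; more cleanly, $|f_a(z)| > |z|^{3/2}$ say (valid once $|z| \geq |a|^2$ and $|a|$ is large enough that $|z|^{3/2} \leq |z|^3 - |a||z|^2 - |z|$, which reduces to $|z|^{3/2} - |a||z| - 1 \leq |z|^{3/2}(|z|^{1/2}\cdot \text{stuff})$ — an elementary inequality). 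From $|f_a(z)| > |z|^{3/2}$ and $|z| \geq |a|^2 \geq M^2 > 1$ it follows that $|f_a^n(z)| \to \infty$, and in particular the whole set $\{|z| > |a|^2\}$ lies in $B_a(\infty)$, since it is forward-invariant (the first estimate shows $|f_a(z)| > |z| > |a|^2$) and every orbit in it escapes.

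The main obstacle — really the only subtlety — is bookkeeping the constant $M$ so that all the competing inequalities ($|a|^2 - |a| - 1 > 1$ for invariance; the stronger expansion inequality for escape; and the requirement that $\{|z|>|a|^2\}$ be nonempty and contain points far from the other critical orbit, though this last point is not actually needed for the statement) hold simultaneously for every $a$ with $|a| > M$. This is routine: each condition is of the form ``a polynomial in $|a|$ is positive for $|a|$ large,'' so a single sufficiently large $M$ works, and one can even exhibit $M = 2$ or so explicitly. No dynamical input beyond the definition of $B_a(\infty)$ as $\{z : f_a^n(z) \to \infty\}$ is required; the argument is the standard ``escape radius'' computation adapted to the fact that here the natural escape radius depends on the parameter like $|a|^2$.
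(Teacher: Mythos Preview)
Your approach is correct and essentially identical to the paper's: both use the triangle inequality $|f_a(z)| \geq |z|^3 - |a||z|^2 - |z|$ on $\{|z|\geq |a|^2\}$ to get an expanding lower bound and then iterate. Note that your own first estimate already gives $|f_a(z)| > |z|^2$ (hence $|f_a^n(z)| > |z|^{2^n}\to\infty$), so the detour through $|z|^{3/2}$ is unnecessary; the paper streamlines this by showing $|f_a(z)| \geq 2|z|$ directly.
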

\begin{proof}
Let $\lambda = 2$. Consider
\[
    |f_a(z)|-\lambda|z| \geq |z|^3 -|a|\cdot|z|^2-(1+\lambda)|z|=:g(|z|)
\]
Notice that if $|a|$ is large enough (for example bigger than 10) , $g(x)$ is increasing for $x\geq |a|$. So for $|z|\geq|a|^2$, $g(|z|)\geq g(|a|^2) = |a|^6-|a|^5-(1+\lambda)|a|^2 \textgreater 0$.
Hence by taking iteration we get $f_a^n(z)\to \infty$ when $n\to \infty$.
\end{proof}

The following lemma is a classical result for univalent functions, see for example \cite{Pommerenke}.
\begin{lemma}\label{lemarea}
(Area Theorem). Let $f:\mathbb{C}\setminus\mathbb{D}\longrightarrow \mathbb{C}$ be a univalent holomorphic mapping and suppose $f(z) = z+b_0+\sum_{n=1}^{\infty}b_nz^{-n}$.
Then {$\sum_{n=1}^{\infty}n\cdot|b_n|^2\leq 1$}.
\end{lemma}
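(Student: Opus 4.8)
The plan is to prove this classical fact by Green's theorem: compute the area of the bounded region enclosed by the image under $f$ of a circle $\{|z|=r\}$ with $r>1$, and use that this area is nonnegative. Geometrically the area in question exceeds the area of the omitted compact set $\hat{\mathbb{C}}\setminus f(\mathbb{C}\setminus\overline{\mathbb{D}})$, which is where the name ``Area Theorem'' comes from, but for the stated inequality only nonnegativity is needed.

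First I would fix $r>1$ and set $C_r=\{|z|=r\}$. Since $f$ is univalent and holomorphic on a neighbourhood of $C_r$, its image $\gamma_r=f(C_r)$ is a real-analytic Jordan curve, positively oriented when $C_r$ is run counterclockwise (because $f$ is orientation preserving and maps a neighbourhood of $\infty$ to a neighbourhood of $\infty$), and $\gamma_r$ bounds a bounded domain $E_r$. By Green's theorem,
\[
0\le \mathrm{Area}(E_r)=\frac{1}{2i}\int_{\gamma_r}\overline{w}\,dw=\frac{1}{2i}\int_{C_r}\overline{f(z)}\,f'(z)\,dz .
\]
The Laurent series $f(z)=z+b_0+\sum_{n\ge1}b_nz^{-n}$ converges for $|z|>1$ — it is the Laurent expansion at the simple pole of $w\mapsto f(1/w)$ — uniformly on $C_r$, and likewise for $f'$, so $\overline{f(z)}f'(z)$ is a uniformly convergent Laurent series on $C_r$ and may be integrated term by term. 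On $C_r$ one has $\overline{z}=r^2/z$, hence $\overline{f(z)}=\overline{b_0}+r^2z^{-1}+\sum_{m\ge1}\overline{b_m}\,r^{-2m}z^{m}$; multiplying by $f'(z)=1-\sum_{n\ge1}nb_nz^{-n-1}$ and extracting the coefficient of $z^{-1}$ (the only term surviving $\int_{C_r}(\cdot)\,dz$) gives that coefficient as $r^2-\sum_{n\ge1}n|b_n|^2r^{-2n}$. Therefore
\[
0\le \mathrm{Area}(E_r)=\pi\Bigl(r^2-\sum_{n\ge1}n|b_n|^2r^{-2n}\Bigr).
\]

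To finish, the inequality above gives $\sum_{n=1}^{N}n|b_n|^2r^{-2n}\le r^2$ for every $N$ and every $r>1$; letting $r\to1^{+}$ (the left side is a finite sum, hence continuous) and then $N\to\infty$ yields $\sum_{n\ge1}n|b_n|^2\le1$. The only points that deserve a word of care are that $\gamma_r$ is genuinely a Jordan curve bounding a domain of finite, nonnegative area — which is exactly where univalence of $f$ is used — and the term-by-term integration, which is justified by uniform convergence of the Laurent series on $C_r$ for each fixed $r>1$; everything else is the residue computation displayed above. I do not expect any real obstacle here: the statement is standard and, once the setup is fixed, the proof is essentially a one-line calculation.
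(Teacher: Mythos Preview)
Your proof is correct and is the standard Green's-theorem argument for the Area Theorem. The paper itself does not prove this lemma: it is stated as a classical result with a reference to Pommerenke, so there is nothing to compare against beyond noting that your argument is exactly the one found in that reference.
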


For $a\in \mathbb{C}$, let $\phi^{\infty}_a$ denote the Böttcher coordinate at $\infty$ normalized by \nolinebreak{$(\phi^{\infty}_a)'(\infty) = 1$}. 

\begin{proposition}\label{prop.escape}
$\mathcal{H}_{\infty}$ is connected. The mapping $\Phi_{\infty}:{\mathcal{H}}_\infty\longrightarrow \mathbb{C}\setminus \overline{\mathbb{D}}$ defined by $a\mapsto \phi_a^{\infty}(v_-(a))$ is a branched covering of degree 3 ramified at $\infty$.
\end{proposition}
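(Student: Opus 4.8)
The plan is to follow the classical Branner–Hubbard / Milnor analysis of the escape locus for cubic polynomials, adapted to the slice $Per_1(1)$. The key point is that on $\mathcal{H}_\infty$ the critical value $v_-(a)$ lies in the basin $B_a(\infty)$, so the Böttcher coordinate $\phi_a^\infty$, which is defined and conformal near $\infty$ and normalized by $(\phi_a^\infty)'(\infty)=1$, can be analytically continued along $B_a(\infty)$ at least until one meets the critical point $c_-(a)$ (the other critical point may or may not escape). This gives a well-defined holomorphic function $a\mapsto \phi_a^\infty(v_-(a))$ on $\mathcal{H}_\infty$; I would first check this is well-defined and holomorphic, using the holomorphic dependence of $\phi_a^\infty$ on $a$ and the fact that $v_-(a)$ stays away from $c_-(a)$ on the locus where the continuation is valid (when $v_-(a)$ hits the Böttcher image of $c_-(a)$ one is on the boundary, not in $\mathcal{H}_\infty$).

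Next I would establish properness: as $a\to\partial\mathcal{H}_\infty\cup\{\infty\}$ inside $\mathcal{H}_\infty$, we have $|\Phi_\infty(a)|\to 1$ or $|\Phi_\infty(a)|\to\infty$. The behavior near $a=\infty$ is governed by the asymptotic formula (\ref{eq.asymp}): since $v_-(a)=\frac{4a^3}{27}+O(a)$ and $\phi_a^\infty(z)\sim z$ near infinity, we get $\Phi_\infty(a)\sim \frac{4a^3}{27}$, so $\Phi_\infty$ is a degree-$3$ branched cover in a neighborhood of $\infty$, totally ramified there. For the boundary behavior near $\partial\mathcal{C}_1$, one argues that if $a_n\to a_\infty\in\partial\mathcal{C}_1$ then the escape rate (Green's function value) $G_a(v_-(a))\to 0$, hence $|\Phi_\infty(a_n)|\to 1$; this is the standard continuity/semicontinuity argument for Green's functions in families. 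Combined with Lemma \ref{lemesti} (which gives that $\mathcal{H}_\infty$ contains a punctured neighborhood of $\infty$ and controls things at infinity) and Lemma \ref{lemarea} (the Area Theorem, used to bound the univalent Böttcher map and prevent pathologies), properness follows.

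Once $\Phi_\infty:\mathcal{H}_\infty\to\mathbb{C}\setminus\overline{\mathbb{D}}$ is a proper holomorphic map, it is a branched covering of some finite degree $d$. To compute $d$ and conclude connectedness of $\mathcal{H}_\infty$, I would count preimages of a point far out: for $w$ with $|w|$ large, $\Phi_\infty^{-1}(w)$ corresponds to solving $\frac{4a^3}{27}+O(a)=w$ (using the global form of the map established from the escape dynamics, or equivalently by a degree computation via the fact that fixing $\phi_a^\infty(v_-(a))=w$ with $|w|$ large forces $a$ large), which has exactly $3$ solutions counted with multiplicity. Hence $d=3$. Since $\mathbb{C}\setminus\overline{\mathbb{D}}$ is connected and $\Phi_\infty$ is a branched cover, $\mathcal{H}_\infty$ is connected (a branched cover of a connected base from a space that is a priori a union of components must have each component surjecting, but the total degree being $3$ with the ramification all accounted for at the single point over $\infty$ forces connectedness — more carefully, the Riemann–Hurwitz count together with the local degree $3$ at $\infty$ leaves no room for extra components).

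The main obstacle I expect is the analytic continuation and properness argument: one must carefully track how far $\phi_a^\infty$ extends along the basin, show that the obstruction to further continuation (hitting $c_-(a)$) is exactly what defines $\partial\mathcal{H}_\infty$, and control the limiting behavior of $\Phi_\infty$ as $a$ approaches the connected locus — ruling out that $|\Phi_\infty(a_n)|$ could fail to converge to $1$. This requires uniform estimates on the Böttcher coordinate (where the Area Theorem enters) and the standard but somewhat delicate continuity properties of the escape-rate function $G_a(v_-(a))$ in the parameter $a$. The degree computation and the passage from "proper holomorphic" to "branched cover of degree $3$, hence connected domain" are then formal.
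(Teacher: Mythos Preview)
Your proposal is correct and follows essentially the same route as the paper: holomorphicity of $\Phi_\infty$, properness via continuity of the Green's function $G_a(v_-(a))$ at the finite boundary, and the asymptotic $\Phi_\infty(a)\sim\tfrac{4a^3}{27}$ at $a=\infty$ to read off local degree $3$. You also correctly identify that the Area Theorem is needed to make the last step rigorous: since the lower-order coefficients of $\phi_a^\infty$ depend on $a$, the naive ``$\phi_a^\infty(z)\sim z$'' is not enough, and the paper uses Lemma~\ref{lemarea} (after rescaling by $a^2$, using Lemma~\ref{lemesti} for the domain) to bound $|b_{a,n}|\le |a|^{2n+2}/\sqrt n$ and then the functional equation to get $b_{a,0}=O(a^2)$, whence $\Phi_\infty(a)=\tfrac{4a^3}{27}+O(a^2)$.

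The one place where your write-up diverges is the connectedness argument. The paper's version is shorter and avoids any circularity: on \emph{any} component $\tilde{\mathcal{H}}_\infty$ the map is proper to $\mathbb{C}\setminus\overline{\mathbb{D}}$, hence surjective; if $\tilde{\mathcal{H}}_\infty$ were bounded the image would also be bounded, a contradiction, so every component is unbounded and there is only one. Your Riemann--Hurwitz/degree-count argument works too, but note that ``$\Phi_\infty^{-1}(w)$ has exactly $3$ points for $|w|$ large'' already presupposes that no bounded component contributes preimages of large $w$, i.e.\ it implicitly uses the paper's observation. Once you make that step explicit, the two arguments coincide.
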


\begin{proof}
Let $\Tilde{\mathcal{H}}_{\infty}$ be any connected component of $\mathcal{H}_{\infty}$. Without loss of generality we may suppose that on $\Tilde{\mathcal{H}}_{\infty}$, $c_-(a)$ escapes to $\infty$. Then one may define $\Phi_{\infty}$ on $\Tilde{\mathcal{H}}_{\infty}$ similarly by $a\mapsto \phi_a^{\infty}(v_-(a))$. Clearly $\Phi_{\infty}$ is holomorphic by holomorphic dependence of Böttcher coordinate.
Let $G_a:\mathbb{C}\longrightarrow \mathbb{R}$ be the Green function of $f_a$ which is continuous on $(a,z)$. Suppose that $(a_n)\subset \Tilde{\mathcal{H}}_\infty$ converges to some $a_0\in\partial \mathcal{C}_1$. Then
\[\lim_{n\to\infty}log\left|\phi^{\infty}_{a_n}(f_{a_n}(c_-(a_n)))\right| = \lim_{n\to \infty}G_{a_n}(f_{a_n}(c_-(a_n))) = G_{a_0}(v_-(a_0)) = 0\]
since $c_-(a_0)$ does not escape to $\infty$. Now suppose $\Tilde{\mathcal{H}}_{\infty}$ is bounded, the above analysis implies that $\Phi(\Tilde{\mathcal{H}}_{\infty})$ is bounded. But on the other hand the above analysis also implies that $\Phi:\Tilde{\mathcal{H}}_{\infty}\longrightarrow\mathbb{C}\setminus\overline{\mathbb{D}}$ is proper, and hence surjective. A contradiction. So $\mathcal{H}_\infty$ is connected, containing a neighborhood of $\infty$.\\
Hence it remains to show that $\Phi_{\infty}(\infty) = \infty$ and its local degree at $\infty$ is 3. To do this, we need an asymptotic analysis on the behavior of ${\phi}_a^{\infty}(v_-(a))$ when $a\to \infty$. 
By Lemma \ref{lemesti}, $\phi_a^{\infty}$ is at least well-defined on $|z|\textgreater |a|^2$ (for $a$ large enough). Write its Taylor expansion:
\[\phi_a^{\infty}(z) = z + b_{a,0} + \sum_{n=0}^{\infty} b_{a,n}z^{-n},\,\,|z|\textgreater |a|^2.\]
Hence $\Tilde{\phi}_a^{\infty}(z) = a^{-2}\phi_a^{\infty}(a^2z)$ satisfies the hypothesis in Lemma \ref{lemarea}, which yields the estimate 
\begin{equation}\label{eqcoeffi}
    \sum_{n=1}^{\infty}\frac{n|b_{a,n}|^2}{|a|^{4n+4}}\leq 1\,\, \Rightarrow \,\,|b_{a,n}|\leq \frac{|a|^{2n+2}}{\sqrt{n}}.
\end{equation}
On the other hand by definition of Böttcher coordinate we have ${\phi}_a^{\infty}\circ f_a= ({\phi}_a^{\infty})^3$:
\[z^3+az^2+z+b_{a,0}+\sum_{n=1}^{\infty}b_{a,n}\frac{1}{(z^3+az^2+z)^n} = \left(z+b_{a,0}+\sum_{n=1}^{\infty}b_{a,n}\frac{1}{z^n}\right)^3.\]
Comparing the constant coefficients one gets $b_{a,0} = 3b_{a,2}+6b_{a,0}b_{a,1}+b^3_{a,0}$. Using (\ref{eqcoeffi}) we obtain $b_{a,0} = O(a^2)$. This together with (\ref{eq.asymp}) and (\ref{eqcoeffi}) yields the desired asympotic estimate when $a\to \infty$:
\[{\phi}_a^{\infty}(v_-(a)) = \frac{4a^3}{27} + O(a^2) +O(a^2)\cdot\sum_{n=1}^{\infty}\frac{1}{\sqrt{n}(a^3+O(a))^n}.\]
\end{proof}

By symmetricity of $\Phi_{\infty}$ with respect to $x,y-$axis one easily gets
\begin{corollary}
The restriction 
\[\Phi_{\infty}:\mathcal{H}_{\infty}\cap{\overline{S}}\longrightarrow \{z;\,\,0 \leq arg(z) \leq \frac{3\pi}{2},|z|\textgreater 1\}\] is a homeomorphism, where $S = \{x+iy;x\textgreater 0,y\textgreater 0\}$.
\end{corollary}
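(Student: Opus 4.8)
The plan is to derive this corollary directly from Proposition \ref{prop.escape} together with the symmetry $\overline{f_a(\overline z)} = f_{\overline a}(z)$ and $-f_a(-z) = f_{-a}(z)$. First I would record precisely how these symmetries act on the objects involved: conjugating $f_a$ by $z\mapsto\overline z$ (resp. $z\mapsto -z$) carries $f_a$ to $f_{\overline a}$ (resp. $f_{-a}$), carries $B_a(\infty)$ to $B_{\overline a}(\infty)$ (resp. $B_{-a}(\infty)$), and hence intertwines the normalized Böttcher coordinates by $\phi^{\infty}_{\overline a}(\overline z) = \overline{\phi^{\infty}_a(z)}$ and $\phi^{\infty}_{-a}(-z) = -\phi^{\infty}_a(z)$. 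I also need that the chosen branch $c_-$ of the escaping critical point satisfies $c_-(\overline a) = \overline{c_-(a)}$ and $c_-(-a) = -c_-(a)$ (on $\mathbb C\setminus[-\sqrt3,\sqrt3]$ this follows from the explicit formulas and the definition \eqref{eq.def.crit}), so that $v_-(\overline a) = \overline{v_-(a)}$ and $v_-(-a) = -v_-(a)$. Combining these gives the functional equations $\Phi_\infty(\overline a) = \overline{\Phi_\infty(a)}$ and $\Phi_\infty(-a) = -\Phi_\infty(a)$ on $\mathcal H_\infty$.

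Next I would use these two functional equations to see that $\Phi_\infty$ is equivariant for the Klein four-group $\{1,\,a\mapsto\overline a,\,a\mapsto -a,\,a\mapsto -\overline a\}$ acting on both source and target, where on the target the three nontrivial elements act as $w\mapsto\overline w$, $w\mapsto -w$, $w\mapsto -\overline w$. The open set $\mathcal H_\infty$ and the round annulus-complement $\mathbb C\setminus\overline{\mathbb D}$ are both invariant, and $\overline S$ is a fundamental domain for the action on a neighborhood of $\infty$ in the source; its image under the quotient identification is the sector $\{0\le\arg w\le \tfrac{3\pi}{2},\ |w|>1\}$ — indeed $\overline S$ is the closed first quadrant, and under the group the four quadrant-sectors of the $w$-plane are permuted, with $\overline S$ corresponding to the union of the first, second and fourth closed quadrants, i.e. exactly $\{0\le\arg w\le\tfrac{3\pi}{2}\}$. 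So the claimed target set is forced.

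Then I would argue bijectivity. By Proposition \ref{prop.escape}, $\Phi_\infty:\mathcal H_\infty\to\mathbb C\setminus\overline{\mathbb D}$ is a degree-$3$ branched cover ramified (totally) at $\infty$, hence $3$-to-$1$ away from $\infty$. The three preimages of a generic $w$ are permuted transitively; I would identify them as an orbit-like triple using the relation ${\phi^\infty_a}\circ f_a = ({\phi^\infty_a})^3$ — more simply, since the asymptotic expansion in the proof of Proposition \ref{prop.escape} shows $\Phi_\infty(a)\sim \tfrac{4a^3}{27}$, the three preimages near $\infty$ are related by $a\mapsto\zeta a$ with $\zeta^3 = 1$, up to lower order. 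The key point is that the cube-root ambiguity $a\leftrightarrow\zeta a$ is, modulo the deck group, exactly realized by the reflection/rotation symmetries above restricted to the fundamental domain: the three sheets each meet $\overline S$ in a single point, so $\Phi_\infty$ restricted to $\mathcal H_\infty\cap\overline S$ is injective, and surjective onto the sector by the fundamental-domain count. Properness (already established in the proof of Proposition \ref{prop.escape}, giving continuity up to $|w|=1$ and the boundary behavior) then upgrades the continuous bijection on the closed sector to a homeomorphism.

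The main obstacle I anticipate is the boundary bookkeeping, not the interior statement: one must check carefully what happens along $\partial S = (0,\infty)\cup i(0,\infty)$ and at $a=0$, where the symmetry group has fixed points and where the branch choices for $c_\pm$ in \eqref{eq.def.crit} were deliberately arranged to remain continuous only on $\mathbb C\setminus[-\sqrt3,\sqrt3]$. Concretely, I would verify that on the positive real axis (beyond $\sqrt3$) and the positive imaginary axis the escaping critical point is still consistently $c_-$, that $\Phi_\infty$ maps these half-lines into the rays $\arg w = 0$ and $\arg w = \tfrac{3\pi}{2}$ respectively (consistent with $\Phi_\infty(a)\sim\tfrac{4a^3}{27}$, since $a=it$ gives $\tfrac{4a^3}{27} = -\tfrac{4it^3}{27}$, argument $-\tfrac\pi2\equiv\tfrac{3\pi}{2}$), and that these two rays are exactly the two boundary rays of the target sector — so the three sheets glue along the quadrant boundaries in precisely the way that makes the restriction to $\overline S$ a global homeomorphism rather than merely a local one. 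Everything else is a routine consequence of Proposition \ref{prop.escape} and the equivariance.
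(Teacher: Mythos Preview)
Your approach---deducing the corollary from Proposition~\ref{prop.escape} via the two symmetries $\Phi_\infty(\overline a)=\overline{\Phi_\infty(a)}$ and $\Phi_\infty(-a)=-\Phi_\infty(a)$---is exactly what the paper intends by its one-line ``by symmetricity''. The ingredients you list (equivariance, the asymptotic $\Phi_\infty(a)\sim\tfrac{4}{27}a^3$, and properness) are the right ones.

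That said, the middle of your argument is muddled in a way that would not survive scrutiny. You invoke the Klein four-group and speak of $\overline S$ as a ``fundamental domain'' whose image is three target quadrants; but a fundamental domain for an equivariant map of degree one would go to a fundamental domain, whereas here the degree is three, so the group action alone cannot account for the target sector. You then try to patch this by saying the cube-root ambiguity $a\leftrightarrow\zeta a$ is ``realized by the reflection/rotation symmetries''. This is false: the deck group of the covering is $\mathbb Z/3$ (asymptotically $a\mapsto e^{2\pi i/3}a$), which is \emph{disjoint} from the Klein four-group of real symmetries---neither $a\mapsto -a$ nor $a\mapsto\overline a$ commutes with $\Phi_\infty$ over the identity. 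So the sentence ``the three sheets each meet $\overline S$ in a single point, so $\Phi_\infty|_{\overline S}$ is injective'' is asserted, not proved. (There is also a slip: ``first, second and fourth quadrants'' is $[0,\pi]\cup[3\pi/2,2\pi]$, not $[0,3\pi/2]$; you mean first, second and third.)

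A clean way to close the gap, staying within your framework: since $\Phi_\infty$ is an unramified $3$-fold cover over $\mathbb C\setminus\overline{\mathbb D}$ and the open target sector $T=\{0<\arg w<3\pi/2\}$ is simply connected, $\Phi_\infty^{-1}(T)$ is three disjoint sheets, each mapped homeomorphically. Your equivariance shows that $\partial S\cap\mathcal H_\infty$ (the positive real and positive imaginary half-axes) maps into $\partial T$ (arguments $0$ and $3\pi/2$), so $\mathcal H_\infty\cap S\subset\Phi_\infty^{-1}(T)$; the asymptotic pins down which sheet contains a neighbourhood of $\infty$ in $S$. It remains only to see that $\mathcal H_\infty\cap S$ is \emph{exactly} one sheet and not a union of several---equivalently, that no sheet boundary enters $S$. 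But a sheet boundary lies over $\partial T$, and by equivariance the preimage of the positive real ray is conjugation-invariant while the preimage of the negative imaginary ray is $(-\overline{\,\cdot\,})$-invariant; combined with the asymptotic placement of the three arcs over each boundary ray (at approximate arguments $0,2\pi/3,4\pi/3$ and $\pi/2,7\pi/6,11\pi/6$ respectively), none lies in the open quadrant $S$. This yields the bijection, and properness finishes as you say.
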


\paragraph{\uppercase\expandafter{\romannumeral2}. Some topological properties for $\mathcal{H}_n$.} We claim that $\mathcal{H}_n$ is open. Indeed, the property of being attracted by 0 is open by holomorphic dependence of Fatou coordinate. Thus any $a\in\mathcal{H}_n$ is J-stable and hence the property of capture or adjacent is preserved under perturbation. Moreover applying the maximal principle to the holomorphic function $f^n_a(c_{\pm}(a))$ is not difficult to show that every component of $\mathcal{H}$ is simply connected (One may need to change the parametrisation to mark out critical points, since under the parameter $a$, $a\mapsto c_{\pm}(a)$ are not well-defined on $\mathbb{C}^*$). The next lemma tells us that the components of $\mathcal{H}_n$ are actually maximal: they are components of $\mathring{\mathcal{C}_1}$.

\begin{lemma}\label{lemboundaryH}
Let $\mathcal{U}$ be a connected component of $\mathcal{H}_n,\,\,n\geq0$. Then $\partial\mathcal{U}\subset\partial \mathcal{C}_1$.
\end{lemma}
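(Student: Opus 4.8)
The plan is to show equivalently that $\mathcal{U}$ is a connected component of $\mathring{\mathcal{C}_1}$. Since $\mathcal{H}\subset\mathcal{C}_1$ and $\mathcal{C}_1$ is closed (its complement $\mathcal{H}_\infty$ being open), we have $\overline{\mathcal{U}}\subset\mathcal{C}_1$, so it suffices to prove $\partial\mathcal{U}\cap\mathring{\mathcal{C}_1}=\emptyset$. Assume not and fix $a_0\in\partial\mathcal{U}\cap\mathring{\mathcal{C}_1}$. First I would rule out $a_0\in\mathcal{H}$: writing $\mathcal{H}=\bigsqcup_{m\geq0}\mathcal{H}_m$ with each $\mathcal{H}_m$ open, if $a_0\in\mathcal{H}_m$ then a neighborhood $N$ of $a_0$ lies in $\mathcal{H}_m$; since $a_0\in\partial\mathcal{U}$, $N$ meets $\mathcal{U}\subset\mathcal{H}_n$, forcing $m=n$, and then $N\cup\mathcal{U}$ is connected and contained in $\mathcal{H}_n$, hence lies in the single component $\mathcal{U}$, so $a_0\in\mathcal{U}$ — impossible, $\mathcal{U}$ being open. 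Thus $a_0\in\mathring{\mathcal{C}_1}\setminus\mathcal{H}$: both critical orbits of $f_{a_0}$ are bounded, but the free critical point is not attracted to $0$.

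Next, treat the generic case $a_0\neq0$. Choose a round disc $D\ni a_0$ with $\overline D\subset\mathring{\mathcal{C}_1}$. Near $a_0$ we may label the two critical points continuously (the ambiguity is harmless by the symmetries of the family); write $c^*(a)$ for the one on $\partial\Omega_a$ and $c(a)$ for the free one. On $\overline D\subset\mathcal{C}_1$ the filled Julia sets are uniformly bounded (as in Lemma \ref{lemesti}), so the family $a\mapsto f_a^{\,j}(c(a))$, $j\geq0$, is uniformly bounded on $D$, hence normal: the free critical point is passive on $D$. The critical point $c^*(a)$ is also passive on $D$, because $a_0$ is a non-degenerate parabolic parameter ($\omega(a_0)\neq0$) and holomorphic dependence of the Fatou coordinate provides a moving attracting petal $P_a$ capturing the tail of the orbit of $c^*(a)$, making that orbit-family normal as well. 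By the stability theory of holomorphic families (Mañé–Sad–Sullivan), together with the $\lambda$-lemma, $f_a$ is then $J$-stable on $D$: a holomorphic motion over $D$ conjugates $f_a$ to $f_{a_0}$ on the Julia set, hence carries along the Fatou components and the grand-orbit relations. Since on $D\cap\mathcal{U}\neq\emptyset$ the orbit of $c(a)$ enters $B^*_a(0)$ after exactly $n+1$ steps, the same holds throughout the connected set $D$, i.e. $D\subset\mathcal{H}_n$, contradicting $a_0\in D\setminus\mathcal{H}$.

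There remains the special parameter $a_0=0$, which lies on $\partial\mathcal{H}_0$. Here the previous argument fails: at $a=0$ the point $0$ is a \emph{degenerate} parabolic point (two petals), $\omega(0)=0$, and $c^*$ is no longer passive, so one cannot infer $J$-stability near $0$. For this point I would instead argue directly that $0\in\partial\mathcal{C}_1$: writing $f_a(z)=z+az^2+z^3$, the degenerate parabolic point splits under perturbation into a simple (one-petalled) parabolic point at $0$ and the repelling fixed point $-a$ of multiplier $1+a^2$, so that only one of the two critical points can be absorbed by the parabolic basin of $0$; an explicit perturbation analysis along suitable directions $a\to0$ then produces parameters for which the orbit of the remaining critical point escapes, giving $0\in\partial\mathcal{C}_1$.

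The main obstacle is the step from $J$-stability on $D$ to the invariance of the combinatorial type of the capture — that is, checking that the holomorphic motion genuinely preserves the statement ``$f_a^{\,n+1}(c(a))\in B^*_a(0)$'' across $D$, and in particular that the immediate basin $B^*_a(0)$ is the correct Fatou component throughout — and, separately, the ad hoc treatment of $a_0=0$, where the stability machinery is simply unavailable.
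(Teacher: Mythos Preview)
Your argument follows the same route as the paper's proof: suppose $a_0\in\partial\mathcal{U}\cap\mathring{\mathcal{C}_1}$, deduce that both critical orbits are passive on a small disk, invoke Ma\~n\'e--Sad--Sullivan $J$-stability, and derive a contradiction. The preliminary step ruling out $a_0\in\mathcal{H}$ is fine and is implicitly needed in the paper as well.

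There are two places where the paper is tighter. First, for passivity the paper does \emph{not} use the parabolic Fatou coordinate at all: since $B(a_0,r)\subset\mathcal{C}_1$, neither critical orbit escapes, and the B\"ottcher coordinate at $\infty$ gives a uniform disk at $\infty$ avoided by both orbits; hence $\{a\mapsto f_a^n(c_\pm(a))\}_n$ are uniformly bounded, so normal. This argument is indifferent to whether $a_0=0$, so your separate perturbation analysis at $a_0=0$ is unnecessary --- the obstacle you flagged (degeneracy of the parabolic, $\omega(0)=0$) only arises because you chose to establish passivity of $c^*$ via the attracting petal.

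Second, the paper avoids your worry about ``preservation of the combinatorial capture type under holomorphic motion''. Instead of tracking which Fatou component the orbit enters, it argues directly at $a_0$: $J$-stability forces $c_-(a_0)$ into the Fatou set, so by Sullivan some iterate $f_{a_0}^k(c_-(a_0))$ lies in a bounded periodic Fatou component $W$; then $W$ cannot be a Siegel disk, an attracting basin, or a parabolic basin other than $B^*_{a_0}(0)$, since each of these is stable under perturbation and would persist for $a\in\mathcal{U}$ near $a_0$, contradicting $\mathcal{U}\subset\mathcal{H}$. The only remaining option is $W=B^*_{a_0}(0)$, i.e.\ $a_0\in\mathcal{H}$, which you already excluded. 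This classification of periodic Fatou components is cleaner than transporting the relation $f_a^{n+1}(c(a))\in B^*_a(0)$ along the motion.
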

\begin{proof}
Suppose the contrary. Then there exists $a_0\in\partial\mathcal{U}$ and a small disk $B(a_0,r)\subset {\mathcal{C}_1}$ on which $\{a\mapsto f^n_a(c_{\pm}(a))\}_n$ are normal families. Indeed, the Böttcher coordinate $\phi^{\infty}_a$ depends holomorphically on $a$ and hence there is a uniform disk at $\infty$ on which $\phi^{\infty}_a$ is well defined for $a\in B(a_0,r)$. Since $B(a_0,r)\subset {\mathcal{C}_1}$, the two critical points never escape to $\infty$, therefore $\{f^n_a(c_{\pm}(a))|_{B(a_0,r)}\}$ are normal. By Mañé-Sad-Sullivan's $J-$stability theorem \cite{MSS}, the Julia set $J_a$ moves homeomorphically and therefore contains no critical points for $a\in B(a_0,r)$. Hence for some $k$, $f^k_{a_0}(c_-(a_0))$ hits a bounded periodic Fatou component $W$. But $W$ can neither be a Siegel disk, nor an attracting basin, nor a parabolic basin different from $B^*_{a_0}(0)$ (since it would be stable under perturbation while $a_0\in\partial\mathcal{U}$). This leads to a contradiction.
\end{proof}

\subsection{Describing the special locus $\mathcal{I}$}
Let $a\in\mathbb{C}^*$. Denote by $\phi_a:B^*_a(0)\longrightarrow\mathbb{C}$ the Fatou coordinate of $f_a$ (not normalized) and $\Omega_a\subset B_a^*(0)$ the maximal petal. Observe that if $a\in[-\sqrt{3},\sqrt{3}]\setminus \{0\}$, then both critical points are on $\partial\Omega_a$ (one may verify by uniqueness of $\phi_a$ that $\Omega_a$ is symmetric with respect to $x-$axis). Define
\[\mathcal{I} = \{a\in\mathbb{C}^*;\, \text{both critical points belongs to } \partial \Omega_a\}.\] 
By definition $\mathcal{I}\subset\mathcal{H}_0$. For $a\in \mathcal{I}$, the image of the two critical points under $\phi_a$ has the same real part. We denote the one whose image has bigger imaginary part by $c^+(a)$ and the other $c^-(a)$. Define $I(a) = \mathfrak{Im}\{\phi_a(c^+(a)) - \phi_a(c^-(a))\}$. It is well-defined due to the uniqueness up to translation of $\phi_a$. Clearly $I\geq 0$.

\begin{remark}
$c^{\pm}(a)$ defined above are not identical with $c_{\pm}(a)$ defined in (\ref{eq.def.crit}). We will only use this notation in this subsection.
\end{remark}

\begin{lemma}\label{lem.rigid}
$I(a) = 0$ if and only $a = \sqrt{3}$. Let $a_1,a_2\in\mathcal{I}$ be in the same connected component of $\mathcal{H}_0$ such that $I(a_1) = I(a_2)\textgreater 0$, then $a_1=a_2$ by the normalisation condition.
\end{lemma}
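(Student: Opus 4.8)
The plan is to treat the two assertions separately; fix once and for all a connected component $\mathcal{U}$ of $\mathcal{H}_0$. For every $a\in\mathcal{U}$ both critical points lie in the \emph{open} set $B^*_a(0)$, so $\phi_a$ is holomorphic near $c_\pm(a)$; using the holomorphic dependence of $\phi_a$ recalled above together with the holomorphic dependence of the unordered pair of critical points, the symmetric expression $E(a):=(\phi_a(c_+(a))-\phi_a(c_-(a)))^2$ is holomorphic on $\mathcal{U}$. A parameter $a$ lies in $\mathcal{I}$ exactly when both critical points sit on $\partial\Omega_a$, equivalently when $\phi_a(c_+(a))$ and $\phi_a(c_-(a))$ have the same real part (the maximal petal reaches exactly as far left as the larger of the two real parts), i.e. when $E(a)\in(-\infty,0]$; and there $E(a)=-I(a)^2$. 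So the lemma amounts to: on $\mathcal{I}$, $E(a)=0$ only at $a=\sqrt{3}$, and $E$ is injective on $\mathcal{I}\cap\mathcal{U}$.

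For the first statement the easy direction is immediate: at $a=\pm\sqrt{3}$ the two critical points of $f_a$ collide at $-a/3$, so $c^+(a)=c^-(a)$ and $I(a)=0$; restricting, as the whole paper does, to $\overline{S}$ disposes of $-\sqrt{3}$. For the converse I would use the local structure of the maximal petal at a boundary critical point. Suppose $a\in\mathcal{I}$, $c_+(a)\neq c_-(a)$ (so $a\neq\pm\sqrt{3}$ and both critical points are simple) and $I(a)=0$; put $w_0:=\phi_a(c_+(a))=\phi_a(c_-(a))$, a point of the vertical line $\partial V$ where $V:=\phi_a(\Omega_a)$. Since $f_a'(c_+(a))=0$, $\phi_a$ has a critical point of order $2$ at $c_+(a)$; consequently $\Omega_a\cap D_+$, for a small disc $D_+\ni c_+(a)$, is a sector at $c_+(a)$ bounded by two analytic arcs which $\phi_a$ maps homeomorphically onto the half-disc $V\cap D(w_0,\varepsilon)$, so that $\psi_a(w)\in D_+$ for every $w\in V$ near $w_0$, where $\psi_a:=(\phi_a|_{\Omega_a})^{-1}$. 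The same holds at $c_-(a)$ with a disc $D_-\ni c_-(a)$ that we may take disjoint from $D_+$; then $\psi_a(w)\in D_+\cap D_-=\emptyset$ for $w\in V$ near $w_0$, a contradiction. Hence $I(a)=0$ forces $c_+(a)=c_-(a)$, i.e. $a=\sqrt{3}$.

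For the rigidity statement, take $a_1,a_2\in\mathcal{U}$ with $I(a_1)=I(a_2)=t>0$. Every parameter of $\mathcal{H}_0$ is $J$-stable and $\mathcal{U}$ is (simply) connected, so the standard theory of $J$-stable families produces a quasiconformal $h:\hat{\mathbb{C}}\to\hat{\mathbb{C}}$ with $h\circ f_{a_1}=f_{a_2}\circ h$, fixing $0$ and $\infty$ and intertwining the normalised Böttcher coordinates at $\infty$. I would then upgrade $h$ to a conformal conjugacy. It is already conformal on $B_{a_1}(\infty)$. On $B^*_{a_1}(0)$: normalise $\phi_{a_i}$ by $\phi_{a_i}(c^-(a_i))=0$; then by Part 1 the real parts of $\phi_{a_i}(c^\pm(a_i))$ vanish, so $\phi_{a_i}(c^+(a_i))=it$ for $i=1,2$, i.e. the positions of the two critical points in the \'Ecalle--Voronin cylinder $B^*_{a_i}(0)/f_{a_i}$ agree. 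Together with the combinatorial data of the parabolic basin (locally constant on $\mathcal{H}_0$, hence equal at $a_1$ and $a_2$) this yields a conformal isomorphism $\beta:B^*_{a_1}(0)\to B^*_{a_2}(0)$ with $\phi_{a_2}\circ\beta=\phi_{a_1}$, conjugating $f_{a_1}$ to $f_{a_2}$, which extends over $B_{a_1}(0)$ by the dynamics. As $f_{a_i}$ is geometrically finite (all critical points in the parabolic basin), $J_{a_i}$ is locally connected and has zero area; so $\beta$ extends continuously to $\partial B_{a_1}(0)$ and agrees there with $h$, and replacing $h$ by $\beta$ on $B_{a_1}(0)$ gives a global quasiconformal conjugacy that is conformal off the null set $J_{a_1}$, hence Möbius. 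Fixing $0,\infty$ it has the form $z\mapsto\mu z$, and comparing $\mu f_{a_1}(z)$ with $f_{a_2}(\mu z)$ forces $\mu^2=1$ and $a_2=\mu a_1$; the normalisation at $\infty$ gives $\mu=1$, and anyway $\mu=-1$ would put $-a_1=a_2$ in $\mathcal{U}$, impossible since the simple connectivity of $\mathcal{U}$ and the symmetry $a\mapsto-a$ would force $\mathcal{U}$ to encircle $0\notin\mathcal{H}_0$. Hence $a_1=a_2$.

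The hard part is the rigidity step --- upgrading the $J$-stability conjugacy to a conformal one, i.e. ruling out nontrivial conformal deformations of the parabolic basin maps $f_{a_i}|_{B^*_{a_i}(0)}$ once their \'Ecalle data and combinatorics are fixed, and checking that the conformal conjugacy on the basin glues continuously with $h$ across the (locally connected, null) Julia set. This is exactly where the normalisation of $\phi_a$ is used: along $\mathcal{I}$, the single real number $I(a)$ is the \'Ecalle invariant distinguishing the maps, so $I(a_1)=I(a_2)$ leaves no room for a nontrivial conformal conjugacy. The first assertion is, by comparison, elementary modulo the local description of $\Omega_a$ near a boundary critical point.
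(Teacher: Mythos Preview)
Your overall architecture matches the paper's: for the first assertion you show $\phi_a(c^+(a))=\phi_a(c^-(a))$ forces the critical points to coincide, and for the rigidity you build a conformal conjugacy on the parabolic basin, a conformal conjugacy on the basin of $\infty$, and glue across the Julia set using geometric finiteness and zero area. Two points deserve comment.

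\medskip
\textbf{On the first assertion.} Your route through the local boundary geometry of $\Omega_a$ at a critical point is correct but more delicate than necessary: you need that $\Omega_a\cap D_\pm$ is exactly a sector and that $\psi_a=(\phi_a|_{\Omega_a})^{-1}$ has a well-defined boundary limit at $w_0$. The paper avoids this entirely by pushing forward to the critical \emph{values}: from $\phi_a(c^+(a))=\phi_a(c^-(a))$ one gets $\phi_a(v^+(a))=\phi_a(v^-(a))$, and since $v^\pm(a)\in\Omega_a$ (their $\phi_a$-image has real part $1$), injectivity of $\phi_a$ on the \emph{open} petal gives $v^+(a)=v^-(a)$; then the local degree of $f_a$ at this common critical value would be $\ge 4$, impossible for a cubic. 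This sidesteps all boundary-regularity issues.

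\medskip
\textbf{On the rigidity step.} Here there is a genuine gap. You assert that the conformal conjugacy $\beta$ on $B^*_{a_1}(0)$ ``agrees there with $h$'' on $\partial B^*_{a_1}(0)$, but you do not say why. Both $\beta$ and $h$ conjugate $f_{a_1}$ to $f_{a_2}$ on the Jordan curve $\partial B^*_{a_1}(0)$ and both send $0$ to $0$, yet a degree-$3$ expanding circle map has \emph{two} conjugacies to any other such map, so this needs an argument. The paper fills this by invoking a lemma of Petersen--Tan: for weakly expanding circle maps of degree $d\ge2$ there is a \emph{unique} orientation-preserving conjugacy sending a prescribed fixed point to a prescribed fixed point. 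Once that is in place, your gluing (and the appeal to Rickman's lemma and the null area of $J_{a_1}$) goes through. A second, smaller gap is that your construction of $\beta$ is only gestured at (``\'Ecalle data and combinatorics agree''); the paper builds $\varphi$ explicitly by lifting $\phi_{a_2}^{-1}\circ\phi_{a_1}$ through the tower $\Omega_{a_i}^n=f_{a_i}^{-n}(\Omega_{a_i})$, matching the three components at each stage and using Morera's theorem to glue---this is where the hypothesis $I(a_1)=I(a_2)$ is actually used, to make the lifts agree on the overlaps.
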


\begin{proof}
If $a = \sqrt{3}$, the two critical points coincide, therefore $I(a) = 0$. Conversely, $I(a) = 0$ implies that $\phi_a(v^+(a)) - \phi_a(v^-(a)) = 0$, where $v^{\pm}(a) = f_a(c^{\pm}(a))$. Hence $f_a(c^+(a)) = f_a(c^-(a))$ since $\phi_a$ is injective on $\Omega_a$. But this implies that the degree of $f_a$ near $v^+(a)$ is 4, a contradiction.

Choose normalisations of $\phi_{a_i}$, $i=1,2$, such that $\phi_{a_i}(c^{+}(a_i)) = I(a_i)$ and $\phi_{a_i}(c^{-}(a_i)) = 0$ with $\phi_{a_i}(\Omega_{a_i}) = \mathbb{H}$. Set $\Tilde{\Omega}_{a_i} = f_{a_i}(\Omega_{a_i})$ then $f^{-1}_{a_i}(\Tilde{\Omega}_{a_i})$ has three components which are topological disks: ${\Omega}_{a_i},{\Omega}^+_{a_i},{\Omega}^-_{a_i}$ and $\partial{\Omega}_{a_i}\cap\partial{\Omega}^+_{a_i} = c^+(a_i)$, $\partial{\Omega}_{a_i}\cap\partial{\Omega}^-_{a_i} = c^-(a_i)$ (see Figure \ref{fig.illus}). We set $K_i =\overline{{\Omega_{a_i}}\cup{\Omega^+_{a_i}}\cup{\Omega^-_{a_i}}}$. Therefore $f_{a_1}$ is conjugate to $f_{a_2}$ on $K_1$ by $\varphi = \phi^{-1}_{a_2}\circ\phi_{a_1}$ where $\varphi$ is defined by choosing the inverse branch of $\phi_{a_2}|_{\Omega_{a_2},\Omega^{\pm}_{a_2}}$ respectively. Now $B_{a_i}^*(0)\setminus K_i$ has three connected components denoted by $D^i_1,D^i_2,D^i_3$ in the cyclic order. Set $\Omega^1_{a_i} = f^{-1}_{a_i}(\Omega_{a_i})$, then $\Omega^1_{a_i}\setminus K_i$ has three connected components $\Tilde{C}^i_\epsilon\subset D^i_\epsilon$, $\epsilon = 1,2,3$, on which $f_{a_1}$ is conjugate to $f_{a_2}$ by $\phi^{-1}_{a_2}\circ\phi_{a_1}$ by choosing $\phi^{-1}_{a_2}(\{z;\,0\textless\mathfrak{Re}\textless1\}) = \Tilde{C}^2_\epsilon$. One checks easily that the conjugating mapping defined on $\Tilde{C}^i_\epsilon$ coincides with $\varphi$ on the boundary, therefore by Morera's Theorem $\varphi$ extends holomorphically to $\Omega^1_{a_1}$ conjugating $f_{a_1}$ to $f_{a_2}$. For $n\geq 1$, define $\Omega^{n+1}_{a_i} = f^{-1}_{a_i}(\Omega^{n}_{a_i})$ and the components $C^i_{\epsilon_0...\epsilon_n}$ of $\Omega^{n+1}_{a_i}\setminus\overline{\Omega^{n}_{a_i}}$ where $\epsilon_k\in \{1,2,3\}, k =0,...,n-1$ is determined by $f^k_{a_i}(C^i_{\epsilon_0...\epsilon_{n-1}}) \subset D^i_{\epsilon_k}$. Define conjugating mappings on these components via Fatou coordinates by assigning $C^1_{\epsilon_0...\epsilon_{n-1}}$ to $C^2_{\epsilon_0...\epsilon_{n-1}}$ and one may prove by induction that these mappings coincide with $\varphi$ on $\partial\Omega^n_{a_1}$. Thus again by Morera's Theorem $\varphi$ extends holomorphically to $\Omega^{n+1}_{a_1}$ and hence to $B^*_{a_1}(0)$ by induction on $n$.

Since $f_{a_i}$ is geometrically finite, $\partial B^*_{a_i}(0)$ is a Jordan curve by a result of Tan-Yin \cite{TaYi}. Therefore $\varphi$ continuously extends to $\partial B^*_{a_1}(0)$. On the other hand, since $a_1,a_2$ are in the same component $\mathcal{U}_0$ of $\mathcal{H}_0$, the dynamical holomorphic motion (parametrized by $a\in\mathcal{U}_0$) $\psi_a :=(\phi^{\infty}_a)^{-1}\circ{\phi^{\infty}_{a_1}}$ of $\mathbb{C}\setminus \overline{B^*_{a_1}(0)}$ with $\psi_a(\mathbb{C}\setminus \overline{B^*_{a_1}(0)})= \mathbb{C}\setminus \overline{B^*_{a}(0)}$ can be extended to a holomorphic motion of $\mathbb{C}$ by Slodkowski's Theorem. Now $\psi_{a_2},\varphi$ both conjugate $f_{a_1}$ to $f_{a_2}$ on $\partial B^*_{a_1}(0)$ and fix 0 since they are dynamically defined. Apply the following lemma to $f_{a_1},f_{a_2}$ and $\psi_{a_2},\varphi$, we get that $\psi_{a_2} = \varphi$ on $\partial B^*_{a_1}(0)$.

\begin{lemma}[\cite{Tanlei}]
Let $f_1,f_2: \mathbb{S}^1\longrightarrow\mathbb{S}^1$ be weakly expanding map of degree $d\geq 2$ ($f$ is weakly expanding means that for any closed segment $I\subset \mathbb{S}^1$, there exists $N$ such that $f^N(I) = \mathbb{S}^1$). Let $\alpha_1,\alpha_2$ be a fixed point of  $f_1,f_2$ respectively. Then there exists a unique orientation-preserving homeomorphism $h: \mathbb{S}^1\longrightarrow\mathbb{S}^1$ such that $h \circ{f_1} = f_2\circ{h}$ with $h(\alpha_1) = \alpha_2$.
\end{lemma}

Hence the mapping $\psi$ defined by $\psi|_{\mathbb{C}\setminus B^*_{a_1}(0)} = \psi_{a_2}$ and $\psi|_{\overline{B^*_{a_1}(0)}} = \varphi$ is continuous. By Rickman's Lemma, $\psi$ is quasiconformal. In fact it is conformal: its Beltrami coefficient vanishes almost everywhere since $\partial B^*_{a_1}(0)$ has zero measure (see for example \cite{DH}). Hence $a_1 = a_2$.
\end{proof}

\begin{figure}[H] 
\centering 
\includegraphics[width=0.6\textwidth]{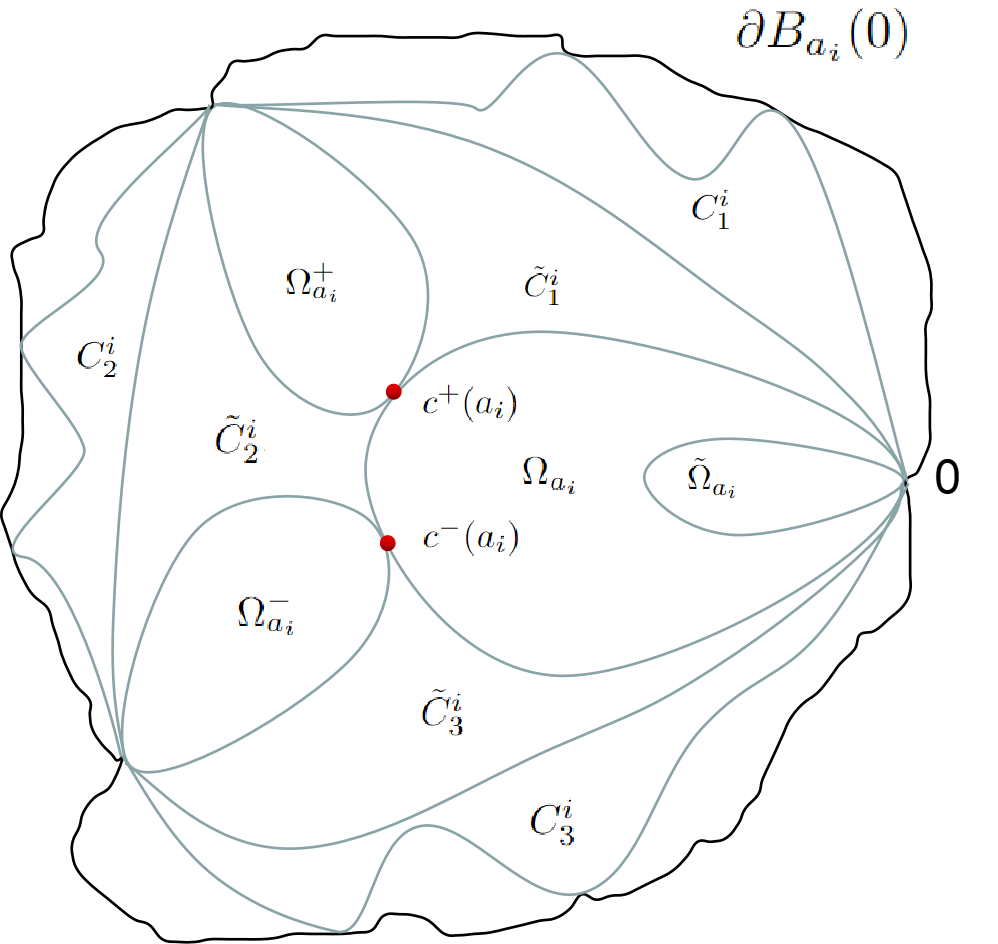} 
\caption{Illustration of the proof of Lemma \ref{lem.rigid}} 
\label{fig.illus} 
\end{figure}

\begin{lemma}
Let $\mathcal{U}_0$ be a connected component of $\mathcal{H}_0$ such that $\mathcal{U}_0\cap\mathcal{I}\not = \emptyset$. Then $I:(\mathcal{I}\setminus\{\pm\sqrt{3}\})\cap\mathcal{U}_0\longrightarrow(0,+\infty)$ is a homeomorphism. Moreover $\lim\limits_{t\to \infty}I^{-1}(t) = 0$, and $\lim\limits_{t\to 0}I^{-1}(t)$ is either $\pm\sqrt{3}$ or $0$.  
\end{lemma}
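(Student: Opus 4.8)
## Proof Proposal

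The plan is to establish the three assertions — that $I$ is a homeomorphism onto $(0,+\infty)$, that $I^{-1}(t)\to 0$ as $t\to\infty$, and that $\lim_{t\to 0}I^{-1}(t)\in\{0,\pm\sqrt{3}\}$ — by combining the rigidity statement of Lemma \ref{lem.rigid} with properness/continuity arguments in the parameter $a$. First I would show $I$ is \emph{injective} on $(\mathcal{I}\setminus\{\pm\sqrt3\})\cap\mathcal{U}_0$: this is exactly the content of the second half of Lemma \ref{lem.rigid}, once one notes that the normalisation condition there is harmless because $I(a)$ is the normalisation-independent quantity $\mathfrak{Im}\{\phi_a(c^+(a))-\phi_a(c^-(a))\}$. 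Next I would argue \emph{continuity} of $I$: on $\mathcal{U}_0$ the parabolic point is non-degenerate (the family is $J$-stable there, away from $a=0$), so by the holomorphic dependence of the Fatou coordinate recalled in the excerpt, $\phi_a$ depends continuously (indeed analytically, up to translation) on $a$ near any $a_0\in\mathcal{I}\cap\mathcal{U}_0$; the critical points $c^\pm(a)$ also vary continuously, hence $I$ is continuous on $\mathcal{I}\cap\mathcal{U}_0$. Since $\mathcal{I}\cap\mathcal{U}_0$ is connected (I would need to check, or arrange, that $\mathcal{I}$ meets each such component in a connected arc — plausibly this follows from the parametrisation set up in Section \ref{sec.parametrisation}, or can be deduced a posteriori once surjectivity and injectivity are in hand), a continuous injection into $\mathbb{R}$ is automatically a homeomorphism onto its image, which is an interval.

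To pin down that the image is exactly $(0,+\infty)$, I would prove \emph{properness} of $I$ as a map to $(0,+\infty)$, i.e. that $I^{-1}$ of a compact subinterval $[\delta,N]$ is compact in $\mathcal{U}_0$. Suppose $a_n\in\mathcal{I}\cap\mathcal{U}_0$ with $I(a_n)\in[\delta,N]$ and $a_n\to a_\infty$. If $a_\infty$ lies in $\mathcal{U}_0$ with $a_\infty\neq\pm\sqrt3$, continuity gives $I(a_\infty)\in[\delta,N]$ and we must check $a_\infty\in\mathcal{I}$ — i.e.\ that the limiting maximal petal still carries both critical points on its boundary; this is where one uses that the quantity $\mathfrak{Re}\{\phi_{a_n}(c^+(a_n))\}-\mathfrak{Re}\{\phi_{a_n}(c^-(a_n))\}$ stays $0$ (characterising $\mathcal{I}$) and passes to the limit. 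The real work is excluding the three bad limits: $a_\infty=0$, $a_\infty=\pm\sqrt3$, and $a_\infty\in\partial\mathcal{U}_0\setminus\{0,\pm\sqrt3\}$. Near $a=\pm\sqrt3$ the two critical points collide, so $\phi_a(c^+(a))-\phi_a(c^-(a))\to 0$, forcing $I(a_n)\to 0$, contradicting $I(a_n)\ge\delta$. Near $a=0$ the parabolic point degenerates (two attracting axes) and the maximal petal degenerates; one should show $I$ blows up there, which handles both the lower bound $\le N$ at $a_\infty=0$ and, conversely, gives $I^{-1}(t)\to 0$ as $t\to\infty$. For $a_\infty\in\partial\mathcal{U}_0\setminus\{0\}$, the point $a_\infty$ still has a non-degenerate parabolic fixed point, but since $\mathcal{U}_0$ is a component of $\mathring{\mathcal C}_1$ (Lemma \ref{lemboundaryH}), at $a_\infty$ at least one critical point leaves $B^*_{a_\infty}(0)$ or at least leaves the \emph{closure of the maximal petal}; in either case $a_\infty\notin\mathcal{I}$, and moreover $I$ must tend to $0$ or $\infty$ as $a\to a_\infty$ (I would argue that if $I(a_n)\to\ell\in(0,\infty)$ then by the rigidity Lemma the maps $f_{a_n}$ would be forced together, and a normal-families / limiting argument would produce $a_\infty\in\mathcal{I}\cap\mathcal{U}_0$, a contradiction). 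Assembling these: $I(\mathcal{I}\cap\mathcal{U}_0\setminus\{\pm\sqrt3\})$ is an interval, closed in $(0,\infty)$ by properness, and unbounded above (it accumulates at $+\infty$ precisely along parameters heading to $0$), while $0$ is a genuine boundary value of $I$; hence the image is all of $(0,+\infty)$.

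The limiting statements then fall out. That $I^{-1}(t)\to 0$ as $t\to\infty$ is the properness-at-$0$ analysis above: any subsequential limit of $I^{-1}(t_n)$ with $t_n\to\infty$ must be a boundary point of $\mathcal{U}_0$ at which $I$ is unbounded, and the only such candidate (since every finite $a_\infty\in\partial\mathcal{U}_0\setminus\{0\}$ and $a_\infty=\pm\sqrt3$ give $I\to 0$ or $I$ finite) is $a=0$; uniqueness of the limit upgrades "every subsequential limit is $0$" to $I^{-1}(t)\to 0$. That $\lim_{t\to 0}I^{-1}(t)\in\{0,\pm\sqrt3\}$ is similar at the other end: a subsequential limit $a_*$ of $I^{-1}(t_n)$ with $t_n\to 0$ has $I$ "equal to $0$" in the limit, so either the critical points collide there ($a_*=\pm\sqrt3$) or the petal degenerates ($a_*=0$) — these being the only ways for $I$ to vanish — and again one needs the connectedness of $\mathcal{I}\cap\mathcal{U}_0$ and a uniqueness-of-limit argument to conclude the limit genuinely exists and equals one of these three points.

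I expect the \textbf{main obstacle} to be the boundary analysis $a_\infty\in\partial\mathcal{U}_0\setminus\{0\}$: ruling out that $I$ converges to a finite positive value there. The clean way is to run the rigidity argument of Lemma \ref{lem.rigid} "in the limit" — if $I(a_n)\to\ell>0$, transport the conformal conjugacy structure on $K_n=\overline{\Omega_{a_n}\cup\Omega^+_{a_n}\cup\Omega^-_{a_n}}$ to a limiting configuration, and derive that $a_\infty$ would itself lie in $\mathcal{I}\cap\mathcal{U}_0$ (using that $\partial\mathcal{U}_0\subset\partial\mathcal{C}_1$ from Lemma \ref{lemboundaryH} to get the contradiction) — but making the limiting Fatou coordinates and petals behave requires care, precisely because the combinatorial position of the critical points on $\partial B^*_{a_n}(0)$ could jump. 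A secondary technical point is verifying, at $a=0$, that $I\to\infty$: one must understand how the maximal petal and Fatou coordinate of the non-degenerate parabolic point behave as it splits into the degenerate (two-axis) configuration, and show the imaginary-part separation of the two critical values diverges.
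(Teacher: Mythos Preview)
Your injectivity step (via Lemma~\ref{lem.rigid}) and the continuity step match the paper, but for \emph{surjectivity} the paper takes a very different and much shorter route that completely bypasses the obstacle you flagged. Rather than proving properness of $I$ on $(0,+\infty)$, the paper \emph{constructs} $I^{-1}$ directly by quasiconformal surgery: pick any $a_0\in\mathcal{I}\cap\mathcal{U}_0$ with $I(a_0)=t_0>0$; for each $t>0$ pull back the real-linear stretch $\tau_t(x,y)=(x,\tfrac{t}{t_0}y)$ through the Fatou coordinate to obtain an $f_{a_0}$-invariant Beltrami differential $\mu_t$ supported on $B^*_{a_0}(0)$; integrate to get $\psi_t$, and set $f_{a_t}=\psi_t\circ f_{a_0}\circ\psi_t^{-1}\in Per_1(1)$. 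Since $\psi_t$ is a conjugacy, the image of $\Omega_{a_0}$ is the maximal petal of $f_{a_t}$ and both critical points still sit on its boundary, so $a_t\in\mathcal{I}$; the new Fatou coordinate is $\tau_t\circ\phi_{a_0}\circ\psi_t^{-1}$, which immediately gives $I(a_t)=t$. Continuity of $t\mapsto a_t$ keeps the whole family inside $\mathcal{U}_0$. Thus $I$ is a continuous bijection onto $(0,+\infty)$, hence a homeomorphism, and no boundary analysis on $\partial\mathcal{U}_0\setminus\{0\}$ is ever needed for this part.

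For the limiting statements the paper is also more direct than your sketch. For $t\to\infty$: if an accumulation point $b_\infty$ of $I^{-1}(t)$ were $\neq 0$, then the Fatou coordinate is stable at $b_\infty$, and $\phi_{a_n}(c_-(a_n))\to\infty$ forces $c_-(b_\infty)=\lim\phi_{b_\infty}^{-1}(\phi_{a_n}(c_-(a_n)))=0$, which is impossible. For $t\to 0$: if $b_0\neq 0$, continuity of Fatou coordinates shows $c_-(b_0)$ is still attracted by $0$, hence $b_0\in\mathcal{U}_0$ (not on the boundary!), and then $|\phi_{b_0}(c_+(b_0))-\phi_{b_0}(c_-(b_0))|=0$ forces $c_+(b_0)=c_-(b_0)$, i.e.\ $b_0=\pm\sqrt3$. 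So the case $a_\infty\in\partial\mathcal{U}_0\setminus\{0\}$ that you singled out as the ``main obstacle'' is in fact never reached: the paper shows such a limit cannot occur, rather than analysing what $I$ does there. Your properness strategy is not wrong in principle, but the surgery argument is the missing idea that makes the proof short.
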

\begin{proof}
By Lemma \ref{lem.rigid}, $I$ is injective on $\mathcal{U}_0\cap\mathcal{I}$, so it suffices to prove that $I$ is also surjective. Pick $a_0\in\mathcal{U}_0\cap\mathcal{I}$ such that $t_0 = I(a_0)\textgreater 0$. For $t\in(0,+\infty)$, define $\tau_t:\mathbb{C}\longrightarrow\mathbb{C}$ by $(x,y)\mapsto(x,\frac{t}{t_0}y)$. Define a $f_{a_0}-$invariant Beltrami differential $\mu_t = \phi_{a_0}^*0$ on $B^*_{a_0}(0)$, $\mu_t = 0$ elsewhere. Let $\psi_t$ integrate $\mu_t$ with normalisation $\psi_t(0) = 0$ and $\psi_t(z) = z+o(1)$ near $\infty$. Then $f_{a_t} = \psi_t\circ f_{a_0}\circ \psi^{-1}_t \in Per_1(1)$ since the dynamic at the parabolic fixed point 0 is preserved by $\psi_t$. For the reason of consistency of notations, we denote $a_{t_0} = a_0$. Notice that $\tau_t\circ{\phi_{a_{t_0}}}\circ{\psi^{-1}_t}$ is a Fatou coordinate of $f_{a_t}$, on checks easily that $I(a_t) = t$. So $I$ is surjective.

If $\mathcal{U}_0$ is the component containing $(0,\sqrt{3}]$ or $[-\sqrt{3},0)$, then we are done. So suppose $\mathcal{U}_0$ is another component and hence on which $a\mapsto c_{\pm}(a)$ is well-defined and holomorphic on $a$. Let $b_{\infty},b_0$ be an accumulation point of $I^{-1}(0,+\infty)$ when $t\to \infty,0$ respectively. If $b_{\infty}\not = 0$, let $(a_n)\subset I^{-1}(0,+\infty)$ converge to $b_{\infty}$ as $n\to\infty$, then $\phi^{-1}_{b_{\infty}}(\phi_{a_n}(c_-(a_n)))$ converges to $c_-(b_\infty)$ by holomorphic dependence of Fatou coordinate. But $\phi_{a_n}(c_-(a_n))\to\infty$ as $n\to\infty$, which implies that $c_-(b_{\infty}) = 0$, a contradiction.

If $b_{0}\not = 0$, let $(a_n)\subset I^{-1}(0,+\infty)$ converge to $b_{0}$ as $n\to\infty$, then by holomorphic dependence of Fatou coordinate, $c_-(b_0)$ is attracted by 0, therefore $b_0\in\mathcal{U}_0$. Hence the function $|\phi_a(c_+(a))-\phi_a(c_-(a))|$ is well-defined in a neighborhood of $b_0$ and is continuous. Thus $\phi_{b_0}(c_-(b_0)) = \phi_{b_0}(c_+(b_0))$, which implies $c_-(b_0) = c_+(b_0)$, i.e. $b_0 = \pm\sqrt{3}$.
\end{proof}

We get immediately
\begin{corollary}
Let $\mathcal{U}_0$ be as in the above Lemma. If $([-\sqrt{3},\sqrt{3}]\setminus\{0\})\cap\mathcal{U}_0\not=\emptyset$, then $\mathcal{I}\cap\mathcal{U}_0 = (0,\sqrt{3}]$ or $[-\sqrt{3},0)$. Otherwise $(\mathcal{I}\cap\mathcal{U}_0)\cup\{0\}$ is a simple closed curve.
\end{corollary}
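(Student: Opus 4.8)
The plan is to derive the corollary directly from the preceding lemma, which already gives that $I:(\mathcal{I}\setminus\{\pm\sqrt 3\})\cap\mathcal{U}_0\to(0,+\infty)$ is a homeomorphism with the stated limiting behavior as $t\to\infty$ and $t\to 0$. First I would dispose of the case $([-\sqrt 3,\sqrt 3]\setminus\{0\})\cap\mathcal{U}_0\neq\emptyset$: since $[-\sqrt 3,\sqrt 3]\setminus\{0\}$ is contained in $\mathcal{H}_0$ (as both critical points lie on $\partial\Omega_a$ there, hence in $B^*_a(0)$) and since $\mathcal{U}_0$ is a \emph{connected component} of $\mathcal{H}_0$, if $\mathcal{U}_0$ meets $(0,\sqrt 3]$ then it contains the whole connected set $(0,\sqrt 3]$, and similarly for $[-\sqrt 3,0)$. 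Using the symmetry with respect to the axes noted in the introduction, the two half-intervals lie in two distinct components (they are swapped by $a\mapsto -a$), so exactly one of $(0,\sqrt 3]$, $[-\sqrt 3,0)$ is contained in $\mathcal{U}_0$. It remains to check that $\mathcal{I}\cap\mathcal{U}_0$ contains nothing else: every point of $(0,\sqrt 3)$ has $I(a)>0$, and $\sqrt 3$ is the unique point of $\mathcal{I}$ with $I=0$ by Lemma \ref{lem.rigid}, so $(\mathcal{I}\setminus\{\sqrt 3\})\cap\mathcal{U}_0$ is a subset of $I^{-1}(0,+\infty)$ containing the ray $(0,\sqrt 3)$; since $I$ is a homeomorphism onto $(0,+\infty)$ and $I|_{(0,\sqrt3)}$ is already a continuous injection into $(0,+\infty)$ whose image, by the limit statements $I^{-1}(t)\to 0$ as $t\to\infty$ and $I^{-1}(t)\to\sqrt 3$ as $t\to 0$, must be a full neighborhood of both ends, a connectedness/properness argument forces $(0,\sqrt 3)=I^{-1}(0,+\infty)\cap\mathcal{U}_0$, giving $\mathcal{I}\cap\mathcal{U}_0=(0,\sqrt 3]$.

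In the complementary case, $([-\sqrt 3,\sqrt 3]\setminus\{0\})\cap\mathcal{U}_0=\emptyset$, the lemma gives that $b_0:=\lim_{t\to 0}I^{-1}(t)$ cannot be $\pm\sqrt 3$, hence $b_0=0$, and likewise $b_\infty:=\lim_{t\to\infty}I^{-1}(t)=0$. (One should note that a priori the lemma speaks of \emph{an} accumulation point; I would observe that the argument there applies to every accumulation point, so the limits genuinely exist and equal $0$.) Therefore the map $\gamma:[0,+\infty]\to\mathbb{C}$ defined by $\gamma(t)=I^{-1}(t)$ for $t\in(0,+\infty)$ and $\gamma(0)=\gamma(\infty)=0$ is a continuous extension of a homeomorphism $(0,+\infty)\to\mathcal{I}\cap\mathcal{U}_0$ onto a circle's worth of parameter values; reparametrizing $[0,+\infty]$ as $[0,1]$ with endpoints identified, $\gamma$ descends to a continuous map $S^1\to\mathbb{C}$. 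It is injective: on the open arc it is injective because $I$ is; and $\gamma(0)=0\notin\mathcal{I}\cap\mathcal{U}_0$ since $0\notin\mathbb{C}^*$, so the glued point is not hit twice. A continuous injection from the compact $S^1$ is a homeomorphism onto its image, which is exactly $(\mathcal{I}\cap\mathcal{U}_0)\cup\{0\}$; hence this set is a simple closed curve.

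The main obstacle is the bookkeeping at the endpoints: one must be sure that $I^{-1}(t)$ genuinely \emph{converges} (not merely accumulates) to $0$ as $t\to 0$ and as $t\to\infty$, and that the two ``ends'' of the arc $I^{-1}(0,+\infty)$ approach $0$ from genuinely different directions so that the glued curve has no self-tangency or hidden self-intersection near $0$ — but for the purely topological statement ``simple closed curve'' only injectivity of the glued parametrization is needed, which follows as above from $0\notin\mathbb{C}^*$. A secondary point requiring a line of justification is that in the first case $\mathcal{U}_0$ cannot simultaneously meet both $(0,\sqrt3]$ and $[-\sqrt3,0)$; this is where one invokes that these two intervals lie in the closures of distinct components, or alternatively that $0\notin\mathcal{H}_0$ separates them within $\mathcal{H}_0$. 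Everything else is a direct read-off from the previous lemma and Lemma \ref{lem.rigid}.
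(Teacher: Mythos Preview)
Your proposal is correct and follows the same route the paper intends: the corollary is stated with no proof, only the phrase ``We get immediately,'' so the intended argument is precisely a direct read-off from the preceding lemma together with Lemma~\ref{lem.rigid}, which is what you do.

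Two small sharpenings are worth recording. First, your justification that $\mathcal{U}_0$ cannot contain both $(0,\sqrt 3]$ and $[-\sqrt 3,0)$ is not quite complete as written: neither the symmetry $a\mapsto -a$ alone nor the fact that $0\notin\mathcal{H}_0$ forces the two intervals into distinct components (a connected set in $\mathbb{C}^*$ may well contain both). The clean argument uses the injectivity already at hand: since $f_a$ and $f_{-a}$ are conjugate by $z\mapsto -z$, one has $I(a)=I(-a)$; if both $a\in(0,\sqrt 3)$ and $-a\in(-\sqrt 3,0)$ lay in $\mathcal{I}\cap\mathcal{U}_0$, Lemma~\ref{lem.rigid} would force $a=-a$. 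Second, in the first case you can make the ``connectedness/properness'' step explicit: $I((0,\sqrt 3))$ is an open subinterval of $(0,+\infty)$ with left endpoint $0$ (since $I(a)\to 0$ as $a\to\sqrt 3$); were its right endpoint $\beta$ finite, $I^{-1}(\beta)$ would lie in the closure of $(0,\sqrt 3)$ relative to the arc $(\mathcal{I}\setminus\{\sqrt 3\})\cap\mathcal{U}_0$, but that relative closure is $(0,\sqrt 3)$ itself, contradicting $I^{-1}(\beta)\notin(0,\sqrt 3)$. This also rules out the possibility $\lim_{t\to 0}I^{-1}(t)=0$ in this case, since then the arc's closure would be contained in $\{0\}\cup(\mathcal{I}\setminus\{\sqrt 3\})\cap\mathcal{U}_0$, yet $\sqrt 3$ lies in it. With these two tweaks your argument is complete.
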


\subsection{Parametrisation of $\mathcal{H}_n$, $n\geq 0$}
The main result in this subsection are Proposition \ref{prop.para.Hn} and \ref{PROP_para0}.

In order to give an explicit parametrisation, we need first to distinguish the relative position of the two critical points. Define
\[\mathcal{A}_+ = \{a\in \mathbb{C}^*;\,\,c_+(a)\in\partial\Omega_a,c_-(a)\not \in \overline{\Omega_a}\}.\]
\[\mathcal{A}_- = \{a\in \mathbb{C}^*;\,\,c_-(a)\in\partial\Omega_a,c_+(a)\not \in\overline{\Omega_a}\}.\]
By holomorphic dependence of Fatou coordinate, both $\mathcal{A}_{\pm}$ are open. By definition, $\partial\mathcal{A}_{+}\cup\partial\mathcal{A}_{-} = \mathcal{I}\cup\{0\}$. 

Since $c_+(a)$ will never escape to $\infty$ by Proposition \ref{prop.escape}, it is reasonable to believe that it appears always on the boundary of the maximal petal. In fact, we will show that $\mathcal{A}_- = \emptyset$ (Corollary \ref{cor.setI}).

\paragraph{Parametrization by locating the free critical value}
A direct calculation shows that the equation $v_-(a) = c_+(a)$ has exactly four solutions $\pm s_0,\pm\overline{s_0}$ with $s_0\in S$. Therefore $\mathcal{H}_0\cap\mathcal{A}_+$ is non empty. For $a_0\in \mathcal{H}_0\cap\mathcal{A}_+$, let $\phi_{a_0}:\Omega^0_{a_0} :=\Omega_{a_0}\longrightarrow \mathbb{H}$ be the normalized Fatou coordinate with $\phi_{a_0}(c_+(a_0)) = 0$. On the other hand, set $P_{\frac{1}{4}}(z) := z^2+\frac{1}{4}$, denote by $\Omega^0$ the maximal petal of $P_{\frac{1}{4}}$, $B$ its parabolic basin and $\phi:\Omega^0\longrightarrow \mathbb{H}$ the normalized Fatou coordinate with $\phi(0) = 0$. For $n\geq 1$, let $\Omega_{a_0}^n$ (resp. $\Omega^n$) be the connected component of $f_{a_0}^{-1}(\Omega_{a_0}^{n-1})$ (resp. $P_{\frac{1}{4}}^{-1}(\Omega^{n-1})$) containing $\Omega_{a_0}^{n-1}$ (resp. $\Omega^{n-1}$). Define $h_a:\Omega^0_a\longrightarrow\Omega^0$ by $h_{a_0} = \phi^{-1}\circ\phi_{a_0}$. We can lift $h_{a_0}$ by the following commutative diagram until $\Omega_{a_0}^{n_{a_0}}$ contains $v_-(a_0)$ for the first time, since before that $h_{a_0}(c_+({a_0})) = 0,h_{a_0}(v_+(a_0)) = P_{\frac{1}{4}}(0)$:
\begin{equation}\label{commu1}
\begin{tikzcd}
\Omega_a^{n_{a_0}} \ar[r]{}{f_{a_0}}\ar[d]{}{h_{a_0}} & \Omega_a^{{n_{a_0}}-1} \ar[d]{}{h_{a_0}} \ar[r]{}{f_{a_0}} & ... \ar[d]{}{h_{a_0}}  \ar[r]{}{f_{a_0}} & \Omega_{a_0}^1 \ar[d]{}{h_{a_0}} \ar[r]{}{f_{a_0}} & \Omega_{a_0}^0 \ar[d]{}{h_{a_0}}\\
\Omega^{n} \ar[r]{}{P_{\frac{1}{4}}} & \Omega^{n-1}  \ar[r]{}{P_{\frac{1}{4}}} & ...  \ar[r]{}{P_{\frac{1}{4}}} & \Omega^1  \ar[r]{}{P_{\frac{1}{4}}} & \Omega^0
\end{tikzcd}
\end{equation}
Define $\Phi_+:\mathcal{H}_0\cap{\mathcal{A}_+}\longrightarrow B$ by $\Phi_+(a) = h_a(v_-(a))$.
It would be a little subtle to see that $\Phi_+$ is holomorphic: regard $h_a(z)$ as a function of two variables, then it is defined on \[D = \{(a,z);\,\,a\in\mathcal{H}_0\cap{\mathcal{A}_+}, z\in\Omega^{n_a}_a\}.\]
If $v_-(a_0)\in\Omega^{n_{a_0}}\setminus\overline{\Omega^{n_{a_0-1}}_{a_0}}$, then by holomorphic dependance of Fatou coordinate we have $n_a = n_{a_0}$ for $a$ close to $a_0$, hence $(a_0,v_-(a_0))\in\mathring D$. If $v_-(a_0)\in\partial\Omega^{n_a-1}_{a_0}$, then there is a chance that $v_-(a_0)\in\Omega^{n_{a_0}-1}_{a}$ for $a$ near $a_0$ (in this case $n_a = n_{a_0} - 1$) so that $(a_0,v_-(a_0))$ is no longer in $\mathring D$. However, we may take a petal $U^0_{a_0}\subset \Omega^0_{a_0}$ such that its $n_{a_0}-$th pull-back $U^{n_{a_0}-1}_{a_0}$ contains $v_-(a_0)$ on its boundary, and therefore it can be pulled back once again. So $h_a(z)$ is defined on a open neighborhood of $(a_0,v_-({a_0}))$ and by chain rule $\Phi_+$ is holomorphic.

On can construct $\Phi_-$ on $\mathcal{H}_0\cap\mathcal{A}_-$ similarly by locating the position of $v_+(a)$.

Notice that if $a\not \in\mathcal{H}_0$, then diagrame (\ref{commu1}) can be continued forever and $h_a$ can be extended to $B^*_a(0)$. In \cite{Nakane} Nakane gives a parametrisation of $\mathcal{H}_n,n\geq 1$ by (admitting that $c_+(a)$ is always contained in the immediate basin)
\begin{proposition}\label{prop.para.Hn}
Let $\mathcal{U}$ be a connected component of $\mathcal{H}_n$ ($n\geq 1$). Then $\Phi_{\mathcal{U}}:\mathcal{U}\longrightarrow B$ defined by $a\mapsto h_a(f^{n+1}(c_-(a)))$ is a homeomorphism.
\end{proposition}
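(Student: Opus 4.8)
The plan is to run the usual three-step scheme for parametrising a component of hyperbolic type: show that $\Phi_{\mathcal U}$ is holomorphic, then proper, then injective; a proper injective holomorphic map is a degree-one branched covering, hence a biholomorphism, which gives the asserted homeomorphism (surjectivity onto $B$ then comes for free from properness). First the map is well defined: a component $\mathcal U$ of $\mathcal H_n$ with $n\ge 1$ is disjoint from $[-\sqrt3,\sqrt3]$ — there both critical points lie on $\partial\Omega_a$, so that segment lies in $\overline{\mathcal H_0}$, and $0\notin\mathcal H$ — so $c_\pm$ are holomorphic on $\mathcal U$ with $c_+(a)\in B^*_a(0)$ and $f_a^{n+1}(c_-(a))\in B^*_a(0)$; since moreover $a\notin\mathcal H_0$, diagram (\ref{commu1}) continues indefinitely, $h_a$ extends to a conformal conjugacy $B^*_a(0)\to B$ between $f_a$ and $P_{1/4}$ with $h_a(c_+(a))=0$, and $\Phi_{\mathcal U}(a)=h_a(f_a^{n+1}(c_-(a)))\in B$. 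Holomorphy follows from the holomorphic dependence of the Fatou coordinate exactly as for $\Phi_+$ above: regard $h_a(z)$ as holomorphic in $(a,z)$ and, where $f_a^{n+1}(c_-(a))$ sits on the boundary of the relevant piece, enlarge the petal so that one further pull-back is possible on a neighbourhood.

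For properness, let $a_k\in\mathcal U$ with $w_k:=\Phi_{\mathcal U}(a_k)$ in a compact $K\subset B$; since $\mathcal U\subset\mathcal C_1$ is bounded (Lemma \ref{lemesti}) we may assume $a_k\to a_\infty\in\overline{\mathcal U}$. By Lemma \ref{lemboundaryH}, $\partial\mathcal U\subset\partial\mathcal C_1$, while $[-\sqrt3,\sqrt3]\setminus\{0\}$ is interior to $\mathcal C_1$, so $a_\infty\notin[-\sqrt3,\sqrt3]\setminus\{0\}$; granting $a_\infty\ne 0$ (see below), the Fatou coordinate of $f_{a_\infty}$ is available near $a_\infty$. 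The inverse conjugacies $h_{a_k}^{-1}\colon B\to B^*_{a_k}(0)$ have images in a fixed disk, so a subsequence converges locally uniformly to some $H$ with $H\circ P_{1/4}=f_{a_\infty}\circ H$; on the maximal petal $\Omega^0$ one has $H=\phi_{a_\infty}^{-1}\circ\phi$, so $H$ is nonconstant, hence injective, and $H(B)\subset B^*_{a_\infty}(0)$ since $B^*_{a_\infty}(0)$ is a Fatou component. Since $h_{a_k}(c_+(a_k))=0$ with $0$ the critical point of $P_{1/4}$, which lies in $B$, we get $c_+(a_\infty)=\lim_k c_+(a_k)=\lim_k h_{a_k}^{-1}(0)=H(0)\in B^*_{a_\infty}(0)$, and for $w_\infty:=\lim_k w_k\in K$, $f_{a_\infty}^{n+1}(c_-(a_\infty))=\lim_k h_{a_k}^{-1}(w_k)=H(w_\infty)\in B^*_{a_\infty}(0)$ by continuity of $c_-$ at $a_\infty$. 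Thus both critical points of $f_{a_\infty}$ lie in $B_{a_\infty}(0)$, the captured one reaching $B^*_{a_\infty}(0)$ in at most $n+1$ steps; as the $\mathcal H_m$, $m<n$, are open and disjoint from $\mathcal H_n\ni a_k$, $a_\infty$ lies in none of them, so $a_\infty\in\mathcal H_n$, and $\overline{\mathcal U}\cap\mathcal H_n=\mathcal U$ forces $a_\infty\in\mathcal U$. Hence $\Phi_{\mathcal U}$ is proper, so surjective onto $B$ and a branched covering of some degree $d\ge 1$.

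Injectivity is where the real work is, and I would model it on Lemma \ref{lem.rigid}. If $\Phi_{\mathcal U}(a_1)=\Phi_{\mathcal U}(a_2)$ with $a_1,a_2\in\mathcal U$, then $\varphi:=h_{a_2}^{-1}\circ h_{a_1}$ is a conformal conjugacy $B^*_{a_1}(0)\to B^*_{a_2}(0)$ carrying $c_+(a_1)$ and $f_{a_1}^{n+1}(c_-(a_1))$ to their $a_2$-counterparts. Since $f_{a_i}$ is geometrically finite, $\partial B^*_{a_i}(0)$ is a Jordan curve (Tan--Yin \cite{TaYi}), so $\varphi$ extends continuously there; pulling $\varphi$ back through $f_{a_1},f_{a_2}$ over the successive preimages of the immediate basin — matching components by their attaching points, the unique ramified pull-back at the component carrying $c_-(a_1)$ being compatible by the orbit matching — extends it by Morera to a conformal conjugacy $B_{a_1}(0)\to B_{a_2}(0)$. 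On the complement, $(\phi^\infty_{a_2})^{-1}\circ\phi^\infty_{a_1}$ is a conformal conjugacy $\mathbb C\setminus\mathcal K_{a_1}\to\mathbb C\setminus\mathcal K_{a_2}$; extending its holomorphic motion by Slodkowski and gluing with $\varphi$ across $J_{a_1}$, one obtains a homeomorphism of $\widehat{\mathbb C}$ conjugating $f_{a_1}$ to $f_{a_2}$, conformal off $J_{a_1}$, hence (Rickman's lemma, plus zero area of $J_{a_1}$ for a geometrically finite map, cf.\ \cite{DH}) a Möbius map, which the normalisations ($\Psi(0)=0$, $\Psi(z)=z+O(1)$ at $\infty$) force to be the identity, so $a_1=a_2$ and $d=1$. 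The hard part will be precisely this last gluing: proving that $\varphi$ and the Böttcher conjugacy extend continuously to $J_{a_1}$ and agree there. Unlike in Lemma \ref{lem.rigid}, where the interface is the single Jordan curve $\partial B^*$, here the captured critical point forces one to work along the whole Julia set, and the agreement on $J_{a_1}$ has to be extracted from the combinatorial rigidity of $f_{a_1}$ — local connectivity of $\mathcal K_{a_1}$, the Jordan-curve structure of the boundaries of the Fatou components making up $B_{a_1}(0)$, and the circle-map uniqueness lemma of \cite{Tanlei} propagated through the dynamics. Two subsidiary points remain: excluding $a_\infty=0$ in the properness step — i.e.\ that a capture component does not accumulate at the degenerate parameter $0$, where both critical points sit deep inside the two petals, which should follow from the local structure of $\mathcal H$ near $0$ or a posteriori from the global description of $\mathcal C_1$ — and, as an alternative route to $d=1$, counting the centres of $\mathcal U$ (the solutions of $f_a^{n+1}(c_-(a))=c_+(a)$, which are exactly $\Phi_{\mathcal U}^{-1}(0)$), which however needs a rigidity input of the same depth.
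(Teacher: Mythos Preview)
The paper does not give its own proof of this proposition: it is quoted from Nakane \cite{Nakane}, the paper's contribution being only to supply the missing hypothesis that $c_+(a)$ always lies on $\partial\Omega_a$ (Corollary \ref{cor.setI}). So there is no argument in the paper to compare yours against directly; what can be said is how your outline fits with the surrounding machinery.

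Your three-step scheme (holomorphic, proper, injective) is the right one, and your properness argument is essentially the same as the paper's Lemma \ref{lem.proper}, transported from $\mathcal H_0$ to $\mathcal H_n$: take a normal limit of $h_{a_k}^{-1}$, show it is nonconstant via the Fatou coordinate and lands in $B^*_{a_\infty}(0)$ by the repelling-periodic-point trick, and derive a contradiction with $a_\infty\in\partial\mathcal C_1$. The gap you flag --- excluding $a_\infty=0$ --- is genuine and is \emph{not} resolvable by local considerations alone. The paper handles it only later, in Proposition \ref{propequiHn}, by an argument using parameter external rays and the wake $\mathcal W(0)$, entirely independent of the present proposition; so you may either forward-reference that, or reproduce the core of that argument (stability of the landing point of $R^\infty_a(t)$ at the fixed point $-a$ versus at $0$ forces a capture component in $\mathcal W(0)$ away from $0$, while outside $\mathcal W(0)$ the nested sets $V_n$ of Proposition \ref{propequiH0} squeeze the component away from $0$).

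For injectivity, your rigidity scheme works but the gluing is slightly less dramatic than you fear. Since $a_1,a_2$ lie in the same component $\mathcal U$, the holomorphic motion $\psi_a=(\phi^\infty_a)^{-1}\circ\phi^\infty_{a_1}$ already conjugates $f_{a_1}$ to $f_{a_2}$ on all of $J_{a_1}$ (not just on $\partial B^*_{a_1}(0)$). On $\partial B^*_{a_1}(0)$ the circle-map uniqueness lemma forces $\psi_{a_2}=\varphi$; glue as in Lemma \ref{lem.rigid} to get a quasiconformal conjugacy $\Psi$ on $\mathbb C$, conformal on $B^*_{a_1}(0)$ and on $\mathbb C\setminus K_{a_1}$. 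On each further component $W$ of $B_{a_1}(0)$, replace $\Psi|_W$ by the lift $f_{a_2}^{-1}\circ\Psi\circ f_{a_1}$, choosing the branch so as to agree with $\psi_{a_2}$ on $\partial W\subset J_{a_1}$; this is conformal and, by Rickman again, patches in. The one step needing the hypothesis $\Phi_{\mathcal U}(a_1)=\Phi_{\mathcal U}(a_2)$ is the lift over the component containing $c_-(a_1)$: there the branched double cover lifts precisely because $\Psi(v_-(a_1))=v_-(a_2)$, which you get by pushing forward the equality $\varphi(f_{a_1}^{n+1}(c_-(a_1)))=f_{a_2}^{n+1}(c_-(a_2))$ through the already-defined conjugacy on the intermediate components. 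After finitely many such replacements, $\Psi$ is conformal off $J_{a_1}$, hence affine, hence the identity.
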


\begin{corollary}
There are only finitely many components of $\mathcal{H}_n$ ($n\geq 1$).
\end{corollary}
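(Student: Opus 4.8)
The plan is to attach to each component $\mathcal{U}$ of $\mathcal{H}_n$ a single distinguished parameter $a_{\mathcal{U}}$ which is a root of one fixed, non-trivial algebraic equation depending only on $n$; since distinct components are disjoint open sets they carry distinct distinguished parameters, and an algebraic equation has only finitely many roots.

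Fix $n\geq 1$ and a component $\mathcal{U}$ of $\mathcal{H}_n$. By Proposition \ref{prop.para.Hn} the map $\Phi_{\mathcal{U}}:\mathcal{U}\to B$ is a homeomorphism, so there is a unique $a_{\mathcal{U}}\in\mathcal{U}$ with $\Phi_{\mathcal{U}}(a_{\mathcal{U}})=0$, where $0$ denotes the critical point of $P_{1/4}$ (which lies in $B$). I claim $a_{\mathcal{U}}$ satisfies the relation $f_{a_{\mathcal{U}}}^{\,n+1}(c_-(a_{\mathcal{U}}))=c_+(a_{\mathcal{U}})$. The key point is that the extended Fatou semiconjugacy $h_a:B^*_a(0)\to B$ is genuinely \emph{injective} on each petal $\Omega^m_a$, not merely a branched cover. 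Indeed, for $a\in\mathcal{U}$ the free critical point $c_-(a)$ lies outside $B^*_a(0)$, and so does the free critical value $v_-(a)$; hence the only critical value of $f_a$ meeting $B^*_a(0)$ is $v_+(a)$, and each $f_a:\Omega^m_a\to\Omega^{m-1}_a$ is a degree-two branched covering ramified only over $v_+(a)$, matching $P_{1/4}:\Omega^m\to\Omega^{m-1}$; since $h_a$ is conformal on $\Omega^0_a$ (it is $\phi^{-1}\circ\phi_a$) and sends $c_+(a)$ to $0$, an induction on $m$ using diagram (\ref{commu1}) shows $h_a:\Omega^m_a\to\Omega^m$ is a conformal isomorphism for every $m$. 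Now choose $m$ large enough that $\Omega^m_a$ contains $f_a^{\,n+1}(c_-(a))\in B^*_a(0)=\bigcup_k\Omega^k_a$; since also $c_+(a)\in\Omega^m_a$ and $0\in\Omega^m$, injectivity of $h_a$ on $\Omega^m_a$ together with $h_a(c_+(a))=0$ gives $h_a^{-1}(0)\cap\Omega^m_a=\{c_+(a)\}$. Evaluating at $a=a_{\mathcal{U}}$, where $h_{a_{\mathcal{U}}}(f_{a_{\mathcal{U}}}^{\,n+1}(c_-(a_{\mathcal{U}})))=\Phi_{\mathcal{U}}(a_{\mathcal{U}})=0$, yields the claimed identity.

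It remains to check that the equation $f_a^{\,n+1}(c_-(a))=c_+(a)$ is non-trivial. After turning it into a genuine polynomial identity in $a$ (take the product of $f_a^{\,n+1}(c)-c'$ over the pairs $c,c'$ of roots of $3c^2+2ac+1$, equivalently a resultant), note by the asymptotics (\ref{eq.asymp}) that $v_-(a)\sim\tfrac{4a^3}{27}$ while $c_+(a)$ remains bounded as $a\to\infty$, so $f_a^{\,n+1}(c_-(a))$ grows like a positive power of $a$ whereas $c_+(a)$ does not; the identity is therefore not satisfied identically. Hence for each fixed $n$ there are finitely many solutions $a$, and since the components of $\mathcal{H}_n$ are pairwise disjoint and each contributes its own solution $a_{\mathcal{U}}$, there are only finitely many of them.

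The main obstacle is precisely the middle step, the identification $f_{a_{\mathcal{U}}}^{\,n+1}(c_-(a_{\mathcal{U}}))=c_+(a_{\mathcal{U}})$: one must know that lifting $h_a$ through (\ref{commu1}) introduces no extra ramification, which rests on the observation that $v_+(a)$ is the unique critical value of $f_a$ visiting $B^*_a(0)$ when $a\in\mathcal{H}_n$. The remaining ingredients — that a homeomorphism onto $B$ has a (unique) point over $0$, that an algebraic equation has finitely many roots, and that disjoint open sets give distinct parameters — are routine.
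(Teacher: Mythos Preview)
Your argument is correct and follows exactly the route the paper takes: pick in each component the unique parameter $a_{\mathcal{U}}=\Phi_{\mathcal{U}}^{-1}(0)$, identify it as a solution of $f_a^{\,n+1}(c_-(a))=c_+(a)$, and observe this equation is non-degenerate. The paper's proof is two lines and leaves both the identification $\Phi_{\mathcal{U}}(a)=0\Rightarrow f_a^{\,n+1}(c_-(a))=c_+(a)$ and the non-degeneracy unstated; you have supplied precisely those missing details (injectivity of $h_a$ on each $\Omega^m_a$ because $v_-(a)\notin B^*_a(0)$ for $n\geq1$, and the asymptotics $(\ref{eq.asymp})$ to rule out the equation holding identically), so there is nothing to correct.
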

\begin{proof}
From the above proposition we see that for every component of $\mathcal{H}_n$ there is a unique parameter $a_0$ such that $\Phi_{\mathcal{U}}(a_0) = 0$. This leads to $f^{n+1}_{a_0}(c_-(a_0)) = c_+(a_0)$, which is a non -degenerated algebraic equation, hence only finitely many solutions.
\end{proof}
However there he does not justify why the critical point $c_+(a)$ should always appear on $\partial\Omega_a$. Here we first answer positively to this question by Corollary \ref{cor.setI} and then our job reduces to parametrize $\mathcal{H}_0$, which will be done in Proposition \ref{PROP_para0}.

\begin{lemma}\label{lem.proper}
Let $\Tilde{\Omega} = P_{\frac{1}{4}}(\Omega^0)$. Let $\mathcal{U}_0$ be a connected component of $\mathcal{H}_0$. Consider the restriction $\Phi_{\pm}:\mathcal{U}_0\cap\mathcal{A}_{\pm}\longrightarrow B$.
\begin{itemize}
    \item If $\mathcal{U}_0\cap\mathcal{I}\not = \emptyset$, then $\Phi_{\pm}:W\longrightarrow \Tilde{\Omega}\text{ or }B\setminus\overline{\Tilde{\Omega}}$ is proper, where  $W$ is a connected component of $\Phi^{-1}_{\pm}(\Tilde{\Omega})$ or of $\Phi^{-1}_{\pm}(B\setminus\overline{\Tilde{\Omega}})$.
    \item If $\mathcal{U}_0\cap\mathcal{I} = \emptyset$, then $\Phi_{\pm}:\mathcal{U}_0\cap\mathcal{A}_{\pm}(=\mathcal{U}_0)\longrightarrow B$ is proper.
\end{itemize}
\end{lemma}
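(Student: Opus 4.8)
The plan is to verify properness through sequences: since $\mathcal{U}_0\subset\mathcal{C}_1$ is bounded by Lemma \ref{lemesti}, a continuous map from an open subset $X$ of $\mathcal{U}_0\cap\mathcal{A}_\pm$ to an open set $T\subset B$ is proper onto $T$ as soon as, whenever $a_n\in X$ tends to some $a_\infty\in\partial X$, the images $\Phi_\pm(a_n)$ have no accumulation point in $T$. So I would fix such a sequence, pass to a subsequence with $\Phi_\pm(a_n)\to w_\infty\in\overline{B}\subset\hat{\mathbb{C}}$, and show $w_\infty\notin T$. Using $\partial\mathcal{A}_\pm\subset\mathcal{I}\cup\{0\}$, the fact that $\mathcal{U}_0$ is a connected component of the open set $\mathcal{H}$ (hence relatively closed in $\mathcal{H}$), and $\partial\mathcal{U}_0\subset\partial\mathcal{C}_1$ (Lemma \ref{lemboundaryH}), the limit $a_\infty$ falls into exactly one of: (i) $a_\infty\in\mathcal{U}_0\cap\mathcal{A}_\pm$ with $\Phi_\pm(a_\infty)\in\partial_B\tilde{\Omega}$; (ii) $a_\infty\in\mathcal{I}\cap\mathcal{U}_0$; (iii) $a_\infty\in\partial\mathcal{U}_0\setminus\{0\}$; (iv) $a_\infty=0$. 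Case (i) can occur only for the $W$-type domains: there $\Phi_\pm$ is holomorphic near $a_\infty$, so $w_\infty=\Phi_\pm(a_\infty)$, and since $\Phi_\pm^{-1}(T)$ is open and $a_\infty=\lim a_n$ with $a_n$ in the single component $W$, having $\Phi_\pm(a_\infty)\in T$ would force $a_\infty\in W$; hence $w_\infty$ lies on the relative boundary $\partial_B\tilde{\Omega}$ and is not in $T$.

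For case (ii) (which does not arise when $\mathcal{U}_0\cap\mathcal{I}=\emptyset$), I would use that $a_\infty$ is a non-degenerate parabolic parameter, so by holomorphic dependence of the Fatou coordinate the maximal petal $\Omega^0_a$ and its inverse Fatou coordinate converge, as $a\to a_\infty$, to those of $f_{a_\infty}$ (Carath\'eodory convergence). Since $a_\infty\in\mathcal{I}$, the free critical point $c_\mp(a_\infty)$ lies on $\partial\Omega^0_{a_\infty}$, hence has purely imaginary Fatou coordinate; the relation $\phi_{a_\infty}\circ f_{a_\infty}=\phi_{a_\infty}+1$ then puts $v_\mp(a_\infty)$ inside $\Omega^0_{a_\infty}$ with $\operatorname{Re}\phi_{a_\infty}(v_\mp(a_\infty))=1$. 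Consequently $v_\mp(a_n)\in\Omega^0_{a_n}$ for $n$ large, so $n_{a_n}=0$ and $\Phi_\pm(a_n)=\phi^{-1}(\phi_{a_n}(v_\mp(a_n)))\to\phi^{-1}(\phi_{a_\infty}(v_\mp(a_\infty)))$, a point whose $\phi$-coordinate has real part $1$, i.e. a point of $\partial_B\tilde{\Omega}$; again $w_\infty\notin T$.

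For case (iii) I would argue by contradiction, assuming $w_\infty\in B$. Then $w_\infty\in\Omega^{m}$ for some $m$, and since $\Phi_\pm(a_n)\in\Omega^{n_{a_n}}\setminus\overline{\Omega^{n_{a_n}-1}}$ the depths $n_{a_n}$ must stay bounded; after extracting, $n_{a_n}\equiv N$. Choosing $M\ge N$ so large that $\zeta:=\phi(P_{\frac14}^{M}(w_\infty))$ is deep inside $\mathbb{H}$, the equivariance of $h_{a_n}$ gives $\phi_{a_n}(f_{a_n}^{M}(v_\mp(a_n)))=\phi(P_{\frac14}^{M}(\Phi_\pm(a_n)))\to\zeta$, so by convergence of the inverse Fatou coordinates $f_{a_\infty}^{M}(v_\mp(a_\infty))=\phi_{a_\infty}^{-1}(\zeta)\in\Omega^0_{a_\infty}$, whence $c_\mp(a_\infty)\in B_{a_\infty}(0)$. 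Likewise $v_\pm(a_n)=\phi_{a_n}^{-1}(1)\to\phi_{a_\infty}^{-1}(1)\in\Omega^0_{a_\infty}$, so $c_\pm(a_\infty)\in B_{a_\infty}(0)$. Thus both critical points of $f_{a_\infty}$ lie in $B_{a_\infty}(0)$, i.e. $a_\infty\in\mathcal{H}$; being in $\overline{\mathcal{U}_0}\cap\mathcal{H}=\mathcal{U}_0$ it lies in $\mathcal{U}_0$, contradicting $a_\infty\in\partial\mathcal{U}_0$. Hence $w_\infty\notin B\supset T$.

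The hard part will be case (iv), $a_\infty=0$, where $f_0(z)=z+z^{3}$ has a \emph{degenerate} parabolic fixed point with two attracting petals $P_+,P_-$ (in the directions $\pm i$, with $\overline{P_+}\cap\overline{P_-}=\{0\}$) and the holomorphic dependence of the Fatou coordinate breaks down ($\omega(a)=a\to0$). Assuming again $w_\infty\in B$, exactly as in case (iii) one reduces to $n_{a_n}\equiv N$ and to $\phi_{a_n}(f_{a_n}^{M}(v_\mp(a_n)))\to\zeta$ deep in $\mathbb{H}$. Here the key input is that the \emph{incoming} Fatou coordinates persist in the limit of the parabolic implosion: after extracting, $\phi_{a_n}^{-1}$ converges locally uniformly on $\mathbb{H}$ to a holomorphic $\psi:\mathbb{H}\to\mathbb{C}$ with $\psi(s+1)=f_0(\psi(s))$ and $\psi(1)=\lim_n v_\pm(a_n)=f_0(\lim_n c_\pm(a_n))\ne0$; since $c_\pm(a_n)$ tends to one of the two critical points $\pm i/\sqrt{3}$ of $f_0$, the map $\psi$ parametrizes the petal of $f_0$ containing that critical point, say $P_+$, so $f_{a_n}^{M}(v_\mp(a_n))=\phi_{a_n}^{-1}(\zeta_n)\to\psi(\zeta)\in P_+$. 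But $v_\mp(a_n)$ tends to $f_0(\mp i/\sqrt{3})$, a point of the \emph{other} petal $P_-$, and forward invariance of $P_-$ gives $\lim_n f_{a_n}^{M}(v_\mp(a_n))\in P_-$; so this point lies in $P_+\cap P_-=\emptyset$, a contradiction, and therefore $w_\infty\notin B\supset T$. I expect this last case to be the main obstacle: it requires justifying the convergence of the incoming Fatou coordinates across the implosion at $a=0$ (extractable from Shishikura's work, or from a normality argument anchored by $\phi_{a_n}^{-1}(1)=v_\pm(a_n)\to f_0(\pm i/\sqrt{3})\ne0$) together with the description of the immediate basin of $f_0$ as the disjoint union $P_+\sqcup P_-$; the remaining cases are soft consequences of Lemma \ref{lemboundaryH} and the holomorphic dependence already established.
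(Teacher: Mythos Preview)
Your argument is correct and follows the same sequential case-analysis as the paper: reduce to $a_n\to a_0\in\partial W$ and split according to whether $a_0\in\mathcal{H}_0\cap\mathcal{A}_\pm$, $a_0\in\mathcal{I}$, or $a_0\in\partial\mathcal{H}_0$ (including $a_0=0$). The one organizational difference is that the paper treats your cases (iii) and (iv) uniformly: rather than invoking the holomorphic dependence of $\phi_a$ directly in case (iii), it runs a single normality argument on the family $h_{a_n}^{-1}:\Omega^k\to\mathbb{C}$ (equivalently on $\phi_{a_n}^{-1}$), extracts a limit $\psi$, and then shows $\psi(\Omega^k)\cap J_{a_0}=\emptyset$ by noting that otherwise the open set $\psi(\Omega^k)$ would contain a repelling periodic point $x_0$, whose holomorphic continuation $x(a_n)$ would lie in $h_{a_n}^{-1}(\Omega^k)\subset B^*_{a_n}(0)$---impossible. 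This repelling-point step is exactly what you leave implicit in case (iv) when you assert that $\psi$ ``parametrizes the petal $P_+$'': once $\psi(\mathbb{H})$ is known to avoid $J_0$, connectedness together with $\psi(1)\in P_+$ forces $\psi(\mathbb{H})\subset P_+$, and your contradiction with $\lim f_{a_n}^{M}(v_\mp(a_n))\in P_-$ goes through. So the two proofs are essentially the same; the paper's version is slightly more economical in that the repelling-point trick handles $a_0\ne 0$ and $a_0=0$ simultaneously, whereas your direct Fatou-coordinate argument in case (iii) is more elementary but does not carry over to the degenerate parameter.
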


\begin{proof}
We only prove the case when $W$ is a component of $\Phi_+(B\setminus\overline{\Tilde{\Omega}})$ and the other cases are similar. Let $(a_n)\subset W$ be a sequence tending to some $a_0\in\partial W$. Since $W\subset \mathcal{H}_0\cap\mathcal{A}_+$, either $a_0\in \mathcal{H}_0\cap\mathcal{A}_+$ or $a_0\in \partial(\mathcal{H}_0\cap\mathcal{A}_+)$. If $a_0\in \mathcal{H}_0\cap\mathcal{A}_+$, then $\Phi_{+}(a_n)$ tends to $\partial\Tilde{\Omega}$ by continuity of $\Phi_{+}$. So let $a_0\in \partial(\mathcal{H}_0\cap\mathcal{A}_+)\subset(\partial\mathcal{H}_0\cup\partial\mathcal{A}_+) = \partial \mathcal{H}_0\cup\mathcal{I}$. If $a_0\in \mathcal{I}$, then $\Phi_{+}(a_n)$ tends to $\partial\Tilde{\Omega}$ by continuity of Fatou coordinate. 

So suppose $a_0\in\partial \mathcal{H}_0$. We want to prove that $\Phi_+(a_n)$ converges to $\partial B$. Suppose the contrary: up to taking a subsequence, $\Phi_+(a_n)$ converges to some $b\in B$. Then there exists some $k\geq 0$ not depending on $n$ such that $b\in \Omega^k$ and $h_{a_n}$ is invertible on $\Omega^k_{a_n}$. Consider the normal family $h^{-1}_{a_n}: \Omega^k\longrightarrow \Omega^{k}_{a_n}$. Up to taking a subsequence we may suppose that it converges uniformly on compact sets to some $\psi:\Omega^k\longrightarrow \mathbb{C}$. We claim that $\psi(\Omega^k)$ will not intersect the Julia set $J_{a_0}$ of $f_{a_0}$. Indeed, if $\psi(\Omega^k)\cap J_{a_0}\not = \emptyset$, then their intersection contains at least a repelling periodic point $x_0$ of $f_{a_0}$. Then near $a_0$ there exists a holomorphic motion of $x_0$, therefore for $a$ close enough to $a_0$, $h^{-1}_{a_n}(\Omega^k)$ contains a repelling periodic point, a contradiction. Hence if $a_0 \not = 0$, then $\psi(\Omega^k)\subset B_{a_0}^*(0)$ since $f_{a_n}(c_+(a_n)) = h^{-1}_{a_n}(\frac{1}{4}) \to \psi(\frac{1}{4})$, but 
\[\psi(b) = \lim_{n\to \infty}h^{-1}_{a_n}(b) = \lim_{n\to \infty}f_{a_n}(c_-(a_n)) =  v_-(a_0)\in B_{a_0}^*(0).\]
A contradiction since $a_0\not\in \mathcal{H}_0$. A similar argument as above works for $a_0 = 0$, The only difference is that we replace $B_{a_0}^*(0)$ by $B_{+}^*(0)$ or $B_{-}^*(0)$, where $B_{\pm}^*(0)$ is the immediate basin of $f_0$ contained in the upper-half (resp.lower-half) plane.
\end{proof}

\begin{corollary}\label{cor.setI}
$\mathcal{H}_0$ has exactly two connected components $\mathcal{U}_0,-\mathcal{U}_0$ symmetric with respect to $y-$axis, containing $(0,\sqrt{3}],[-\sqrt{3},0)$ respectively. In particular, $\mathcal{A}_- = \emptyset$, $\mathcal{I} = [-\sqrt{3},\sqrt{3}]\setminus\{0\}$. 
\end{corollary}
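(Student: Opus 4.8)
The plan is to combine the parametrization map $\Phi_+$ of Lemma \ref{lem.proper} with the structure of $\mathcal{I}$ obtained in the previous subsection. First I would record the known anchor data: the segment $(0,\sqrt 3]$ lies in $\mathcal{H}_0\cap\mathcal{A}_+$ (with $\sqrt 3$ on $\mathcal{I}$ where the two critical points collide), and by the symmetries $-f_a(-z)=f_{-a}(z)$, $\overline{f_a(\overline z)}=f_{\overline a}(z)$ the picture for $[-\sqrt 3,0)$ is the mirror image. Let $\mathcal{U}_0$ be the component of $\mathcal{H}_0$ containing $(0,\sqrt 3]$. The first real step is to show $\mathcal{U}_0\cap\mathcal{I}=(0,\sqrt 3]$: this is exactly the dichotomy in the Corollary following Lemma 2.3.?, since $\mathcal{U}_0$ meets $[-\sqrt 3,\sqrt 3]\setminus\{0\}$, so $\mathcal{I}\cap\mathcal{U}_0$ is the arc $(0,\sqrt 3]$ rather than a closed curve through $0$.

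Next I would analyze $\Phi_+$ on $\mathcal{U}_0\cap\mathcal{A}_+$ using the properness statement of Lemma \ref{lem.proper}, first bullet. Since $\mathcal{U}_0\cap\mathcal{I}=(0,\sqrt 3]\neq\emptyset$, for each component $W$ of $\Phi_+^{-1}(\widetilde\Omega)$ or of $\Phi_+^{-1}(B\setminus\overline{\widetilde\Omega})$ the restriction $\Phi_+\colon W\to\widetilde\Omega$ (resp. $B\setminus\overline{\widetilde\Omega}$) is proper, hence a branched covering; computing its degree by counting preimages of a generic point (using that $v_-(a)=c_+(a)$ has, globally, the four solutions $\pm s_0,\pm\overline{s_0}$, of which exactly one lies in the relevant component by the symmetry bookkeeping) pins down $\Phi_+$ as a conformal isomorphism onto its image. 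The arc $\mathcal{I}\cap\mathcal{U}_0=(0,\sqrt3]$ is precisely $\Phi_+^{-1}(\partial\widetilde\Omega\setminus\{\text{cusp}\})$ together with the endpoint $\sqrt3\mapsto$ cusp of $\widetilde\Omega$; so $\Phi_+$ glues the $\widetilde\Omega$-part and the $(B\setminus\overline{\widetilde\Omega})$-part along this arc into a homeomorphism $\mathcal{U}_0\cap\mathcal{A}_+\xrightarrow{\ \sim\ }B$ — and since $\partial\mathcal{A}_+\subset\mathcal{I}\cup\{0\}$ and the boundary behavior forces $\mathcal{U}_0\setminus\mathcal{A}_+\subset\overline{\mathcal{I}}$, one gets $\mathcal{U}_0\cap\mathcal{A}_+=\mathcal{U}_0$, i.e. $\mathcal{U}_0\subset\mathcal{A}_+$.

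It then remains to rule out any further components of $\mathcal{H}_0$ and to rule out $\mathcal{A}_-$. For the former: any component $\mathcal{U}_0'$ of $\mathcal{H}_0$ contains a point of $\mathcal{I}$ (the equation $v_-(a)=c_+(a)$ has a solution forcing $\Phi$-preimage of a basepoint) — more carefully, every component of $\mathcal{H}_0$ carries a well-defined $\Phi_+$ (on its $\mathcal{A}_+$-part) or $\Phi_-$ (on its $\mathcal{A}_-$-part), and each is proper onto $B$ by the second bullet of Lemma \ref{lem.proper} when it misses $\mathcal{I}$, hence a homeomorphism, giving a unique center $a_0$ with $v_-(a_0)=c_+(a_0)$ or $v_+(a_0)=c_-(a_0)$; but all such centers are among $\pm s_0,\pm\overline{s_0}$, which the symmetries identify with the single orbit already accounted for by $\mathcal{U}_0$ and $-\mathcal{U}_0$. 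Finally $\mathcal{A}_-=\emptyset$: if $a\in\mathcal{A}_-$ then $c_-(a)\in\partial\Omega_a$, so $c_-(a)$ does not escape and in fact lies in $\overline{B^*_a(0)}$; running the $\Phi_-$ parametrization on its component and matching centers again lands on the same four points, each of which we have shown lies in $\mathcal{A}_+$, not $\mathcal{A}_-$ — contradiction. Hence $\mathcal{H}_0=\mathcal{U}_0\sqcup(-\mathcal{U}_0)$ and $\mathcal{I}=[-\sqrt3,\sqrt3]\setminus\{0\}$.

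I expect the main obstacle to be the degree/gluing count for $\Phi_+$: showing that the proper maps onto $\widetilde\Omega$ and onto $B\setminus\overline{\widetilde\Omega}$ each have degree one and that they glue continuously across the arc $(0,\sqrt3]$ into a single homeomorphism onto $B$, so that no "extra sheet" (which would correspond to an extra component of $\mathcal{H}_0$ or to nonempty $\mathcal{A}_-$) can hide. This is where one must carefully use the exact solution count of $v_-(a)=c_+(a)$ together with the $x$- and $y$-axis symmetries to see that the four algebraic solutions collapse to exactly the two expected component-centers.
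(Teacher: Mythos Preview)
Your approach diverges from the paper's and has a genuine gap. You count preimages of $0\in B$ under $\Phi_\pm$, obtaining the four ``centers'' $\pm s_0,\pm\overline{s_0}$, and then argue that any extra component of $\mathcal H_0$ would need its own center, contradicting the count. But the step ``the symmetries identify [them] with the single orbit already accounted for by $\mathcal U_0$ and $-\mathcal U_0$'' is circular: the symmetry group only tells you that the four points form one orbit under $a\mapsto -a,\overline a$; it does \emph{not} tell you that $s_0\in\mathcal U_0$. A priori $s_0$ could sit in a hypothetical third component, and then your center count rules out nothing. The same circularity undermines your degree computation for $\Phi_+|_W$ (you need to know which of $s_0,\overline{s_0}$ lie in which $W$, and since $\mathcal U_0$ is $\tau$-invariant both could land there). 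You also never treat the case where an extra component $\mathcal U_0'$ \emph{does} meet $\mathcal I$---by the previous corollary this intersection, together with $0$, would be a simple closed curve, and your properness bullet (the second one in Lemma~\ref{lem.proper}) no longer applies.

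The paper sidesteps all of this by counting preimages of $\tfrac14$ rather than of $0$. The key observation is that $\Phi_\pm(a)=\tfrac14$ forces $v_-(a)=v_+(a)$, hence $c_-(a)=c_+(a)$ (a cubic cannot have two distinct critical points sharing a critical value), so $a=\pm\sqrt3$. These two points lie on the real segment, and their location \emph{is} known in advance. Thus: if a putative extra component $\mathcal U_0'$ misses $\mathcal I$, the second bullet of Lemma~\ref{lem.proper} makes $\Phi_\pm$ proper onto $B$, forcing a preimage of $\tfrac14$, hence $\pm\sqrt3\in\mathcal U_0'$---contradiction. If instead $\mathcal U_0'\cap\mathcal I\neq\emptyset$, that intersection union $\{0\}$ bounds a region $W$; the first bullet of Lemma~\ref{lem.proper} makes $\Phi_\pm\colon W\to\widetilde\Omega$ proper and extendable to $\partial W$, but no boundary point can map to $\tfrac14$ since $\pm\sqrt3\notin\partial W$. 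Replacing $0$ by $\tfrac14$ is exactly the missing idea that makes the argument close.
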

\begin{proof}
Suppose there is a component $\mathcal{U}'_0$ not intersecting $[-\sqrt{3},\sqrt{3}]\setminus\{0\}$. If $\mathcal{I}\cap\mathcal{U}'_0= \emptyset$, then by the second point of Lemma \ref{lem.proper}, there exists $a_0\in\mathcal{U}'_0$ sent to $\frac{1}{4}$ by $\Phi_+$ or $\Phi_-$, which implies that $a_0 = \pm\sqrt{3}$, a contradiction. Hence $\mathcal{I}\cap\mathcal{U}'_0\not= \emptyset$ and by Corollary \ref{cor.setI} and Lemma \ref{lemboundaryH}, $(\mathcal{I}\cap\mathcal{U}'_0)\cup\{0\}$ bounds a connected component $W$ (which is simply connected) of $\mathcal{A}_{+}$ or of $\mathcal{A}_{-}$. By the first point of Lemma \ref{lem.proper}, $\Phi_+$ or $\Phi_-$ induces a proper map from $W$ to $\Tilde{\Omega}$ and hence can be extended to $\partial W$. Therefore $0$ has to be sent to $\frac{1}{2}$ (which is the parabolic fixed point of $z^2+\frac{1}{4}$) by the extended map of $\Phi_+$ or $\Phi_-$. However there will not be any point on $\partial W$ that is sent to $\frac{1}{4}$ since if $\Phi_{\pm}(a) = \frac{1}{4}$, then the two critical points of $f_a$ coincide, i.e. $a = \pm\sqrt{3}$.
\end{proof}

\paragraph{Notation.} By the above corollary, $c_+(a)$ is always on the maximal petal $\Omega_a$ so there is no need to define $\Phi_-$ and for simplicity we will replace $\Phi_+$ by $\Phi$. In the sequel, we may write $\mathcal{H}_0 = \mathcal{U}_0\cup(-\mathcal{U}_0)$, where $\mathcal{U}_0$ is the connected component containing $(0,\sqrt{3}]$.

\begin{definition}\label{def.Misiure}
A parameter $a\in\mathcal{C}_1\setminus[-\sqrt{3},\sqrt{3}]$ is of \textbf{Misiurewicz} type if $v_-(a)$ is a repelling (pre-)periodic point; $a\in\mathcal{C}_1\setminus[-\sqrt{3},\sqrt{3}]$ is is of \textbf{Misiurewicz parabolic} type if $v_-(a)$ is in the inverse orbit of 0.
\end{definition}

Now we give a more specific description of $\Phi:\mathcal{U}_0\setminus\mathcal{I}\longrightarrow B$ (see Figure \ref{fig.main.parabolic}):
\begin{proposition}\label{PROP_para0}
$\Phi^{-1}(\Tilde{\Omega})\cap\mathcal{U}_0$ has only one connected component $W$ symmetric with respect to the real axis and containing the open interval $(\sqrt{3},2)$. Its boundary $\partial W = \gamma\cup\tau(\gamma)$ ($\tau(z) = \overline{z}$) where $\gamma \subset \overline{S}\cap\mathcal{U}_0$ is a simple arc joining $\sqrt{3}$ and $2$. $\Tilde{\mathcal{U}}_0 := \mathcal{U}_0\setminus(\mathcal{I}\cup \overline{W})$ has two connected components symmetric with respect to the real axis. Moreover, $\Phi|_{\Tilde{\mathcal{U}}_0\cap S}$ and $\Phi|_W$ are homeomorphisms mapping onto $B\setminus\overline{\Tilde{\Omega}}$ and $\Tilde{\Omega}$ respectively.
\end{proposition}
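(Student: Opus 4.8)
The plan is to exploit the properness statement of Lemma \ref{lem.proper} together with degree-counting and the already-established structure of $\mathcal{I}\cap\mathcal{U}_0 = (0,\sqrt{3}]$ (Corollary \ref{cor.setI}). First I would analyze the preimage $\Phi^{-1}(\Tilde{\Omega})\cap\mathcal{U}_0$: by the first point of Lemma \ref{lem.proper}, each connected component $W'$ of it maps properly onto $\Tilde{\Omega}$, hence is a branched cover of the disk $\Tilde{\Omega}$, so $W'$ is a topological disk (the Riemann--Hurwitz count forces the cover to be of some degree $d\geq 1$ with $d-1$ branch points, and the only possible branching is where $v_-(a)$ crosses a critical point, i.e. $a = \sqrt{3}$ where $v_-(\sqrt3)=c_+(\sqrt3)$). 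Using that $v_-(a) = c_+(a)$ has exactly the solutions $\pm s_0,\pm\overline{s_0},\pm\sqrt{3}$ (the four $\pm s_0,\pm\overline{s_0}$ from the computation in the text, plus $a=\pm\sqrt3$ where the two critical points collide), and that only $a=\sqrt3 \in \mathcal{U}_0$ lies on $\partial(\mathcal{U}_0\cap\mathcal{A}_+)$ with $\Phi(a)=\tfrac14$, I would conclude that $\Phi$ restricted to a neighborhood of $\sqrt3$ in $W'$ is $2$-to-$1$ (the local model at $\sqrt3$: two critical points merging makes $v_-$ tangent to $\partial\Omega$), pinning down $\deg(\Phi|_{W'})$. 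The symmetry $\tau(z)=\bar z$ then shows $\sqrt3\in W$ forces $W$ to be $\tau$-symmetric, and the real-analytic curve $\mathcal{I}=(0,\sqrt3]$ enters $\sqrt3$ on the boundary, so only one such component exists and it contains a real interval to the right of $\sqrt3$; an elementary estimate (e.g. checking $v_-(a)\in\Tilde\Omega$ along $(\sqrt3,2)$ directly, using that at $a=2$, $f_2$ has a double root structure) identifies the interval as $(\sqrt3,2)$.

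Next I would identify $\partial W$. Since $W$ is a $\tau$-symmetric topological disk containing $(\sqrt3,2)$ with $\sqrt3,2\in\partial W$ (one checks $\Phi(\sqrt3)=\tfrac14\in\partial\Tilde\Omega$ and $\Phi(2)=\tfrac12\in\partial\Tilde\Omega$, the parabolic point, by the explicit dynamics of $f_2$), the boundary $\partial W$ must be the union of an arc $\gamma$ in the closed upper half-plane joining $\sqrt3$ to $2$ and its reflection $\tau(\gamma)$; that $\gamma$ is a simple arc follows because $\Phi|_{W}$ is proper of the established degree onto the Jordan domain $\Tilde\Omega$ and the boundary map $\partial W\to\partial\Tilde\Omega$ is a local homeomorphism away from the (one) ramification value. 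That $\gamma\subset\overline{S}\cap\mathcal{U}_0$ uses $W\subset\mathcal{U}_0\cap\mathcal{A}_+$ and $\tau$-symmetry to place it in the closed first quadrant.

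For the last assertion, I would show $\Phi|_W:W\to\Tilde\Omega$ is a homeomorphism, i.e. degree $1$, not $2$. Here is the key point: although $v_-$ is tangent to $\partial\Omega$ at $a=\sqrt3$, the map $h_a$ composed with $\phi$ unfolds this — near $\sqrt3$ the parametrization $a\mapsto v_-(a)$ versus the model sees the square-root behavior exactly once, so the honest count gives local degree $1$ at $\sqrt3$ after all. I would make this precise by computing the first nonvanishing derivative of $\Phi$ at $\sqrt3$ using the asymptotic expansions of $c_\pm(a), v_-(a)$ near $a=\sqrt3$ (where $\sqrt{a^2-3}\sim \sqrt{2\sqrt3}\,\sqrt{a-\sqrt3}$) and the normalization $\phi(0)=0$, $\phi_{a}(c_+(a))=0$; the square root in $c_\pm$ cancels against the square root coming from $\phi$ near the parabolic point $\tfrac14$ of $P_{1/4}$, yielding $\Phi'(\sqrt3)\neq 0$. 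Hence $\deg(\Phi|_W)=1$. The complementary piece $\Tilde{\mathcal{U}}_0 = \mathcal{U}_0\setminus(\mathcal{I}\cup\overline W)$ is then handled by the companion case of Lemma \ref{lem.proper}: each component maps properly onto $B\setminus\overline{\Tilde\Omega}$, which is an annulus, with no available branch points (the only candidate $a=\sqrt3$ is on $\mathcal{I}\cup\partial W$, excluded), so each component is an annulus mapping by a proper unramified — hence homeomorphic — map; $\tau$-symmetry gives exactly two components, interchanged by $\tau$, and $\Phi|_{\Tilde{\mathcal{U}}_0\cap S}$ is a homeomorphism onto $B\setminus\overline{\Tilde\Omega}$.

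I expect the main obstacle to be the degree computation at $a=\sqrt3$: establishing rigorously that the square-root ramification of the critical points exactly compensates the square-root behavior of the Fatou coordinate of $P_{1/4}$ at its parabolic fixed point, so that $\Phi$ is a local homeomorphism there rather than $2$-to-$1$. This requires carefully tracking the normalizations through the lifting diagram (\ref{commu1}) and the asymptotics (\ref{eq.asymp}) specialized near $\sqrt3$; a clean way may be to argue indirectly — if $\deg(\Phi|_W)=2$ there would be a second preimage of each point of $\Tilde\Omega$, in particular a second parameter in $\overline{S}$ with $v_-(a)=$ a given petal point, contradicting the rigidity Lemma \ref{lem.rigid} applied along $\mathcal{I}$ (or contradicting injectivity of $I$) — thereby bypassing the explicit derivative computation.
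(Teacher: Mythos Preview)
Your approach via properness and degree is close to the paper's for $\Tilde{\mathcal{U}}_0\cap S$ --- there the paper simply counts preimages of $0\in B\setminus\overline{\Tilde\Omega}$ via the four solutions $\pm s_0,\pm\overline{s_0}$ of $v_-(a)=c_+(a)$. Two corrections on this part: $a=\pm\sqrt3$ are \emph{not} among these solutions (the critical points coincide there, but $f_{\sqrt3}(c_\pm(\sqrt3))\neq c_\pm(\sqrt3)$, so there are four solutions, not six); and $B\setminus\overline{\Tilde\Omega}$ is simply connected, not an annulus, since $\overline{\Tilde\Omega}$ touches $\partial B$ at the parabolic point $\tfrac12$ (your conclusion survives, but the topology is misidentified).

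The real divergence, and the gap, is on $W$. The paper does not attempt any local degree computation at $\sqrt3$; it proves injectivity of $\Phi$ on $\Phi^{-1}(\Tilde\Omega)\cap\mathcal{U}_0$ directly by the rigidity argument of Lemma \ref{lem.rigid}: if $\Phi(a_1)=\Phi(a_2)$ one builds a conjugacy on the figure-eight $f_{a_i}^{-1}(\overline{\Tilde\Omega_{a_i}})$ via Fatou coordinates and promotes it to a global conformal conjugacy, forcing $a_1=a_2$. This is precisely the ``indirect'' route you propose only as a fallback, and it entirely sidesteps the obstacle you correctly identify. Your primary plan --- explicit square-root cancellation, or Riemann--Hurwitz with boundary ramification at $\sqrt3$ --- is problematic because $\sqrt3\in\mathcal I$ lies outside the domain of $\Phi$, so you are really analyzing a boundary extension rather than an interior branch point, and your claim that the only possible branching is at $\sqrt3$ is unjustified ($\Phi(a)=h_a(v_-(a))$ depends on $a$ through both factors). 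Similarly, for the curve $\gamma$ the paper parametrizes $\Phi^{-1}(\partial\Tilde\Omega)\cap S$ by $I^{-1}$ (injective again by rigidity) and then eliminates three of the four endpoint pairs $(b_0,b_\infty)\in\{0,\sqrt3\}\times\{0,2\}$ by topological obstructions (trapped regions contradicting Lemma \ref{lemboundaryH}, or a Carath\'eodory contradiction); your ``$\Phi(\sqrt3)=\tfrac14$, $\Phi(2)=\tfrac12$'' are boundary limits that require exactly this analysis, since $\Phi$ is undefined at both points.
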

\begin{proof}
First of all one can verify easily by symmetricity of Fatou coordinate and Julia sets of $f_a$ when $a\in \mathcal{R}$ that $(\sqrt{3},{2})\subset \Phi^{-1}(\Tilde{\Omega})$ while $(2,+\infty)\in \mathcal{H}_{\infty}$. 

Let $W$ be the connected component of $\Phi^{-1}(\Tilde{\Omega})$ containing $(\sqrt{3},{2})$. When $a\in \Phi^{-1}(\Tilde{\Omega})$, $f^{-1}_a(\overline{\Tilde{\Omega}_a})$ (where $\Tilde{\Omega}_a = f_a(\Omega_a)$) is an "eight": its interior consists of two simply connected components, one is $\Omega_a$, the other $\Omega^-_a$ attached $\Omega_a$ at $c_+(a)$ contains $c_-(a)$ and is mapped 2 to 1 onto $\Tilde{\Omega_a}$. So if $a_1,a_2\in \Phi^{-1}(\Tilde{\Omega})\cap\mathcal{U}_0$ with $\Phi(a_1) = \Phi(a_2)$, then it is easy to construct a conjugcy map $\varphi$ on this "eight" via Fatou coordinates. By applying the same strategy as in Lemma \ref{lem.rigid}, one may prove that $\varphi$ admits an conformal extension on $\mathbb{C}$ and hence prove the injectivity of $\Phi$ on $\Phi^{-1}(\Tilde{\Omega})$. In particular $W = \Phi^{-1}(\Tilde{\Omega})$. Similarly one may prove that $\Phi$ is injective on $\Phi^{-1}(\partial\Tilde{\Omega})\cap\mathcal{U}_0$. In fact, this is equivalent to the injectivity of $I(a) = \phi_a(c_+(a))-\phi_a(c_-(a))$ on $\Phi^{-1}(\partial\Tilde{\Omega})\cap\mathcal{U}_0$. Therefore $\Tilde{\gamma} = \Phi^{-1}(\partial\Tilde{\Omega})\cap S$ is parametrized by $I^{-1}:(0,+\infty)\longrightarrow \Tilde{\gamma}$. Let $b_0,b_\infty$ be an accumulation point of $I^{-1}(t)$ when $t$ tends to 0 and $\infty$ respectively. By holomorphic dependence of Fatou coordinate, $b_0 =0$ or $\sqrt{3}$, $b_\infty = 0$ or $2$ (notice that $a=2$ comes from the solution of $f_-(c_-(a)) = 0$). So there are four possibilities: $b_0 =0, b_{\infty} = 0$; $b_0 =\sqrt{3}, b_{\infty} = 0$; $b_0 =0, b_{\infty} = 2$; $b_0 =\sqrt{3}, b_{\infty} = 2$. The first can not happen for otherwise $\overline{\Tilde{\gamma}}$ surrounds a region not intersecting $W$ that is mapped to $\Tilde{\Omega}$ under $\Phi$. The the second is not true for a similar reason. If we are in the third case, then $\partial W = \overline{\Tilde{\gamma}}\cup\tau(\overline{\Tilde{\gamma}})\cup(0,\sqrt{3})$. Thus $\Phi^{-1}:\Tilde{\Omega}\longrightarrow W$ admits a continuous extension to the boundary by Carathéodory Theorem. But $\Phi^{-1}(z)$ accumulates to both $\sqrt{3},0$ when $z\to \frac{1}{4}$, a contradiction. So $\gamma = \Tilde{\gamma}$ is an arc joining $\sqrt{3},2$ and $\partial W = \gamma\cup\tau(\gamma)$.

So it remains to prove that $\Phi|_{\Tilde{\mathcal{U}}_0\cap S}$ is injective. This is easy to see since the equation $v_-(a) = c_+(a)$ has exactly four solutions $\pm s_0,\pm\overline{s_0}$ with $s_0\in S$. The injectivity follows easily by the properness of $\Phi$.
\end{proof}

\begin{figure}[H] 
\centering 
\includegraphics[width=0.8\textwidth]{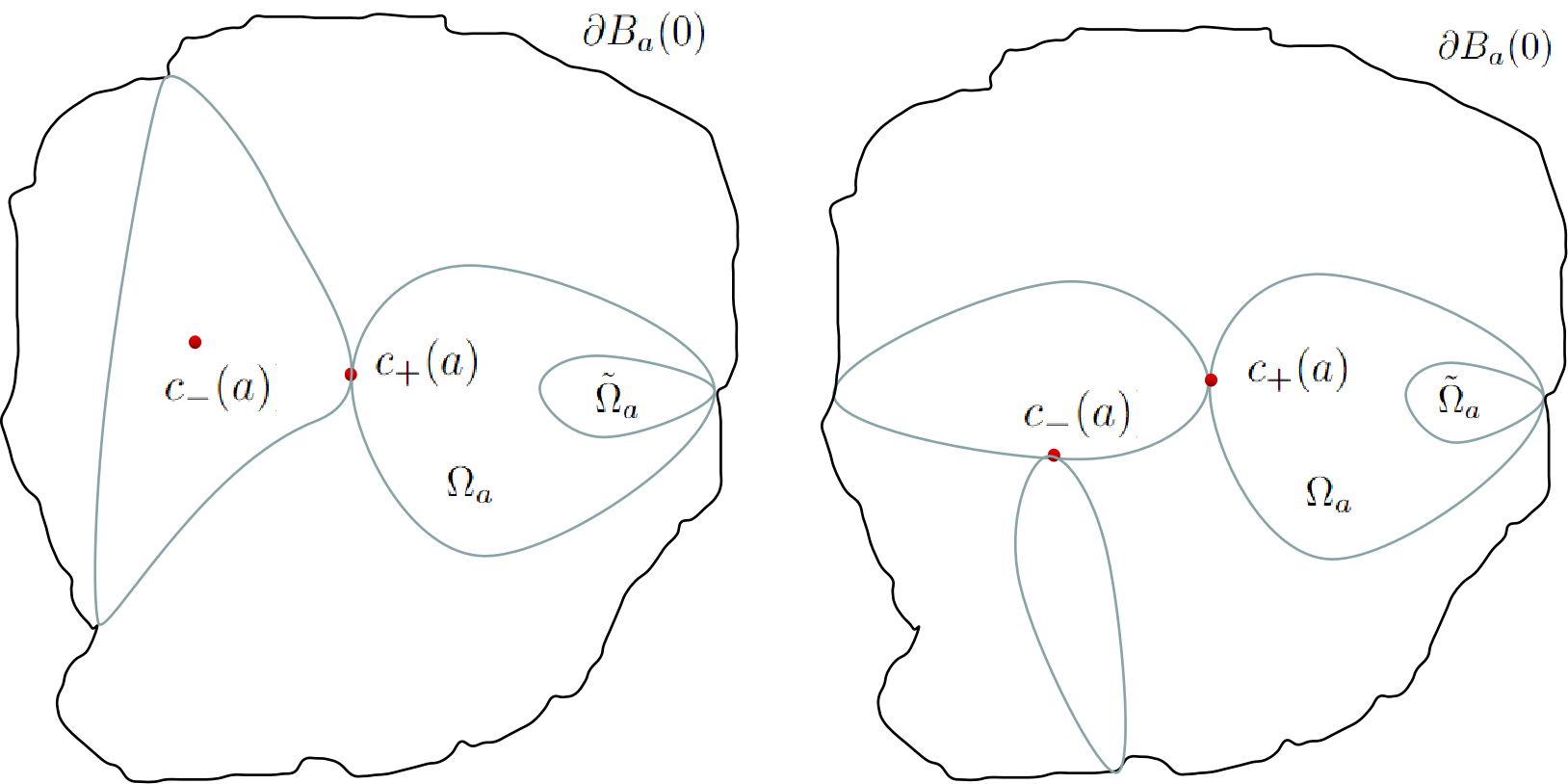} 
\caption{The preimage of $\Tilde{\Omega}_a$: $\Phi(a)\in W$ on the right and $\Phi(a)\in \partial W$ on the left.} 
\label{fig.preimage} 
\end{figure}

\begin{figure}[H] 
\centering 
\includegraphics[width=0.75\textwidth]{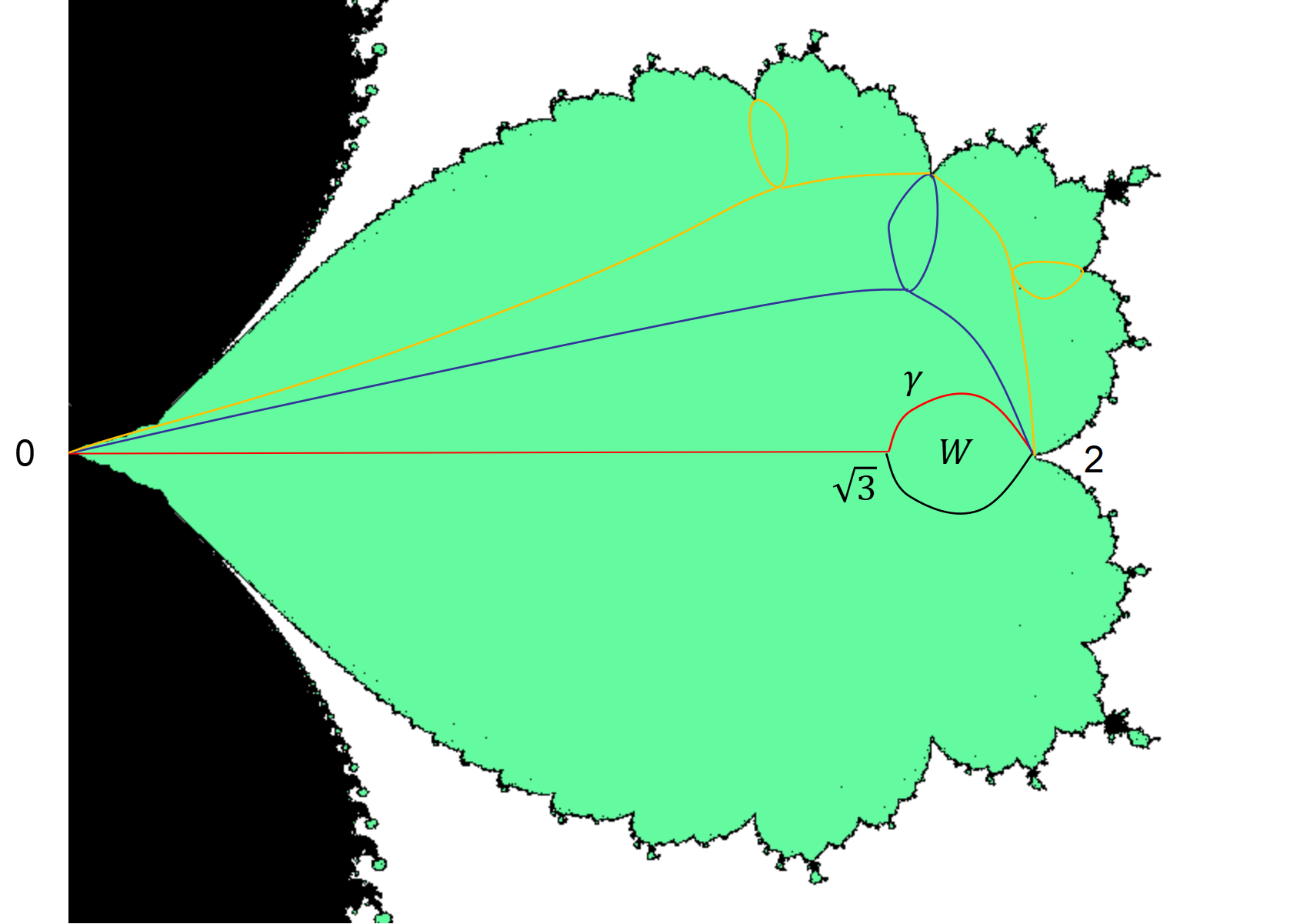} 
\caption{The connected component $\mathcal{U}_0$ of $\mathcal{H}_0$ with the curve $\gamma$ and the component $W$ depicted in Proposition \ref{PROP_para0}. The union of curves in red, blue and orange is equipotentials of level 0,1,2, denoted by $\mathcal{E}(0),\mathcal{E}(1),\mathcal{E}(2)$ respectively. See next section.} 
\label{fig.main.parabolic} 
\end{figure}

\section{Graphs and puzzles}\label{sec.puzzle}
\subsection{Dynamical rays}
This subsection is devoted to construct dynamical rays for $f_a,a\in\mathbb{C}^*$. For $t\in\mathbb{Q}/\mathbb{Z},r\textgreater 1$, the definitions of external rays $R^{\infty}_a(t)$ and equipotentials $E^{\infty}_a(r)$ for polynomials whose Julia set is connected are classical, see for example \cite{DH}:
\[R^{\infty}_a(t) := (\phi^{\infty}_a)^{-1}(\{re^{2\pi i t};\,r\textgreater 1\}),\,\,E^{\infty}_a(r) := (\phi^{\infty}_a)^{-1}(\{re^{2\pi i t};\,t\in[0,1)\}).\]

We will focus on the internal rays and internal equipotentials contained in the parabolic basin of $f_a$. Let us begin with a model of quadratic polynomial.
\paragraph{Model $P_{\frac{1}{4}}(z) = z^2+\frac{1}{4}$}\mbox\\

We keep using the notations in diagram (\ref{commu1}). Recall that \[\Omega = P_{\frac{1}{4}}(\Omega^0) = \phi^{-1}\{z;\,\,\mathfrak{Re}z\textgreater 1\}.\]
\begin{definition}
The equipotential (of depth $n\geq 0$) in $B$ is defined by $E(0) = \partial\Omega\setminus\{\frac{1}{4}\}$ and $E(n) = P^{-1}_{\frac{1}{4}}(E(n-1))$ for $n\geq 1$.
\end{definition}

Next following Roesch \cite{Roesch} we present here her construction of a "jigsawed" internal ray of angle $\theta = \frac{\pm1}{2^k-1}$ in the parabolic basin for the model $P_{\frac{1}{4}}$. Later in this subsection we generalize this construction for $f_a$. The motivation of introducing such internal rays was to create a infinite sequence of non-degenerated annuli around points on Julia set so that one may apply Yoccoz's Theorem which tells us that the puzzles shrink either to a single point, or a quasiconformal copy of some quadratic Julia set.

Let $\Xi^+$ (resp. $\Xi^-$) be the connected component of $B\setminus E(1)$ contained in the upper-half (resp. lower-half) plane. There are two inverse branches of $P$, $P_+:B\setminus\overline{\Omega}\longrightarrow \Xi^+$ and $P_-:B\setminus\overline{\Omega}\longrightarrow \Xi^-$ which extends continuously to the boundary. Let $\Delta^0 = \phi^{-1}\{z;\,\,0\leq\mathfrak{Re}z\leq1,\mathfrak{Im}z\geq0\}$, $\Delta^n = P_+^n(\Delta^0)$. One checks easily that
\[\Tilde{\phi}_+:\bigcup_{n=0}^{k-1}\Delta^n\longrightarrow\{z;\,\,-(k-1)\leq\mathfrak{Re}z\leq1,\mathfrak{Im}z\geq0\}\]
is a homeomorphism. Let $L$ be the straight line joining $-(k-1)$ and $1+i$. Fix a smooth curve (for example the semi-circle) $L':[0,1]\longrightarrow \mathbb{C}$ with $L'(0) = 0$, $L'(1) = i$ and $L'(t)\subset (0,1)\times(0,1)$ for $t\in (0,1)$. Morenover, notice that the connected component 
$\Tilde{\phi}_-:(\Omega^0)'\longrightarrow \mathbb{H}$ is also a homeomorphism extending to boundary, where $(\Omega^0)'$ is the bounded conneceted component of $\mathbb{C}\setminus E(1)$ with $-\frac{1}{2}$ on its boundary.
Define a segment $\delta$ in $B$ starting from $\Tilde{\phi}_+^{-1}(1+i)$ by
\[\delta = \Tilde{\phi}_+^{-1}(L)\bigcup P_+^{k-1}(\Tilde{\phi}_-(L'))\]
and set $R = \bigcup_{n=0}^{\infty}(P_+^{k-1}\circ P_-)^n(\delta)$

\begin{PandD}
Let $R$ be defined as above. Then the $k$ curves $R,...,P_{\frac{1}{4}}^{k-1}(R)$ are disjoint in $B\setminus \Omega$ and $R\subset P^k_{\frac{1}{4}}(R)$. Moreover $R$ lands at a $k-$periodic point of $P_{\frac{1}{4}}$ on $\partial B$. The curve $R\cap\Omega^c$ is called an internal ray with angle $\theta = \frac{1}{2^k-1}$ and denoted by $R(\theta)$. The internal rays $R(\theta')$ with angle $\theta'$ such that $\theta' = d^i\theta$ or $d^j\theta' = \theta$ are defined by $f^i(R(\theta))\cap\Omega^c$ or the connected component of $f^{-j}(R(\theta))$ whose landing point on $\partial B$ has cyclic order $\theta'$. Similarly one can construct internal ray with angle $\theta = \frac{-1}{2^k-1}$
\end{PandD}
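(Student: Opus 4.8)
The plan is to follow Roesch's construction for the model polynomial $P_{\frac14}$ directly, exploiting that the Fatou coordinate $\phi$ linearizes $P_{\frac14}$ on $\Omega^0$ to an explicit half-plane translation, and then push everything down to $B$ by the two inverse branches $P_\pm$. First I would check that the three curves $R, P_{\frac14}(R),\dots,P_{\frac14}^{k-1}(R)$ are pairwise disjoint inside $B\setminus\Omega$: by construction $\delta$ lives in the fundamental domain bounded by $E(0)$ and $E(1)$ in the upper half plane, $P_+^{k-1}(\widetilde\phi_-(L'))$ connects it back across the $\Delta^n$ strips, and applying $(P_+^{k-1}\circ P_-)$ repeatedly maps the building block into successively deeper fundamental domains (between $E(n)$ and $E(n+1)$); since $P_+,P_-$ have disjoint images $\Xi^+,\Xi^-$ and preserve equipotential depth, the iterates cannot collide, and the forward images $P_{\frac14}^j(R)$ land in disjoint translated sectors under $\widetilde\phi$. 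The relation $R\subset P^k_{\frac14}(R)$ is immediate from the self-similar definition $R=\bigcup_n(P_+^{k-1}\circ P_-)^n(\delta)$ together with $P_{\frac14}^k\circ(P_+^{k-1}\circ P_-)=\mathrm{id}$ on the relevant pieces: applying $P^k_{\frac14}$ to $R$ just drops the $n=0$ term and reindexes, so $P^k_{\frac14}(R)\supset R$.

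Next I would establish landing. The containment $\overline{R}\supset R\subset P^k_{\frac14}(\overline R)$ forces the accumulation set $\Lambda=\overline R\setminus R$ to be a nonempty, compact, connected (full continuum) subset of $\partial B$ that is forward-invariant under $P^k_{\frac14}$. Because $R$ is eventually contained in an arbitrarily thin equipotential neighborhood of $\partial B$ (its depth goes to $\infty$), $\Lambda\subset\partial B$. One then argues $\Lambda$ is a single point: either by a length–area / hyperbolic-contraction estimate for the inverse branch $(P_+^{k-1}\circ P_-)$ near $\partial B$ in the orbifold metric — this composition is a strict contraction on a neighborhood of the continuum it fixes, so its maximal invariant set there is a point — or, more softly, by invoking that $\partial B$ is a Jordan curve (geometric finiteness, cf. the Tan–Yin result already cited in the excerpt) and a snail/no-wandering-continuum argument ruling out a nondegenerate invariant arc in the boundary. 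The fixed point of $P^k_{\frac14}$ at the landing point is then repelling (it cannot be the parabolic point $\frac12$ since the ray stays off the maximal petal's closure, and any attracting/parabolic cycle would swallow the ray), so $R$ lands at a $k$-periodic point on $\partial B$.

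The dynamical-angle bookkeeping at the end — defining $R(\theta')$ for $\theta'=d^i\theta$ by $f^i(R(\theta))\cap\Omega^c$ and for $d^j\theta'=\theta$ by the pullback component whose landing point sits at cyclic position $\theta'$ on $\partial B$ — is routine once the above is in place: $P_{\frac14}$ acts as angle-doubling on the landing points (transported via $\phi^\infty$, or intrinsically via the canonical circle parametrization of $\partial B$ coming from its Jordan-curve structure), pullback components are indexed by $\Xi^+$ vs.\ $\Xi^-$, and compatibility of the two constructions is checked on the overlap. The symmetric construction for $\theta=\frac{-1}{2^k-1}$ is obtained by complex conjugation $z\mapsto\overline z$, which conjugates $P_{\frac14}$ to itself and swaps $\Xi^+\leftrightarrow\Xi^-$, i.e.\ $P_+\leftrightarrow P_-$.

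I expect the main obstacle to be the landing assertion — specifically, proving that the invariant continuum $\Lambda$ degenerates to a point and that the landing point is genuinely $k$-periodic (not a proper divisor period, and not parabolic). The cleanest route is the orbifold-metric contraction estimate for $P_+^{k-1}\circ P_-$ on a neighborhood of $\Lambda$; the alternative relying on $\partial B$ being Jordan is also available here since $P_{\frac14}$ is geometrically finite, and either suffices for the statement.
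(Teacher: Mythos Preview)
Your arguments for the disjointness of $R,\dots,P_{\frac14}^{k-1}(R)$ in $B\setminus\Omega$ and for the invariance $R\subset P_{\frac14}^k(R)$ match the paper's: both rely on the fact that the pieces of $R$ outside $\overline{\Omega^0}$ live in $\Xi^+$ (with $P_{\frac14}^{k-1}(R)$ in $\Xi^-$), while inside $\overline{\Omega^0}$ the curves sit in disjoint strips under $\widetilde\phi_+$.

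The difference is in the landing step. The paper does not set up an invariant continuum $\Lambda\subset\partial B$ or appeal to an orbifold metric or to the Jordan structure of $\partial B$; it simply observes that $P_+^{k-1}\circ P_-$ is a holomorphic self-map of the hyperbolic domain $B\setminus\overline{\Omega}$ without interior fixed point (any such fixed point would be a $k$-periodic point of $P_{\frac14}$, hence in the Julia set), and invokes the Denjoy--Wolff theorem to conclude that iterates of any point---hence the curve $R=\bigcup_n(P_+^{k-1}\circ P_-)^n(\delta)$---converge to a single boundary point, automatically a fixed point of $P_{\frac14}^k$ on $\partial B$. Your ``orbifold-metric contraction'' is precisely the mechanism underlying Denjoy--Wolff, so it would succeed, but citing the theorem by name replaces the estimate with a one-line reference; your alternative route through the Tan--Yin Jordan-curve result and a no-wandering-continuum argument is correct but substantially heavier than needed. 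Your additional caution about exact period $k$ versus a proper divisor is reasonable, though the paper does not address it explicitly either.
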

\begin{proof}
By construction of $R$, $P^l_{\frac{1}{4}}(R),0\leq l\leq k-1$ are fixed by $P^k$ outside $B\setminus\Omega$. Notice that in $B\setminus\Omega^0$, $R,P_{\frac{1}{4}}(R),...,P_{\frac{1}{4}}^{k-2}(R)$ are contained in $\Xi_+$ while $P_{\frac{1}{4}}^{k-1}(R)$ is contained in $\Xi_-$ and $R,P_{\frac{1}{4}}(R),...,P_{\frac{1}{4}}^{k-1}(R)$ are disjoint in $\overline{\Omega^0}$. Hence these $k$ curves are disjoint in $B\setminus\Omega$. Finally, the Denjoy-Wolff Theorem applied to $P_+^{k-1}\circ P_-$ yields that $R$ converges to a $k-$periodic point.
\end{proof}

\begin{definition}
Let $a\not\in \mathcal{H}_0$, then $f_a:B^*_a(0)\longrightarrow B^*_a(0)$ is of degree two and therefore conformally conjugate to $P_{\frac{1}{4}}$ by the pull-back of Fatou coordinate. The internal rays of $P_{\frac{1}{4}}$ are transfered to internal rays in $B^*_a(0)$, denoted by $R^0_a(\theta)$.
\end{definition}

\begin{figure}[H] 
\centering 
\includegraphics[width=0.95\textwidth]{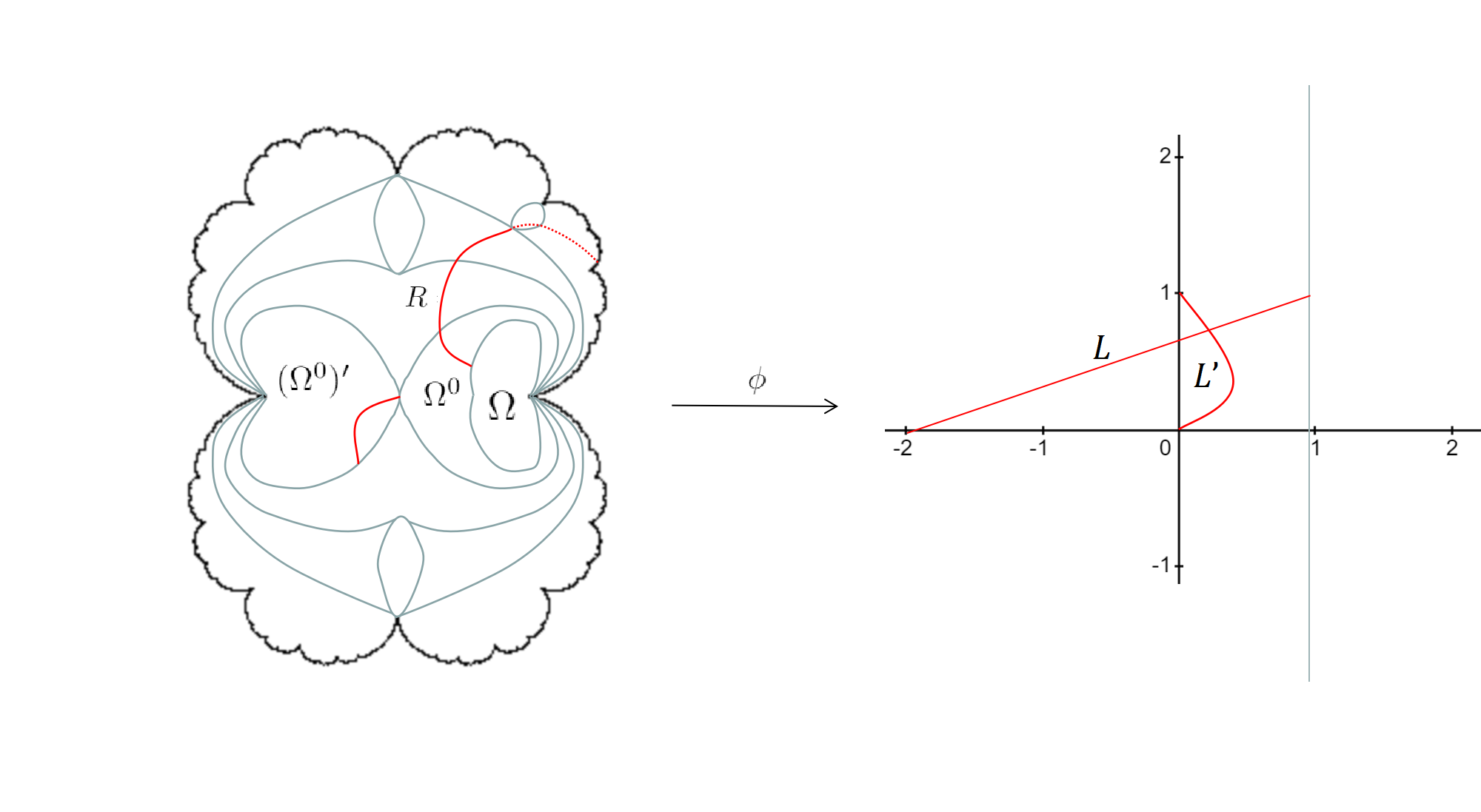} 
\caption{Illustration for the construction of $R$ when $k = 3$}\label{fig.internal} 
\label{Fig.main2} 
\end{figure}

Next we construct internal rays for $a\in \Tilde{\mathcal{U}}_0\cap S$, where $\Tilde{\mathcal{U}}_0$ as prescribed in Proposition \ref{PROP_para0}. In this situation the internal rays for $P_{\frac{1}{4}}$ can not completely be transformed to $B^*_a(0)$ since $B^*_a(0)$ contains the free critical point $c_-(a)$, but still one can construct $R^a_0(\theta)$ by a pull-back argument. 

Denote by $(\Omega^0_a)'$ the connected component of $f_a^{-1}(f_a(\Omega^0_a))$ which attaches $\Omega^0_a$ at $c_+(a)$. Hence $\overline{(\Omega^0_a)'\cup\Omega^0_a}$ forms a figure "eight" and cuts $B^*_a(0)$ into two connected components. Denote by $\Xi_-$ the one containing $c_-(a)$ and $\Xi_+$ the other. Hence $f_a:\Xi_+\longrightarrow B\setminus \overline{\Omega_a}$ is conformal and $f_a:\Xi_-\setminus \overline{f_a^{-1}(f_a(\Omega^0_a))}\longrightarrow B\setminus \overline{\Omega_a}$ is a 2-covering map ramified at $c_-(a)$. We denote them by $f^+_a,f^-_a$ for short. Both these two maps can be extended homeomorphically to the corresponding half-boundary of the "eight" which is contained in $\partial\Xi_{\pm}$ respectively. 

Now we construct $R^0_a(\theta)$ with $\theta = \frac{1}{2^k-1}$. The construction for angle $\frac{-1}{2^k-1}$ will be similar. In the right-half complex plane, let $I$ be the straight line joint $1+\frac{i}{k},0$ and let $L'$ be as in Figure \ref{fig.internal}. Notice that the restrictions of the Fatou coordinate $\phi^-_a := \phi_a|_{(\Omega^0_a)'},\phi^+_a :=\phi_a|_{\Omega^0_a}$ are conformal and set $l^{k-1}_a = \overline{\phi^{-1}_a|_{(\Omega^0_a)'}(L)}\cup\overline{\phi^{-1}_a|_{\Omega^0_a}(I)}$. Consider the pull back $(f^+_a)^{-1}(l^{k-1}_a)$ and link the two points $(f^+_a)^{-1}((\phi^-_{a})^{-1}(i)),(\phi^+_a)^{-1}(1+\frac{2i}{k})$ by $(\phi^+_a)^{-1}(I+\frac{i}{k})$. Denote by $l^{k-2}_{a} = (f^+_a)^{-1}(l^{k-1}_a)\cup(\phi^+_a)^{-1}(I+\frac{i}{k})$. Repeating this process until we get $l^1_a$ which is a piece-wise smooth curve joining $(\phi^+_a)^{-1}(1+i),(f^+_a)^{-k+1}((\phi^-_{a})^{-1}(i))$. Now suppose $v_-(a)\not\in l^1_a$. Then we can lift $l^1_a$ by $f^-_a$ which gives us a curve starting from $(\phi^-_{a})^{-1}(i)$ and hence $l^{k-1}_a$ is extended. Keep doing this pull-back argument under the hypothesis that $l^1_a$ will never meet $v_-(a)$ (we will give a sufficient condition for this later, see Lemma \ref{lem.holomotion.internal}) we will get a $k-$periodic cycle of curves $l^0_a,...,l^{k-1}_a$ converging to $\partial B^*_a(0)$. The curve $R^0_a(\theta) := l^0_a$ is called the internal ray with angle $\theta$.

\begin{remark}
One should notice that if the choice of the curve $L'$ in Figure \ref{fig.internal} does not depend on $a$, then the internal ray constructed above is uniquely defined and hence automatically admits a holomorphic motion. See Lemma \ref{lem.holomotion.internal} for a detailed description.
\end{remark}

\subsection{Parameter rays and landing properties}
\paragraph{Parameter rays in $\mathcal{H}_\infty$}\mbox\\

Define equipotentials and external rays in $\mathcal{H}_\infty$ by
\begin{equation}\label{eq.rays.Hinfty}
    \mathcal{E}_{\infty}(r) = \Phi_\infty^{-1}\{re^{2\pi it};\,\,t\in[0,1]\},\,\,\mathcal{R}_{\infty}(t) = \Phi_\infty^{-1}\{re^{2\pi it};\,\,r\textgreater 1\}.
\end{equation}

\begin{figure}[H] 
\centering 
\includegraphics[width=0.65\textwidth]{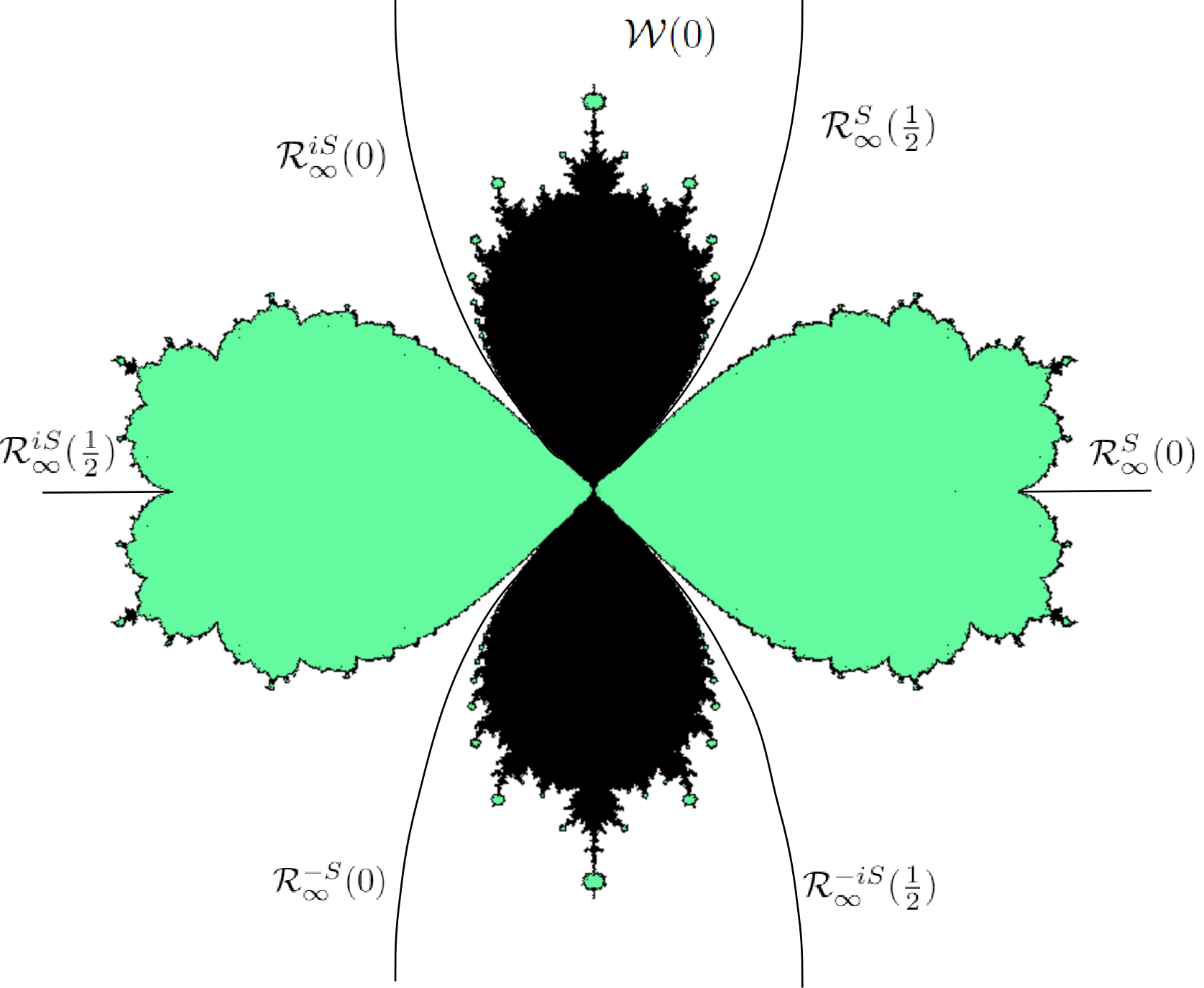} 
\caption{External rays $\mathcal{R}_{\infty}(0),\mathcal{R}_{\infty}(\frac{1}{2})$ among which the ones land at 0 bound two wakes $\mathcal{W}(0),-\mathcal{W}(0)$.} 
\label{fig.external} 
\end{figure}

\begin{remark}
Technically, for $t\in \mathbb{R}/\mathbb{Z}$, $\mathcal{R}_{\infty}(t)$ consists of three distinct simple curves since $\Phi_{\infty}$ is of degree 3. We call every such curve an external ray corresponding to the angle $t$. We write $\mathcal{R}_{\infty}^{S}(t),\mathcal{R}_{\infty}^{iS}(t),\mathcal{R}_{\infty}^{-S}(t),\mathcal{R}_{\infty}^{-iS}(t)$ respectively for pointing out the quadrant which this curve belongs to. See Figure \ref{fig.external} for an example.
\end{remark}

Next we prove some landing properties for the parameter external rays. We will need some classical stability results:
\begin{lemma}[\cite{DH}]\label{lem.stab.misiur}
 Let $P_{a}:\mathbb{C}\longrightarrow\mathbb{C}$ be an analytic family (parametrized by $a$) of polynomials of degree $d$, $t\in\mathbb{Q}/\mathbb{Z}$. Suppose that $R^{\infty}_{a_0}(t)$ lands at $x(a_0)\in J_{a_0}$ and $\overline{R^{\infty}_a(d^nt)},\forall n\geq 0$ contains no critical point of $P_{a_0}$ (in particular $R^{\infty}_a(d^nt)$ lands). If $x(a_0)$ is repelling (pre-)periodic, then there exists a neighborhood $U$ of $a_0$ such that $\forall a\in U$, $R^{\infty}_{a}(t)$ will land at a repelling (pre-)periodic point. Moreover there is a natural holomorphic motion preserving equipotential:
\[L:U\times\bigcup_{n\geq 0}\overline{R^{\infty}_{a_0}(d^nt)}\longrightarrow \mathbb{C},\, L(a,\overline{R^{\infty}_{a_0}(d^nt)}) = \overline{R^{\infty}_{a}(d^nt)}.\]
\end{lemma}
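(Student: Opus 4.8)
The plan is to reduce everything to the classical stability theory for external rays of polynomials, which in this setting is essentially a consequence of the implicit function theorem together with the fact that repelling (pre-)periodic points move holomorphically. First I would recall the setup: the landing point $x(a_0)$ of $R^\infty_{a_0}(t)$ is repelling (pre-)periodic, say $f^{k}_{a_0}(x(a_0))$ is a repelling periodic point of exact period $p$. Since the multiplier of this cycle is not a root of unity (it is repelling, hence has modulus $>1$), the cycle persists as a holomorphic family of repelling cycles $x(a)$ for $a$ in a neighborhood $U$ of $a_0$, by the implicit function theorem applied to $f^p_a(z)=z$; pulling back along the $k$ preimages chosen by $x(a_0)$ and using that none of the relevant forward images $R^\infty_{a_0}(d^n t)$ contains a critical point, one gets a holomorphic family of pre-periodic points, still denoted $x(a)$, with $x(a)\to x(a_0)$.

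Next I would set up the holomorphic motion of the rays themselves. The key point is that on a fixed equipotential $E^\infty_{a_0}(r)$ with $r$ large, the Böttcher coordinate $\phi^\infty_a$ is defined and depends holomorphically on $a$ (uniformly near $\infty$), so each ray segment $R^\infty_a(d^n t)\cap\{G_a\ge 1\}$ moves holomorphically. The issue is to extend this motion down to the landing point past the equipotential $\{G_a = 1\}$, through the region where critical orbits may interfere. Here one uses the hypothesis that $\overline{R^\infty_{a_0}(d^n t)}$ avoids every critical point of $P_{a_0}$ for all $n\ge 0$: this guarantees that the inverse branch of $P_{a_0}$ (and of $P_a$ for $a$ near $a_0$, by continuity of the critical points and compactness of the rays) used to define $R^\infty_a(t)$ from $R^\infty_a(dt)$ is well-defined and depends holomorphically on $a$. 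Iterating this pullback finitely many times (using periodicity of the angle under multiplication by $d$) one propagates the holomorphic motion all the way to $\overline{R^\infty_{a_0}(t)}$, and the landing point of the moved ray must be $x(a)$ since it is the unique point the ray accumulates to and it varies continuously. Finally, to see the motion respects equipotentials one just observes it is built out of $\phi^\infty_a$ and holomorphic inverse branches of $P_a$, both of which intertwine $G_a$ correctly; the $\lambda$-lemma then upgrades the pointwise-holomorphic, injective family to a genuine holomorphic motion $L$.

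The main obstacle, and the place requiring the most care, is controlling the pullback through the equipotential $\{G_a=1\}$: a priori an external ray can crash into a critical point as $a$ varies, destroying the inverse branch, and one must use the avoidance hypothesis on $\overline{R^\infty_{a_0}(d^n t)}$ together with a compactness/continuity argument to shrink $U$ so that this never happens for $a\in U$. Once the inverse branches are under control the rest is the standard machinery — I would simply cite \cite{DH} for the precise statement and sketch the above, since the argument is classical and identical to the one used there for the Mandelbrot set. One should also note that ``repelling pre-periodic'' must be interpreted so that the eventual periodic cycle is genuinely repelling (multiplier of modulus $>1$), which is exactly what makes the implicit function theorem applicable; this is the only hypothesis doing real work beyond the combinatorial avoidance condition.
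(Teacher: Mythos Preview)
Your sketch is correct and follows the classical argument. Note, however, that the paper does not actually prove this lemma: it is stated with a citation to \cite{DH} and no proof is given. The paper does prove the parabolic analogue, Lemma~\ref{lem.stab.parabo}, and explicitly says there that ``the prove is quite similar to that of the above lemma''; the argument given for Lemma~\ref{lem.stab.parabo} (define the ray near $\infty$ via B\"ottcher coordinates, pull back by inverse branches using the no-critical-point hypothesis, and use stability of the repelling/parabolic landing point to conclude) is exactly the outline you wrote, so your proposal agrees with the paper's implicit approach.
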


\begin{lemma}[stability of repelling petal]\label{lem.stab.parabo}
Let $P_a$, $R^{\infty}_{a_0}(d^nt)$ be as in Lemma \ref{lem.stab.misiur}. Suppose that $0$ is a common parabolic fixed point for $P_a$ with multiplier $e^{2\pi i\frac{p}{q}}$ and near 0 one has 
\[P^q_a(z) = z + \omega(a)z^{q+1} + o(z^{q+1}), \,\,\text{with }\omega(a_0)\not = 0.\]
If the landing point $x(a_0)$ of $R^{\infty}_{a_0}(t)$ is in the inverse orbit of 0, then there exists a neighborhood of $a_0$ such that $\forall a\in U$, $R^{\infty}_{a}(t)$ will land at some $x(a)$ which is also in the inverse orbit of 0 and there is a natural holomorphic motion preserving equipotential:
\[L:U\times\bigcup_{n\geq 0}\overline{R^{\infty}_{a_0}(d^nt)}\longrightarrow \mathbb{C},\, L(a,\overline{R^{\infty}_{a_0}(d^nt)}) = \overline{R^{\infty}_{a}(d^nt)}.\]
\end{lemma}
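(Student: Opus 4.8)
The plan is to adapt the proof of Lemma \ref{lem.stab.misiur} (the Misiurewicz stability result) to the parabolic setting, using that, by hypothesis, the landing point $x(a_0)$ lies in the inverse orbit of $0$, i.e. $P^m_{a_0}(x(a_0)) = 0$ for some minimal $m \geq 1$. The key structural input that replaces "$x(a_0)$ repelling periodic" is the \textbf{stability of repelling petals} recalled in the "holomorphic dependence of Fatou coordinate" paragraph: since $\omega(a_0) \neq 0$, for each repelling axis of $P_{a_0}$ at $0$ there is a neighborhood $D_{a_0}$ of $a_0$, a family of repelling petals $(P_a)$, and Fatou coordinates $\phi_a$ with $\phi_a^{-1}$ analytic in $a$, giving a dynamical holomorphic motion of $\overline{P_{a_0}}$. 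The repelling petal plays here the role that a linearizing neighborhood of a repelling cycle plays in the classical argument.

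First I would fix the repelling petal $P_{a_0}$ at $0$ corresponding to the axis along which the tail of $\overline{R^\infty_{a_0}(t)}$ (more precisely the piece of the orbit $P^m_{a_0}(\overline{R^\infty_{a_0}(t)})$, which lands at $0$) enters $0$; the assumption that no $\overline{R^\infty_{a_0}(d^n t)}$ meets a critical point guarantees that all these rays land and that $x(a_0)$ is not a critical point, so $P_{a_0}$ is a local homeomorphism near each point of the finite orbit $x(a_0), P_{a_0}(x(a_0)), \dots, 0$. Second, I would truncate: choose an equipotential $E^\infty_{a_0}(r)$ of large level and let $\eta_{a_0}$ be the compact arc of $\overline{R^\infty_{a_0}(t)}$ from that equipotential to $x(a_0)$, together with the finitely many forward images up to the point where the orbit enters the chosen repelling petal. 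For $a$ near $a_0$, the Böttcher coordinate $\phi^\infty_a$ depends holomorphically on $a$ on a fixed neighborhood of the equipotential level, so the truncated external rays move holomorphically there; the interior arcs move holomorphically by pulling back under the (locally invertible, since critical-point-free) iterates $P_a$; and the innermost piece inside the petal moves by the petal holomorphic motion $\phi_a^{-1}\circ\phi_{a_0}$. Third, I would check these three holomorphic motions agree on overlaps (they are all "the unique continuation of the ray/equipotential structure") to glue them into a single holomorphic motion $L$ of $\bigcup_{n\geq 0}\overline{R^\infty_{a_0}(d^n t)}$ over a neighborhood $U \subset D_{a_0}$, apply the $\lambda$-lemma to extend and conclude continuity/injectivity, and read off that $x(a) := L(a, x(a_0))$ satisfies $P^m_a(x(a)) = 0$, so it lies in the inverse orbit of $0$, and that $R^\infty_a(t)$ lands at $x(a)$.

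The main obstacle I anticipate is the bookkeeping at the "seam" between the repelling petal and the pulled-back ray segments: unlike the repelling-cycle case, where a single linearizing chart carries everything, here the orbit of $x(a_0)$ must be followed forward until it genuinely lies inside the repelling petal $P_{a_0}$, and one must ensure that for all $a \in U$ the corresponding orbit of the moved point still enters the moved petal $P_a$ (not just that the petal moves holomorphically) so that the infinitely many remaining pull-backs stay well-defined and the landing persists. This is exactly where $\omega(a_0)\neq 0$ and the explicit disk $V_{rep} = \{x+iy : x < c - b|y|\}$ with $b,c$ independent of $a$ are used: they give a uniform repelling petal, so the "enters the petal" condition is open in $a$. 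The remaining verifications — that the glued map is a holomorphic motion, that injectivity holds, that equipotentials are preserved — are then routine, essentially identical to the proof of Lemma \ref{lem.stab.misiur}.
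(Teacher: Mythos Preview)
Your proposal is correct in spirit and identifies exactly the right ingredients --- B\"ottcher coordinate for the outer arc, finitely many critical-point-free pull-backs, and the uniform repelling petal coming from $\omega(a_0)\neq 0$ --- and your last paragraph pinpoints the genuine crux (openness of ``enters the petal''). This is the same approach the paper takes.

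There is one organizational point worth flagging. You propose to build the motion by \emph{gluing three motions}, the third being the Fatou-coordinate petal motion $\phi_a^{-1}\circ\phi_{a_0}$ applied to the tail of the ray, and then to ``check these agree on overlaps (they are all the unique continuation of the ray/equipotential structure)''. That parenthetical is not quite right: the repelling Fatou coordinate has nothing to do with the B\"ottcher angle/potential structure, so there is no reason $\phi_a^{-1}\circ\phi_{a_0}$ should carry the tail of $R^\infty_{a_0}(t)$ to the tail of $R^\infty_a(t)$, and the compatibility check would in general fail. The paper sidesteps this entirely: it uses only the B\"ottcher motion plus pull-backs to move the whole \emph{open} ray $R^\infty_{a_0}(t)$ to $R^\infty_a(t)$ (this already preserves equipotentials by construction), and invokes the repelling petal solely to conclude \emph{landing}: once a point $l_a(r_0)$ of the moved ray lies in the uniform repelling petal (an open condition in $a$), the contracting branch $(P_a^q)^{-1}$ forces the rest of the ray to converge to $0$. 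The motion of the landing point is then trivial ($0\mapsto 0$, and its finitely many preimages move holomorphically by the implicit function theorem since they avoid critical points). So: keep your first two pieces and your uniform-petal observation, but replace the third ``glued motion'' by the simpler landing argument.
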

\begin{proof}
The prove is quite similar to that of the above lemma. First suppose that $R^{\infty}_{a_0}(t)$ is periodic. Let $l_{a_0}:[1,\infty]\longrightarrow\mathbb{C}$ with $l_{a_0}(r)$ representing the unique point on $R^{\infty}_{a_0}(t)$ with equipotential $r$. For $r$ large enough and $a$ close to $a_0$, the ray $R^{\infty}_a(t)$ is well-defined as $l_a(r) = (\phi^{\infty}_a)^{-1}\phi^{\infty}_{a_0}(l_{a_0}(r))$ ($\phi^{\infty}_a$ is the Böttcher coordinate at $\infty$) and thus in a neighborhood $U'$ of $a_0$, we have the holomorphic motion 
\[L:U'\times l_{a_0}[r,\infty)\longrightarrow\mathbb{C}, \,L(a,z) = (\phi^{\infty}_a)^{-1}\phi^{\infty}_{a_0}(z).\] 
We can pull back $L$ for $N$ steps with $f_{a_0},f_{a}$ by choosing the inverse branch corresponding to the cycle $\{d^nt\}$ as long as $a$ is in some small neighborhood $U\subset U'$ of $a_0$ ($U$ depends on $N$). This is possible since by assumption $R^{\infty}_{a_0}(d^nt)$ contains no critical point. Now fix $\forall r_0\textgreater 1$ such that $l_{a_0}(r_0)$ enters the repelling neighborhood of $x(a_0)$, and we shrink $U$ such that $L$ is defined for $l_{a_0}[r_0,\infty)$. By continuity dependence of repelling Fatou coordinate (since $\omega(a_0)\not=0$), if $a$ is sufficiently close to $a_0$, $l_a(r_0)$ will enter some repelling petal at $0$. Thus $R^{\infty}_a(t)$ lands at $0$ since it is attracted by $(f^q_a)^{-1}$ and hence all the other $R^{\infty}_a(d^nt)$ land at $J_a$ if we pull back $R^{\infty}_{a}(t)$.\\
If $R^{\infty}_a(t)$ is pre-periodic, i.e. $d^lt$ is periodic for some $l\geq 1$, then we apply the first case to $d^lt$ and pull back $R^{\infty}_a(d^lt)$.
\end{proof}

\begin{proposition}[rational parameter rays land]\label{propland}
Let $t\in \mathbb{Q}/\mathbb{Z}$. Then $\mathcal{R}_{\infty}(t)$ lands at some $a_0\in \partial \mathcal{C}_1$ and in the dynamical plane $R^{\infty}_{a_0}(t)$ lands at some parabolic or repelling $k-$(pre-)periodic point $x({a_0})$. Moreover if $a_0\not = 0$ and $x({a_0})$ is repelling (pre-)periodic or in the inverse orbit of $0$ (under $f_{a_0}$), then $x({a_0}) = v_-(a_0)$.
\end{proposition}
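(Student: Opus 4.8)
The plan is to prove the two assertions in sequence: first that every rational external ray $\mathcal{R}_\infty(t)$ lands (and simultaneously identify the dynamical landing point in the corresponding dynamical plane as a repelling or parabolic (pre-)periodic point), and then the rigidity statement that this landing point must be $v_-(a_0)$ when $a_0 \neq 0$ and the point is not a non-parabolic-basin-periodic-point. For the landing part, I would use the standard compactness argument: the rational ray $\mathcal{R}_\infty(t)$ accumulates on a connected subset $X$ of $\partial\mathcal{C}_1$; the goal is to show $X$ is a single point. Since $\Phi_\infty$ conjugates the parameter dynamics to angle tripling on $\mathbb{C}\setminus\overline{\mathbb{D}}$ (Proposition \ref{prop.escape}), $\mathcal{R}_\infty(t)$ is a (pre-)periodic ray, so it suffices to treat the periodic case by replacing $t$ with a forward iterate. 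For a fixed $a_0 \in X$, in the dynamical plane the ray $R^\infty_{a_0}(t)$ lands (by classical results, since $J_{a_0}$ is connected and $t$ is rational) at a point $x(a_0)$ which is either parabolic or repelling (pre-)periodic — this is the Douady--Hubbard landing theorem. If $x(a_0)$ is repelling, I invoke Lemma \ref{lem.stab.misiur} to get a holomorphic motion of the ray and its forward orbit on a neighborhood $U$ of $a_0$; if $x(a_0)$ is in the inverse orbit of $0$ under $f_{a_0}$ and $a_0 \neq 0$, I invoke Lemma \ref{lem.stab.parabo} instead (using that $\omega(a_0) \neq 0$ because the parabolic fixed point $0$ of $f_{a_0}$ is non-degenerate for $a_0 \neq 0$); either way the dynamical ray's landing behaviour is stable, which — via the parametrization $\Phi_\infty$ locally conjugating parameter rays to dynamical rays of $v_-$ — forces $\mathcal{R}_\infty(t)$ to actually land at $a_0$ rather than merely accumulate there.

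The subtle case is when $x(a_0)$ is parabolic, i.e.\ lands at the parabolic fixed point $0$ itself (or a point in its orbit) but is \emph{not} in a repelling-stable configuration: here stability can genuinely fail and $\mathcal{R}_\infty(t)$ might accumulate on a nontrivial arc. I expect the resolution to be: if $a_0 \neq 0$, then the landing point being the parabolic point $0$ is itself the ``Misiurewicz parabolic'' situation, and one shows $x(a_0) = v_-(a_0)$ by the following mechanism — the parameter ray $\mathcal{R}_\infty(t)$ corresponds, under $\Phi_\infty$, to the dynamical condition that $v_-(a)$ lies on $R^\infty_a(t)$; as $a$ runs along the parameter ray toward its accumulation set, $v_-(a)$ is forced onto the closure of $R^\infty_{a_0}(t)$ in the limit, hence onto its landing point $x(a_0)$. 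Conversely the landing point, being (pre-)periodic for $f_{a_0}$, is persistent under the holomorphic motion away from $a_0$, and a standard transversality/winding argument (comparing how the parameter ray wraps around $a_0$ with how the dynamical ray's landing point moves) pins $a_0$ down as the unique accumulation point; the case $a_0 = 0$ is explicitly excluded from the last assertion because there the repelling petals at $0$ degenerate.

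Concretely, the key steps in order are: (1) reduce to $t$ periodic using that $\Phi_\infty$ semiconjugates to $z \mapsto z^3$; (2) pick $a_0$ in the accumulation set $X \subset \partial\mathcal{C}_1$ and apply the Douady--Hubbard landing theorem in the dynamical plane of $f_{a_0}$ to get the (pre-)periodic landing point $x(a_0)$, parabolic or repelling; (3) in the repelling case apply Lemma \ref{lem.stab.misiur}, in the parabolic (inverse orbit of $0$, $a_0\neq 0$) case apply Lemma \ref{lem.stab.parabo}, obtaining in both cases a holomorphic motion of $\overline{R^\infty_{a_0}(3^n t)}$ over a neighborhood $U$; (4) transfer this stability through the parametrization $\Phi_\infty$ (whose defining property is $\Phi_\infty(a) = \phi^\infty_a(v_-(a))$) to conclude that the parameter ray $\mathcal{R}_\infty(t)$ cannot accumulate on more than the point where $v_-(a)$ equals $x(a)$, forcing $X = \{a_0\}$ and $x(a_0) = v_-(a_0)$; (5) handle the remaining parabolic subcase (landing at $0$ with $a_0 \neq 0$) by the same transfer, now using non-degeneracy $\omega(a_0)\neq 0$, and simply record that $a_0 = 0$ is outside the scope of the moreover-clause. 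The main obstacle is step (4)–(5): making the passage from ``the dynamical ray moves holomorphically'' to ``the parameter ray lands'' rigorous, which is the usual para-puzzle/transversality heart of the argument — in Douady--Hubbard style one argues that if $\mathcal{R}_\infty(t)$ did not land at $a_0$, the holomorphically moving dynamical ray $R^\infty_a(t)$ together with the analytic map $a \mapsto v_-(a)$ would produce a nonconstant bounded holomorphic function on a punctured disk detecting the accumulation, contradicting removability; I would expect to cite or adapt the corresponding argument from \cite{DH} or \cite{Roesch1} rather than reprove it from scratch.
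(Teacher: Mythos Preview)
Your proposal has the right skeleton and correctly identifies the two stability lemmas (Lemma~\ref{lem.stab.misiur} and Lemma~\ref{lem.stab.parabo}) as the key tools, but it contains a genuine gap: you conflate ``$x(a_0)$ is parabolic'' with ``$x(a_0)$ lies in the inverse orbit of the parabolic fixed point $0$''. These are not the same thing. For $a_0\in\partial\mathcal{C}_1$ the map $f_{a_0}$ can perfectly well have a parabolic cycle of period $k>1$ (these are exactly the cusp parameters of the Mandelbrot copies attached to $\partial\mathcal{H}_0$), and then $R^\infty_{a_0}(t)$ may land at such a cycle. In that situation neither stability lemma applies: Lemma~\ref{lem.stab.parabo} is stated only for the common persistent parabolic point $0$, and Lemma~\ref{lem.stab.misiur} requires the landing point to be repelling. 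Your case split therefore omits this case entirely, and your step~(5) cannot cover it. The paper handles it separately by a purely algebraic argument: the pair $(a_0,x(a_0))$ lies on the variety $\{f_a^{k+l}(z)=f_a^l(z),\ (f_a^k)'(f_a^l(z))=1\}$, and one shows by elementary projection/unboundedness considerations that the only one-dimensional irreducible component of this variety is $\{z=0\}$, so there are only finitely many such $a_0$. Without this step your proof is incomplete.

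Separately, your mechanism for concluding that $\mathcal{R}_\infty(t)$ actually lands --- a transversality/winding or removable-singularity argument --- is more elaborate than necessary. The paper's route is much cleaner: in each of the three cases (inverse orbit of $0$; other parabolic; repelling) one shows that any accumulation point $a_0\neq 0$ satisfies a nontrivial polynomial equation in $a_0$ (for the first and third cases this is $f_{a_0}^{n+1}(c_-(a_0))=0$ or $f_{a_0}^{k+l+1}(c_-(a_0))=f_{a_0}^{l+1}(c_-(a_0))$, obtained exactly via your stability argument and the identity $x(a_0)=v_-(a_0)$). Hence the accumulation set of $\mathcal{R}_\infty(t)$ is finite; being connected, it is a single point. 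This replaces your steps~(4)--(5) entirely and makes any appeal to para-puzzle transversality or removability unnecessary. Note also that the ``moreover'' clause deliberately excludes the second (other-parabolic) case precisely because there one does \emph{not} conclude $x(a_0)=v_-(a_0)$.
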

\begin{proof}
Let $a_0 \in\partial \mathcal{C}_1$ be an accumulation point of $\mathcal{R}_{\infty}(t)$. Then in particular $J_{a_0}$ is connected, then $R^{\infty}_{a_0}(t)$ lands at some parabolic or repelling $k-$(pre-)periodic point $x({a_0})$ (see for example \cite{DH}). Now if the only accumulation point of $\mathcal{R}_{\infty}(t)$ is 0, then we are done, so we assume that $a_0\not = 0$. We consider 3 cases respectively.
\begin{itemize}
    \item $x(a_0)$ is in the inverse orbit of 0, i.e. $f_{a_0}^n(x(a_0)) = 0$. Then $R^{\infty}_{a_0}(3^nt)$ lands at 0. Let $x(a_n)$ be the landing point of $R^{\infty}_{a_n}(t)$. By Lemma \ref{lem.stab.parabo}, we have
    \[x(a_0) =\lim_{n\to\infty}x(a_n) = \lim_{n\to\infty}f_{a_n}(c_-(a_n)) = v_-(a_0).\]
    Thus $a_0$ verifies $f_{a_0}^{n+1}(c_-(a_0)) = 0$ and there are only finitely many solutions.
    
    \item $x(a_0)$ is (pre-)parabolic but not in the inverse orbit of 0. Then there exists $l\geq 0,k\geq 1$ (only depend on $t$) such that $f^l(R^\infty_{a_0}(t))$ is fixed by $f_{a_0}^k$. By the snail lemma, $(a_0,x(a_0))$ verifies \begin{equation}\label{eq.parabolic1}
        f^{k+l}_{a}(z) = f_a^l(z),\,\,(f^k_a)'(f_a^l(z)) = 1.
    \end{equation} 
    (\ref{eq.parabolic1}) defines a non-trivial algebraic variety (not equal to $\mathbb{C}^2$) and hence consists of only finitely many irreducible components of dimension 1 and 0 (i.e. points). We claim that the only irreducible component of dimension 1 is $z = 0$. Indeed, suppose there is another such component $X$. Then $X$ is unbounded. Consider the projections $\pi_1: X\longrightarrow \mathbb{C}$, $\pi(a,z) = a$ and $\pi_2: X\longrightarrow \mathbb{C}$, $\pi(a,z) = z$. Then at least one of $\pi_i(X),i=1,2$ is unbounded. If $\pi_1(X)$ is unbounded, let $(a_n,z_n)\subset X$ with $a_n\to \infty$, then for $n$ large enough, $z_n = 0$, so $X$ has to be $z = 0$. If $\pi_2(X)$ is unbounded, let $(a_n,z_n)\subset X$ with $z_n\to \infty$. If $(a_n)$ also tends to $\infty$ then we are in the precedent case; if not, then the Böttcher coordinate of $f_{a_n}$ at $\infty$ is stable, which implies that for $n$ large enough, $z_n\in B_{a_n}(\infty)$, contradicting the assumption that $(a_n,z_n)$ verifies (\ref{eq.parabolic1}).
    
    \item $x(a_0)$ is (pre-)repelling. Let us say $f_{a_0}^{k+l}(x(a_0)) = f^l_{a_0}(x(a_0))$. This case is similar to the first case. The only difference is that here we apply Lemma \ref{lem.stab.misiur}. By the same argument of the first case we will still get a polynomial equation of $a_0$: $f_{a_0}^{k+l+1}(c_-(a_0)) = f^{l+1}_{a_0}(c_-(a_0))$. Therefore there are only finite many possible solutions.
\end{itemize}
To conclude, the above analysis shows that the accumulation set of $\mathcal{R}_0(t)$ is finite, then it reduces to a single point since the accumulation set of $\mathcal{R}_0(t)$ is connected.
\end{proof}

\begin{corollary}
$\mathcal{R}^{S}_{\infty}(\frac{1}{2})$ and $\mathcal{R}^{iS}_{\infty}(0)$ both land at 0.
\end{corollary}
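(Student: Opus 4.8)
The plan is to identify the precise parameters $a_0$ at which these two rays land and to argue that they must land at $0$ by ruling out every other possibility using Proposition \ref{propland}. First I would recall the symmetry relations $\overline{f_a(\overline z)}=f_{\overline a}(z)$ and $-f_a(-z)=f_{-a}(z)$, which show that $\mathcal{R}^{S}_{\infty}(\tfrac12)$ and $\mathcal{R}^{iS}_{\infty}(0)$ are the images of each other (and of the rays in the remaining quadrants) under the reflections generating the symmetry group of $\mathcal{C}_1$; hence it suffices to treat one of them, say $\mathcal{R}^{iS}_{\infty}(0)$, and the statement for the other follows. By Proposition \ref{propland}, $\mathcal{R}^{iS}_{\infty}(0)$ lands at a single parameter $a_0\in\partial\mathcal{C}_1$, and the invariance under the reflection $a\mapsto\overline a$ that fixes this ray forces $a_0$ to lie on the imaginary axis, i.e. $a_0\in i\mathbb{R}$ (together with $a_0=\infty$ a priori, but the ray is contained in a bounded region of the equipotential structure near its landing point, so $a_0$ is finite).

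Next I would use the dynamical side of Proposition \ref{propland}: in the plane of $f_{a_0}$, the ray $R^{\infty}_{a_0}(0)$ lands at a fixed point $x(a_0)$ of $f_{a_0}$ which is parabolic or repelling. Since angle $0$ is fixed by multiplication by $3$, this is a genuine fixed point; the fixed points of $f_{a,\lambda}$ with $\lambda=1$ are $0$ (the parabolic one) and the two roots of $z^2+az=0$, i.e. $0$ and $-a$ — wait, more carefully, the nonzero fixed points solve $z^2+az+\lambda-1=0$, which for $\lambda=1$ gives $z(z+a)=0$, so the only fixed points are $0$ and $-a_0$. If $x(a_0)=-a_0$, then $-a_0$ is a repelling (or parabolic) fixed point on which the $0$-ray lands, and by the last assertion of Proposition \ref{propland} (the case where $x(a_0)$ is repelling (pre-)periodic, applied with $n=0$), this would force $-a_0=v_-(a_0)=f_{a_0}(c_-(a_0))$; I would then check, via the asymptotic formulas (\ref{eq.asymp}) and a direct computation along the imaginary axis, that $v_-(a)=-a$ has no solution on $i\mathbb{R}^{*}$ landing in the closure of $\mathcal{C}_1$ in the relevant quadrant, or alternatively observe that $-a_0$ being the landing point of the fixed external ray $R^{\infty}_{a_0}(0)$ forces it to be the $\beta$-type fixed point, which is never a critical value for $a_0$ in the connected locus away from degenerate cases. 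That leaves $x(a_0)=0$, and since the accumulation set is a single point, $\mathcal{R}^{iS}_{\infty}(0)$ lands at $a_0$ with $R^{\infty}_{a_0}(0)$ landing at $0$; but then $0$ is a parabolic fixed point at which the fixed ray lands, which is consistent only with $a_0=0$ itself (the degenerate parameter), since for $a_0\neq0$ the parabolic point $0$ has a single repelling petal and cannot receive the fixed external ray unless the $0$-ray crashes into $0$, which by Lemma \ref{lem.stab.parabo} and the snail-lemma analysis in Proposition \ref{propland} pins $a_0$ down to the finite solution set and, combined with the imaginary-axis constraint, to $a_0=0$.

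I expect the main obstacle to be the very last step: cleanly excluding the possibility that $\mathcal{R}^{iS}_{\infty}(0)$ lands at a nonzero parameter on the imaginary axis whose dynamical $0$-ray lands at the $\beta$-fixed point $-a_0$. The subtlety is exactly the one flagged in the introduction — at $a=0$ one loses the stability of the repelling petal, so the stability lemmas \ref{lem.stab.misiur} and \ref{lem.stab.parabo} degenerate there, and one must argue that the accumulation set of the parameter ray, which is connected and contained in $i\mathbb{R}$, cannot be a nonzero repelling-$\beta$ parameter. I would handle this by combining the algebraic finiteness from Proposition \ref{propland} (the landing parameter satisfies $f_{a_0}(c_-(a_0))=-a_0$, a polynomial equation with finitely many roots) with the explicit location of those roots relative to the axes, using the branch conventions (\ref{eq.def.crit}) and the asymptotics (\ref{eq.asymp}); ruling out all of them except $0$ is a finite check. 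The remaining cases (part of Proposition \ref{propland}) — $x(a_0)$ (pre-)parabolic not in the inverse orbit of $0$, or (pre-)repelling — are excluded identically since a fixed ray cannot land at a strictly preperiodic point, and the only parabolic fixed point is $0$ itself, leaving $a_0=0$ as the unique possibility.
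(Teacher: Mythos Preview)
Your argument has two genuine gaps. First, the symmetry reduction is incorrect: the reflection $a\mapsto\overline a$ sends the quadrant $iS$ to $-iS$, so it maps $\mathcal{R}^{iS}_{\infty}(0)$ to $\mathcal{R}^{-iS}_{\infty}(0)$, not to itself. In fact none of the symmetries $a\mapsto\pm a,\pm\overline a$ fixes $\mathcal{R}^{iS}_{\infty}(0)$ setwise (under $a\mapsto-\overline a$ the angle transforms as $t\mapsto\tfrac12-t$), so you cannot conclude that the landing point lies on $i\mathbb{R}$, and the subsequent ``finite check along the imaginary axis'' is moot. Second, your treatment of the case $x(a_0)=0$ is wrong: the claim that for $a_0\neq0$ the parabolic point $0$ ``cannot receive the fixed external ray'' is false---for a large open set of parameters (indeed, by Lemma~\ref{lemwake}, for all $a\notin\pm\mathcal{W}(0)$) the ray $R^{\infty}_{a}(0)$ does land at $0$. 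What Proposition~\ref{propland} actually yields in this case, since $x(a_0)=0$ lies in the inverse orbit of $0$, is the algebraic constraint $v_-(a_0)=0$, and one must solve this and rule out the resulting finitely many parameters by hand.

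The paper avoids the symmetry reduction entirely and argues directly. For $\mathcal{R}^{S}_{\infty}(\tfrac12)$: assuming the landing point $a_0\neq0$, the dynamical ray $R^{\infty}_{a_0}(\tfrac12)$ lands at one of the two fixed points $0,-a_0$. If at $0$, Proposition~\ref{propland} gives $v_-(a_0)=0$, forcing $a_0=2$; but at $a_0=2$ one checks that only $R^{\infty}_2(0)$, not $R^{\infty}_2(\tfrac12)$, lands at $0$. If at the repelling point $-a_0$, Proposition~\ref{propland} gives $v_-(a_0)=-a_0$; since $-a_0$ is then both a fixed point and an $f_{a_0}$-preimage of $v_-(a_0)$, it must equal $c_-(a_0)$ or the co-critical point $\tilde c_-(a_0)$, and either alternative makes $c_-(a_0)$ periodic while lying in the Julia set, a contradiction. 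This is the missing mechanism in your step~4: rather than locating roots of $v_-(a)=-a$ on a symmetry axis, one uses that a fixed landing point which is simultaneously a critical value forces the critical point itself into a periodic repelling orbit.
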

\begin{proof}
(See also in \cite{Nakane}) We prove for $\mathcal{R}^{S}_{\infty}(\frac{1}{2})$ and the other is similar. Suppose $\mathcal{R}^{S}_{\infty}(\frac{1}{2})$ lands at $a_0\not=0$. Then $R^{\infty}_{a_0}(\frac{1}{2})$ lands at a parabolic or repelling fixed point. The two fixed points of $f_{a_0}$ are $0,-a_0$ with multiplier $1,1+a_0^2$ respectively. If $R^{\infty}_{a_0}(\frac{1}{2})$ lands at $0$, then by Proposition \ref{propland}, $v_-(a_0) = 0$ and since we are considering $\mathcal{R}^{S}_{\infty}(\frac{1}{2})$, $a_0 = 2$. However in the dynamical plan of $f_{2}$, only $R^{\infty}_2(0)$ lands at 0. So $R^{\infty}_{a_0}(\frac{1}{2})$ lands at $-a_0$ which must be repelling. Then again by Proposition \ref{propland}, $-a_0 = v_-(a_0)$. Since $R^{\infty}_{a_0}(\frac{1}{2})$ is fixed by $f_{a_0}$, either $v_-(a_0) = \Tilde{c}_-(a_0)$ or $c_-(a_0)$, where $\Tilde{c}_-(a_0)$ denotes the co-critical point. But neither are possible for otherwise $c_-(a_0)$ will be periodic while it is in the Julia set.
\end{proof}

From this we define the wakes at 0:
\begin{definition}\label{def.wake0}
 The \textbf{positive wake} at 0, denoted by $\mathcal{W}(0)$, is defined to be the open region bounded by $\mathcal{R}^{S}_{\infty}(\frac{1}{2})$ and $\mathcal{R}^{iS}_{\infty}(0)$ which contains the positive imaginary axis. The \textbf{negative wake} at 0 is defined to be $-\mathcal{W}(0)$.
\end{definition}

\begin{lemma}\label{lemwake}
Let $a\in\mathbb{C}^*$. Then in the dynamical plane $R^{\infty}_a(0),R^{\infty}_a(\frac{1}{2})$ both land at 0 if and only if $a\in\mathcal{W}(0)\cup(-\mathcal{W}(0))$.
\end{lemma}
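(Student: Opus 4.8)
The plan is to characterize the parameters $a$ for which both fixed rays land at $0$ via the combinatorics of the two fixed points of $f_a$. The two fixed points are $0$ (always parabolic, multiplier $1$) and $-a$ (multiplier $1+a^2$). The rays $R^\infty_a(0)$ and $R^\infty_a(1/2)$ are each fixed by $f_a$, so each lands either at $0$, or at $-a$ (if $-a$ is repelling or parabolic), or possibly at a parabolic cycle — but since both rays are actually fixed, the snail lemma forces the landing point to be a fixed point whenever it is parabolic or repelling. So each of the two rays lands at $0$ or at $-a$.

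First I would set up the parameter-space side. Consider the locus $X = \{a \in \mathbb{C}^* : \text{both } R^\infty_a(0), R^\infty_a(\tfrac12) \text{ land at } 0\}$. The goal is $X = \mathcal{W}(0) \cup (-\mathcal{W}(0))$ (as subsets of $\mathbb{C}^*$; one should note $0 \notin \mathcal{W}(0)$ by definition). For the inclusion $X \supset \mathcal{W}(0) \cup (-\mathcal{W}(0))$: by the stability lemmas (Lemma \ref{lem.stab.misiur} and Lemma \ref{lem.stab.parabo}, or more simply the classical fact that a fixed ray landing at a repelling or parabolic fixed point does so stably under perturbation, together with the fact that both fixed rays always land since $J_a$ is connected on $\mathcal{C}_1$ — and when $a\in\mathcal{H}_\infty$ one uses the escape picture directly), the set of $a$ where $R^\infty_a(0)$ lands at $0$ is both open and closed in $\mathbb{C}^* \setminus \{a : -a \text{ is parabolic}\}$, and similarly for $R^\infty_a(1/2)$; hence $X$ is open and closed away from the finitely many parameters where $1+a^2$ is a root of unity. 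Since $\mathcal{W}(0)$ is connected, contains the positive imaginary axis, and is bounded by the two rays $\mathcal{R}^S_\infty(1/2)$ and $\mathcal{R}^{iS}_\infty(0)$ which land at $0$ by the preceding corollary, it suffices to exhibit one parameter in $\mathcal{W}(0) \cap X$ and check $X$ does not jump across the relevant parabolic parameters; one natural choice is a purely imaginary $a = it$, $t > 0$, where the symmetry $-f_a(-z) = f_{-a}(z)$ together with $\overline{f_a(\bar z)} = f_{\bar a}(z)$ pins down the combinatorics of $R^\infty_a(0)$ and $R^\infty_a(1/2)$ explicitly.

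For the reverse inclusion $X \subset \mathcal{W}(0) \cup (-\mathcal{W}(0))$: suppose $a \notin \overline{\mathcal{W}(0)} \cup (-\overline{\mathcal{W}(0)})$. The two wakes are bounded by the four fixed rays $\mathcal{R}^S_\infty(0), \mathcal{R}^{iS}_\infty(0), \mathcal{R}^{-S}_\infty(0), \mathcal{R}^{-iS}_\infty(0)$ and the four rays at angle $1/2$; outside the closed wakes, one of the two dynamical rays $R^\infty_a(0), R^\infty_a(1/2)$ must land at $-a$ rather than $0$. The cleanest way to see this is: the parameter ray $\mathcal{R}_\infty(t)$ landing at $a$ corresponds dynamically (Proposition \ref{propland}) to the ray $R^\infty_a(t)$ landing at $v_-(a)$ when the landing point is repelling; running the discriminant/transversality argument from the corollary above, the only way for both fixed dynamical rays to land at $0$ forces $a$ into the region cut out by the parameter rays at angles $0, 1/2$, i.e. into $\mathcal{W}(0)\cup(-\mathcal{W}(0))$. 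Equivalently, one argues directly that in $\mathbb{C}^* \setminus (\overline{\mathcal{W}(0)} \cup \overline{-\mathcal{W}(0)})$ the point $-a$ is always repelling (since $|1+a^2|>1$ there — this needs a small check on the geometry of the wake boundary) and that the fixed ray landing at the $\alpha$-fixed point $-a$ cannot simultaneously be replaced by another fixed ray landing at $0$ without crossing a parameter ray, again by the openness/closedness argument applied on the complement.

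The main obstacle I anticipate is handling the behavior exactly on the boundary rays and at the finitely many parameters where $-a$ becomes parabolic (i.e. $1+a^2 = e^{2\pi i p/q}$): there the simple open-and-closed dichotomy for $X$ can fail, and one must verify by hand — using the snail lemma and the fact that a ray fixed by $f_a$ cannot accumulate on a parabolic cycle of period $> 1$ — that no extra parameters sneak into $X$. A secondary technical point is making precise, via the explicit formula for the multiplier $1+a^2$, that the complement of the two closed wakes lies in $\{|1+a^2| \ge 1\}$ with equality only on a measure-zero set, so that $-a$ is genuinely repelling there and the combinatorial rigidity argument applies.
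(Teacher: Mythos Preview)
Your open-and-closed argument rests on the wrong bad locus. You claim that the set where $R^\infty_a(0)$ lands at $0$ is open and closed in $\mathbb{C}^*\setminus\{a:-a\text{ parabolic}\}$, but by the snail lemma a \emph{fixed} ray can only land at a parabolic point of multiplier $1$; since $1+a^2=1$ forces $a=0$, the fixed point $-a$ is never a relevant parabolic obstruction in $\mathbb{C}^*$. The genuine obstruction---which you do not mention---is that $R^\infty_a(0)$ may crash on the free critical point $c_-(a)$, i.e.\ $v_-(a)\in R^\infty_a(0)$, which happens exactly along the parameter rays $\mathcal{R}_\infty(0)$. On such parameters the landing behaviour can switch, so the set is \emph{not} closed in the complement of parabolic parameters, and your connectedness argument collapses. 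The same issue applies to $R^\infty_a(\tfrac12)$ and the parameter rays $\mathcal{R}_\infty(\tfrac12)$.

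The paper's proof proceeds by identifying exactly this. For $W=\{a:R^\infty_a(0)\text{ lands at }0\}$ one shows $W$ is open (stability of landing at the persistent parabolic point $0$) and that $\partial W$ is contained in the union of the three parameter rays $\overline{\mathcal{R}_\infty(0)}$: if $R^\infty_a(0)$ does not land at $0$ then either it crashes on $c_-(a)$ (so $a\in\mathcal{R}_\infty(0)$) or it lands at $-a$, which is then necessarily repelling, hence stable, hence $a\notin\partial W$. The analogous statement for $W'=\{a:R^\infty_a(\tfrac12)\text{ lands at }0\}$ places $\partial W'$ on the parameter rays of angle $\tfrac12$. One then checks directly (using the real symmetry) that for $a\in\mathbb{R}^+$ the ray $R^\infty_a(0)$ lands at $0$ while $R^\infty_a(\tfrac12)$ does not, and conversely on $\mathbb{R}^-$; this pins down which complementary component of the parameter rays equals $W\cap W'$. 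Your secondary claim that $|1+a^2|>1$ outside the closed wakes is also unfounded: wake boundaries are transcendental parameter rays, not the algebraic circle $|1+a^2|=1$, and there is no reason for one to contain the other.
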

\begin{proof}
Let $W\subset\mathbb{C}^*$ be the set of parameters such that $R^{\infty}_a(0)$ lands at 0. In the proof of the first case of Propostion \ref{propland} we see that $W$ is open. Moreover $\partial W\subset\overline{\mathcal{R}^S_{\infty}(0)}\cup\overline{\mathcal{R}^{iS}_{\infty}(0)}\cup\overline{\mathcal{R}^{-S}_{\infty}(0)}$. Indeed, if $R^{\infty}_a(0)$ does not land at 0, then either it crashes on the critical point $c_-(a)$, or it lands at the other fixed point $-a$. If we are in the first case, then $v_-(a)\in R^{\infty}_{a}(0)$ and hence $a\in\mathcal{R}_{\infty}(0)$. If we are in the latter case, then $-a$ is repelling since the only parabolic fixed point with multiplier 1 is 0. By Lemma\ref{lem.stab.misiur}, $a\not\in\partial W$. Similarly one can prove that the set of parameters such that $R^{\infty}_{a}(\frac{1}{2})$ lands at 0, denoted by $W'$, is open and with boundary contained in $\overline{\mathcal{R}^S_{\infty}(\frac{1}{2})}\cup\overline{\mathcal{R}^{iS}_{\infty}(\frac{1}{2})}\cup\overline{\mathcal{R}^{-iS}_{\infty}(\frac{1}{2})}$. One may verify easily by symmetricity that $R^{\infty}_a(0)$ lands at 0 while $R^{\infty}_a(\frac{1}{2})$ not if $a\in\mathbb{R}^+$ and vice versa if $a\in\mathbb{R}^-$. Hence the lemma follows.
\end{proof}

\begin{lemma}\label{lemparaland}
Let $a\in \overline{\mathcal{W}(0)\cup S}$ be such that $f^{n+1}_a(c_-(a))=0$, $f^{n}_a(c_-(a))\not=0$. If $a\not\in\mathcal{W}(0)$ (resp. $a\in\mathcal{W}(0)$), then there exists a (resp. two) unique $t\in \mathbb{Q}/\mathbb{Z}$ (resp. $t,t'$) such that $R^\infty_a(t)$ (resp. $R^\infty_a(t),R^\infty_a(t')$) lands at $v_-(a)$ in the dynamical plan and $\mathcal{R}_{\infty}(t)$ (resp. $\mathcal{R}_{\infty}(t),\mathcal{R}_{\infty}(t')$) lands at $a$ in the parameter plan.
\end{lemma}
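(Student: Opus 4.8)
The plan is to transfer, via the parameter rays already constructed in $\mathcal{H}_\infty$, the landing information from the dynamical plane to the parameter plane, combining the stability lemmas \ref{lem.stab.misiur} and \ref{lem.stab.parabo} with the structure of the wake $\mathcal{W}(0)$ and the behavior at $a=0$. First I would fix $a$ with $f^{n+1}_a(c_-(a))=0$ and $f^n_a(c_-(a))\neq 0$; since $a\in\mathcal{C}_1$ the Julia set $J_a$ is connected, so every rational ray $R^\infty_a(s)$ lands, and $v_-(a)$ is in the inverse orbit of the parabolic point $0$. By the structure of dynamical rays landing at points in the inverse orbit of a parabolic fixed point (and the fact that $f_a$ acts as angle-tripling on external angles), the set of angles $s$ with $R^\infty_a(s)$ landing at $v_-(a)$ is finite and non-empty. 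The count of such angles is governed by the local degree of $f^{n+1}_a$ at $c_-(a)$ and by how many rays land at $0$ itself: by Lemma \ref{lemwake}, exactly two rays ($R^\infty_a(0)$ and $R^\infty_a(\tfrac12)$) land at $0$ precisely when $a\in\mathcal{W}(0)\cup(-\mathcal{W}(0))$, and otherwise only one ray lands at $0$. Pulling this back along the (pre)image chain $c_-(a)\mapsto v_-(a)=f_a(c_-(a))\mapsto\cdots\mapsto f^{n+1}_a(c_-(a))=0$ and accounting for the ramification at the critical point $c_-(a)$ shows that $v_-(a)$ receives exactly one ray if $a\notin\mathcal{W}(0)$ (and $a\notin-\mathcal{W}(0)$, but we restrict to $\overline{\mathcal{W}(0)\cup S}$) and exactly two rays $R^\infty_a(t),R^\infty_a(t')$ if $a\in\mathcal{W}(0)$ — these two being the pull-backs of $0$ and $\tfrac12$ along the chosen inverse branch.

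Next I would promote this to the parameter plane. Fix one such angle $t$; the ray $\mathcal{R}_\infty(t)$ lands at some $a_1\in\partial\mathcal{C}_1$ by Proposition \ref{propland}, and since $v_-(a)$ lies in the inverse orbit of $0$, the last clause of Proposition \ref{propland} forces $x(a_1)=v_-(a_1)$, i.e. $f^{m+1}_{a_1}(c_-(a_1))=0$ for the appropriate $m$ depending only on $t$. The candidate set for $a_1$ is thus cut out by a non-degenerate algebraic equation, so it is finite; to pin down $a_1=a$ I would use the holomorphic motion of $\overline{R^\infty_a(3^jt)}$ supplied by Lemma \ref{lem.stab.parabo}, which is available because none of the rays $R^\infty_a(3^jt)$ contains a critical point (the only critical point in the relevant part of the orbit is $c_-(a)$, which is the landing point $v_-(a)$'s preimage, not an interior point of a ray — this needs a short verification). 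The motion gives a neighborhood of $a$ on which $R^\infty(t)$ lands at the corresponding point of the inverse orbit of $0$; combined with the fact that $\mathcal{R}_\infty(t)$ is an arc in $\mathcal{H}_\infty$ accumulating only on $\partial\mathcal{C}_1$ and with the algebraic finiteness, a standard connectedness-of-accumulation-set argument (as in the proof of Proposition \ref{propland}) yields that $\mathcal{R}_\infty(t)$ lands exactly at $a$. The same argument applied to $t'$ in the case $a\in\mathcal{W}(0)$ gives the second parameter ray.

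Finally, uniqueness: if $\mathcal{R}_\infty(s)$ landed at $a$ for some rational $s$, then by continuity $R^\infty_a(s)$ would have to land at $v_-(a)$ (pushing the parameter-ray landing into the dynamical plane via the holomorphic motion of the Böttcher coordinate near $\infty$ and the fact that the free critical value is the only obstruction), so $s$ is among the finitely many dynamical angles identified in the first step; the bijection between those dynamical angles and the parameter rays landing at $a$ then gives exactly one (resp. two) admissible $s$. The main obstacle I expect is the bookkeeping in the first step: correctly counting the dynamical rays landing at $v_-(a)$ by pulling back through the (pre)critical chain while tracking the ramification at $c_-(a)$ and the case split coming from Lemma \ref{lemwake} — and, relatedly, checking that no ray $R^\infty_a(3^jt)$ passes through $c_-(a)$ so that Lemma \ref{lem.stab.parabo} genuinely applies. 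Everything after that is a repetition of the machinery already deployed in Proposition \ref{propland}.
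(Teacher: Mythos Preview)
Your overall architecture is right and matches the paper's: count dynamical rays at $v_-(a)$ via Lemma~\ref{lemwake}, then lift to the parameter plane, then use Proposition~\ref{propland} for uniqueness. The dynamical counting and the uniqueness paragraph are fine.

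The gap is in your second step, where you try to show that $\mathcal{R}_\infty(t)$ lands \emph{at $a$} rather than at some other solution $a_1$ of $f^{n+1}_{a_1}(c_-(a_1))=0$. You invoke the holomorphic motion of $\overline{R^\infty_a(3^jt)}$ from Lemma~\ref{lem.stab.parabo} and then say a ``standard connectedness-of-accumulation-set argument'' finishes it. But stability of the \emph{dynamical} ray near $a$ only tells you that for $b$ near $a$ the ray $R^\infty_b(t)$ still lands at the continuation of $v_-(a)$; it does not by itself produce any parameter $b$ near $a$ with $v_-(b)\in R^\infty_b(t)$, i.e.\ with $b\in\mathcal{R}_\infty(t)$. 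The connectedness argument of Proposition~\ref{propland} shows the accumulation set of $\mathcal{R}_\infty(t)$ is a single point among finitely many candidates, but gives no mechanism for identifying which candidate; there are in general several Misiurewicz-parabolic parameters of the same depth, and nothing in your outline distinguishes $a$ from them.

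The paper closes this gap with a Rouch\'e argument: writing $l_a(r)$ for the point of potential $r$ on $R^\infty_a(t)$ (moving holomorphically in $a$ by Lemma~\ref{lem.stab.parabo}), one considers $F_r(a)=v_-(a)-l_a(r)$; since $F_1(a_0)=0$, Rouch\'e gives, for every neighbourhood $U$ of $a_0$, some $r$ close to $1$ and $a\in U$ with $F_r(a)=0$, i.e.\ $a\in\mathcal{R}_\infty(t)$. Thus $a_0$ is an accumulation point of $\mathcal{R}_\infty(t)$, and then Proposition~\ref{propland} upgrades accumulation to landing. This is the missing idea you need to insert; once it is in place the rest of your outline goes through.
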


\begin{proof}
We prove for $a\not\in{\mathcal{W}(0)}$, the left case is similar. Since $c_-(a)$ will eventually hit 0, the Julia set $J_a$ is connected, hence there exists some $t\in \mathbb{Q}/\mathbb{Z}$ with $3^nt \equiv 1 \pmod{\mathbb{Z}}$ such that $R^{\infty}_{a}(t)$ lands at $v_-(a)$. It is clear that $R^{\infty}_{a_0}(t)$ is the unique ray in the dynamical plan landing at $v_-(a)$: notice that $f_a$ is univalent near $f^k_a(c_-(a))$ for $k\geq1$. \\
Next we prove the landing property in the parameter space. Suppose $a_0$ verifies the assumptions in the lemma. Let $l_{a_0}:[1,\infty]\longrightarrow\mathbb{C}$ with $l_{a_0}(r)$ representing the unique point on $R^{\infty}_a(t)$ with equipotential $r$. Recall the holomorphic motion given by Lemma \ref{lem.stab.parabo}
\[L:W\times R^{\infty}_{a_0}(t)\longrightarrow \mathbb{C},\,\,(a,l_{a_0}(r))\mapsto l_a(r).\]
Consider the holomorphic function $F_r(a) := v_-(a) - l_a(r)$. Then $F_1(a_0) = 0$. By Rouché's theorem, for every neighborhood $U\subset W$ of $a_0$, there exists $r$ close enough to 1 and $a\in U$ such that $F_r(a) = 0$. This means that $\mathcal{R}_{\infty}(t)$ accumulates to $a$ and by Proposition \ref{propland}, it actually lands at $a$. Now suppose that there is another ray $\mathcal{R}_{\infty}(t')$ with $t$ rational landing at $a$. Then by Proposition \ref{propland}, $R^{\infty}_{a}(t')$ lands at $v_-(a)$, contradicts to the uniqueness of $R^{\infty}_{a}(t)$.
\end{proof}

Adapting the same strategy as above (applying Lemma \ref{lem.stab.misiur}), we can prove the following landing property for parameters of Misiurwicz type:
\begin{lemma}\label{lem.para.land.Misiur}
Let $a\in \overline{\mathcal{W}(0)\cup S}$ be such that $v_-(a)$ is repelling (pre-)periodic. Let $R^{\infty}_a(\eta)$ land at this periodic point. Then there exists a unique $t\in\mathbb{Q}/\mathbb{Z}$ with $3^nt = \eta$ for some $n\geq 0$ such that $R^{\infty}_a(t)$ lands at $v_-(a)$ and $\mathcal{R}_{\infty}(t)$ lands at $a$.
\end{lemma}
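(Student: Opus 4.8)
The plan is to follow the exact template used in the proof of Lemma \ref{lemparaland}, replacing the parabolic stability input (Lemma \ref{lem.stab.parabo}) with the Misiurewicz stability input (Lemma \ref{lem.stab.misiur}), since now the landing point $v_-(a)$ is a repelling (pre-)periodic point rather than a point in the inverse orbit of $0$. First I would work in the dynamical plane of $f_a$: since $v_-(a)$ is repelling (pre-)periodic, $J_a$ is connected (the orbit of $c_-(a)$ is eventually periodic and bounded), so by the classical theory there is at least one rational external ray $R^{\infty}_a(\eta)$ landing at the periodic point in the orbit of $v_-(a)$, and pulling back along the appropriate inverse branch of $f_a$ (which is univalent near every point $f^k_a(c_-(a))$, $k\geq 1$) yields a ray $R^{\infty}_a(t)$ with $3^n t = \eta$ landing exactly at $v_-(a)$. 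Uniqueness of this $t$ follows because $f_a$ is a local homeomorphism near $v_-(a)$, so only one external ray can land there; this also pins down $t$ from $\eta$.

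Next I would transfer this to the parameter plane. Applying Lemma \ref{lem.stab.misiur} to the analytic family $f_a$ at $a_0$ (using that the landing point is repelling (pre-)periodic, that $\overline{R^{\infty}_{a_0}(3^m t)}$ contains no critical point for the relevant $m$ — which holds because the only critical point that could obstruct is $c_-(a_0)$, and $c_-(a_0)=v_-(a_0)$ up to one iterate is not on the closure of these rays for $k\geq 1$), one obtains a neighborhood $U$ of $a_0$ and a holomorphic motion $L: U\times \overline{R^{\infty}_{a_0}(t)} \to \mathbb{C}$ preserving equipotentials, $(a, l_{a_0}(r))\mapsto l_a(r)$. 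Then I would set $F_r(a) := v_-(a) - l_a(r)$, a holomorphic function of $a$ on $U$ for each fixed $r>1$, note $F_1(a_0)=0$, and apply Rouché's theorem: for any neighborhood $U'\subset U$ of $a_0$, choosing $r$ sufficiently close to $1$ gives a zero of $F_r$ inside $U'$. Hence $\mathcal{R}_{\infty}(t)$ accumulates at $a_0$, and by Proposition \ref{propland} it actually lands there. For uniqueness of the parameter angle: if $\mathcal{R}_{\infty}(t')$ with $t'$ rational also landed at $a_0$, then by Proposition \ref{propland} $R^{\infty}_{a_0}(t')$ would land at $v_-(a_0)$, contradicting the uniqueness established in the dynamical plane.

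The main obstacle, and the only place where care beyond a literal copy of Lemma \ref{lemparaland} is needed, is verifying the hypotheses of Lemma \ref{lem.stab.misiur} — in particular that no critical point of $f_{a_0}$ lies on $\bigcup_{m\geq 0}\overline{R^{\infty}_{a_0}(3^m t)}$. One critical point is $c_+(a_0)$, which lies in the (closure of the) parabolic basin $\overline{B^*_{a_0}(0)}$ or escapes to $\infty$ by the earlier sections, and in neither case is it on a preperiodic external ray; the other is $c_-(a_0)$, whose forward orbit under $f_{a_0}$ is preperiodic and lands precisely on the orbit of $v_-(a_0)$, so one must check that $v_-(a_0)$ — and hence $c_-(a_0) = \tilde c_-(a_0)$ via the co-critical relation — is not itself sitting on the backward orbit of its own external ray in a way that produces a critical point on some $\overline{R^{\infty}_{a_0}(3^m t)}$; this is handled by the same observation as in Lemma \ref{lemparaland} that $f_{a_0}$ is univalent near $f^k_{a_0}(c_-(a_0))$ for $k\geq 1$, so the ray $R^{\infty}_{a_0}(3^m t)$ for $m\geq 1$ avoids critical points, and the potential issue only at $m=0$ (i.e. at $v_-(a_0)$ itself) does not arise since $R^{\infty}_{a_0}(t)$ lands at $v_-(a_0)$ but does not contain it. Finally I would remark that the restriction $a\in\overline{\mathcal{W}(0)\cup S}$ is used only to know which external ray configuration is relevant and to invoke the one-ray-landing conclusion (as opposed to the two-ray case inside $\mathcal{W}(0)$), exactly parallel to Lemma \ref{lemparaland}.
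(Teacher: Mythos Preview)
Your proposal is correct and follows exactly the approach the paper indicates: the paper's proof consists of the single sentence ``Adapting the same strategy as above (applying Lemma \ref{lem.stab.misiur}),'' and you carry this out in detail, running the Rouch\'e argument of Lemma \ref{lemparaland} with the repelling stability lemma in place of the parabolic one. One small imprecision: your phrase ``only one external ray can land there'' is stronger than what is needed or claimed---the uniqueness in the statement is only among angles $t$ with $3^nt=\eta$, which follows from univalence of $f_a$ along the forward orbit of $v_-(a)$, not from $v_-(a)$ being uni-accessible in general.
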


\paragraph{Parameter rays in $\mathcal{H}_0$}\mbox\\ 

Define equipotentials and internal rays contained in $\mathcal{H}_0\cap{\overline{S}}$ by
\begin{equation}\label{eq.rays.H0}
    \mathcal{E}(0) = [0,\sqrt{3}]\cup \gamma,\,\,\mathcal{E}(n) = \overline{\Phi^{-1}(E(n)\cap B)}\,\,(n\geq 1);\,\,\mathcal{R}(\theta) = \Phi^{-1}(R(\theta)).
\end{equation}

\begin{proposition}\label{propequiH0}
For $n\geq 1$, $\mathcal{E}(n-1)\cap\partial\mathcal{H}_0$ consists of finitely many points. Let $\Tilde{\mathcal{E}}(n)$ denote the quotient space of $\mathcal{E}(n)$ by gluing 0 and 2, then $\Tilde{\mathcal{E}}(n)$ is homeomorphic to $E(n)$. In particular $(\mathcal{E}(n-1)\cap \partial\mathcal{H}_0)\subset(\mathcal{E}(n)\cap \partial\mathcal{H}_0)$. Let $a\in (\mathcal{E}(n)\cap \partial\mathcal{H}_0)\setminus(\mathcal{E}(n-1))\cap \partial\mathcal{H}_0)$, then $f^{n+1}_a(c_-(a))=0$, $f^{n}_a(c_-(a))\not=0$.
\end{proposition}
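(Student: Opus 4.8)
The plan is to exploit the parametrization $\Phi:\mathcal{U}_0\setminus\mathcal{I}\longrightarrow B$ from Proposition \ref{PROP_para0}, which identifies $W$ (resp.\ each component of $\Tilde{\mathcal{U}}_0\cap S$) conformally with $\Tilde{\Omega}$ (resp.\ $B\setminus\overline{\Tilde{\Omega}}$), together with the dynamical picture: when $a\notin\mathcal{H}_0$ the map $f_a$ restricted to $B^*_a(0)$ is conformally conjugate to $P_{\frac14}$, and for $a\in\mathcal{U}_0$ the diagram (\ref{commu1}) gives $h_a$ conjugating the dynamics on the petals. The key point is that a parameter $a$ lies on $\mathcal{E}(n)\cap\partial\mathcal{H}_0$ precisely when the free critical value $v_-(a)$ has ``just reached'' $\partial B^*_a(0)$ at depth-$n$ equipotential level, i.e.\ the orbit of $c_-(a)$ crashes onto $0$ after exactly $n+1$ steps.

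First I would establish that $\mathcal{E}(n-1)\cap\partial\mathcal{H}_0$ is finite. The set $\mathcal{E}(n-1)$ is the closure of $\Phi^{-1}$ of the depth-$(n-1)$ equipotential $E(n-1)\cap B$; a boundary parameter $a_0\in\mathcal{E}(n-1)\cap\partial\mathcal{H}_0$ must be a limit of parameters $a\in\mathcal{U}_0$ with $\Phi(a)\to E(n-1)$, and by Lemma \ref{lem.proper} (properness of $\Phi$) and Lemma \ref{lemboundaryH} ($\partial\mathcal{U}_0\subset\partial\mathcal{C}_1$) such an $a_0$ satisfies $f^n_{a_0}(c_-(a_0))=0$: indeed $h_{a_0}$ extends (as in the discussion after (\ref{commu1}), using that for $a\notin\mathcal{H}_0$ the diagram continues forever) so that $v_-(a_0)$ lands on the boundary of a depth-$(n-1)$ petal, forcing the orbit to hit the parabolic point. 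This is a non-degenerate polynomial equation in $a_0$ (its solution set cannot be all of $\mathbb{C}$ since generic $a$ has $c_-(a)$ escaping or attracted to $0$ without crashing), hence finitely many solutions. Conversely the same computation identifies, for $a\in(\mathcal{E}(n)\cap\partial\mathcal{H}_0)\setminus(\mathcal{E}(n-1)\cap\partial\mathcal{H}_0)$, exactly the condition $f^{n+1}_a(c_-(a))=0$ with $f^n_a(c_-(a))\neq 0$, giving the last assertion.

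Next, for the homeomorphism statement: I would show $\Tilde{\mathcal{E}}(n)$, the quotient of $\mathcal{E}(n)$ gluing $0$ and $2$, is homeomorphic to $E(n)$ by induction on $n$. For $n=0$ this is the description of $\mathcal{E}(0)=[0,\sqrt{3}]\cup\gamma$ together with $\partial W=\gamma\cup\tau(\gamma)$ from Proposition \ref{PROP_para0} (after gluing the endpoints $0$ and $2$ one gets a Jordan curve, matching $E(0)=\partial\Omega\setminus\{\tfrac14\}$ completed). For the inductive step, $\mathcal{E}(n)=\overline{\Phi^{-1}(E(n)\cap B)}$ and $E(n)=P^{-1}_{\frac14}(E(n-1))$; using that $\Phi$ is a homeomorphism onto $B$ on $\Tilde{\mathcal{U}}_0\cap S$ and onto $\Tilde{\Omega}$ on $W$, the preimage $\Phi^{-1}(E(n)\cap B)$ is combinatorially a copy of $E(n)\cap B$, and the only identifications introduced by taking closures in parameter space occur at the ``spine'' $\mathcal{I}=[-\sqrt{3},\sqrt{3}]\setminus\{0\}$ and at the landing of $\gamma$, which after gluing $0\sim 2$ exactly reconstitute $E(n)$. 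The inclusion $\mathcal{E}(n-1)\cap\partial\mathcal{H}_0\subset\mathcal{E}(n)\cap\partial\mathcal{H}_0$ then follows from $E(n-1)\subset$ (the region bounded by) $E(n)$, or more precisely from the fact that a depth-$(n-1)$ crash is a fortiori a depth-$n$ configuration once we note $f_a(0)=0$.

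The main obstacle I expect is the careful bookkeeping of the closure and the gluing: $\Phi$ is only defined on $\mathcal{U}_0\setminus\mathcal{I}$ and is \emph{not} injective across $\mathcal{I}$ (the two ``sheets'' $\Tilde{\mathcal{U}}_0\cap S$ and its reflection, plus $W$, all abut along $[0,\sqrt3]$ and $\gamma$), so one must verify precisely which boundary points of the model equipotential $E(n)$ correspond to the single parameter $0$ (the multiple ``ends'' of $\mathcal{E}(n)$ at the origin, coming from different accesses in $B$ to the parabolic point $\tfrac12$) — this is exactly why the quotient gluing $0$ and $2$ is needed. Controlling that the pieces fit together continuously, and that no \emph{extra} self-intersections of $\mathcal{E}(n)$ appear on $\partial\mathcal{H}_0$ beyond the prescribed ones, will require the same Fatou-coordinate continuity and normal-family arguments used in Lemma \ref{lem.proper} and Lemma \ref{lem.rigid}, applied now along the equipotential rather than at a single point.
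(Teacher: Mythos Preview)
Your finiteness argument and the identification of the polynomial condition $f^{n+1}_a(c_-(a))=0$ are essentially the paper's Step~1. The genuine gap is in the inductive homeomorphism step. You propose to control the closure of $\Phi^{-1}(E(n)\cap B)$ using only ``Fatou-coordinate continuity and normal-family arguments'', but this cannot work near $a=0$: at $a=0$ the parabolic point becomes degenerate (two attracting axes), so the Fatou coordinate does \emph{not} depend continuously on $a$ there, and nothing in your outline prevents the new arcs of $\mathcal{E}(n)$ (for $n\geq 2$) from accumulating at $0$ in addition to the prescribed Misiurewicz-parabolic endpoints. If that happened, the quotient gluing $0\sim 2$ would not recover $E(n)$.

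The paper resolves this with an ingredient you do not invoke: the parameter \emph{external} rays of Lemma~\ref{lemparaland}. Once one knows that $\mathcal{R}_\infty(\tfrac{1}{3})$ lands at the Misiurewicz-parabolic endpoint $a_1$ of $\mathcal{E}(1)$, the curve $\mathcal{R}_\infty(\tfrac{1}{3})\cup\mathcal{E}(1)$ cuts $S$ into two unbounded pieces, only one of which has $0$ on its boundary; the new arc of $\mathcal{E}(2)$ is forced into the other piece, hence cannot reach $0$. Iterating with the rays $\mathcal{R}_\infty\big(\tfrac{3^n-1}{2\cdot 3^n}\big)$ builds a nested family of barriers $V_n$ that separate each successive new arc of $\mathcal{E}(n+1)$ from $0$. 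This external-ray barrier, not Fatou-coordinate continuity, is what pins down the endpoints and makes the induction go through; you should incorporate Lemma~\ref{lemparaland} explicitly at this step.
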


\begin{proof}
The proof proceeds in three parts:
\begin{itemize}
    \item $\#(\mathcal{E}(n-1)\cap\partial\mathcal{H}_0)\textless\infty$. Let $a_0\in \mathcal{E}(n)\cap{\partial\mathcal{H}_0},\,\,n\geq0$  and $a\not =0$, $(a_m)\subset \mathcal{E}(n)\cap\mathcal{H}_0$ a sequence tending to $a_0$. By definition of $\mathcal{E}(n)$, $f_{a_m}^{n+1}(c_-(a_m))\in \overline{f_{a_m}(\Omega^0_{a_m})}$. By continuity of Fatou coordinate (since $a_0\not=0$), $\phi_{a_m}(f_{a_m}^{n+1}(c_-(a_m)))\to \infty$, therefore $f_{a_0}^{n+1}(c_-(a_0)) = 0$. It has only finitely many solutions.
    
    \item $\Tilde{\mathcal{E}}(1)$ is homeomorphic to $E(1)$. By the first step the only possible accumulation points of ${\Phi^{-1}(\partial\Omega^0)}$ in $\partial\mathcal{H}_0$ are $\{0,2\}$. Clearly both $\{0,2\}$ are reached by $\Phi^{-1}(\partial\Omega^0)$ since $\Phi^{-1}(\partial\Omega^0),\gamma,[0,\sqrt{3}]$ surround a region which is mapped under $\Phi$ to $\Omega^0\setminus\Omega$. Let $\Tilde{\Omega}^1$ be the region bounded by $\mathcal{E}(1)\setminus\Phi^{-1}(\partial\Omega^0)$. Again by the first step, $\partial\Tilde{\Omega}^1$ lands either at 0, or at some $a_0$ such that $f_{a_1}^2(c_-(a_1)) = 0$ but $f_{a_1}(c_-(a_1)) \not= 0$. Suppose that the landing point is 0, then $\Phi^{-1}(E(1)\cap\mathbb{L})$ bounds a simply connected region $W_1$ in the parameter plane ($\mathbb{L}$ is the lower-half plane), so $W_1\subset \mathcal{C}_1$ must contain parts of $\partial H_0$ and this contradicts Lemma \ref{lemboundaryH}. 
    \item $\Tilde{\mathcal{E}}(n)$ is homeomorphic to $E(n)$. By the preceeding step and Lemma \ref{lemparaland}, $\mathcal{R}_{\infty}(\frac{1}{3})$ lands at $a_1$. $S\setminus\ (\mathcal{R}_{\infty}(\frac{1}{3})\cup\mathcal{E}(1))$ has two unbounded connected components $V_1,V'_1$ with $V_1$ containing 0 on its boundary while the other not. Hence $\Phi^{-1}(E(2)\cap-\mathbb{L})$ will not accumulate at 0. Denote by $\Tilde{\Omega}^2\subset V_1$ the region bounded by $\mathcal{E}(2)\setminus\Phi^{-1}(\partial(\Omega^1))$. A similar argument as step 2 yields that $\partial\Tilde{\Omega}^2$ lands at some $a_2$ with $f_{a_2}^3(c_-(a_2)) = 0$, $f^{2}_{a_2}(c_-(a_2)) \not= 0$. This implies that $\Tilde{\mathcal{E}}(2)$ is homeomorphic to $E(2)$. Again by Lemma \ref{lemparaland}, $\mathcal{R}_{\infty}(\frac{4}{9})$ lands at $a_2$, hence $S\setminus\ (\mathcal{R}_{\infty}(\frac{4}{9})\cup\mathcal{E}(2))$ has two unbounded connected components $V_2,V'_2$ with $V_2$ containing 0 on its boundary while the other not. Inductively, we can repeat the above procedures to construct the corresponding $V_n$ with $V_n$ defined by the unbounded connected component of $S\setminus(\mathcal{R}_{\infty}(\frac{3^{n}-1}{2\cdot3^n})\cup\mathcal{E}(n))$ containing 0 on its boundary (the angle $t_n = \frac{3^{n}-1}{2\cdot3^n}$ is obtained by induction: $\frac{1}{2}\textgreater t_n\textgreater t_{n-1}$) and show that $\left(\mathcal{E}(n+1)\setminus\Phi^{-1}(\partial(\Omega^n))\right)\cap V_n$ does not accumulate at 0. The Proposition follows for all $n$.
\end{itemize}

\end{proof}

\begin{proposition}\label{prop.para.land.Misur}
Let $\theta$ be such that $2^n\theta = \frac{\pm1}{2^k-1}$, $k\textgreater 1,n\geq 0$. Then the parameter ray $\mathcal{R}_0(\theta)$ lands at some $a_0\in \partial \mathcal{H}_0\setminus\{0\}$ and the corresponding dynamical ray $R^0_{a_0}(\theta)$ lands at some (pre-)parabolic or (pre-)repelling $k-$periodic point $x(a_0)\in \partial B^*_{a_0}(0)$. Moreover if $x(a_0)$ is (pre-)repelling, then $R^0_{a_0}(\theta)$ lands at $v_-(a_0)$.
\end{proposition}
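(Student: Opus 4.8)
The strategy mirrors the proof of Proposition \ref{propland} for external rays in $\mathcal{H}_\infty$, but now in the internal-ray setting and with the parametrisation $\Phi:\mathcal{U}_0\setminus\mathcal{I}\longrightarrow B$ of Proposition \ref{PROP_para0}. First I would establish that the accumulation set of $\mathcal{R}_0(\theta)=\Phi^{-1}(R(\theta))$ in $\overline{\mathcal{H}_0}$ is contained in $\partial\mathcal{H}_0$ and does not reduce to $\{0\}$: since $R(\theta)$ accumulates on $\partial B$ (it lands at a $k$-periodic point of $P_{1/4}$ by the Proposition-Definition), $\Phi^{-1}(R(\theta))$ cannot accumulate inside $\mathcal{U}_0\setminus\mathcal{I}$, and using the description of $\partial\mathcal{U}_0$ together with the fact that internal rays of angle $\tfrac{\pm1}{2^k-1}$ with $k>1$ land away from the fixed point, I would rule out $0$ as the sole accumulation point (e.g.\ by a symmetry/combinatorial argument showing the landing point of $R^0_{a_0}(\theta)$ on $\partial B^*_{a_0}(0)$ is not the parabolic fixed point). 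Then pick any accumulation point $a_0\in\partial\mathcal{H}_0\setminus\{0\}$; since $a_0\notin\mathcal{H}_0$, the map $f_{a_0}:B^*_{a_0}(0)\to B^*_{a_0}(0)$ has degree two, is conjugate to $P_{1/4}$, and the internal ray $R^0_{a_0}(\theta)$ is well-defined and lands at some $k$-periodic point $x(a_0)\in\partial B^*_{a_0}(0)$ which is (pre-)parabolic or (pre-)repelling by the standard landing theorem for internal rays in parabolic basins.

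Next I would split into the by-now familiar three cases according to the nature of $x(a_0)$, exactly as in Proposition \ref{propland}. If $x(a_0)$ is in the inverse orbit of $0$ or is (pre-)repelling, I would use the appropriate stability lemma for internal rays — the internal-ray analogues of Lemmas \ref{lem.stab.misiur} and \ref{lem.stab.parabo}, obtained by transporting the holomorphic motion of $R^0_{a_0}(\theta)$ (which exists because the defining curve $L'$ in the construction does not depend on $a$, cf.\ the Remark after the construction and Lemma \ref{lem.holomotion.internal}) — to conclude that for $a_n\to a_0$ in $\mathcal{R}_0(\theta)$ the landing point $x(a_n)$ of $R^0_{a_n}(\theta)$ moves continuously. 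Since $a_n\in\Phi^{-1}(R(\theta))$ forces $v_-(a_n)$ to lie on the corresponding dynamical internal ray through the commutative-diagram characterization of $\Phi$, taking limits gives $x(a_0)=\lim v_-(a_n)=v_-(a_0)$; this is the assertion that the dynamical ray lands at $v_-(a_0)$ in the (pre-)repelling case, and it also yields an algebraic equation $f^{N}_{a_0}(c_-(a_0))=0$ (resp.\ $f^{N}_{a_0}(c_-(a_0))$ is periodic) with only finitely many solutions. If $x(a_0)$ is (pre-)parabolic but not in the inverse orbit of $0$, I would argue as in the second bullet of Proposition \ref{propland}: the conditions $f^{k+l}_a(z)=f^l_a(z)$, $(f^k_a)'(f^l_a(z))=1$ cut out an algebraic variety whose only one-dimensional component is $z=0$ (same projection argument, using stability of the Böttcher coordinate at $\infty$ for the bounded-$a$ escaping case), so again $a_0$ ranges over a finite set. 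In all cases the accumulation set of $\mathcal{R}_0(\theta)$ is finite, hence a single point by connectedness, proving that $\mathcal{R}_0(\theta)$ lands.

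The main obstacle I anticipate is \textbf{not} the case analysis itself — that is a direct transcription of Proposition \ref{propland} — but rather setting up the internal-ray stability lemmas with enough uniformity near a boundary parameter $a_0$ where the dynamics on $B^*_{a}(0)$ changes character as $a$ crosses $\partial\mathcal{H}_0$: for $a\in\mathcal{U}_0$ the ray $R^0_a(\theta)$ is built by a pull-back that must avoid the free critical value $v_-(a)$ (this is exactly the hypothesis invoked in the construction and controlled by Lemma \ref{lem.holomotion.internal}), whereas for $a\notin\mathcal{H}_0$ the basin is degree-two and the ray is transported directly from $P_{1/4}$. Checking that the holomorphic motion of $\overline{R^0_{a_0}(\theta)}$ extends to a full neighborhood of $a_0$ in $\mathbb{C}$ — so that one may pull back along the periodic cycle of angles $\{2^i\theta\}$ without the ray crashing into $c_-(a)$ — is where the delicate, slice-specific work lies, and it is essentially what Lemma \ref{lem.holomotion.internal} is designed to supply; I would quote it at the precise point where the holomorphic motion is pulled back. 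A secondary subtlety is the exclusion of $0$ as an accumulation point, which must be handled by hand using the combinatorics of which internal angles are pinched at the parabolic fixed point, in the spirit of the corollary identifying the landing rays $\mathcal{R}^S_\infty(\tfrac12)$, $\mathcal{R}^{iS}_\infty(0)$ at $0$.
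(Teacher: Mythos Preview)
Your plan is essentially the paper's proof: both transcribe the case analysis of Proposition~\ref{propland} to the internal-ray setting and conclude landing from finiteness of the accumulation set. The one substantive difference is the exclusion of $0$. The paper does not argue combinatorially about which internal angles are pinched at the parabolic fixed point; instead it observes that Proposition~\ref{propequiH0} together with Lemma~\ref{lemparaland} already furnishes barriers --- the equipotentials $\mathcal{E}(n)$ land at Misiurewicz-parabolic parameters, which are in turn landed on by external parameter rays --- that separate $\overline{\mathcal{R}_0(\theta)}$ from $0$ outright. This is cleaner and reuses work already done, whereas your proposed ``symmetry/combinatorial argument'' is left vague and would need to be made precise. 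Also, your case ``$x(a_0)$ in the inverse orbit of $0$'' is vacuous here: the periodic part of the landing point has exact period $k>1$, so cannot be $0$, and hence no pre-image of it lies in the inverse orbit of $0$; the paper accordingly treats only the genuine parabolic and repelling cases. Your concern about internal-ray stability across $\partial\mathcal{H}_0$ is not misplaced, but the paper handles it as a direct adaptation of Proposition~\ref{propland} (implicitly via Lemma~\ref{lem.holomotion.internal}) rather than as a separate obstacle.
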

\begin{proof}
Let $a_0 \in \partial \mathcal{H}_0$ be an accumulation point of $\mathcal{R}_0(\theta)$. From Lemma \ref{lemparaland} and Proposition \ref{propequiH0} we see that $\overline{\mathcal{R}_0(\theta)}$ is separated from $0$, therefore $a_0\not= 0$. Let $x(a_0)$ be the $k-$periodic landing point of $R^0_{a_0}(\theta)$. Since the dynamical ray $R^0_{a_0}(\theta)$ is fixed by $f^k$,  by the snail lemma, $x(a_0)$ is a fixed point of $f^k$ either parabolic with multiplier 1 or repelling. Similarly as Proposition \ref{propland}, it suffices to prove that there are only finitely many possible accumulation value at $\partial\mathcal{H}_0$ of $\mathcal{R}_0(\theta)$. In fact the idea of the proof goes the same as Proposition \ref{propland}: $x(a_0)$ parabolic corresponds to the second case in the proof of Proposition \ref{propland} and $x(a_0)$  repelling corresponds to the third case.
\end{proof}

\paragraph{Parameter rays in $\mathcal{H}_n$ ($n\geq1$)}\mbox\\ 

Let $\mathcal{U}$ be a connected component of $\mathcal{H}_n$. Define equipotentials and internal rays contained in $\overline{\mathcal{U}}$ by
\begin{equation}\label{eq.rays.Hn}
    \mathcal{E}_{\mathcal{U}}(n) = \overline{\Phi_{\mathcal{U}}^{-1}(E(n)\cap B)}\,\,(n\geq 0);\,\,\mathcal{R}_{\mathcal{U}}(\theta) = \Phi_{\mathcal{U}}^{-1}(R(\theta)).
\end{equation}

\begin{figure}[H] 
\centering 
\includegraphics[width=0.7\textwidth]{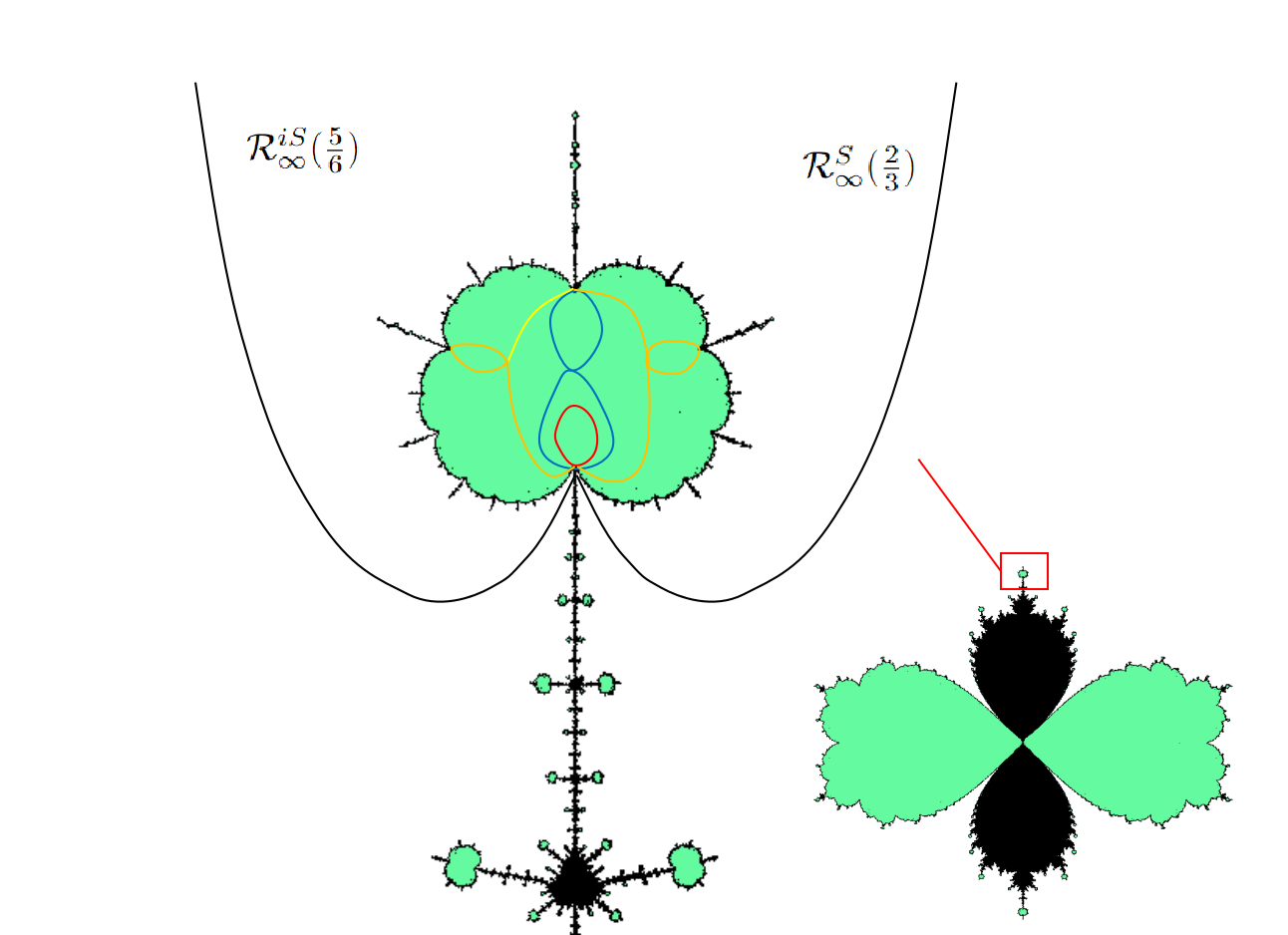} 
\caption{A zoom of a connected component $\mathcal{U}_1$ of $\mathcal{H}_1$. $\mathcal{U}_1$ is a copy of the cauliflower due to Proposition \ref{prop.para.Hn}. Every Misiurewicz parabolic parameter admits two rays landing since $\mathcal{U}_1\subset\mathcal{W}(0)$. For example the two external rays $\mathcal{R}^S_{\infty}(\frac{2}{3}),\mathcal{R}^{iS}_{\infty}(\frac{5}{6})$ land at $a_0$ such that $f_{a_0}(v_-(a_0)) = 0$. The union of curves in red, blue, orange inside $\mathcal{U}_1$ is the equipotential $\mathcal{E}_{\mathcal{U}_1}(0),\mathcal{E}_{\mathcal{U}_1}(1),\mathcal{E}_{\mathcal{U}_1}(2)$ respectively.} 
\label{Fig.main2} 
\end{figure}

\begin{proposition}\label{propequiHn}
$0\not\in\overline{\mathcal{U}}$. ${\mathcal{E}}(n)$ is homeomorphic to $E(n)$. In particular $(\mathcal{E}(n-1)\cap \partial\mathcal{H}_0)\subset(\mathcal{E}(n)\cap \partial\mathcal{H}_0)$. Let $a\in (\mathcal{E}(n)\cap \partial\mathcal{H}_0)\setminus(\mathcal{E}(n-1))\cap \partial\mathcal{H}_0)$, then $f^{n+1}_a(c_-(a))=0$, $f^{n}_a(c_-(a))\not=0$.
\end{proposition}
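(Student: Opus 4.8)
The plan is to mirror the proof of Proposition \ref{propequiH0}, which we already have in hand, but with the crucial simplification that $0\notin\overline{\mathcal U}$ so the Fatou coordinate stays stable on the whole of $\overline{\mathcal U}$. First I would establish $0\notin\overline{\mathcal U}$. By Lemma \ref{lemboundaryH} we have $\partial\mathcal U\subset\partial\mathcal C_1$, and by the Corollary following Proposition \ref{prop.para.Hn} there are only finitely many components of $\mathcal H_n$ with $n\ge 1$, each a topological disk carrying the homeomorphism $\Phi_{\mathcal U}:\mathcal U\to B$ of Proposition \ref{prop.para.Hn}. If $0$ were in $\overline{\mathcal U}$, then $a=0$ would be an accumulation point of parameters in $\mathcal H_n$; but at $a=0$ the fixed point at the origin is degenerate (two attracting axes) and one loses the stability of the single parabolic basin $B_a^*(0)$ which underlies the very definition of $\mathcal H_n$ (one critical point in $B_a^*(0)$, the other in $f_a^{-n-1}(B_a^*(0))\setminus f_a^{-n}(B_a^*(0))$). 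More concretely, I would argue that near $a=0$ the basin $B_a^*(0)$ cannot contain both the combinatorial data forcing $a\in\mathcal H_n$; since $\mathcal H_n$ is open and its closure meets $\partial\mathcal C_1$ only, the only way $0\in\overline{\mathcal U}$ is if $0\in\partial\mathcal U$, and then a perturbation argument using holomorphic dependence of the Fatou coordinate away from the degenerate case yields a contradiction, exactly as in the $a_0=0$ branch of the proof of Lemma \ref{lem.proper}.

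Once $0\notin\overline{\mathcal U}$ is secured, the rest is a direct transcription of Proposition \ref{propequiH0} but \emph{without} the $0$-vs-$2$ gluing issue, which is precisely why the conclusion is ``$\mathcal E(n)$ is homeomorphic to $E(n)$'' rather than ``$\widetilde{\mathcal E}(n)$ is''. I would run the same three-part induction: (1) show $\#(\mathcal E(n-1)\cap\partial\mathcal H_0)<\infty$ — wait, here the statement is copied verbatim from the $\mathcal H_0$ case and should instead read $\partial\mathcal U$; modulo that, for $a_0\in\mathcal E_{\mathcal U}(n)\cap\partial\mathcal U$ and a sequence $a_m\in\mathcal E_{\mathcal U}(n)\cap\mathcal U$ with $a_m\to a_0\ (\ne 0)$, continuity of the Fatou coordinate forces $\phi_{a_m}(f_{a_m}^{n+1}(c_-(a_m)))\to\infty$, hence $f_{a_0}^{n+1}(c_-(a_0))=0$, an algebraic equation with finitely many roots; (2) for the base case $n=1$, note $\Phi_{\mathcal U}^{-1}(\partial\Omega^0)$ together with $\partial\Omega^0$'s images bounds a region mapped by $\Phi_{\mathcal U}$ onto $\Omega^0\setminus\Omega$, and since $0$ is now excluded there is no spurious accumulation, so $\mathcal E_{\mathcal U}(1)\cong E(1)$; (3) propagate by pulling back: the ray landing at the parameter where $f_a^2(c_-(a))=0$ cuts off the relevant region and a Lemma \ref{lemparaland}-type argument feeds the induction on $n$.

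The main obstacle I expect is step (1) of the $0\notin\overline{\mathcal U}$ argument: one must rule out $0\in\partial\mathcal U$ cleanly. The delicate point is that at $a=0$ there are two immediate basins $B_\pm^*(0)$ (upper and lower half-plane) rather than one, so ``which basin'' the critical points land in is not well-defined in the limit, and the naive continuity-of-Fatou-coordinate statement fails. The fix is the same device used in Lemma \ref{lem.proper}: replace $B_{a_0}^*(0)$ by the appropriate $B_\pm^*(0)$, use that $\Phi_{\mathcal U}$ is proper onto $B$ (Proposition \ref{prop.para.Hn}) so that if $0\in\partial\mathcal U$ then along an approaching sequence $\Phi_{\mathcal U}(a_m)$ would have to converge into $B$, and derive that a limit map $\psi$ would carry $\Omega^k$ into $B_\pm^*(0)$ while simultaneously placing $v_-$ there, contradicting $a_0=0\notin\mathcal H_n$ (indeed at $a=0$ there is no free critical orbit of the required combinatorial type). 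Everything downstream — the homeomorphism $\mathcal E(n)\cong E(n)$, the inclusion $\mathcal E(n-1)\cap\partial\mathcal U\subset\mathcal E(n)\cap\partial\mathcal U$, and the characterization of the ``new'' boundary parameters by $f_a^{n+1}(c_-(a))=0$, $f_a^n(c_-(a))\ne 0$ — then follows by the same bookkeeping as in Proposition \ref{propequiH0}, only simpler.
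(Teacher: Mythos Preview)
Your treatment of the ``downstream'' part is fine and matches the paper: once $0\notin\overline{\mathcal U}$ is established, the Fatou coordinate is stable on all of $\overline{\mathcal U}$ and the argument of Proposition~\ref{propequiH0} goes through without the $0$-vs-$2$ gluing. The paper says exactly this.

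However, your argument for $0\notin\overline{\mathcal U}$ has a genuine gap. You propose to mimic the $a_0=0$ branch of Lemma~\ref{lem.proper}, using properness of $\Phi_{\mathcal U}$. But that lemma's logic runs the other way: one \emph{assumes} $\Phi_+(a_n)\to b\in B$ and derives a contradiction, thereby \emph{proving} properness. Here $\Phi_{\mathcal U}$ is already a homeomorphism by Proposition~\ref{prop.para.Hn}, so properness is automatic and gives $\Phi_{\mathcal U}(a_m)\to\partial B$, not ``into $B$''. There is then no normal-family limit $\psi:\Omega^k\to\mathbb C$ to work with, and the contradiction you describe (``$\psi$ carries $\Omega^k$ into $B_\pm^*(0)$ while placing $v_-$ there'') never gets off the ground. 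The vague claim that ``at $a=0$ there is no free critical orbit of the required combinatorial type'' is true but does not by itself exclude $0$ from $\partial\mathcal U$; the instability of the parabolic basin at $a=0$ is precisely what prevents a direct continuity argument.

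The paper's proof is quite different and splits into two cases. If $\mathcal U\subset\mathcal W(0)$, it uses the \emph{other} fixed point $-a$: for $a\in\mathcal U$ some external ray $R^\infty_a(t)$ of fixed period $k>1$ lands at $-a$ (repelling), and by landing stability $t$ is constant on $\mathcal U$; but $R^\infty_0(t)$ then lands at a repelling $k$-periodic point of $f_0$ with $k>1$, and stability forces the same for nearby $a$, contradicting that $-a$ is a fixed point. If $\mathcal U\subset S\setminus\overline{\mathcal W(0)}$, the paper uses the nested regions $V_n$ built in step~3 of Proposition~\ref{propequiH0}: assuming $0\in\partial\mathcal U$, one first shows (via Lemma~\ref{lemboundaryH}) that $\mathcal E_{\mathcal U}(1)$ must land at some $a_1\ne0$, whence by Lemma~\ref{lemparaland} some $\mathcal R_\infty(t)$ with $t<\tfrac12$ lands on $\partial\mathcal U$; but $\overline{\mathcal U}\cup\mathcal R_\infty(t)\subset\bigcap_n V_n$ forces $t\ge\lim_n\tfrac{3^n-1}{2\cdot3^n}=\tfrac12$, a contradiction. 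You would need one of these two mechanisms (or an equivalent) to close the gap.
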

\begin{proof}
By symmetricity of $\mathcal{H}_n$ we may suppose that $\mathcal{U}\subset \mathcal{W}(0)\cup S$. Essentially once we prove that 0 is not on the boundary of $\mathcal{U}$, the homeomorphism between ${\mathcal{E}}(n)$ and $E(n)$ is immediate. Suppose not, then there are two cases to consider respectively:
\begin{itemize}
    \item $\mathcal{U}\subset \mathcal{W}(0)$. Notice that $-a$ is a fixed point of $f_a$. When $a\in\mathcal{U}$, $J_a$ is connected, hence some rational external ray $R^{\infty}_a(t)$ lands at $-a$. Clearly $-a$ must be a repelling. By stability of landing property, $t$ does not depend on $a$ for $a\in\mathcal{U}$. Suppose $t^k = t \mod \mathbb{Z}$ ($k\textgreater 1$ since $R^{\infty}_a(\frac{1}{2})$ lands at 0). Therefore $R^{\infty}_0(t)$ lands at some repelling periodic point with period $\textgreater 1$ of $f_0$. However by stability of landing property, $R^{\infty}_a(t)$ also lands at a repelling periodic point with period $\textgreater 1$, a contradiction.
    \item $\mathcal{U}\subset S\setminus\overline{\mathcal{W}(0)}$. If the landing point (denoted by $a_0$) of $\mathcal{E}_{\mathcal{U}}(0)$ is not 0, then by continuity of Fatou coordinate $f^{n+1}_a(c_-(a)) = 0$. If $a_0 = 0$, then $\mathcal{E}_{\mathcal{U}}(1)$ must have a landing point $a_1$ other than 0. Indeed, suppose the contrary, then $\Phi^{-1}_{\mathcal{U}}(E(1))\cap\mathbb{L}$ or $\Phi^{-1}_{\mathcal{U}}(E(1))\cap-\mathbb{L}$ ($\mathbb{L}$ is the lower-half plane) will bound a simply connected region $W\subset \mathcal{C}_1$ containing points in $\partial\mathcal{U}$. This contradicts Lemma \ref{lemboundaryH}. Hence by Lemma \ref{lemparaland} there exists an external parameter ray $\mathcal{R}_{\infty}(t)$ with $t\textless\frac{1}{2}$ landing on $\partial\mathcal{U}$. However, recall that in Proposition \ref{propequiH0} step 3 we have constructed a sequence of connected open sets $V_n$ with 0 and $\mathcal{R}_{\infty}(\frac{3^n-1}{2\cdot3^n})$ contained in its boundary. Therefore $\overline{\mathcal{U}}\cup\mathcal{R}_{\infty}(t)\subset \bigcap V_n$. But $t\textless\frac{1}{2}$ while $\lim_n\frac{3^n-1}{2\cdot3^n} = \frac{1}{2}$, which leads to a contradiction.
\end{itemize}
\end{proof}
\begin{remark}\label{rem.para.land.Misur}
Since $0\not\in\partial\mathcal{U}$, We have the similar landing properties for $\mathcal{R}_{\mathcal{U}}(\theta)$ as those of $\mathcal{R}_{0}(\theta)$ in Proposition \ref{prop.para.land.Misur}.
\end{remark}

\subsection{Parameter graphs and puzzles}\label{sub.sec.paragraph}
For the reason of symmetricity, we will only construct the parameter graphs and puzzle pieces in $\overline{\mathcal{W}(0)\cup S}$ (which means that all the external rays, equipotentials appearing below are contained in $\overline{\mathcal{W}(0)\cup S}$). Recall the definitions of parameter rays, equipotentials in (\ref{eq.rays.Hinfty}) (\ref{eq.rays.H0}) and (\ref{eq.rays.Hn}).

\paragraph{Parameter graphs $\mathcal{Y}_n$ and puzzles $\mathcal{Q}_n$}\mbox{}\\
Fix some $r\textgreater 1$, for each $n\geq0$ define the graph adapted for parameters of \textbf{Misiurewicz parabolic type}.
\[\mathcal{Y}_n = \bigcup_{k=0}^{n}\left(\bigcup_{\mathcal{U}_k\subset\mathcal{H}_k}\mathcal{E}_{\mathcal{U}_k}(n-k)\right)\cup\left(\bigcup_{t\in T_n}\overline{\mathcal{R}_{\infty}(t)}\right)\cup\mathcal{E}_{\infty}(r^{1/3^n})\]
where 
\[T_n = T^{0}_n\cup (T^{\frac{1}{2}}_n\cap \{\frac{1}{2}\leq t\leq1\}),\,\,T^{0}_n = \{t;\,3^nt = 1\};\,T^{\frac{1}{2}}_n = \{t;\,3^nt = \frac{1}{2}\}.\]
By Lemma \ref{lemparaland}, Proposition \ref{propequiH0} and Proposition \ref{propequiHn}, $\mathcal{Y}_n$ is connected, hence every connected component of $({\mathcal{W}(0)\cup S})\setminus\mathcal{Y}_n$ is simply connected. We call such a connected component $\mathcal{Q}$ a puzzle piece associated to $\mathcal{Y}_n$ (or of depth $n$) if it is bounded and $\partial \mathcal{Q}\cap\mathcal{E}_{\infty}(r^{1/3^n})\not =\emptyset$. We denote it by $\mathcal{Q}_n$ in the sequel. By construction, every $\mathcal{Q}_{n+1}$ is contained in a unique puzzle piece $\mathcal{Q}_{n}$ of depth $n$.

\begin{lemma}\label{lem.holomotion.external.parabo}
For every $n\geq 0$, Let $\mathcal{Q}$ be a connected component of \[\left(\mathcal{W}(0)\cup S\setminus\bigcup_{t \in T_{n}} \overline{\mathcal{R}_{\infty}(t)}\right)\] 
Then there is a dynamical holomorphic motion
$L_n: \mathcal{Q}\times\left(\bigcup_{t \in T_{n+1}}\overline{R^\infty_{a_0}(t)}\right)\longrightarrow \mathbb{C}$,
where any $a_0\in \mathcal{Q}$ can be chosen to be the base point. By saying dynamical one means that (denote $L_n(a,z)$ by $L^a_n(z)$)
\[\xymatrix{
 \underset{t \in T_{n+2}}{\bigcup}\overline{R^\infty_{a_0}(t)}\ar[r]^{L^a_{n+1}}\ar[d]_{f_{a_0}} &    \underset{t \in T_{n+2}}{\bigcup}\overline{R^\infty_{a}(t)} \ar[d]^{f_a}\\
 \underset{t \in T_{n+1}}{\bigcup}\overline{R^\infty_{a_0}(t)} \ar[r]^{L^a_{n}} &   \underset{t \in T_{n+1}}{\bigcup}\overline{R^\infty_{a}(t)}}\]
\end{lemma}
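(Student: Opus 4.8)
The plan is to build the holomorphic motion $L_n$ by following the same strategy used in Lemma \ref{lem.stab.parabo} and Lemma \ref{lemparaland}, namely: construct the motion first on a large equipotential piece of each ray in $\bigcup_{t\in T_{n+1}}\overline{R^\infty_{a_0}(t)}$ via the Böttcher coordinate at $\infty$, then pull it back along the appropriate inverse branches of $f_a$, and finally extend it over the landing points by invoking the stability of repelling and parabolic-preimage landing points. Concretely, fix a base point $a_0\in\mathcal{Q}$ and for each $t\in T_{n+1}$ parametrise the ray by $l^{t}_{a_0}(s)=(\phi^\infty_{a_0})^{-1}(s\,e^{2\pi i t})$, $s>r^{1/3^{n+1}}$; the map $(a,l^t_{a_0}(s))\mapsto (\phi^\infty_a)^{-1}(s\,e^{2\pi i t})=:l^t_a(s)$ is a holomorphic motion of the truncated rays over a neighbourhood of $a_0$ by holomorphic dependence of the Böttcher coordinate. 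Since $T_{n+1}=T^0_{n+1}\cup(T^{1/2}_{n+1}\cap\{1/2\le t\le 1\})$, every $t\in T_{n+1}$ satisfies $3^{n+1}t\in\{1,\tfrac12\}$, so the whole finite set of rays is forward-invariant: $f_a$ maps $\overline{R^\infty_a(t)}$ onto $\overline{R^\infty_a(3t)}$, and after $n+1$ steps we reach $\overline{R^\infty_a(1)}=\overline{R^\infty_a(0)}$ or $\overline{R^\infty_a(1/2)}$, which by the Corollary land at $0$ on all of $\mathcal{W}(0)\cup S$ by Lemma \ref{lemwake}.

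The pull-back step is the heart of the argument. On the component $\mathcal{Q}$ of $(\mathcal{W}(0)\cup S)\setminus\bigcup_{t\in T_n}\overline{\mathcal{R}_\infty(t)}$ the combinatorics of the rays with angles in $T_{n+1}$ are stable: no $\mathcal{R}_\infty(t)$ with $t\in T_n$ (equivalently, no parameter ray through which $v_-(a)$ could collide with one of the rays $R^\infty_a(3^jt)$, $0\le j\le n$) is crossed, so for every $a\in\mathcal{Q}$ the rays $\overline{R^\infty_a(t)}$, $t\in T_{n+1}$, contain no critical value in their first $n$ forward iterates and hence the relevant inverse branches of $f_a$ are well-defined and depend holomorphically on $a$; this lets one lift the Böttcher-coordinate motion step by step, exactly as in the proof of Lemma \ref{lem.stab.parabo}. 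To extend the motion all the way to the landing points one splits $T_{n+1}$ according to the nature of the landing point of $R^\infty_{a_0}(t)$: if it is a repelling (pre-)periodic point one applies Lemma \ref{lem.stab.misiur}; if it is in the inverse orbit of $0$ one applies Lemma \ref{lem.stab.parabo} (this covers all $t$ with $3^{n+1}t\in\{1,\tfrac12\}$, since their eventual image lands at $0$). In either case one gets a holomorphic motion over a neighbourhood of $a_0$, and the $\lambda$-lemma together with the fact that $\mathcal{Q}$ is connected (being a component of an open set) and simply connected — and that the motions built around different base points agree on overlaps because they are all given by the same dynamically natural formula — promotes this to a holomorphic motion over all of $\mathcal{Q}$ with any base point.

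The commuting diagram is then automatic from the construction: by definition $L^a_{n+1}$ is obtained from $L^a_n$ by pulling back with the fixed inverse branches of $f_{a_0}$ and $f_a$, so $f_a\circ L^a_{n+1}=L^a_n\circ f_{a_0}$ holds on the truncated rays by construction and extends to the closures (hence to the landing points) by continuity. I expect the main obstacle to be the careful bookkeeping that guarantees the inverse branches used to pull back are globally consistent on $\mathcal{Q}$ — i.e. that crossing none of the rays $\mathcal{R}_\infty(t)$, $t\in T_n$, is exactly the condition preventing a critical value from obstructing the lift — and the verification that the locally-defined motions around different base points glue; both are of the same flavour as arguments already carried out in Propositions \ref{propland} and \ref{propequiH0}, so no genuinely new difficulty should arise, but the combinatorial invariance claim is where the proof has to be written with care.
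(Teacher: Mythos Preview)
Your proposal is correct and follows essentially the same route as the paper: verify that for $a\in\mathcal{Q}$ none of the rays $R^\infty_a(t)$, $t\in T_{n+1}$, contains a critical point, so each lands at a repelling or parabolic pre-fixed point; invoke Lemma~\ref{lem.stab.misiur} in the repelling case and Lemma~\ref{lem.stab.parabo} in the parabolic case to get a local holomorphic motion; then use simple connectivity of $\mathcal{Q}$ to globalise. The paper is somewhat terser and, instead of your explicit Böttcher-coordinate pullback, appeals directly to those two stability lemmas (which already contain that construction).

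Two small points to tighten. First, your appeal to Lemma~\ref{lemwake} is slightly off: it does \emph{not} say that $R^\infty_a(0)$ and $R^\infty_a(\tfrac12)$ both land at $0$ on all of $\mathcal{W}(0)\cup S$; outside $\mathcal{W}(0)$ the ray $R^\infty_a(\tfrac12)$ lands at the other fixed point $-a$, which is repelling. The paper handles this by observing that a parabolic landing point must have multiplier~$1$ and hence equal $0$ (the only other fixed point $-a_0$ has multiplier $1+a_0^2$), so your case split ``repelling $\Rightarrow$ Lemma~\ref{lem.stab.misiur}, inverse orbit of $0$ $\Rightarrow$ Lemma~\ref{lem.stab.parabo}'' is correct, but the parenthetical claim that \emph{all} such rays eventually land at $0$ should be dropped. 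Second, for the globalisation step the paper justifies simple connectivity of $\mathcal{Q}$ by noting (via Lemma~\ref{lemparaland}) that distinct parameter rays $\mathcal{R}_\infty(t)$, $t\in T_n$, land at distinct points; you assert simple connectivity without saying why, and it is worth making this explicit.
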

\begin{proof}
By assumption, $a_0\not\in\bigcup_{t \in T_{n}} \overline{\mathcal{R}_{\infty}(t)}$ ensures that $\bigcup_{t \in T_{n+1}}\overline{R^\infty_{a_0}(t)}$ contains no critical point, therefore ${R}^{\infty}_{a_0}{\infty}(t)$ will land at a parabolic or repelling pre-fixed point $x(a_0)$. Moreover if $x(a_0)$ is parabolic, then its multiplier is 1 since ${R}^{\infty}_{a_0}{\infty}(t)$ will be eventually fixed by $f_{a_0}$. While the two finite fixed points of $f_{a_0}$ are $0$ and $-a_0$ with multiplier $1$ and $1+a_0^2$ respectively. Hence by Lemma \ref{lem.stab.misiur} and \ref{lem.stab.parabo}, $L_n$ can be defined near $a_0$. Notice that
$(\mathcal{W}(0)\cup S\setminus\bigcup_{t \in T_{n}} \overline{\mathcal{R}_{\infty}(t)})$ is simply connected since two different rays $\mathcal{R}_{\infty}(t),\mathcal{R}_{\infty}(t')$ land at different points by Lemma \ref{lemparaland}, therefore $L_n$ can be extended to $\mathcal{Q}$.
\end{proof}

There are also dynamical holomorphic motions of equipotentials and the proof is similar (simpler) as above:
\begin{lemma}\label{lem.holomotion.equi} 
For $n\geq0$, let $\mathcal{Q}$ be the unbounded connected component of \[(\mathcal{W}(0)\cup S)\setminus\bigcup_{k=0}^{n}\left(\bigcup_{\mathcal{U}_k\subset\mathcal{H}_k}\mathcal{E}_{\mathcal{U}_k}(n-k)\right)\]
and $\mathcal{Q}'$ be the bounded connected component of $(\mathcal{W}(0)\cup S)\setminus\mathcal{E}_{\infty}(r^{1/3^n})$. Then there are dynamical holomorphic motions\footnote{Here we use the abuse of notations $L_n$ representing different holomorphic motions.}:
\[L_n: \mathcal{Q}\times \bigcup_{j=0}^{n+1}f^{-j}_{a_0}(f_{a_0}(\Omega_{a_0}))\longrightarrow \mathbb{C},\,\, L_n: \mathcal{Q}'\times \bigcup_{j=0}^{n+1} E^{\infty}_{a_0}(r^{1/3^j})\longrightarrow \mathbb{C}.\]
\end{lemma}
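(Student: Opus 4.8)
The proof splits into two independent constructions, both in the spirit of Lemma \ref{lem.holomotion.external.parabo}; the first is routine, the second carries the real content.

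\emph{External equipotentials over $\mathcal{Q}'$.} The plan is to work directly with the Böttcher coordinate $\phi^{\infty}_a$ at $\infty$. The point is a uniform bound on the potential of the free critical value: since $\mathcal{Q}'$ is the component of $(\mathcal{W}(0)\cup S)\setminus\mathcal{E}_{\infty}(r^{1/3^n})$ lying inside the parameter equipotential, every $a\in\mathcal{Q}'$ satisfies $G_a(v_-(a))<\frac{1}{3^n}\log r$, where $G_a$ is the Green function of $f_a$ (and $G_a(v_-(a))=0$ for $a\in\mathcal{C}_1$). As $c_+(a)$ never escapes by Proposition \ref{prop.escape}, it has zero potential, while $G_a(c_-(a))=\frac{1}{3}G_a(v_-(a))<\frac{1}{3^{n+1}}\log r$. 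Hence $\phi^{\infty}_a$ is a conformal isomorphism of $\{z:G_a(z)>G_a(c_-(a))\}$ onto $\{|w|>e^{G_a(c_-(a))}\}$, and since $G_a(c_-(a))<\frac{1}{3^{n+1}}\log r\leq\frac{1}{3^j}\log r$ for $0\leq j\leq n+1$, this domain contains every equipotential $E^{\infty}_a(r^{1/3^j})$ in that range — which is exactly why the statement stops at $j=n+1$. Then $L_n(a,z)=(\phi^{\infty}_a)^{-1}(\phi^{\infty}_{a_0}(z))$, for $z\in\bigcup_{j=0}^{n+1}E^{\infty}_{a_0}(r^{1/3^j})$, is a holomorphic motion (holomorphic in $a$ by holomorphic dependence of $\phi^{\infty}_a$, injective in $z$, the identity at $a=a_0$), and it is dynamical because $\phi^{\infty}_a$ conjugates $f_a$ to $w\mapsto w^3$. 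No extension step is needed, the formula being globally defined on $\mathcal{Q}'$.

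\emph{Internal equipotentials over $\mathcal{Q}$.} Write $\tilde\Omega_a=f_a(\Omega_a)$ and normalise the attracting Fatou coordinate $\phi_a$ of the parabolic point $0$ by $\phi_a(c_+(a))=0$; this is legitimate uniformly over $\mathcal{Q}$ because $c_+(a)\in\partial\Omega_a$ for every parameter by Corollary \ref{cor.setI}, so $\phi_a$ is single-valued and holomorphic in $a$, and moreover the attracting coordinate (unlike the repelling petals) is stable near $a=0$, so the possible presence of $0$ on $\partial\mathcal{Q}$ does no harm. On $\tilde\Omega_{a_0}$ set $L_n(a,\cdot)=\phi_a^{-1}\circ\phi_{a_0}$: a holomorphic motion conjugating $f_{a_0}$ to $f_a$, as $\phi_a$ conjugates $f_a$ to $z\mapsto z+1$. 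Then extend $L_n$ inductively to $f_{a_0}^{-1}(\tilde\Omega_{a_0}),\dots,f_{a_0}^{-n-1}(\tilde\Omega_{a_0})$, at each step lifting through the squares $f^{-j}_{a_0}(\tilde\Omega_{a_0})\xrightarrow{f_{a_0}}f^{-j+1}_{a_0}(\tilde\Omega_{a_0})$ and $f^{-j}_{a}(\tilde\Omega_{a})\xrightarrow{f_{a}}f^{-j+1}_{a}(\tilde\Omega_{a})$. The lift at step $j$ is the unique continuous extension of the already-defined motion compatible with the conjugacy relation; since for $a\in\mathcal{Q}$ each $f^{-j}_a(\tilde\Omega_a)$ (with $j\leq n+1$) is a tree-like union of topological disks, hence simply connected, there is no monodromy in the $z$-variable, holomorphy in $a$ persists, and the lift is automatically dynamical. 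This works provided $f^{-j}_a(\tilde\Omega_a)$ has no interior critical point for $1\leq j\leq n+1$: the point $c_+(a)$ always lies on the boundary of these regions (the figure-eight picture of Corollary \ref{cor.setI}), and is harmless since the model $P_{1/4}$ has its critical point on $\partial\Omega^0$ in the corresponding position; and the free critical point $c_-(a)$ must stay outside $\bigcup_{j=1}^{n+1}f^{-j}_a(\tilde\Omega_a)$.

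This last requirement is the main obstacle. It is the translation of ``$a$ lies in the unbounded complementary component of the internal parameter equipotentials of depth at most $n$'' — i.e. $a$ is beyond $\mathcal{E}_{\mathcal{U}_k}(n-k)$ for every $k\leq n$, or $a\in\mathcal{H}_{\infty}$ with $c_-(a)$ escaping — into the dynamical statement that the free critical orbit does not re-enter the petal region $\tilde\Omega_a$ before time $n+2$. Establishing this equivalence is exactly a matter of reading off, through the parametrisations $\Phi$ and $\Phi_{\mathcal{U}_k}$, the dictionary between parameter and dynamical equipotentials set up in Propositions \ref{propequiH0}, \ref{propequiHn} and \ref{prop.para.Hn}; one should also confirm that the inductive lifting is undisturbed at $a=0$, where the figure-eight at $c_+(a)$ degenerates, but since only the attracting Fatou coordinate enters this causes no genuine difficulty.
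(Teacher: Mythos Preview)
Your argument is essentially correct and fills in what the paper omits entirely (the paper merely asserts that this lemma is ``similar (simpler)'' to Lemma~\ref{lem.holomotion.external.parabo} and gives no further proof). The B\"ottcher-coordinate formula for the external equipotentials and the Fatou-coordinate-plus-pullback construction for the internal ones are exactly the natural approaches, and your identification of the key dynamical condition --- that $v_-(a)$ must stay outside the depth-$(n+1)$ pullback of $\tilde\Omega_a$, which is precisely what membership in the unbounded component $\mathcal{Q}$ encodes via the parametrisations $\Phi,\Phi_{\mathcal{U}_k}$ --- is the heart of the matter.

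One correction, though it does not affect the validity of the proof: your parenthetical claim that ``the attracting coordinate (unlike the repelling petals) is stable near $a=0$'' is false. At $a=0$ the parabolic point degenerates --- $f_0(z)=z+z^3$ has \emph{two} attracting axes rather than one --- so neither the attracting nor the repelling Fatou coordinate is stable there (cf.\ the paper's remark immediately after the definition of $Per_1(1)$). This does not damage your argument, because $0\notin\mathcal{Q}$: the set $\mathcal{W}(0)\cup S$ is open and excludes the origin, so the holomorphic motion is never asked to exist at $a=0$. Your later worry about the lifting being ``undisturbed at $a=0$'' is therefore moot, and the justification you offer for it should simply be deleted rather than repaired.
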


\paragraph{Parameter graphs $\mathcal{X}_n$ and puzzles $\mathcal{P}_n(a_0)$}\mbox{}\\
Next we define the graph adapted for parameters which are \textbf{not} of Misiurewicz parabolic type. Let $a_0\in \partial \mathcal{C}_1\setminus\{0\}$ be such that $f^n_{a_0}(c_-(a_{0})) \not = 0,\forall n\geq 1$. Let $l\geq 2$, $\theta_l = \frac{\pm1}{2^l-1}$, then the internal ray $R^0_{a_0}(\theta_l)\subset B^*_{a_0}(0)$ is well defined and lands at a parabolic or repelling $l-$periodic point $x(a_0)$. One may choose $l$ large enough such that 
\begin{equation}\label{eq.land}
    v_-(a_0)\not\in f_{a_0}^{-n}(R^0_{a_0}(2^j\theta_l)),\,\,\forall 0\leq j\leq l-1,n\geq 0.
\end{equation}

In particular there is an external ray $R^{\infty}_{a_0}(\eta_l)$ with rational angle $\eta_l$ landing at $x(a_0)$. Now fix such an integer $l$, for each $n\geq0$ consider
\[\mathcal{X}_n = \bigcup_{k=0}^{n}\left(\bigcup_{\mathcal{U}_k\subset\mathcal{H}_k}\mathcal{E}_{\mathcal{U}_k}(n-k)\cup\bigcup_{\theta\in\Theta_{n-k}}\overline{\mathcal{R}_{\mathcal{U}_k}(\theta)}\right)\cup\left(\bigcup_{\eta\in H_n}\overline{\mathcal{R}_{\infty}(\eta)}\right)\cup\mathcal{E}_{\infty}(r^{1/3^n})\]
where
\[\Theta_n = \{\theta;\,\exists (i,j)\in[\![0,n]\!]\times[\![0,l-1]\!],2^{n-i}\theta =2^j \theta_l\}\]
\[H_n = \{\eta;\,\exists (i,j)\in[\![0,n]\!]\times[\![0,l-1]\!],3^{n-i}\eta =3^j \eta_l\}.\]
 We call a connected component $\mathcal{P}$ of $({\mathcal{W}(0)\cup S})\setminus\mathcal{X}_n$ a puzzle piece associated to $\mathcal{X}_n$ if it is bounded and $\partial \mathcal{P}\cap\mathcal{E}_{\infty}(r^{1/3^n})\not =\emptyset$. We denote it by $\mathcal{P}_n$ in the sequel. Clearly every $\mathcal{P}_{n+1}$ is contained in a unique $\mathcal{P}_n$. Define $\mathcal{P}_n(a_0)$ to be the puzzle piece containing $a_0$. (This is well-defined since $a_0\not\in \mathcal{X}_n$, for otherwise some external ray $R^{\infty}_{a_0}(\eta)$ or internal ray will land at a parabolic pre-periodic point or $v_-(a_0)$, contradicting with the construction of the dynamical graph for $f_{a_0}$). 

\begin{lemma}\label{lem.holomotion.internal}
For $n\geq0$, let $\mathcal{P}$ be the connected component of \[(\mathcal{W}(0)\cup S)\setminus\left(\bigcup_{k=0}^{n}\left(\bigcup_{\theta\in\Theta_{n-k}}\overline{\mathcal{R}_{\mathcal{U}_k}(\theta)})\right)\cup\Phi^{-1}(\overline{\Omega})\right)\]
containing $a_0$ and $\mathcal{P}'$ be the connected component $(\mathcal{W}(0)\cup S)\setminus\bigcup_{\eta\in H_n}\overline{\mathcal{R}_{\infty}(\eta)}$ containing $a_0$. Then there are dynamical holomorphic motions:
\[L_n: \mathcal{P}\times \bigcup_{j=0}^{n+1}f^{-j}_{a_0}(\bigcup_{i=0}^{l-1} R^0_{a_0}(2^i\theta_l))\longrightarrow \mathbb{C},\,\,L_n: \mathcal{P'}\times \bigcup_{j=0}^{n+1}f^{-j}_{a_0}(\bigcup_{i=0}^{l-1} R^\infty_{a_0}(3^i\eta_l))\longrightarrow \mathbb{C}.\]
\end{lemma}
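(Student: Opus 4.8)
\textbf{Proof plan for Lemma \ref{lem.holomotion.internal}.}

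The plan is to construct the two holomorphic motions locally near an arbitrary base point of $\mathcal{P}$ (resp. $\mathcal{P}'$), using the explicit pull-back construction of the internal and external rays, and then to globalize them using the $\lambda$-Lemma together with the simple connectivity of the domain. I will treat the external-ray motion first, since it is the easier one and serves as a model. Around $a_0$, the rays $R^\infty_{a_0}(3^i\eta_l)$, $0\le i\le l-1$, land at the repelling or parabolic $l$-periodic point $x(a_0)$, and by the choice (\ref{eq.land}) of $l$ together with the hypothesis $a_0 \in \mathcal{P}'$, none of these rays nor their first $n+1$ pull-backs contains a critical point. Hence I can invoke Lemma \ref{lem.stab.misiur} (if $x(a_0)$ is repelling pre-periodic) or Lemma \ref{lem.stab.parabo} (if it is pre-parabolic, in which case its multiplier must be $1$ and the formula $f_a^q(z)=z+\omega(a)z^{q+1}+o(z^{q+1})$ has $\omega(a_0)\neq 0$ since $a_0 \notin [-\sqrt3,\sqrt3]$) to obtain a local equipotential-preserving holomorphic motion of $\bigcup_i \overline{R^\infty_{a_0}(3^i\eta_l)}$; pulling this back $j$ times along the relevant inverse branches (possible on a smaller neighborhood depending on $n$, precisely because the graph $\mathcal{X}_n$ removes the loci where a critical point hits these rays) yields the local motion of $\bigcup_{j=0}^{n+1} f^{-j}_{a_0}\big(\bigcup_i R^\infty_{a_0}(3^i\eta_l)\big)$. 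That it is dynamical in the asserted sense is immediate from the commutation built into the pull-back.

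For the internal-ray motion the idea is the same but one must be more careful about the pull-back past the free critical point. On a neighborhood of $a_0$ the Fatou coordinate $\phi_a$ moves holomorphically (holomorphic dependence of Fatou coordinate), so the two conformal branches $\phi^\pm_a$ and the maps $f^\pm_a$ of the preceding subsection vary holomorphically, and the piecewise-smooth arcs $l^{k-1}_a,\dots,l^1_a$ entering the construction of $R^0_a(\theta_l)$ move holomorphically as long as the fixed (a-independent) auxiliary curves $L,L'$ are used — this is exactly the point of the Remark following the construction. The obstruction to continuing the pull-back is the hypothesis ``$v_-(a)\notin l^1_a$''; but the locus where $v_-(a)$ meets some pull-back $f^{-j}_a(R^0_a(2^i\theta_l))$, $0\le j\le n+1$, is precisely (a piece of) the graph $\bigcup_k\bigcup_{\theta\in\Theta_{n-k}}\overline{\mathcal{R}_{\mathcal{U}_k}(\theta)}$ removed in the definition of $\mathcal{P}$, together with $\Phi^{-1}(\overline\Omega)$ which removes the locus where $v_-(a)$ enters the region past which the first pull-back is obstructed. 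Hence on $\mathcal{P}$ the construction of all the relevant rays and their $(n+1)$-fold pull-backs is valid and varies holomorphically, giving the local motion; once more the commutation with $f_a$ is built in.

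Finally I globalize. Both $\mathcal{P}$ and $\mathcal{P}'$ are connected components of the complement in the simply connected region $\mathcal{W}(0)\cup S$ of a union of closures of parameter rays (internal ones plus $\Phi^{-1}(\overline\Omega)$, resp. external ones), and by Lemma \ref{lemparaland} and the landing statements of Proposition \ref{prop.para.land.Misur} / Remark \ref{rem.para.land.Misur} distinct parameter rays land at distinct points, so each such component is itself simply connected. Therefore the locally defined motions, which by construction agree on overlaps (uniqueness of the dynamically defined objects being moved), patch together to a single holomorphic motion over all of $\mathcal{P}$ (resp. $\mathcal{P}'$), and by the $\lambda$-Lemma it extends to the closure and is automatically continuous and injective in $z$. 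The dynamical square commutes because it commutes locally. The main obstacle I anticipate is the bookkeeping in the internal-ray case: identifying precisely that the bad locus ``$v_-(a)$ hits a pull-back of an internal ray of depth $\le n$'' is contained in the graph components one has removed, so that the pull-back construction never breaks down on $\mathcal{P}$; the external-ray case and the final patching argument are routine.
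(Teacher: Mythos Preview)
Your overall architecture (local motion via the stability lemmas, then patch over a simply connected base) matches the paper's, but there is a genuine gap in the external-ray part which the paper addresses explicitly and you do not.

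You allow for the possibility that at some $a'\in\mathcal{P}'$ the landing point $x(a')$ of $R^\infty_{a'}(\eta_l)$ is parabolic, and you propose to handle that case with Lemma \ref{lem.stab.parabo}. But Lemma \ref{lem.stab.parabo} applies only when the landing point lies in the inverse orbit of the \emph{persistent} parabolic fixed point $0$; a parabolic $l$-periodic point (with $l\ge 2$) arising at an isolated parameter $a'$ is not covered by that lemma, and your remark ``$\omega(a_0)\neq 0$ since $a_0\notin[-\sqrt3,\sqrt3]$'' refers to the local expansion at $0$, not at $x(a')$. At such an $a'$ neither stability lemma applies, the periodic point may bifurcate, and your local motions need not patch across $a'$.

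The paper closes this gap by showing that no such $a'$ exists in $\mathcal{P}'$: only finitely many parameters can give a parabolic $x(a)$ (second case in the proof of Proposition \ref{propland}), and if some $a'\in\mathcal{P}'$ were among them, the multiplier $m(a)=((f_a)^{k+l})'(x(a))$ would be a holomorphic function with $m(a')=1$ and $|m(a)|>1$ for nearby $a$, contradicting the open mapping theorem. Hence $x(a)$ is repelling for \emph{every} $a\in\mathcal{P}'$, and Lemma \ref{lem.stab.misiur} alone suffices. You should insert this argument (or an equivalent one) before invoking the stability lemma; the same issue, and the same fix, applies verbatim to the internal-ray landing points when you globalize over $\mathcal{P}$.
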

\begin{proof}
This is a analogue version of Lemma \ref{lem.holomotion.parabolic}. We prove for the external rays and the proof for internal rays will be similar. By assumption, $a\not\in\bigcup_{\eta \in H_{n}} \overline{\mathcal{R}_{\infty}(\eta)}$ ensures that $\bigcup_{\eta \in H_{n+1}}\overline{R^\infty_{a}(\eta)}$ contains no critical point, therefore ${R}^{\infty}_{a}(\eta)$ will land at a parabolic or repelling pre-periodic point $x(a)$. There are only finitely many $a$ such that $x(a)$ is parabolic by the second step in the proof of Proposition \ref{propland}. So for all other $a\in \mathcal{P}'$, $x(a)$ is repelling. Take $a'$ such that $x(a')$ is parabolic. Then there exists $k\geq 0$ such that the holomorphic function $m(a) := ((f_a)^{k+l})'(x(a))$ satisfying $m(a')=1$ while for $a$ near $a'$, $|m(a)| \textgreater 1$, contradicting the fact that $m(a)$ is an open map. So for all $a\in \mathcal{P}'$, $x(a)$ is repelling. By Lemma \ref{lem.stab.misiur}, $L_n$ can be defined on $\mathcal{P}'$.
\end{proof}

\begin{corollary}\label{cor.para.Misiur}
Let $n\geq 1$. Any $a\in \mathcal{X}_n\cap\partial\mathcal{H}\cap \mathcal{P}_0(a_0)$ is a Misiurewicz parameter. In particular $a_0\not\in\mathcal{X}_n$ and $\mathcal{X}_n$ is connected.
\end{corollary}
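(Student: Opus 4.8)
## Proof Proposal for Corollary \ref{cor.para.Misiur}

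The plan is to show that any parameter $a$ lying on $\mathcal{X}_n \cap \partial\mathcal{H}$ inside the puzzle piece $\mathcal{P}_0(a_0)$ must have $v_-(a)$ eventually landing on a repelling (pre-)periodic cycle, hence be Misiurewicz, and then deduce the two consequences about $a_0$ and connectedness. First I would unwind what it means for $a$ to lie on $\mathcal{X}_n$: by construction $\mathcal{X}_n$ is built from three types of curves — the equipotentials $\mathcal{E}_{\mathcal{U}_k}(n-k)$, the internal parameter rays $\mathcal{R}_{\mathcal{U}_k}(\theta)$ with $\theta \in \Theta_{n-k}$, and the external parameter rays $\mathcal{R}_{\infty}(\eta)$ with $\eta \in H_n$ (together with the outermost equipotential $\mathcal{E}_\infty(r^{1/3^n})$, which does not meet $\partial\mathcal{H}$). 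So I would go through each case. If $a \in \mathcal{E}_{\mathcal{U}_k}(n-k) \cap \partial\mathcal{H}$, then by Proposition \ref{propequiH0} / Proposition \ref{propequiHn} the parameter $a$ satisfies $f_a^{m+1}(c_-(a)) = 0$ for an appropriate $m$, so $v_-(a)$ is in the inverse orbit of the parabolic point $0$; however — and this is the key point — since $a$ lies in $\mathcal{P}_0(a_0)$ and $a_0$ was chosen so that its dynamical graph uses internal/external rays that \emph{avoid} $v_-(a_0)$ and $0$ is not hit, one checks such an $a$ cannot be in $\mathcal{P}_0(a_0)$ unless it is genuinely Misiurewicz; alternatively one simply notes that the Misiurewicz \emph{parabolic} parameters are exactly those landed on by the graph $\mathcal{Y}_n$, and by the disjointness of the $\mathcal{Y}$- and $\mathcal{X}$-type rays (different rational rays land at different points, Lemma \ref{lemparaland}, Proposition \ref{prop.para.land.Misur}) these do not meet $\mathcal{X}_n$ inside $\mathcal{P}_0(a_0)$. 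Thus the equipotential case contributes nothing to $\mathcal{X}_n \cap \partial\mathcal{H} \cap \mathcal{P}_0(a_0)$, or contributes only honest Misiurewicz (repelling) parameters.

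The main substance is the ray cases. Suppose $a \in \overline{\mathcal{R}_{\infty}(\eta)} \cap \partial\mathcal{H}$ with $\eta \in H_n$. By Proposition \ref{propland} the ray $\mathcal{R}_{\infty}(\eta)$ lands at $a$ and in the dynamical plane $R^{\infty}_a(\eta)$ lands at $v_-(a)$, which is a parabolic or repelling $k$-(pre-)periodic point. If it were parabolic with multiplier $1$, then $v_-(a)$ would be in the inverse orbit of $0$ (the only parabolic fixed point being $0$), putting $a$ on the Misiurewicz parabolic locus — but by the choice of $l$ in \eqref{eq.land} and the fact that $a \in \mathcal{P}_0(a_0)$, the external ray $R^{\infty}_{a_0}(\eta_l)$ and its forward images avoid $v_-(a_0)$; propagating this via the holomorphic motion of Lemma \ref{lem.holomotion.internal} across $\mathcal{P}_0(a_0)$ shows the landing point $x(a)$ stays repelling throughout $\mathcal{P}_0(a_0)$, so the parabolic alternative is excluded (this is exactly the open-mapping argument $m(a) = ((f_a)^{k+l})'(x(a))$ already used in the proof of Lemma \ref{lem.holomotion.internal}, applied now on $\mathcal{P}_0(a_0)$). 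Hence $v_-(a)$ is repelling (pre-)periodic, i.e. $a$ is Misiurewicz. The internal-ray case $a \in \overline{\mathcal{R}_{\mathcal{U}_k}(\theta)} \cap \partial\mathcal{H}$ with $\theta \in \Theta_{n-k}$ is handled identically using Proposition \ref{prop.para.land.Misur} (and Remark \ref{rem.para.land.Misur} for $k \geq 1$): the landing point $x(a)$ of $R^0_a(\theta)$ is parabolic or repelling, the parabolic case forces $v_-(a)$ into the inverse orbit of $0$ and is killed by the same open-mapping/holomorphic-motion argument together with \eqref{eq.land}, so $x(a) = v_-(a)$ is repelling.

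From "every such $a$ is Misiurewicz" the remaining two claims follow quickly. Since $a_0 \in \partial\mathcal{C}_1 \setminus \{0\}$ was chosen with $f_{a_0}^m(c_-(a_0)) \neq 0$ for all $m \geq 1$ — in particular $a_0$ is \emph{not} in the inverse orbit situation — and more to the point the dynamical graph at $a_0$ was built precisely so that no internal ray $R^0_{a_0}(2^j\theta_l)$ and no external ray $R^{\infty}_{a_0}(3^j\eta_l)$ (nor any of their preimages up to depth $n$) passes through $v_-(a_0)$, the parameter $a_0$ cannot lie on any of the rays making up $\mathcal{X}_n$; and $a_0$ is not on an equipotential since $v_-(a_0)$ is not in $\overline{f_{a_0}(\Omega^0_{a_0})}$ unless $a_0$ were of the corresponding preperiodic type. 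Hence $a_0 \notin \mathcal{X}_n$, so $\mathcal{P}_n(a_0)$ is well-defined as claimed in the text. Finally, connectedness of $\mathcal{X}_n$ follows as for $\mathcal{Y}_n$: the equipotentials $\mathcal{E}_{\mathcal{U}_k}(n-k)$ chain together through their common boundary points with $\partial\mathcal{H}$ (Proposition \ref{propequiH0}, \ref{propequiHn}), each internal parameter ray $\mathcal{R}_{\mathcal{U}_k}(\theta)$ lands on $\partial\mathcal{H}_k$ and is attached to an equipotential at its other end, and each external ray $\mathcal{R}_{\infty}(\eta)$ lands at a point of $\partial\mathcal{C}_1$ (Proposition \ref{propland}) which — being a Misiurewicz parameter by the argument above — lies on the already-connected part of $\mathcal{X}_n$, while the outer ends of all rays attach to the outermost equipotential $\mathcal{E}_\infty(r^{1/3^n})$. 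The one point requiring care, and the step I expect to be the main obstacle, is verifying that the parabolic alternative for the landing point really is excluded uniformly on all of $\mathcal{P}_0(a_0)$ rather than just near $a_0$: this needs the holomorphic motion of Lemma \ref{lem.holomotion.internal} to extend across the whole puzzle piece $\mathcal{P}_0(a_0)$ (using that $\mathcal{P}_0(a_0)$ avoids the relevant rays $H_0$, $\Theta_0$ and hence contains no critical point on the moved rays), combined with the fact that a nonconstant holomorphic multiplier function cannot be identically $1$ while having modulus $> 1$ nearby.
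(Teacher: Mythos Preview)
Your proposal is essentially correct and follows the same route as the paper: the heart of the argument is that Lemma \ref{lem.holomotion.internal} (whose proof contains the open-mapping argument on the multiplier $m(a)$) forces the landing point $x(a)$ of the periodic internal/external rays to be \emph{repelling} for every $a$ in the component containing $a_0$, and then Proposition \ref{prop.para.land.Misur} (resp.\ Proposition \ref{propland}) gives $x(a)=v_-(a)$, i.e.\ $a$ is Misiurewicz. The paper's proof is simply the one-line version of this: ``By the lemma above $R^0_a(\theta_l)$ lands at a repelling periodic point. By Proposition \ref{prop.para.land.Misur}, $a$ is a Misiurewicz parameter.''

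Two small points to tighten. First, your treatment of the equipotential case is muddled: the points of $\mathcal{E}_{\mathcal{U}_k}(n-k)\cap\partial\mathcal{H}$ are Misiurewicz \emph{parabolic}, not Misiurewicz, and you should not claim they are excluded from $\mathcal{P}_0(a_0)$; rather, the statement is really about the ray landing points (the paper's proof ignores the equipotential endpoints entirely). Second, for connectedness of $\mathcal{X}_n$ you need the explicit link that every landing point of an internal ray $\mathcal{R}_{\mathcal{U}_k}(\theta)$ is also the landing point of some external ray $\mathcal{R}_\infty(\eta)$ with $\eta\in H_n$; this is exactly Lemma \ref{lem.para.land.Misiur}, which the paper invokes and which you should cite rather than the vaguer ``lies on the already-connected part''.
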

\begin{proof}
By the lemma above $R^0_a(\theta_l)$ lands at a repelling periodic point. By Proposition \ref{prop.para.land.Misur} (or Remark \ref{rem.para.land.Misur}), $a$ is a Misiurewicz parameter. If $a_0\in\mathcal{X}_n$, then by Lemma \ref{lem.para.land.Misiur} there exists $\eta$ and $0\leq j\leq 1-1$ with $3^{n-j}\eta = \eta_l$ such that $R^{\infty}_{a_0}(\eta)$ lands at $v_-(a_0)$. But this contradicts (\ref{eq.land}). Again by Lemma \ref{lem.para.land.Misiur}, every landing point of $\mathcal{R}_{\mathcal{U}_k}(\theta)$ is the landing point of some $\mathcal{R}_{\infty}(\eta)$. ($\theta\in\Theta_{n-k},\eta\in H_n$). Hence $\mathcal{X}_n$ is connected.
\end{proof}

\subsection{Dynamical graphs and puzzles}
\paragraph{Dynamical graphs ${Y}^a_m$ and puzzles ${Q}^a_m$, $({Q}^a_m)^{\pm}$}\mbox{}\\
Let $\mathcal{Q}_n\not\subset\mathcal{W}(0)$ be a parameter puzzle piece of depth $n$. Following \cite{Roesch}, for every $a\in\mathcal{Q}_n$, $0\leq m\leq n+1$, one may construct the dynamical graph of depth $m$ adapted for points in the inverse orbit of 0:
 \[Y^a_0 := f_a(\partial\Omega_a)\cup R^{\infty}_{a}(0)\cup E^{\infty}_{a}(r),\,\,Y^a_m = f_a^{-m}(Y^a_0)\]
with $r\textgreater1$ the same as that in the para-graph $\mathcal{Y}_n$. For every $x\in f^{-m}_a(0)$, there is a unique external ray $R^{\infty}_a(t_x)$ converging to $a$ since $c_-(a)$ is not on $f_a^{-m}(R^{\infty}_a(0))$ (because $a\in \mathcal{Q}_n$). In particular $Y^a_m$ is connected and every connected component $Q$ of $\mathbb{C}\setminus Y^a_m$ is simply connected.

\begin{definition}
A \textbf{puzzle piece} $Q$ associated to $Y^a_m$ (denoted by $Q^m_a$) if it is a bounded a bounded connected component of $\mathbb{C}\setminus Y^a_m$ and its boundary contains parts of $E^{\infty}_a(r^{1/3^m})$. For every $1\leq m\leq n+1$ there are two adjacent puzzle pieces $(Q^m_a)^{\pm}$ (on the left,right hand side of $R^{\infty}_a(t_x)$ respectively) which commonly possess the segment of $R^{\infty}_a(t_x)$ between $x$ and $E^{\infty}_a(r^{1/3^m})$ as part of their boundaries.
\end{definition}

If $\mathcal{Q}_n\subset\mathcal{W}(0)$, similarly we define the dynamical graph adapted for points in the inverse orbit of $0$:
\[Y^a_0 := (f_a(\partial\Omega_a)\cup R^{\infty}_{a}(0)\cup R^{\infty}_{a}(\frac{1}{2})\cup E^{\infty}_{a}(r)),\,\,Y^a_m=f_a^{-m}(Y^a_0)\]
and the corresponding puzzle pieces $Q^a_m$ associated to it. In this case, let $x\in f_a^{-m}(0)$, there are exactly two external rays $R^{\infty}_a(t_x),R^{\infty}_a(t_x')$ converging to $x$ and let $(Q^m_a)^+$ (resp. $(Q^m_a)^-$) be the puzzle piece whose boundary contains the segment of $R^{\infty}_a(t_x)$ (resp. $R^{\infty}_a(t_x)$) between $x$ and $E^{\infty}_a(r^{1/3^m})$ but not that of $R^{\infty}_a(t_x')$ (resp. $R^{\infty}_a(t_x)$). Proposition 2.3 in \cite{Roesch} implies that in both cases, $\overline{(Q^a_m)^{\pm}}$ are shrinking to $x$, which can be restated as the theorem below:

\begin{theorem}\label{thm.pascale.parabolic}
Let $a\in \mathcal{C}_1\setminus \mathcal{H}$ and $f_a^n(c_-(a)) \not =0,\forall n\geq 1$. Take $x\in f_a^{-k}(0)$ ($k\geq 0$) and let $(Q^{a}_m)^{\pm}$ be as above. Then $\bigcap_{m\geq1} (Q^{a}_m)^{\pm} = \emptyset$. 
\end{theorem}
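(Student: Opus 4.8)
The plan is to reduce the statement to the already-cited Proposition 2.3 of \cite{Roesch} (restated here as the assertion that $\overline{(Q^a_m)^\pm}$ shrink to $x$) by checking that its hypotheses are met for $f_a$ in both cases $\mathcal{Q}_n\not\subset\mathcal{W}(0)$ and $\mathcal{Q}_n\subset\mathcal{W}(0)$. The essential point is that the dynamical graphs $Y^a_m$ are genuine graphs adapted to the parabolic point $0$: the ray(s) $R^\infty_a(0)$ (and $R^\infty_a(\tfrac12)$ in the wake case) land at $0$, the curve $f_a(\partial\Omega_a)$ together with its pull-backs gives the nested family of ``equipotentials'' around $\partial B^*_a(0)$, and the free critical point $c_-(a)$ never meets the relevant pull-backs (this is exactly what membership in a puzzle piece $\mathcal{Q}_n$ guarantees). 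First I would record the landing of $R^\infty_a(0)$, resp. $R^\infty_a(0)$ and $R^\infty_a(\tfrac12)$, at $0$: in the non-wake case this follows because $a\notin\mathcal{W}(0)$ exactly means $R^\infty_a(\tfrac12)$ does \emph{not} land at $0$ while $R^\infty_a(0)$ does (Lemma \ref{lemwake}), and in the wake case Lemma \ref{lemwake} gives that both land at $0$. Hence $Y^a_0$ is connected, and since $c_-(a)$ avoids all $f_a^{-m}(Y^a_0)$ (because $a\in\mathcal{Q}_n$ and $f^n_a(c_-(a))\neq 0$ for all $n$), each $Y^a_m$ is connected with simply connected complementary pieces, so the puzzle pieces $(Q^a_m)^\pm$ around any $x\in f_a^{-k}(0)$ are well-defined and nested.

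Next I would invoke the shrinking statement itself. Since $0$ is a parabolic fixed point with a single attracting petal on the side of $B^*_a(0)$ (the degenerate case $a=0$ is excluded), the local picture of $f_a$ near $0$ is that of $z\mapsto z/(1-z)+\cdots$, i.e.\ the cauliflower model, and $\partial B^*_a(0)$ is locally connected (indeed $f_a$ is geometrically finite here, so by Tan--Yin \cite{TaYi} $\partial B^*_a(0)$ is a Jordan curve, as used in Lemma \ref{lem.rigid}). Proposition 2.3 of \cite{Roesch} is precisely designed for this situation: the two pieces $(Q^a_m)^\pm$ adjacent to the ray landing at a preimage $x$ of the parabolic point form an infinitely-nested sequence whose intersection is empty, the mechanism being that the return map near $0$ is a parabolic germ so one cannot trap a nondegenerate continuum between the ray and its iterated pull-backs. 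So I would check term by term that the hypotheses of that proposition — $a\in\mathcal{C}_1\setminus\mathcal{H}$, the orbit of $c_-(a)$ avoids $0$, and $x$ lies in a strict preimage of the parabolic point — translate to ours, and then conclude $\bigcap_{m\geq 1}(Q^a_m)^\pm=\emptyset$ directly.

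The one place that needs genuine care, and which I expect to be the main obstacle, is the compatibility of our graph $Y^a_0$ with the graph used in \cite{Roesch}: there the cubic family is $Per_1(0)$ (one critical point super-attracting at $0$), whereas here $0$ is parabolic, so the ``innermost'' boundary component $f_a(\partial\Omega_a)$ plays the role that an equipotential of the immediate basin plays in the super-attracting slice. I would argue that via the Fatou coordinate $\phi_a$ conjugating $f_a\colon B^*_a(0)\to B^*_a(0)$ (degree $2$ because $c_-(a)\notin B^*_a(0)$ when $a\notin\mathcal{H}$) to $P_{1/4}$, the pair $\bigl(f_a(\partial\Omega_a),\ \{f_a^{-j}(f_a(\partial\Omega_a))\}_j\bigr)$ is conformally the pair $\bigl(E(0),\{E(j)\}_j\bigr)$ of the model, so the combinatorics of nesting and the ``vertices'' $f_a^{-k}(0)$ are exactly those handled in \cite{Roesch}; the external-ray part of the graph is unchanged. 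Once this dictionary is set up the rest is a citation. The secondary subtlety is the wake case, where the two rays $R^\infty_a(0),R^\infty_a(\tfrac12)$ both land at $0$, doubling the local degree of the ray-portrait at $0$; but this is still the configuration treated in \cite{Roesch} (it is the ``two rays at the $\beta$-type fixed point'' situation), and the shrinking argument there accommodates it, so I would simply point to that case of Proposition 2.3 rather than redo it.
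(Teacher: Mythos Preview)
Your overall strategy matches the paper's exactly: the paper gives no proof at all, simply stating the theorem as a restatement of Proposition~2.3 in \cite{Roesch}. So a bare citation is the ``proof'' here.

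That said, two points in your write-up deserve correction. First, your ``main obstacle'' is illusory: you assume \cite{Roesch} treats the super-attracting slice $Per_1(0)$ and worry about transporting the graph via a dictionary, but in this paper \cite{Roesch} is the paper on cubic polynomials with a \emph{parabolic} point (see the other citations to it, e.g.\ Lemma~6.3 which is stated for $f_a\in Per_1(1)$, and Theorem~\ref{thm.dym} which cites Proposition~5.3 and Lemmas~5.5--5.6 of \cite{Roesch}). The graphs $Y^a_m$ here are already the graphs of \cite{Roesch}, so no compatibility check is needed; the super-attracting paper is \cite{Roesch1}. Second, your appeal to Tan--Yin \cite{TaYi} is both unnecessary and wrong in this generality: the hypothesis $a\in\mathcal{C}_1\setminus\mathcal{H}$ with $f_a^n(c_-(a))\neq 0$ does \emph{not} make $f_a$ geometrically finite (the free critical point could lie on the Julia set, accumulate on a Cremer point, etc.), so you cannot conclude $\partial B^*_a(0)$ is a Jordan curve here --- and in any case the shrinking of the puzzle pieces in Proposition~2.3 of \cite{Roesch} does not require it.
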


\paragraph{Dynamical graphs ${X}^a_m$ and puzzles ${P}^a_m$, ${P}^{a,v}_m$}\mbox{}\\
Next we construct the dynamical graph adapted for points in the Julia set which are not in the inverse orbit of 0. First recall the definition of renormalisable maps in the sense of Douady-Hubbard \cite{ASENS_1985_4_18_2_287_0}:
\begin{definition}
$f_a$ is \textbf{renormalizable} if there exists two simply connected open disks $U\subset\subset V$ and an integer $k\geq1$ such that $f_a^k:U\longrightarrow V$ is quadratic-like (i.e. holomorphic proper map of degree 2) and the orbit of the unique critical point of $f_a^k|_U$ stays in $U$. The \textbf{filled Julia set} $K(f^k_a|_U)$ is defined to be $\bigcap_n(f_a^k|_U)^{-n}(U)$.
\end{definition}

Let $a\in \mathcal{P}_n(a_0)$, $0\leq m\leq n+1 $, define the graph for $f_a$ of depth $m$ by
\[X^a_0 := f_a(\partial\Omega_a)\cup \left(\bigcup_{j=0}^{l-1}R^0_a(2^j\theta_l)\cup R^{\infty}_{a}(3^j\eta_l)\right)\cup E^{\infty}_{a}(r),\,\,X^a_m=f_a^{-m}(X^a_0).\]
with $r\textgreater 1$ and $l\geq 2$ being as in $\mathcal{X}_n$.

\begin{definition}
A \textbf{puzzle piece} $P$ associated to $X^a_m$ (denoted by $P^m_a$) if it is a bounded connected component of $\mathbb{C}\setminus X^a_m$ and its boundary contains parts of $E^{\infty}_a(r^{1/3^m})$. Denote by $P^{a,v}_{m}$ the \textbf{critical puzzle piece} of depth $m$, i.e. the puzzle piece containing the critical value $v_-(a)$.
\end{definition}

Proposition 5.3, Lemma 5.5 together with Lemma 5.6 in \cite{Roesch} gives 
\begin{theorem}\label{thm.dym}
Let $a\in \mathcal{C}_1\setminus (\mathcal{H}\cup\{0\})$ and $f^n_a(c_-(a))\not=0,\forall n\geq 1$. then there exists $l\textgreater 1$, $\theta_l = \frac{\pm1}{2^l-1}$ and a sequence of non-degenerated annuli $A^a_{n_i},{i\geq 0}$, such that 

\begin{enumerate}
    \item $A^a_{n_i} = P^{a,v}_{n_i}\setminus\overline{P^{a,v}_{n_i+1}}$ for $i\geq 1$, 
    \item $f^{n_i-n_0}_a:A^a_{n_i}\longrightarrow A^a_{n_0}$ is a non-ramified covering.
    \item if $a\in\mathcal{W}(0)$, then $\sum_i mod(A^a_{n_i}) = \infty$. 
    \item\label{dichotomie} if $a\not\in\mathcal{W}(0)$, then either $\sum_i mod(A^a_{n_i}) = \infty$ or there exists $k\geq 1$ such that $f_a^k: P^{a,v}_{m+k}\longrightarrow P^{a,v}_m$ is quadratic-like for all $m$ large enough and $\bigcap P^{a,v}_m$ is the filled Julia set of the renormalized map $f_a^k$.
\end{enumerate}
\end{theorem}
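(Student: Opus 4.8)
The plan is to reproduce the Yoccoz puzzle analysis carried out by Roesch in Section~5 of \cite{Roesch} for the slice $Per_1(0)$, with the B\"ottcher coordinate of the super-attracting basin systematically replaced by the Fatou coordinate $\phi_a$ of the parabolic basin $B^*_a(0)$. First I would fix, for the given $a$, an integer $l\geq 2$ and an angle $\theta_l=\tfrac{\pm1}{2^l-1}$ satisfying \eqref{eq.land}, so that the dynamical graphs $X^a_m$ and the critical puzzle pieces $P^{a,v}_m$ are defined for all $m\geq 0$. This is legitimate: by Corollary~\ref{cor.setI} the controlled critical point $c_+(a)$ lies in $B^*_a(0)$, so the internal rays $R^0_a(2^j\theta_l)$ entering $X^a_0$ are well defined and land (the boundary $\partial B^*_a(0)$ being a Jordan curve by Tan-Yin \cite{TaYi}, since $f_a$ is geometrically finite), while by the choice of $l$ the free critical value $v_-(a)$ avoids the grand orbit of these rays, so the graphs $X^a_m$ are connected and the pieces are well defined. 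For $m$ large, $P^{a,v}_m$ is disjoint from $B^*_a(0)$, hence contains neither $c_+(a)$ nor any point of its forward orbit, so near the fibre of $v_-(a)$ the map $f_a$ behaves exactly as a degree-two branched cover with single free critical point $c_-(a)$, as in the quadratic Yoccoz setting.

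The heart of the proof is then the tableau / first-return dichotomy, which I would run exactly as in Roesch's Proposition~5.3, Lemma~5.5 and Lemma~5.6. One analyses the itinerary of $v_-(a)$ relative to the nest $\{P^{a,v}_m\}$. If this itinerary is non-recurrent or non-renormalizable in the combinatorial sense, one extracts the subsequence $n_i$ at the ``transition'' levels where a freshly added piece of the graph genuinely separates $v_-(a)$ from the remainder of its piece; there $\overline{P^{a,v}_{n_i+1}}\subset P^{a,v}_{n_i}$, so $A^a_{n_i}=P^{a,v}_{n_i}\setminus\overline{P^{a,v}_{n_i+1}}$ is non-degenerate, the intermediate pieces of the pull-back avoid $c_\pm(a)$ so that $f_a^{n_i-n_0}\colon A^a_{n_i}\to A^a_{n_0}$ is an unramified covering, and the Gr\"otzsch inequality forces $\sum_i\mathrm{mod}(A^a_{n_i})=\infty$. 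If instead the itinerary is ``periodically recurrent'' with some period $k$, the thickened first-return map $f_a^k\colon P^{a,v}_{m+k}\to P^{a,v}_m$ is proper of degree two with a single critical point for all large $m$, hence quadratic-like, and $\bigcap_m P^{a,v}_m$ is its filled Julia set. This is precisely the content of the three cited results of \cite{Roesch}, whose combinatorial bookkeeping carries over unchanged once the graphs above are in place.

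It then remains to see why $a\in\mathcal{W}(0)$ excludes the renormalizable branch. When $a\in\mathcal{W}(0)$, Lemma~\ref{lemwake} gives that both $R^\infty_a(0)$ and $R^\infty_a(\tfrac12)$ land at the parabolic fixed point $0$, so the graph carries this extra pair of rays; combined with the $0$-adapted shrinking $\bigcap_m (Q^a_m)^{\pm}=\emptyset$ of Theorem~\ref{thm.pascale.parabolic}, the extra separation of a neighbourhood of $\partial B^*_a(0)$ prevents the itinerary of $v_-(a)$ from being periodically recurrent, so only the first branch occurs and $\sum_i\mathrm{mod}(A^a_{n_i})=\infty$; for $a\notin\mathcal{W}(0)$ this extra ray is absent and the full dichotomy survives. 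The ingredients that replace Roesch's B\"ottcher estimates are already available: the holomorphic motions of internal rays and equipotentials over the relevant puzzle sets (Lemmas~\ref{lem.holomotion.internal} and \ref{lem.holomotion.equi}), the stability of repelling petals (Lemma~\ref{lem.stab.parabo}) ensuring persistence of the rays landing at $0$ and its preimages, and the Jordan property of $\partial B^*_a(0)$.

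I expect the main obstacle to be the non-degeneracy of the annuli $A^a_{n_i}$ in the parabolic setting: near $\partial B^*_a(0)$ the equipotentials $E(m)$ accumulate on the parabolic point, so one has to check that the pull-back construction of the graphs does not force $P^{a,v}_{n_i+1}$ to be tangent to $\partial P^{a,v}_{n_i}$ at the chosen levels. As in \cite{Roesch}, this is handled by selecting the $n_i$ precisely at the combinatorial transition levels, where a new arc of $X^a_{n_i}$ lands at a repelling (not parabolic) periodic point and genuinely disconnects $v_-(a)$ from the rest of its piece; the secondary technical check is that the iterates $f_a^{n_i-n_0}$ restricted to the intermediate pieces miss both critical points, which again follows from $c_+(a)\in B^*_a(0)$ together with the choice of $l$ in \eqref{eq.land}.
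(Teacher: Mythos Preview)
Your proposal is essentially what the paper does: the paper gives no proof of its own but simply records the theorem as a direct consequence of Proposition~5.3, Lemma~5.5 and Lemma~5.6 of \cite{Roesch}, and you correctly identify those same three results as the engine and sketch how the adaptation goes.

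Two corrections, though. First, your justification for the landing of the internal rays is wrong: you claim $f_a$ is geometrically finite so that Tan--Yin gives $\partial B^*_a(0)$ Jordan, but for a generic $a\in\mathcal{C}_1\setminus(\mathcal{H}\cup\{0\})$ the free critical point $c_-(a)$ lies in $J_a$ with no pre-periodicity assumed, so $f_a$ need not be geometrically finite at all. The correct reason the rays $R^0_a(2^j\theta_l)$ land is the one given in the Proposition--Definition of Section~\ref{sec.puzzle}: since $a\notin\mathcal{H}_0$, the restriction $f_a|_{B^*_a(0)}$ has degree two and is conformally conjugate to $P_{1/4}$, so the internal rays are pulled back from the model and land by the Denjoy--Wolff argument there. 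Second, the holomorphic-motion Lemmas~\ref{lem.holomotion.internal}, \ref{lem.holomotion.equi} and \ref{lem.stab.parabo} that you invoke at the end concern \emph{parameter} dependence; Theorem~\ref{thm.dym} is a purely dynamical statement for a fixed $a$, so those lemmas play no role here (they enter later, in Lemma~\ref{lem.holomotion.misur} and Lemma~\ref{lem.shishikura}, when one transfers the dynamical annuli to para-annuli).
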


\begin{remark}
Notice in the second alternative in \ref{dichotomie} of Theorem \ref{thm.dym}, $f_a$ is renormalisable since $v_-(a)\in\bigcap_n P^{a,v}_n$.
\end{remark}

\begin{definition}\label{def.renor}
A non Misurewicz parabolic parameter $a\in a\in \mathcal{C}_1\setminus (\mathcal{H}\cup\{0\})$ is \textbf{renormalisable} if $a$ satisfies the second alternative in \ref{dichotomie} of Theorem \ref{thm.dym}.
\end{definition} 
\begin{remark}
In fact one may adapt the proof of Lemma 3.24 in \cite{Roesch} to show that the parameter $a$ is renormalisable if and only if the corresponding polynomial $f_a$ is renormalisable. 
\end{remark}

\section{Local connectivity of  $\partial\mathcal{H}_n$}\label{sec.loc}

\subsection{Misiurewicz parabolic case}
Concluding from Lemma \ref{lem.holomotion.external.parabo} and Lemma \ref{lem.holomotion.equi} we get
\begin{lemma}\label{lem.holomotion.parabolic}
Let $a_0\in \mathcal{Q}_n$. Then for $0\leq m\leq n+1$ there exists a dynamical holomorphic motion $L_m:\mathcal{Q}_n\times Y^{a_0}_m\longrightarrow \mathbb{C}$ with base point $a_0$ such that $L_m(a,Y^{a_0}_m) = Y^a_m$.
\end{lemma}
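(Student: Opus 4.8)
The plan is to build the holomorphic motion $L_m$ by induction on $m$, starting from $m=0$ where $Y^{a_0}_0$ is an explicit union of an equipotential, one or two external rays (of angle $0$, plus $\tfrac12$ if $\mathcal{Q}_n\subset\mathcal{W}(0)$), and the curve $f_{a_0}(\partial\Omega_{a_0})$. For the equipotential piece $E^\infty_{a_0}(r)$ and the external rays $R^\infty_{a_0}(0)$ (and $R^\infty_{a_0}(\tfrac12)$) we already have dynamical holomorphic motions over $\mathcal{Q}_n$: the equipotential motion from Lemma \ref{lem.holomotion.equi} (taking $j=n$ so that the motion is defined on the component $\mathcal{Q}'$ containing $\mathcal{Q}_n$) and the external-ray motion from Lemma \ref{lem.holomotion.external.parabo} applied with the relevant component $\mathcal{Q}$, noting that $T_0$ already contains the angles $0$ and $\tfrac12$ that occur in $Y^{a_0}_0$ since $3^0\cdot\tfrac12=\tfrac12$ and we are within the wake-adapted setup. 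For the curve $f_{a_0}(\partial\Omega_{a_0})=\partial\widetilde\Omega_{a_0}$, the holomorphic dependence of the Fatou coordinate (recalled in the ``Holomorphic dependence of Fatou coordinate'' paragraph, valid since $\omega(a)\neq0$ on $\mathcal{Q}_n$ which avoids $a=0$) gives $\phi_a^{-1}\circ\phi_{a_0}$ as a dynamical holomorphic motion of $\overline{P_{a_0}}$, and since $f_{a_0}(\partial\Omega_{a_0})$ is built from the boundary of the maximal petal this extends to a motion of $f_{a_0}(\partial\Omega_{a_0})$ over $\mathcal{Q}_n$; concretely one transports $\partial\widetilde\Omega_{a_0}$ by $\phi_a^{-1}\circ\phi_{a_0}$.

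Next I would check that these three pieces glue to a single holomorphic motion of $Y^{a_0}_0$: the only points to worry about are the intersection points (the landing points of $R^\infty_{a_0}(0)$, $R^\infty_{a_0}(\tfrac12)$ on $\partial\widetilde\Omega_{a_0}$, which are $0$ and $-a_0$ or preimages, and the points where the rays meet the equipotential $E^\infty_{a_0}(r)$). These intersection points are themselves moving holomorphically (they are landing points of rays at $0$, hence tracked by both motions consistently since each motion is \emph{dynamical} and respects $f_a$), so the injectivity hypothesis in the $\lambda$-lemma is preserved and the glued map $L_0:\mathcal{Q}_n\times Y^{a_0}_0\to\mathbb{C}$ is a holomorphic motion with $L_0(a,Y^{a_0}_0)=Y^a_0$. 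For the inductive step, assuming $L_{m}$ is constructed with $L_m(a,\cdot)\circ f_{a_0}=f_a\circ L_{m+1}(a,\cdot)$ built so far, I lift: $Y^{a_0}_{m+1}=f_{a_0}^{-1}(Y^{a_0}_m)$, and for $a\in\mathcal{Q}_n$ each branch of $f_a^{-1}$ is well-defined along $Y^a_m$ away from the critical value, because $a\in\mathcal{Q}_n$ guarantees $v_-(a)\notin f_a^{-k}(R^\infty_a(0))$ for $k\leq n$, i.e. the critical value avoids $Y^a_m$ for the relevant depths (this is exactly the condition ensuring $Y^a_m$ is connected, stated just after the definition of $Q^a_m$). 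Hence the inverse branches depend holomorphically on $a$ and assemble $L_{m+1}$; the $\lambda$-lemma again upgrades the resulting family of injections on a dense/analytic set to an honest holomorphic motion, and it is dynamical by construction of the lift.

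The main obstacle, and the step deserving the most care, is verifying that the critical value $v_-(a)$ genuinely avoids $Y^a_m$ for all $a\in\mathcal{Q}_n$ and all $m\leq n+1$, so that the inverse branches used in the lift exist globally over $\mathcal{Q}_n$ and no monodromy appears. This is where the definition of $\mathcal{Q}_n$ as a puzzle piece bounded by the graph $\mathcal{Y}_n$ (whose rays have angles in $T_n=\{t:3^nt=1\}\cup(\cdots)$) does the work: crossing $\mathcal{Y}_n$ is precisely the locus where $v_-(a)$ lands on some $f_a^{-m}(0)$ or on a moving equipotential, so on the open piece $\mathcal{Q}_n$ the critical value stays off the graph. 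A secondary subtlety is the wake dichotomy: when $\mathcal{Q}_n\subset\mathcal{W}(0)$ one must carry along \emph{both} external rays $R^\infty_a(0)$ and $R^\infty_a(\tfrac12)$ landing at $0$, and check (via Lemma \ref{lemwake}) that both land at $0$ throughout $\mathcal{Q}_n$ so the base graph $Y^{a_0}_0$ has the same combinatorial type for every $a\in\mathcal{Q}_n$; this is guaranteed since $\mathcal{Q}_n\subset\mathcal{W}(0)\cup(-\mathcal{W}(0))$ and Lemma \ref{lemwake} characterizes exactly that region. With these two points settled, the induction closes and the lemma follows.
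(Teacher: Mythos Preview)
Your proposal is correct and follows essentially the same route as the paper, which simply records the lemma as an immediate consequence of Lemma~\ref{lem.holomotion.external.parabo} and Lemma~\ref{lem.holomotion.equi}. Note that those two lemmas already furnish the holomorphic motions of \emph{all} the pieces of $Y^{a_0}_m$ for every $0\le m\le n+1$ (the external rays with angles in $T_{n+1}$, the pulled-back petal boundaries $\bigcup_{j=0}^{n+1} f_{a_0}^{-j}(f_{a_0}(\Omega_{a_0}))$, and the external equipotentials), so your inductive lift via $f_a^{-1}$ is extra detail rather than a genuinely different argument: gluing the motions from those lemmas at each depth already gives $L_m$ directly. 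One small slip: the intersection of $R^\infty_{a_0}(0)$ (and $R^\infty_{a_0}(\tfrac12)$ in the wake case) with $\partial\widetilde\Omega_{a_0}$ is just the parabolic point $0$, not $-a_0$.
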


\begin{proposition}\label{prop.loc.paramisiur}
Let $\mathcal{U}\subset\mathcal{H}_k$ ($k\geq 0$) be a connected component. Then $\partial\mathcal{U}$ is locally connected at all $a_0\in\mathcal{U}$ such that $f_{a_0}^m(c_-(a_0)) = 0$ for some $m\geq 1$.
\end{proposition}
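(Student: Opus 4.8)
The plan is to show that the parameter puzzle pieces of the family $\mathcal{Q}_n$ (the ``puzzles without internal rays'') surrounding $a_0$ shrink to $a_0$; this is the parabolic counterpart of Yoccoz's argument at Misiurewicz points of the Mandelbrot set, with the parabolic fixed point $0$ playing the role of the repelling cycle. Such a parameter $a_0$ necessarily lies on $\partial\mathcal U\setminus\{0\}$ (if $a_0\in\mathcal U\subset\mathcal H_k$ then $c_-(a_0)$ is attracted by $0$ and never hits it, and $0$ is the degenerate parameter handled separately). Fix the minimal $m\ge1$ with $f_{a_0}^m(c_-(a_0))=0$; then the free critical value $x_0:=v_-(a_0)=f_{a_0}(c_-(a_0))$ lies in $f_{a_0}^{-(m-1)}(0)$, is a non-critical point and --- since $c_-(a_0)$ is not periodic --- is not on the forward orbit of $c_-(a_0)$ before $0$, so the dynamical graphs $Y^{a_0}_j$ are non-singular at $x_0$ and contain it as a vertex for $j\ge m-1$. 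By Lemma~\ref{lemparaland} there are exactly one (if $a_0\notin\mathcal W(0)$) or two (if $a_0\in\mathcal W(0)$) rational parameter rays $\mathcal R_\infty(t)$ landing at $a_0$; these are precisely the angles whose dynamical rays $R^\infty_{a_0}(t)$ land at $x_0$, and by Proposition~\ref{propequiH0} the parameter equipotentials $\mathcal E_{\mathcal U}(j)$ pass through $a_0$ for all $j\ge m-1$. Hence for $n$ large $a_0$ is a vertex of $\mathcal Y_n$, and I would let $\widehat{\mathcal Q}_n$ be the union of the finitely many depth-$n$ puzzle pieces having $a_0$ on their boundary --- the parameter analogues of the dynamical pieces $(Q^{a_0}_n)^{\pm}$ about $x_0$.

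Granting $\bigcap_n\overline{\widehat{\mathcal Q}_n}=\{a_0\}$, local connectivity of $\partial\mathcal U$ at $a_0$ follows by the same topological argument as in \cite{Roesch1}: the sets $\widehat{\mathcal Q}_n\cup\{a_0\}$ form a shrinking neighbourhood basis of $a_0$, and, $\widehat{\mathcal Q}_n\cap\mathcal U$ being connected (via the parametrisations of Propositions~\ref{prop.para.Hn} and \ref{PROP_para0}), the sets $\overline{\widehat{\mathcal Q}_n\cap\mathcal U}\cap\partial\mathcal U$ give a neighbourhood basis of connected sets for $a_0$ in $\partial\mathcal U$. The heart of the proof is therefore the shrinking, which I would obtain by transferring the dynamical shrinking of Theorem~\ref{thm.pascale.parabolic}. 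Over each parameter puzzle piece one has the dynamical holomorphic motion $L_m:\mathcal Q_n\times Y^{a_0}_m\to\mathbb C$ of Lemma~\ref{lem.holomotion.parabolic}, trivialising the graphs and the pieces around $v_-$; the key step is the para/dynamical dictionary: for $a$ in a depth-$n$ piece adjacent to $a_0$, $a\in\widehat{\mathcal Q}_n$ if and only if $v_-(a)$ lies in the corresponding moved dynamical piece adjacent to $v_-(a)$ --- equivalently, $a\mapsto v_-(a)$ is, through the trivialisation, proper of degree one from each parameter puzzle piece onto a dynamical puzzle piece (in the spirit of Lemma~\ref{lem.homeo.para.dym}). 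If $a_1\in\bigcap_n\overline{\widehat{\mathcal Q}_n}$ with $a_1\ne a_0$, one checks $a_1\in\mathcal C_1\setminus(\mathcal H\cup\{0\})$ ($0$ is kept on the outer boundary of every $\mathcal Y_n$; $\mathcal H$ is excluded since then $v_-(a_1)$ would land in a Fatou component while the shrinking dynamical pieces force it onto a point of $J_{a_1}$); applying Theorem~\ref{thm.pascale.parabolic} to $f_{a_1}$ (when $f_{a_1}$ is not itself Misiurewicz parabolic) gives $\bigcap_n\overline{(Q^{a_1}_n)^{\pm}}=\{x_1\}$ with $x_1$ a preimage of $0$, whence the dictionary forces $v_-(a_1)=x_1$ --- so $f_{a_1}$ is Misiurewicz parabolic after all --- and then the finiteness argument of Proposition~\ref{propland} (finitely many parameters with a prescribed critical value in the inverse orbit of $0$), together with the dictionary, forces $a_1=a_0$.

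The main obstacle is precisely the para/dynamical dictionary: establishing that $a\mapsto v_-(a)$, read through the trivialising holomorphic motion, is a degree-one proper correspondence between parameter and dynamical puzzle pieces. The delicate part is that $L_m$ only behaves well over the open sub-locus of each puzzle piece where the moved graph $Y^a_m$ stays non-singular, i.e. where $f_a^j(c_-(a))\ne 0$ for $j\le m$; I would set up the correspondence there, then extend it to the whole piece and its closure by properness and continuity, keeping track of the finitely many ``singular'' parameters at each depth (the Misiurewicz parabolic ones). A secondary point, already visible above, is that Theorem~\ref{thm.pascale.parabolic} cannot be applied to $f_{a_0}$ itself --- its critical orbit hits $0$, so $Y^{a_0}_m$ becomes singular at $c_-(a_0)$ for $m$ large --- which is why the contradiction must be drawn on the parameter side, running through generic parameters $a_1$ in the closures of the puzzle pieces rather than through $f_{a_0}$ directly.
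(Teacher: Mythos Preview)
Your proposal is correct and follows the same strategy as the paper: transfer the dynamical shrinking of Theorem~\ref{thm.pascale.parabolic} to the parameter side via the holomorphic motion of Lemma~\ref{lem.holomotion.parabolic}. However, you overcomplicate the argument in two places, and the paper's simplifications are worth noting.

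First, your ``main obstacle'' --- a full degree-one proper correspondence $a\mapsto v_-(a)$ between parameter and dynamical pieces --- is not needed. The paper uses only the one-way implication ``$a\in\mathcal Q_n^{\pm}\Rightarrow v_-(a)\in(Q^a_n)^{\pm}$'', obtained by a density-plus-continuity trick: pick $a'\in\mathcal Q_n^{\pm}$ on a nearby parameter ray $\mathcal R_\infty(t_0')$ with $3^{n'}t_0'=1$, where $v_-(a')\in(Q^{a'}_n)^{\pm}$ is immediate from the definition of $\Phi_\infty$; since $Y^a_n$ moves holomorphically on $\mathcal Q_n^{\pm}$ and $v_-(a)\notin Y^a_n$ there, the inclusion persists for all $a\in\mathcal Q_n^{\pm}$. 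No properness or degree count is required.

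Second, working with the \emph{open} pieces $\mathcal Q_n^{\pm}$ rather than their closures eliminates your case analysis entirely. Any $a\in\bigcap_n\mathcal Q_n^{\pm}$ is automatically in $\mathcal C_1\setminus\mathcal H$ (it is separated from $\mathcal H_\infty$ and from every $\mathcal U_k$ by equipotentials of all depths) and automatically satisfies $f_a^n(c_-(a))\ne0$ for all $n\ge1$ (otherwise $a$ would lie on some $\mathcal Y_n$, not in the open piece). Hence Theorem~\ref{thm.pascale.parabolic} applies directly to $f_a$ and gives $\bigcap_m(Q^a_m)^{\pm}=\emptyset$, contradicting $v_-(a)\in\bigcap_n(Q^a_n)^{\pm}$. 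Thus $\bigcap_n\mathcal Q_n^{\pm}=\emptyset$, and the connected neighbourhood basis at $a_0$ is simply
\[
\mathcal O_n=\mathcal Q_n^{+}\cup\mathcal Q_n^{-}\cup\bigl(\mathcal R_\infty(t_0)\cap\{|\Phi_\infty|\le r^{1/3^n}\}\bigr)\cup\{a_0\}.
\]
Your worry that Theorem~\ref{thm.pascale.parabolic} cannot be applied to $f_{a_0}$ itself is well-founded, but the paper never needs to: the contradiction is drawn at the hypothetical $a$ in the open intersection, which is forced to be non-Misiurewicz-parabolic by construction.
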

\begin{proof}
Let $a_0\in \overline{\mathcal{W}(0)\cup S}$ satisfy the hypothesis. First suppose that $a_0\not=2$. Consider the two following cases respectively:
\begin{itemize}
  \item $a_0\not\in\mathcal{W}(0)$. By Lemma \ref{lemparaland}, let $t_0$ be the angle such that $\mathcal{R}_{\infty}(t_0)$ lands at $a_0$. Then $\mathcal{R}_{\infty}(t_0)$ is part of $\mathcal{Y}_n$ for all $n\geq N$, where $N$ is some fixed integer. Since $t_0\not=0$, there are exactly two adjacent puzzle pieces $\mathcal{Q}^{+}_n,\mathcal{Q}^{-}_n$ which commonly possess $\overline{\mathcal{R}_{\infty}(t_0)}\cap\{a;\,\,|\Phi_{\infty}(a)|\leq r^{1/3^n}\}$ as part of their boundaries, where $\mathcal{Q}^{+}_n$ is defined to be the one intersecting external rays with angles larger than $t_0$. Clearly $\mathcal{Q}^{\pm}_{n+1}\subset\mathcal{Q}^{\pm}_{n}$. Now we claim that for $a\in \mathcal{Q}^{\pm}_{n}$, $v_-(a)\in (Q^a_n)^{\pm}$. Indeed, for any $\epsilon\textgreater 0$, there exists $n'\textgreater n$ large enough and $t'_0$ such that $3^{n'}t'_0 = 1$, $\mathcal{R}_{\infty}(t'_0)\cap \mathcal{Q}^{\pm}_n \not=\emptyset$ and $|t'_0-t_0|\textless \epsilon$. Hence for $a'\in \mathcal{R}_{\infty}(t'_0)\cap \mathcal{Q}^{\pm}_n$, $v_-(a')\in (Q^{a'}_n)^{\pm}$. By Lemma \ref{lem.holomotion.parabolic}, $Y^n_a$ moves holomorphically for $a\in \mathcal{Q}^{\pm}_n$. Notice that $v_-(a)$ also moves holomorphically and it does not belong to $Y^n_a$ when $a\in \mathcal{Q}^{\pm}_n$, we deduce that $v_-(a)\in (Q^a_n)^{\pm}$ since $v_-(a')\in (Q^{a'}_n)^{\pm}$. Now if $a\in \bigcap_n\mathcal{Q}_n^{\pm}$, then $a\in \mathcal{C}_1\cap\mathcal{H}^c$ since $a$ is excluded by all equipotentials in $\mathcal{H}_{\infty}$ and $\mathcal{H}$. Moreover $f^n_a(c_-(a)) \not= 0,\forall n\geq 1$ by definition of $\mathcal{Q}_n$. Therefore by Theorem \ref{thm.pascale.parabolic}, $\bigcap_{m\geq1} (Q^{a}_m)^{\pm} = \emptyset$. But on the other hand $v_-(a)\in \bigcap_n(Q^a_n)^{\pm}$, a contradiction. Hence $\bigcap_n\mathcal{Q}_n^{\pm} = \emptyset$. This means that if we set 
  \[\mathcal{O}_n := \mathcal{Q}_n^{+}\cup\mathcal{Q}_n^{-}\cup({\mathcal{R}_{\infty}(t_0)}\cap\{a;\,\,|\Phi_{\infty}(a)|\leq r^{1/3^n}\})\cup\{a_0\}.\]
  then $(\mathcal{O}_n\cap\partial\mathcal{U})_n$
  form a basis of connected neighborhood of $a_0$.
  \item $a_0\in \mathcal{W}(0)$. The proof is similar as above. The only difference is that here we use the parameter puzzles in $\mathcal{W}(0)$ and the corresponding dynamical puzzles.
\end{itemize}
Finally if $a_0=2$, set $\mathcal{Q}_n$ to be the puzzle piece containing ${\mathcal{R}_{\infty}(t_0)}\cap\{a;\,\,|\Phi_{\infty}(a)|\leq r^{1/3^n}\}$ in its boundary. Define the basis of connected neighborhood of $2$ by setting ($\tau:z\mapsto\overline{z}$)
\[\mathcal{O}_n := \mathcal{Q}_n\cup\tau\mathcal{Q}_n\cup({\mathcal{R}_{\infty}(t_0)}\cap\{a;\,\,|\Phi_{\infty}(a)|\leq r^{1/3^n}\})\cup\{2\}. \]
\end{proof}

\begin{figure}[H] 
\centering 
\includegraphics[width=0.8\textwidth]{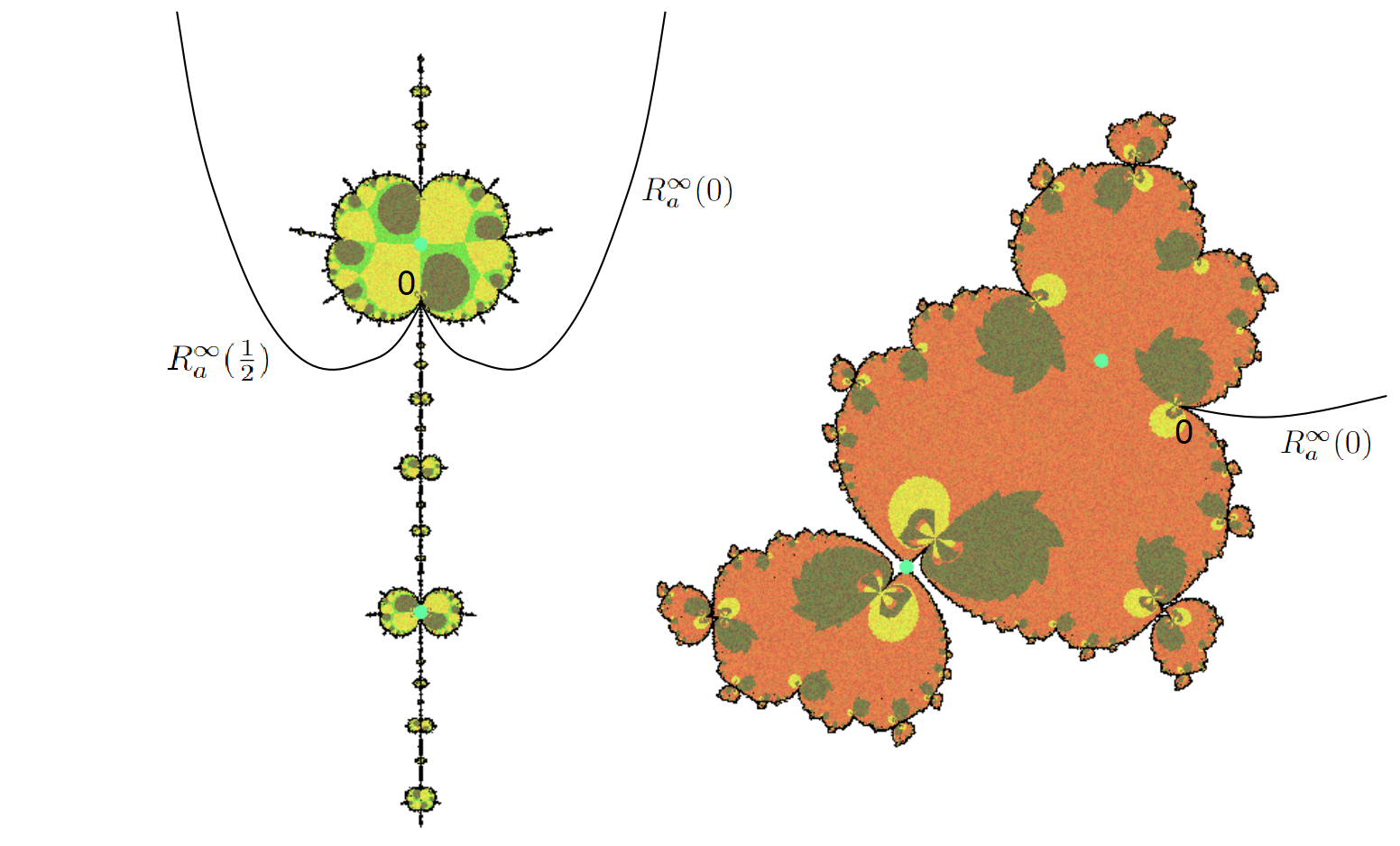} 
\caption{The Julia sets of two different types of Misiurewicz parabolic parameters: the one on the left is the Julia set of the landing point of $\mathcal{R}^S_{\infty}(\frac{2}{3}),\mathcal{R}^{iS}_{\infty}(\frac{5}{6})$ (which is contained in $\mathcal{W}(0)$) and the right one is that of the landing point of $\mathcal{R}^S_{\infty}(\frac{1}{3})$ (which is contained in $S\setminus\overline{\mathcal{W}(0)}$).} 
\label{Fig.main2} 
\end{figure}

\begin{remark}
Notice that $a_0 = 0$ is not included in Proposition \ref{prop.loc.paramisiur}. But still we can verify the local connectivity with a very similar argument: let $\mathcal{Q}_n\not\subset\mathcal{W}(0)$ be the unique puzzle piece of depth $n$ containing $0$ on its boundary. For $a\in\mathcal{Q}_n$, the critical value $v_-(a)$ is contained in the connected component bounded by $R^{\infty}_a(0),R^{\infty}_a(\frac{3^n-1}{2\cdot3^n})$ and a segment of equipotential in $B_a^*(0)$ linking them. If $\bigcap{\mathcal{Q}_n}\not = \emptyset$, then for $a\in\bigcap{\mathcal{Q}_n}$, $a$ is not Misiurewicz parabolic and the puzzle pieces $(Q^a_n)^{\pm}$ adjacent at $R^{\infty}_a(0)$ are well defined for all depth $n$. Moreover, since the graphe $Y^b_n$ moves holomorphically for $b\in\mathcal{Q}_n$, we deduce that $v_-(a)\in (Q^a_n)^-$. This contradicts Theorem \ref{thm.pascale.parabolic} which tells us that $\bigcap_{n\geq1}(Q^a_n)^-=\emptyset$.
\end{remark}

\subsection{Non renormalizable case}
In this subsection, we always fix some $a_0\in\partial\mathcal{U}\setminus 0$ which is \textbf{not} Misiurewicz parabolic and consider the corresponding graph $\mathcal{X}_n$ and puzzle pieces $\mathcal{P}_n(a_0)$.
Concluding from Lemma \ref{lem.holomotion.equi} and Lemma \ref{lem.holomotion.internal} we get 
\begin{lemma}\label{lem.holomotion.misur}
For $0\leq m\leq n+1$, there is a dynamical holomorphic motion $L_m:\mathcal{P}_n(a_0)\times X^{a_0}_m\longrightarrow \mathbb{C}$ with base point $a_0$ such that $L_m(a,X^{a_0}_m) = X^a_m$.
\end{lemma}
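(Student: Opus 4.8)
The plan is to produce $L_m$ by restricting and patching the holomorphic motions already supplied by Lemma \ref{lem.holomotion.equi} and Lemma \ref{lem.holomotion.internal}. Fix $0\le m\le n+1$. Distributing $f_{a_0}^{-m}$ over the four ingredients of $X^{a_0}_0$ gives
\[
X^{a_0}_m = f_{a_0}^{-m}\!\big(f_{a_0}(\partial\Omega_{a_0})\big)\;\cup\;f_{a_0}^{-m}\!\Big(\bigcup_{i=0}^{l-1}R^0_{a_0}(2^i\theta_l)\Big)\;\cup\;f_{a_0}^{-m}\!\Big(\bigcup_{i=0}^{l-1}R^\infty_{a_0}(3^i\eta_l)\Big)\;\cup\;E^\infty_{a_0}(r^{1/3^m}).
\]
Since $m\le n+1$ and since $f_{a_0}$ is an open map (so that $f_{a_0}^{-m}(\partial\tilde\Omega_{a_0})=\partial(f_{a_0}^{-m}(\tilde\Omega_{a_0}))$), the first and last of these sets are moved by the two motions of Lemma \ref{lem.holomotion.equi} (the internal equipotentials $\bigcup_{j\le n+1}f^{-j}_{a_0}(f_{a_0}(\Omega_{a_0}))$, whose boundaries are moved as well, and the external equipotential $\bigcup_{j\le n+1}E^\infty_{a_0}(r^{1/3^j})$), while the two middle sets are moved by the internal-ray and external-ray motions of Lemma \ref{lem.holomotion.internal}. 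So it remains to place $\mathcal{P}_n(a_0)$ inside all four domains of definition, restrict, and glue.

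For the containment: $\mathcal{P}_n(a_0)$ is by construction a connected component of $(\mathcal{W}(0)\cup S)\setminus\mathcal{X}_n$ containing $a_0$, and $\mathcal{X}_n$ contains every equipotential $\mathcal{E}_{\mathcal{U}_k}(n-k)$, the equipotential $\mathcal{E}_\infty(r^{1/3^n})$, the internal rays $\overline{\mathcal{R}_{\mathcal{U}_k}(\theta)}$ with $\theta\in\Theta_{n-k}$ and the external rays $\overline{\mathcal{R}_\infty(\eta)}$ with $\eta\in H_n$; moreover $a_0\notin\mathcal{H}$ lies outside all the regions bounded by the internal equipotentials and inside the topological disk bounded by $\mathcal{E}_\infty(r^{1/3^n})$. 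Hence $\mathcal{P}_n(a_0)$ is a connected subset, containing $a_0$, that avoids each curve cutting out the domains $\mathcal{Q},\mathcal{Q}'$ of Lemma \ref{lem.holomotion.equi} and $\mathcal{P},\mathcal{P}'$ of Lemma \ref{lem.holomotion.internal}, so it is contained in all four. Restricting the four motions to $\mathcal{P}_n(a_0)$ with common base point $a_0$, I define $L_m$ piecewise on the four corresponding pieces of $X^{a_0}_m$.

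It then remains to glue. The four motions agree on the overlaps of their pieces — the common landing points of internal and external rays on $\partial B^*_a(0)$, the points where rays cross equipotentials, and the point $f_a(c_+(a))$ where $f_a(\partial\Omega_a)$ meets the ray structure — because each is built by the same recipe, namely pulling back the Böttcher (resp. Fatou) coordinate along the inverse branch prescribed by the angle (resp. by the choice of preimage petal), so on a shared point the formulas coincide; equivalently, two holomorphic motions over the simply connected set $\mathcal{P}_n(a_0)$ (simple connectivity from Corollary \ref{cor.para.Misiur}) with the same base point which agree at one point of a connected moving set agree on all of it by analytic continuation. Thus $L_m$ is well defined and continuous; each slice $L^a_m$ is injective since the four constituent slices are injective and distinct pieces of $X^{a_0}_m$ meet only at the listed points; holomorphy in $a$ is inherited slice-wise; and $L_m(a,X^{a_0}_m)=X^a_m$ is the union of the four corresponding equalities. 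Finally, the diagrams in Lemma \ref{lem.holomotion.equi} and Lemma \ref{lem.holomotion.internal} commute with $f$ and the pull-backs use consistent inverse branches, so $f_a\circ L^a_m=L^a_{m-1}\circ f_{a_0}$ on $X^{a_0}_m$, i.e. $L_m$ is dynamical.

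The one point requiring real care is this gluing step: Lemma \ref{lem.holomotion.equi} and Lemma \ref{lem.holomotion.internal} are proved over different (overlapping) parameter domains and move overlapping but a priori distinct dynamical sets, so one must confirm that the inverse branches used to pull back equipotentials, internal rays and external rays are precisely those determined by the common angles and petals, and therefore dovetail along $\partial B^*_{a_0}(0)$ and along the equipotentials. Everything else is bookkeeping once $\mathcal{P}_n(a_0)$ has been located inside all four domains.
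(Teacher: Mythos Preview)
Your proposal is correct and follows exactly the approach the paper intends: the paper's entire argument is the single clause ``Concluding from Lemma \ref{lem.holomotion.equi} and Lemma \ref{lem.holomotion.internal} we get'', and you have simply unpacked what that conclusion entails---checking that $\mathcal{P}_n(a_0)$ sits inside all four parameter domains and that the four partial motions patch on overlaps. Your write-up is considerably more careful than the paper on the gluing step, which the paper leaves entirely implicit.
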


Let us first make an important remark on the motion of $\partial P^{a_0,v}_n$ ($n\geq 1$): if $a\in \mathcal{P}_n(a_0)$, then $L_n(a,\partial P^{a_0,v}_n)$ is the boundary of the puzzle piece of $f_a$ containing $v_-(a)$. However if $a\in \mathcal{P}_{n-1}(a_0)$, then $L_{n}(a,\partial P^{a_0,v}_n)$ does not necessarily bound $v_-(a)$ (for example, $v_-(a)\in X^n_{a}$ when $a\in \mathcal{X}_n\cap\mathcal{P}_{n-1}(a_0)$).

Next we give the relation between para-puzzles and dynamical puzzles:
\begin{lemma}\label{lem.homeo.para.dym}
Let $n\geq 0$. The mapping $H_n:\mathcal{P}_n(a_0)\cap\mathcal{X}_{n+1}\longrightarrow P^{a_0,v}_{n}\cap X^{n+1}_{a_0}$ defined by $H_n(a) = (L^a_{n+1})^{-1}(v_-(a))$ is injective. Moreover there exists $N\geq 0$ such that $\forall n\geq N$, $H_n$ is surjective.
\end{lemma}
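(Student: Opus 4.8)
The plan is to establish the injectivity and surjectivity of $H_n$ separately, using the holomorphic motion $L_{n+1}$ of Lemma \ref{lem.holomotion.misur} to transport information between parameter and dynamical planes. For \emph{injectivity}, suppose $a_1, a_2 \in \mathcal{P}_n(a_0)\cap\mathcal{X}_{n+1}$ satisfy $H_n(a_1) = H_n(a_2) = x$, i.e. $L^{a_i}_{n+1}(x) = v_-(a_i)$ for $i=1,2$. Since $a_i \in \mathcal{X}_{n+1}$, the point $v_-(a_i)$ lies on the dynamical graph $X^{a_i}_{n+1}$, so it is a parabolic- or repelling-preperiodic point for $f_{a_i}$ landing at an endpoint of a pair of rays whose angles are determined (via the combinatorics encoded in $\Theta_\bullet, H_\bullet$) by $x$ alone. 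This means $a_1$ and $a_2$ are both parameters at which $v_-$ hits the same combinatorial location on the graph; one then argues that such a parameter is unique, exactly as in the proof of Lemma \ref{lemparaland} and Lemma \ref{lem.para.land.Misiur} (a rigidity/counting argument: $v_-(a)$ being the landing point of a specified rational ray forces an algebraic equation on $a$ whose solution in the puzzle piece is unique by the properness/univalence already established). I would phrase this as: the landing rays of $v_-(a_i)$ in the dynamical plane correspond, via the parameter-plane landing statements, to rays $\mathcal{R}_\infty(\eta)$ or $\mathcal{R}_{\mathcal{U}_k}(\theta)$ landing at $a_i$, and two distinct parameters cannot be the common landing point, whence $a_1 = a_2$.

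For \emph{surjectivity} for $n$ large, the strategy is the standard para-puzzle/degree argument. First I would show $H_n$ extends to a proper map between the relevant closures: if $a_m \to a' \in \partial(\mathcal{P}_n(a_0)\cap\mathcal{X}_{n+1})$, then $H_n(a_m) = (L^{a_m}_{n+1})^{-1}(v_-(a_m))$ must accumulate on $\partial(P^{a_0,v}_n\cap X^{n+1}_{a_0})$ — this uses that $v_-(a_m)$ approaches the graph $X^{a_m}_{n+1}$ (resp. leaves the critical piece through its boundary) precisely when $a_m$ approaches $\mathcal{X}_{n+1}$ (resp. $\partial\mathcal{P}_n(a_0)$), together with the holomorphic motion being a homeomorphism onto its image. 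Properness plus holomorphicity of $H_n$ (which follows because $a \mapsto v_-(a)$ is holomorphic and $(L^a_{n+1})^{-1}$ depends holomorphically on $a$ by the $\lambda$-lemma) gives that $H_n$ is a branched cover of some degree $d_n \geq 1$ onto each component of $P^{a_0,v}_n\cap X^{n+1}_{a_0}$. The point is then that $H_n$ is \emph{unbranched} of degree exactly $1$: a branch point would be a parameter $a$ where $\frac{d}{da}\big[(L^a_{n+1})^{-1}(v_-(a))\big] = 0$, and one rules this out — or more robustly, one counts: composing with the parametrization $\Phi$ (Proposition \ref{PROP_para0}) or $\Phi_{\mathcal{U}}$ (Proposition \ref{prop.para.Hn}) on $\mathcal{H}_0$-pieces and with $\Phi_\infty$ on escaping pieces, $H_n$ is seen to be a bijection on the piece adjacent to the unbounded component, and since every puzzle piece $\mathcal{P}_{n+1}$ maps to a definite dynamical piece, an induction on depth propagates bijectivity once the depth is large enough that the combinatorics of $\mathcal{X}_{n+1}$ separates $a_0$ from all the finitely many "extra" parameters where $f^m_a(c_-(a))=0$ or $v_-$ is parabolic-preperiodic.

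The main obstacle I anticipate is the surjectivity for large $n$, specifically pinning down \emph{why} one needs $n \geq N$ rather than all $n$: for small depth the dynamical piece $P^{a_0,v}_n \cap X^{n+1}_{a_0}$ may be "too big" — it can contain subpieces whose corresponding parameter regions are not subsets of $\mathcal{P}_n(a_0)$ because other parts of the parameter graph $\mathcal{X}_n$ (rays landing at parabolic parameters, or the equipotentials $\mathcal{E}_{\mathcal{U}_k}$) cut through them. Once $n$ is large enough that $\mathcal{P}_n(a_0)$ is a genuinely small neighborhood of $a_0$ (using that $a_0$ is neither Misiurewicz parabolic nor lands on any of the finitely many special rays, so it lies in the interior of the relevant region for all large $n$), the holomorphic motion $L_{n+1}$ identifies $\mathcal{P}_n(a_0)\cap\mathcal{X}_{n+1}$ with exactly $P^{a_0,v}_n\cap X^{n+1}_{a_0}$ piece-by-piece. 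I would make this precise by the Rouché-type argument from Lemma \ref{lemparaland}: given a target point $y \in P^{a_0,v}_n\cap X^{n+1}_{a_0}$ with its combinatorially-determined landing angle, the holomorphic function $a \mapsto v_-(a) - L^a_{n+1}(y)$ has a zero in $\mathcal{P}_n(a_0)$ by an argument-principle count, and that zero lies in $\mathcal{X}_{n+1}$ because $L^a_{n+1}(y) \in X^a_{n+1}$. Combining properness, holomorphicity, and this existence statement yields that $H_n$ is a degree-one branched cover, hence a bijection, for $n \geq N$.
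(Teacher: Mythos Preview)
Your approach to injectivity is broadly in the right spirit but skips the case analysis the paper carries out. The paper observes first that injectivity on the external-ray and equipotential portions of $\mathcal{X}_{n+1}$ is immediate because $L_{n+1}$ preserves angles and potentials. The delicate case is when $a, a'$ lie on internal parameter rays $\mathcal{R}_{\mathcal{U}}(t)$, $\mathcal{R}_{\mathcal{U}'}(t')$ in \emph{different} components $\mathcal{U} \neq \mathcal{U}'$; here the paper compares the landing points $b, b'$ of those rays on $\partial\mathcal{U}, \partial\mathcal{U}'$ and argues that if $b \neq b'$ then the external rays at $b, b'$ would map under $H_n$ to two distinct external rays landing at a common preperiodic point of $X^{n+1}_{a_0}$, contradicting local injectivity of $f_{a_0}$ along the forward orbit of that point (with a further reduction when $b=b'$). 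Your appeal to ``the algebraic equation has a unique solution in the puzzle piece'' does not address this two-component situation.

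The more serious gap is in your surjectivity argument. You treat $H_n$ as a holomorphic map between open domains and invoke properness plus degree theory (``branched cover of some degree $d_n$''). But $\mathcal{P}_n(a_0) \cap \mathcal{X}_{n+1}$ and $P^{a_0,v}_n \cap X^{n+1}_{a_0}$ are pieces of \emph{graphs}---one-real-dimensional sets---so there is no holomorphicity or branched-cover structure for $H_n$ itself. The paper's argument is instead combinatorial: given a target point $z_0$ on an internal ray segment $\zeta \subset X^{n+1}_{a_0}$, follow $\zeta$ to its landing point $x_0 \in J_{a_0}$, pick the external ray $R^\infty_{a_0}(t)$ landing there, use Corollary~\ref{cor.para.Misiur} and Lemma~\ref{lem.para.land.Misiur} to identify the landing parameter $a'$ of $\mathcal{R}_\infty(t)$ as a Misiurewicz parameter accessible by some internal parameter ray $\mathcal{R}_{\mathcal{U}_k}(\theta)$, and then check by continuity and equipotential-matching that $H_n(\mathcal{R}_{\mathcal{U}_k}(\theta)) = \zeta$. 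The threshold $N$ is precisely the depth beyond which $\overline{P^{a_0,v}_n}$ is disjoint from $R^\infty_{a_0}(0)$ (possible since $a_0$ is not Misiurewicz parabolic), which handles surjectivity on the external portion; your description of $N$ is vaguer and does not isolate this. Your Rouch\'e idea at the end is closer to what is done later in Lemma~\ref{lem.shishikura} after extending the motion via S\l odkowski, but that extension is not yet available here and is not what the present lemma uses.
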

\begin{proof}
First we prove that $H_n$ is well-defined. By definition, we have $v_-(a)\in P_n^{a,v}\cap X^{n+1}_{a}$. Consider the holomorphic motion starting at $a$: $\Tilde{L}_{n+1}(a',z) := L_{n+1}(a',(L^a_{n+1})^{-1}(z))$. By continuity there exists a disk $B(a,r)$ on which $v_-(a')$ is surrounded by $\Tilde{L}_{n+1}(a',\partial P_n^{a,v})$. Pick any $a'$ on $\partial B(a,r)$, if $a'\not\in \partial\mathcal{P}_n(a_0)$, then for any $a''$ in a small disk $B(a',r')$, $f_{a''}(c_-(a''))$ is surrounded by $\Tilde{L}_{n+1}(a'',\partial P_n^{a,v})$. Hence we can extend this property step by step until we reach $\partial \mathcal{P}_n(a_0)$. In particular $H_n(a)\in P^{a_0,v}_{n}$. By Lemma \ref{lem.holomotion.misur}, $\partial P_{n+1}^{a_0,v}$ moves holomorphically when $a\in \mathcal{P}_n(a_0)$, hence $H_n(a)\in X^{n+1}_{a_0}$.

Next we verify injectivity. The injectivity is clear for external rays and equipotentials since angles and equipotentials are preserved by $L_{n+1}$. We will only prove for internal rays. The proof for internal equipotentials are similar. Suppose there are two distinct parameters $a,a'$ such that $H_n(a) = H_n(a')$. Then clearly $a,a'$ belong to different connected components $\mathcal{U},\mathcal{U}'$. So set $a\in \mathcal{R}_\mathcal{U}(t),a'\in\mathcal{R}_\mathcal{U'}(t')$. 
\begin{itemize}
    \item First suppose that the landing point $b,b'$ of $\mathcal{R}_\mathcal{U}(t),\mathcal{R}_\mathcal{U'}(t')$ do not coincide. Consider the external rays $\mathcal{R}_{\infty}(s),\mathcal{R}_{\infty}(s')$ landing at $b,b'$ respectively. Then the two rays $H_n(\mathcal{R}_{\infty}(s)),H_n(\mathcal{R}_{\infty}(s'))$ land at a common point $x(a_0)$ since $H_n(a) = H_n(a')$. But this is impossible since $f_{a_0}$ is injective near the forward orbit of $x(a_0)$.
    \item Next if $b = b'$. Then there exists $\mathcal{R}_{\mathcal{U}}(\Tilde{t}),\mathcal{R}_{\mathcal{U}'}(\Tilde{t}')$ landing at $\Tilde{b},\Tilde{b}'$ respectively which have the same image under $H_n$. Then $\Tilde{b}\not = \Tilde{b}'$, for otherwise one can find a loop in $\overline{\mathcal{U}\cup\mathcal{U}'}$ surrounding points in $\mathcal{H}_{\infty}$. We repeat the argument above to $\Tilde{b},\Tilde{b}'$.
\end{itemize}

Finally we verify surjectivity. Since $a_0$ is not Misiurewicz parabolic, there exists $N\geq0$ such that $\forall n\geq N$, $\overline{P^{a_0,v}_n}\cap R^{\infty}_a(0)=\emptyset$. Therefore for $n\geq N$, $H_n$ is surjective on the part of external equipotential and external rays. So we verify surjectivity on the part of internal rays and the case of internal equipotentials are similiar. Let $z_0\in X^{n+1}_{a_0}\cap B_{a_0}(0)\cap P^{a_0,v}_n$. We may suppose that $z_0$ is on some connected component $\zeta$ of $f^{-k}_{a_0}(R^0_{a_0}(2^j\theta_l))$. Let $x_0$ be the landing point of $\zeta$, then there is an external ray $R^{\infty}_{a_0}(t)$ which belongs to $X^{n+1}_{a_0}$ landing at $x_0$. By corollary \ref{cor.para.Misiur}, $\mathcal{R}_{\infty}(t)$ lands at a Misiurewicz parameter $a'$ which is accessible by some $\mathcal{R}_{\mathcal{U}_k}(\theta)$. By continuity we have $H_n(\mathcal{R}_{\mathcal{U}_k}(\theta))\subset \zeta$. By construction of the parameter graph, $\mathcal{R}_{\mathcal{U}_k}(\theta)$ will hit some internal equipotential, and since $H_n$ preserves equipotentials and rays, the $\subset$ is actually $=$.
\end{proof}

\begin{corollary}\label{cor.criticalpuzzle}
Let $a\in \mathcal{P}_{n-1}(a_0)$. Let $C^a_n$ be the puzzle piece bounded by $L_n(\partial P^{a_0}_n)$. Then $a\not\in \overline{\mathcal{P}_{n}(a_0)}$ if and only if $v_-(a)\not\in \overline{C^a_n}$.
\end{corollary}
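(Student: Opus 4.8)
The plan is to exploit the dynamical holomorphic motion $L_n$ given by Lemma \ref{lem.holomotion.misur}, which is valid on $\mathcal{P}_{n-1}(a_0)$ for the graph $X^{a_0}_n$ (note $n-1 \le n \le (n-1)+1$, so this is within range), together with the relation between para-puzzles and dynamical puzzles encoded in Lemma \ref{lem.homeo.para.dym} and its proof. Recall the warning preceding Lemma \ref{lem.homeo.para.dym}: for $a \in \mathcal{P}_{n-1}(a_0)$ the curve $C^a_n := L_n(a, \partial P^{a_0,v}_n)$ need not bound $v_-(a)$, precisely because $v_-(a)$ may leave the piece when $a$ crosses a strand of $\mathcal{X}_n$. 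So the content of the corollary is that the event ``$v_-(a)$ exits $\overline{C^a_n}$'' is synchronized with the event ``$a$ exits $\overline{\mathcal{P}_n(a_0)}$''. First I would set up the comparison: $\mathcal{P}_n(a_0)$ is by definition the connected component of $(\mathcal{W}(0)\cup S)\setminus\mathcal{X}_n$ containing $a_0$, and $\mathcal{P}_{n-1}(a_0)\setminus\mathcal{P}_n(a_0)$ is cut off from $\mathcal{P}_n(a_0)$ exactly by the strands of $\mathcal{X}_n$ lying inside $\mathcal{P}_{n-1}(a_0)$ whose ``interior side'' (toward $a_0$) is $\mathcal{P}_n(a_0)$ — these strands are images under the para-motion of pieces of $\partial P^{a_0,v}_n$, via the identification $H_{n-1}$ of Lemma \ref{lem.homeo.para.dym}.

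The forward implication is the cleaner one: if $a \notin \overline{\mathcal{P}_n(a_0)}$, then any path in $\mathcal{P}_{n-1}(a_0)$ from $a_0$ to $a$ must cross $\mathcal{X}_n\cap\mathcal{P}_{n-1}(a_0)$; I would run the continuity/connectedness argument from the first paragraph of the proof of Lemma \ref{lem.homeo.para.dym} — $v_-(a')$ stays surrounded by $\tilde L(a',\partial P^{a,v}_n)$ as long as $a'$ stays off $\partial\mathcal{P}_n(a_0)$, and this surrounding property propagates along the path until the first crossing of a strand of $\mathcal{X}_n$. At the crossing parameter, $v_-$ lies on the corresponding strand of $X^{a}_n$, i.e. on $C^a_n$; past it, $v_-(a)$ has left $\overline{C^a_n}$. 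Here one uses that $v_-(a)$ moves holomorphically in $a$ while $C^a_n$ moves by the holomorphic motion $L_n$, and these never collide except on $\mathcal{X}_n$ — essentially the same reasoning showing $v_-(a)\in (Q^a_n)^\pm$ in Proposition \ref{prop.loc.paramisiur}. For the reverse implication I would argue contrapositively: if $a \in \overline{\mathcal{P}_n(a_0)}$, then either $a\in\mathcal{P}_n(a_0)$, in which case $L_n$ extends the surrounding property all the way, giving $v_-(a)$ strictly inside $C^a_n = \partial P^{a,v}_n$; or $a\in\partial\mathcal{P}_n(a_0)\cap\mathcal{P}_{n-1}(a_0)$, i.e. $a$ lies on a strand of $\mathcal{X}_n$, in which case $v_-(a)$ lies on the matching strand of $X^a_n$, hence on $\partial C^a_n\subset\overline{C^a_n}$. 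Combining, $a\in\overline{\mathcal{P}_n(a_0)}\Rightarrow v_-(a)\in\overline{C^a_n}$, which is the contrapositive of what we want.

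The main obstacle I anticipate is bookkeeping the combinatorial correspondence between the strands of $\mathcal{X}_n$ cutting inside $\mathcal{P}_{n-1}(a_0)$ and the sides of $\partial P^{a_0,v}_n$ — one must be sure that a strand bounding $\mathcal{P}_n(a_0)$ corresponds under $H_{n-1}$ to a piece of $\partial P^{a_0,v}_n$ and not to some strand on the far side of $P^{a_0,v}_{n-1}$, and that ``the $a_0$-side of the strand'' matches ``the $v_-(a_0)$-side of $\partial P^{a_0,v}_n$''. This is exactly the sort of compatibility that the equipotential- and angle-preservation in Lemma \ref{lem.homeo.para.dym} is designed to guarantee, so I would phrase the argument so that it invokes $H_{n-1}$ (and the injectivity/surjectivity there, for $n$ large; for small $n$ one checks directly that the relevant strands are still matched) rather than re-deriving it. A secondary point requiring a line of care is the boundary case where $a_0$ itself, or the landing of a strand, hits $0$ or $a=2$; but by construction $0\notin\mathcal{X}_n$ only through the external rays, and $a_0$ is assumed non-Misiurewicz-parabolic and $\neq 0$, so these degeneracies do not interfere with the local picture near $a$.
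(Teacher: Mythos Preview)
Your proposal is correct and follows essentially the same approach as the paper: a path argument in $\mathcal{P}_{n-1}(a_0)$ from $a_0$ to $a$ for the forward direction, invoking Lemma~\ref{lem.homeo.para.dym} to see that $v_-$ crosses $\partial C^a_n$ exactly when $a$ crosses $\partial\mathcal{P}_n(a_0)$, and the holomorphic-motion argument for the converse. The paper's proof is terser---it simply chooses the path to cross $\partial\mathcal{P}_n(a_0)$ only once and asserts that Lemma~\ref{lem.homeo.para.dym} prevents re-entry---while your ``main obstacle'' paragraph about matching strands via $H_{n-1}$ spells out exactly what that assertion relies on.
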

\begin{proof}
If $a\not\in \overline{\mathcal{P}_{n}(a_0)}$, then take a simple path $a_t\subset \mathcal{P}_{n-1}(a_0)$ connecting $a_0,a$ such that $a_t\cap\mathcal{P}_{n}(a_0)$ only contains one point. Thus Lemma \ref{lem.homeo.para.dym} ensures that once $a_t$ goes out of $\overline{\mathcal{P}_{n}(a_0)}$, $f_{a_t}(c_-(a_t))$ will never enter again $\overline{C^a_n}$. Conversely if $a\in\mathcal{P}_n(a_0)$, then clearly $v_-(a)\in C^a_n$ since $\partial P^{a_0,v}_n$ moves holomorphically and $v_-(a)$ does not intersect $X^a_n$.
\end{proof}

\begin{corollary}
For $n$ large enough, if $\overline{P^{a_0}_{n+1}}\subset P^{a_0}_n$, then $\overline{\mathcal{P}_{n+1}(a_0)}\subset\mathcal{P}_n(a_0)$.
\end{corollary}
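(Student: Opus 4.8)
The plan is to transport the hypothesis $\overline{P^{a_0,v}_{n+1}}\subset P^{a_0,v}_n$ to the parameter plane through the homeomorphism $H_n$ of Lemma~\ref{lem.homeo.para.dym}, conclude that the relative boundary $\partial\mathcal{P}_{n+1}(a_0)\cap\mathcal{P}_n(a_0)$ is a \emph{compact} (hence closed) subset of $\mathcal{P}_n(a_0)$, and then use a connectedness argument on $\partial\mathcal{P}_{n+1}(a_0)$ to promote this local statement into $\partial\mathcal{P}_{n+1}(a_0)\subset\mathcal{P}_n(a_0)$, which is what we need.

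\textbf{Setup.} I would take $n\geq N$ with $N$ as in Lemma~\ref{lem.homeo.para.dym}, so that $H_n\colon\mathcal{P}_n(a_0)\cap\mathcal{X}_{n+1}\longrightarrow P^{a_0,v}_n\cap X^{n+1}_{a_0}$, $H_n(a)=(L^a_{n+1})^{-1}(v_-(a))$, is a homeomorphism. Recall from Lemma~\ref{lem.holomotion.misur} that $L^a_{n+1}$ carries $X^{a_0}_{n+1}$ onto $X^a_{n+1}$ and $P^{a_0,v}_{n+1}$ onto $C^a_{n+1}$, so $H_n$ conjugates $\partial P^{a_0,v}_{n+1}$ to $\partial C^a_{n+1}$. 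Under the hypothesis, $\partial P^{a_0,v}_{n+1}$ is a compact subset of $P^{a_0,v}_n$ lying on $X^{n+1}_{a_0}$, hence a compact subset of the codomain of $H_n$; therefore $F:=H_n^{-1}(\partial P^{a_0,v}_{n+1})$ is a well-defined compact subset of $\mathcal{P}_n(a_0)\cap\mathcal{X}_{n+1}\subset\mathcal{P}_n(a_0)$. Using Corollary~\ref{cor.criticalpuzzle} at depth $n+1$ (which for $a\in\mathcal{P}_n(a_0)$ gives $a\in\overline{\mathcal{P}_{n+1}(a_0)}\iff v_-(a)\in\overline{C^a_{n+1}}$ and $a\in\mathcal{P}_{n+1}(a_0)\iff v_-(a)\in C^a_{n+1}$), together with the parametrisation identity $\mathcal{X}_{n+1}\cap\mathcal{P}_n(a_0)=\{a\in\mathcal{P}_n(a_0)\colon v_-(a)\in X^a_{n+1}\}$, one checks
\[F=\partial\mathcal{P}_{n+1}(a_0)\cap\mathcal{P}_n(a_0),\]
i.e. $F$ is exactly the relative boundary of $\mathcal{P}_{n+1}(a_0)$ inside $\mathcal{P}_n(a_0)$.

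\textbf{Conclusion.} Now $F$ is relatively open in $\partial\mathcal{P}_{n+1}(a_0)$ (intersection with the open set $\mathcal{P}_n(a_0)$) and relatively closed in $\partial\mathcal{P}_{n+1}(a_0)$ (being compact). It is nonempty: the hypothesis forces $\mathcal{P}_{n+1}(a_0)\subsetneq\mathcal{P}_n(a_0)$ (if they were equal then $\mathcal{X}_{n+1}\cap\mathcal{P}_n(a_0)=\emptyset$, hence $P^{a_0,v}_{n+1}=P^{a_0,v}_n$, contradicting $\overline{P^{a_0,v}_{n+1}}\subset P^{a_0,v}_n$), and a proper nonempty open subset of the connected open set $\mathcal{P}_n(a_0)$ cannot be relatively closed in it. Since $\mathcal{P}_{n+1}(a_0)$ is a bounded simply connected domain, $\partial\mathcal{P}_{n+1}(a_0)$ is connected; being a nonempty clopen subset of it, $F=\partial\mathcal{P}_{n+1}(a_0)$. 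Hence $\partial\mathcal{P}_{n+1}(a_0)=F\subset\mathcal{P}_n(a_0)$, and therefore $\overline{\mathcal{P}_{n+1}(a_0)}=\mathcal{P}_{n+1}(a_0)\cup\partial\mathcal{P}_{n+1}(a_0)\subset\mathcal{P}_n(a_0)$.

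\textbf{Main obstacle.} The delicate point is the identity $F=\partial\mathcal{P}_{n+1}(a_0)\cap\mathcal{P}_n(a_0)$. It uses the ``parameter puzzle $=$ copy of dynamical puzzle'' principle in the sharp form that the correspondence respects \emph{boundaries} (not only interiors): one must know that on $\mathcal{P}_n(a_0)$ the parameter graph $\mathcal{X}_{n+1}$ is precisely the locus $\{v_-(a)\in X^a_{n+1}\}$, and that $H_n$, defined a priori only on $\mathcal{P}_n(a_0)\cap\mathcal{X}_{n+1}$, interacts correctly with the closure operations in Corollary~\ref{cor.criticalpuzzle}. I would also double-check at the outset that, for $n$ large, $\mathcal{P}_{n+1}(a_0)$ stays away from the ``walls'' of $\mathcal{W}(0)\cup S$, so that its boundary consists only of graph and equipotential arcs and no wall pieces intrude into the bookkeeping.
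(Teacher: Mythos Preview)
Your proof is correct and follows essentially the same approach as the paper's: both pull back $\partial P^{a_0,v}_{n+1}$ through $H_n^{-1}$ to a compact set in $\mathcal{P}_n(a_0)$ and then identify this set with $\partial\mathcal{P}_{n+1}(a_0)$ using Corollary~\ref{cor.criticalpuzzle}. The only cosmetic difference is that the paper phrases the identification as ``$H_n^{-1}(\partial P^{a_0,v}_{n+1})$ bounds a puzzle piece $\mathcal{P}$, and $a_0\in\mathcal{P}$'' (argued by contradiction via Corollary~\ref{cor.criticalpuzzle}), whereas you reach the same conclusion through a clopen argument on the connected set $\partial\mathcal{P}_{n+1}(a_0)$.
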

\begin{proof}
By Lemma \ref{lem.homeo.para.dym} $H^{-1}_n(\overline{\partial P^{a_0}_{n+1}})\subset \mathcal{P}_n(a_0)$ bounds a puzzle piece $\mathcal{P}$. So it suffices to prove that $a_0\in\mathcal{P}$. Suppose not, then there is $a'\in\partial\mathcal{P}$ but $a'\not \in \overline{\mathcal{P}_{n+1}(a_0)}$. By Corollary \ref{cor.criticalpuzzle}, $v_-(a')\not\in\overline{C^{a'}_{n+1}}$, a contradiction.
\end{proof}

By Theorem \ref{thm.dym}, there exists $\theta_l = \frac{\pm 1}{2^l-1}$ with $l$ large enough, such that in the dynamical plane of $f_{a_0}$ one has a sequence of non-degenerated annuli $A^{a_0}_{n_i} := P^{a_0,v}_{n_i}\setminus\overline{P^{a_0,v}_{n_i+1}}$. The above corollary implies that in the parameter plane, $\mathcal{A}^{a_0}_{n_i} := \mathcal{P}_{n_i}(a_0)\setminus\overline{\mathcal{P}_{n_i+1}}(a_0)$ is also non-degenerated for $i$ large enough. Applying Shishikura's trick, we can get the estimate between the moduli of the para-annuli and dynamical annuli (see also \cite{Roesch1} Section 4.1):

\begin{lemma}\label{lem.shishikura}
There exists $K\textgreater 1$ such that for $i$ large enough
\[\frac{1}{K}mod(A^{a_0}_{n_i})\leq mod\mathcal(\mathcal{A}_{n_i})\leq Kmod(A^{a_0}_{n_i}).\]
\end{lemma}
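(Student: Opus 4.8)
The plan is to use the classical \emph{Shishikura trick}: construct a holomorphic motion of (a large piece of) the dynamical plane over the parameter annulus $\mathcal{A}_{n_i}$ whose base dynamics at the base point $a_0$ realizes the dynamical annulus $A^{a_0}_{n_i}$, and then compare moduli via the quasiconformality estimates coming from the $\lambda$-lemma. Concretely, first I would fix $i$ large enough that $A^{a_0}_{n_i}$ is a genuine (non-degenerate) annulus bounding the critical puzzle pieces $P^{a_0,v}_{n_i}\supset\overline{P^{a_0,v}_{n_i+1}}$, and recall from the Corollary just proved that $\overline{\mathcal{P}_{n_i+1}(a_0)}\subset\mathcal{P}_{n_i}(a_0)$, so $\mathcal{A}_{n_i}=\mathcal{P}_{n_i}(a_0)\setminus\overline{\mathcal{P}_{n_i+1}(a_0)}$ is a non-degenerate parameter annulus. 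By Lemma~\ref{lem.homeo.para.dym}, the map $H_{n_i}\colon a\mapsto (L^a_{n_i+1})^{-1}(v_-(a))$ is a homeomorphism from $\mathcal{A}_{n_i}$ onto $A^{a_0}_{n_i}$ (injectivity is in the lemma; surjectivity onto this annular region holds for $n_i\geq N$), and in fact it is holomorphic in $a$ since $v_-(a)$ and the holomorphic motion $L^a_{n_i+1}$ depend holomorphically on $a$. Thus we already have a conformal-or-quasiconformal-looking identification; the point is to upgrade this to an explicit bound on the dilatation.

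The key step is to produce a holomorphic motion over a neighborhood of $\overline{\mathcal{A}_{n_i}}$ in parameter space, of the closed dynamical annulus $\overline{A^{a_0}_{n_i}}$ together with its boundary curves $\partial P^{a_0,v}_{n_i}$ and $\partial P^{a_0,v}_{n_i+1}$. This is exactly the content of Lemmas~\ref{lem.holomotion.equi} and \ref{lem.holomotion.internal} (and \ref{lem.holomotion.misur}), which give dynamical holomorphic motions $L_m\colon\mathcal{P}_{n_i}(a_0)\times X^{a_0}_m\to\mathbb{C}$ with $L_m(a,X^{a_0}_m)=X^a_m$. Restricting to the relevant puzzle-piece boundaries, and combining with the motion of $v_-(a)$, one obtains a holomorphic motion $\mathbf{L}\colon\mathcal{P}_{n_i}(a_0)\times\overline{A^{a_0}_{n_i}}\to\mathbb{C}$ such that $\mathbf{L}(a_0,\cdot)=\mathrm{id}$, $\mathbf{L}(a,A^{a_0}_{n_i})$ is the dynamical annulus of $f_a$ around $v_-(a)$ of the corresponding depth, and such that the induced motion of the two boundary curves of $A^{a_0}_{n_i}$, together with the position of $v_-(a)$, determines $H_{n_i}(a)$. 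Now apply the $\lambda$-lemma (Mañé–Sad–Sullivan / Słodkowski): for $a$ in a neighborhood $\Lambda$ of $a_0$ of definite size (say the unit disk in a suitable coordinate), $\mathbf{L}(a,\cdot)$ is $K_0$-quasiconformal with $K_0$ depending only on the size of $\Lambda$ relative to $\mathcal{P}_{n_i}(a_0)$, hence $\bmod\,\mathbf{L}(a,A^{a_0}_{n_i})\in[K_0^{-1}\bmod A^{a_0}_{n_i},\,K_0\bmod A^{a_0}_{n_i}]$.

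Then I would run the Shishikura comparison: on one side, $\mathbf{L}(a,A^{a_0}_{n_i})$ is (up to the quasiconformal distortion above) the dynamical annulus $A^a_{n_i}$; on the other side, the map $H_{n_i}$ being a homeomorphism $\mathcal{A}_{n_i}\to A^{a_0}_{n_i}$ which extends to an identification of the boundary motions, one gets that $\mathcal{A}_{n_i}$ and $A^{a_0}_{n_i}$ are $K$-quasiconformally equivalent with $K$ independent of $i$ — precisely because the only sources of distortion are the holomorphic motion (controlled by $\lambda$-lemma with a uniform constant, since the relative positions of the graphs, rays and equipotentials defining $\mathcal{P}_{n_i}(a_0)$ stay in a compact family of configurations) and the real-analytic dependence of $v_-(a)$ on $a$ (also uniformly bounded). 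This yields $K^{-1}\bmod(A^{a_0}_{n_i})\leq\bmod(\mathcal{A}_{n_i})\leq K\bmod(A^{a_0}_{n_i})$ for $i$ large. The main obstacle I anticipate is bookkeeping the quasiconformal constant uniformly in $i$: one must check that the holomorphic motions $L_m$ can all be taken over parameter sets of a \emph{fixed} conformal shape (not shrinking with $i$), which is where the hypothesis ``$i$ large enough'' and the structure of the graphs $\mathcal{X}_n$ (nested, with the relevant rays/equipotentials at bounded combinatorial complexity near $a_0$) are used; the actual inequalities are then a routine application of Grötzsch-type estimates and the quasi-invariance of moduli, as in \cite{Roesch1} Section~4.1.
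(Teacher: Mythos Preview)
Your outline has the right shape but misses the one step that actually delivers the uniform constant. You propose to extend the holomorphic motion of $\partial A^{a_0}_{n_i}$ over ``a neighborhood of $\overline{\mathcal{A}_{n_i}}$'', i.e.\ essentially over $\mathcal{P}_{n_i}(a_0)$, via S\l odkowski and the $\lambda$-lemma. But the quasiconformal constant of $\Tilde L^a$ in such an extension is controlled by the hyperbolic distance from $a_0$ to $a$ \emph{in the parameter domain of the motion}. Since the outer boundary of $\mathcal{A}_{n_i}$ is $\partial\mathcal{P}_{n_i}(a_0)$ itself, this distance is unbounded on $\mathcal{A}_{n_i}$, and your statement that ``the relative positions \dots\ stay in a compact family of configurations'' is exactly what is \emph{not} true at depth $n_i$. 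Your fallback that the motions ``can all be taken over parameter sets of a fixed conformal shape'' is the whole content of the lemma and needs a mechanism, not an assertion.

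The paper's mechanism is the lifting step you omit. One extends via S\l odkowski \emph{only} the motion of $\partial A^{a_0}_{n_0}$ at the fixed base depth $n_0$, over the fixed domain $\mathcal{P}_{n_0}(a_0)$; call the extension $\Tilde L^a_{n_0}$. Then, using that $f^{n_i-n_0}_{a_0}:A^{a_0}_{n_i}\to A^{a_0}_{n_0}$ and $f^{n_i-n_0}_{a}:A^{a}_{n_i}\to A^{a}_{n_0}$ are unramified holomorphic coverings (Theorem~\ref{thm.dym}(2)), one \emph{lifts} $\Tilde L^a_{n_0}$ through these coverings to obtain $\Tilde L^a_{n_i}$ on $\overline{A^{a_0}_{n_i}}$. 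Because the lift is by holomorphic maps, the Beltrami coefficient of $\Tilde L^a_{n_i}$ equals that of $\Tilde L^a_{n_0}$; and since for $i\geq 1$ one has $\mathcal{A}_{n_i}\subset\overline{\mathcal{P}_{n_0+1}(a_0)}\Subset\mathcal{P}_{n_0}(a_0)$, the dilatation is bounded by $\sup\{K_a:a\in\overline{\mathcal{P}_{n_0+1}(a_0)}\}$, a finite constant independent of $i$. Differentiating the implicit relation $\Tilde L^a_{n_i}(\Tilde H_{n_i}(a))=v_-(a)$ then shows that the Beltrami coefficient of $\Tilde H_{n_i}$ is pointwise bounded by that of $\Tilde L^a_{n_i}$, so $\Tilde H_{n_i}:\mathcal{A}_{n_i}\to A^{a_0}_{n_i}$ is $K$-quasiconformal with $K$ uniform in $i$.

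A smaller point: your assertion that $H_{n_i}(a)=(L^a_{n_i+1})^{-1}(v_-(a))$ is ``holomorphic in $a$'' is false. Holomorphic dependence of $L^a$ on $a$ for fixed $z$ does not make $a\mapsto (L^a)^{-1}(w(a))$ holomorphic; the inverse in $z$ introduces an antiholomorphic contribution. The map is only quasiconformal, and bounding its dilatation is precisely the work above.
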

\begin{proof}
Consider the sequence of annuli $(A^{a_0}_{n_i})_{i\geq 0}$ in Theorem \ref{thm.dym} for $a_0$. For $i=0$ we set $A^{a_0}_{n_0} = P^{a_0}_{n_0}\setminus \overline{P^{a_0}_{n_0+1}}$. Now fix any $n=n_i$, by Lemma \ref{lem.holomotion.misur}, we have below the commutative diagram on the left for $a\in\mathcal{P}_n(a_0)$, where $A^{a}_{n}$ is just the annulus bounded by $L^a_n(\partial A^{a_0}_n)$. Applying Słodkowski's extension theorem \cite{Sl}, the holomorphic motion $L_{n_0}:\mathcal{P}_{n_0}(a_0)\times\partial{A}^{a_0}_{n_0}\longrightarrow \mathbb{C}$ can be quasiconformally extended to $\Tilde{L}_{n_0}:\mathcal{P}_{n_0}(a_0)\times\mathbb{C}\longrightarrow \mathbb{C}$ and we lift the extended mapping $\Tilde{L}^a_{n_0}$ by $f^{n-n_0}_{a_0},f^{n-n_0}_{a}$ to get the commutative diagram on the right:
\[\xymatrix{
 \partial A^{a_0}_n\ar[r]^{L^a_n}\ar[d]_{f^{n-n_0}_{a_0}} &   \partial A^{a}_{n} \ar[d]_{f^{n-n_0}_{a}}\\
\partial A^{a_0}_{n_0} \ar[r]^{L^a_{n_0}} &  \partial A^{a}_{n_0}
}
\,\,\,\,\,\,\,
\xymatrix{
  \overline{A^{a_0}_n} \ar@{.>}[r]^{\Tilde{L}^a_n}\ar[d]^{f^{n-n_0}_{a_0}} &    \overline{A^{a}_{n}} \ar[d]^{f^{n-n_0}_{a}}\\
 \overline{A^{a_0}_{n_0}} \ar[r]^{\Tilde{L}^a_{n_0}} &   \overline{A^{a}_{n_0}}
}
\]
In particular, the dilatation of $\Tilde{L}^{a}_n$ equals that of $K_a := \Tilde{L}^{a}_{n_0}$. Define $\Tilde{H}_n:\mathcal{A}^{a_0}_n\longrightarrow {A}^{a_0}_n$ by $a\mapsto (\Tilde{L}^{a}_n)^{-1}(v_-(a))$. Notice that $\Tilde{H}_n$ is well-defined by Corollary \ref{cor.criticalpuzzle}. Hence we have $\Tilde{L}^{a}_n(\Tilde{H}_n(a)) = v_-(a)$. One may verify easily from this that $\Tilde{H}_n$ is locally invertible except at finitely many points. Differentiating with $\partial_{\overline{a}}$ on both sides gives that 
\[\partial_{\overline{a}} \Tilde{H}_n(a) = -\frac{\partial_{\overline{z}}\Tilde{L}^{a}_n}{\partial_{z}\Tilde{L}^{a}_n}(\Tilde{H}_n(a))\cdot\overline{\partial_a\Tilde{H}_n(a)}.
\]
Hence the dilatation of $\Tilde{H}_n$ is bounded by $\sup_a\{K_a;\,a\in \mathcal{P}_{n_0+1}(a_0)\}\textless+\infty$. Recall that $\Tilde{H}_n$ is almost everywhere locally invertible and therefore it is injective since its extension on $\partial\mathcal{P}_{n+1}(a_0)$ is injective. The surjectivity is clear since it is a proper map. Therefore $\Tilde{H}_n$ is a $K-$quasiconformal homeomorphism with $K$ not depending on $n$. The result then follows.
\end{proof}

\begin{corollary}
Let $a_0\in\partial\mathcal{U}\setminus\{0\}$. If $a_0\not\in \mathcal{W}(0)$ satisfies the first alternative in Theorem \ref{thm.dym} or $a_0\in\mathcal{W}(0)$, then $\partial\mathcal{U}$ is locally connected at $a_0$. 
\end{corollary}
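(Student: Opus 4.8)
The plan is to transfer the shrinking of dynamical puzzle pieces to the shrinking of parameter puzzle pieces, using the quasiconformal comparison of moduli already established in Lemma \ref{lem.shishikura}, and then to argue that a nested sequence of parameter puzzle pieces shrinking to $a_0$ yields a basis of connected neighbourhoods of $a_0$ in $\partial\mathcal{U}$. First I would recall that, since $a_0\in\partial\mathcal{U}\setminus\{0\}$ is not Misiurewicz parabolic, Theorem \ref{thm.dym} provides an integer $l$ (hence a choice of $\theta_l$ and the associated graphs $\mathcal{X}_n$) and a sequence of non-degenerate dynamical annuli $A^{a_0}_{n_i}=P^{a_0,v}_{n_i}\setminus\overline{P^{a_0,v}_{n_i+1}}$. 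In the two cases of the statement — $a_0\notin\mathcal{W}(0)$ satisfying the first alternative in item \ref{dichotomie}, or $a_0\in\mathcal{W}(0)$ (item 3) — one has $\sum_i \mathrm{mod}(A^{a_0}_{n_i})=\infty$, so by Grötzsch the nested intersection $\bigcap_n \overline{P^{a_0,v}_n}$ is a single point, necessarily $\{v_-(a_0)\}$ since $v_-(a)\in P^{a,v}_n$ for all $n$.

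Next I would pass to the parameter side. For $i$ large the corollary preceding Lemma \ref{lem.shishikura} gives $\overline{\mathcal{P}_{n_i+1}(a_0)}\subset\mathcal{P}_{n_i}(a_0)$, so the para-annuli $\mathcal{A}_{n_i}:=\mathcal{P}_{n_i}(a_0)\setminus\overline{\mathcal{P}_{n_i+1}(a_0)}$ are genuine non-degenerate annuli. Lemma \ref{lem.shishikura} then yields $\mathrm{mod}(\mathcal{A}_{n_i})\geq K^{-1}\mathrm{mod}(A^{a_0}_{n_i})$, whence $\sum_i \mathrm{mod}(\mathcal{A}_{n_i})=\infty$. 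Since these para-annuli are nested and separate $a_0$ from the complement of $\mathcal{P}_{n_{i_0}}(a_0)$ for the first index $i_0$ in the sequence, Grötzsch's inequality forces $\bigcap_i \overline{\mathcal{P}_{n_i}(a_0)}=\{a_0\}$: the Euclidean diameters of the $\mathcal{P}_{n_i}(a_0)$ tend to $0$.

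Finally I would conclude local connectivity of $\partial\mathcal{U}$ at $a_0$. Each $\mathcal{P}_{n}(a_0)$ is an open connected (in fact simply connected, being a component of a complement of a connected graph) piece containing $a_0$, and its closure is connected; hence $\overline{\mathcal{P}_{n}(a_0)}\cap\partial\mathcal{U}$ is a connected closed neighbourhood of $a_0$ in $\partial\mathcal{U}$ — here one uses that $\mathcal{P}_n(a_0)$, being a piece of depth $n$ whose boundary meets the outer equipotential, has connected intersection with $\partial\mathcal{U}$, an argument parallel to the one producing the $\mathcal{O}_n$ in the proof of Proposition \ref{prop.loc.paramisiur}. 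Since the diameters of $\mathcal{P}_{n_i}(a_0)$ shrink to $0$, these sets form a neighbourhood basis at $a_0$, and local connectivity follows. The main obstacle I anticipate is not the modulus bookkeeping, which is supplied by Lemma \ref{lem.shishikura}, but the step showing $a_0\in\mathcal{P}_{n}(a_0)$ for all $n$ together with the nesting $\overline{\mathcal{P}_{n+1}(a_0)}\subset\mathcal{P}_n(a_0)$ for all sufficiently large $n$ in the sequence — i.e. ruling out that $a_0$ sits on the boundary between two depth-$n$ pieces — and the careful verification that $\overline{\mathcal{P}_n(a_0)}\cap\partial\mathcal{U}$ is genuinely connected rather than merely closed; both are handled by Corollary \ref{cor.criticalpuzzle} and the holomorphic-motion machinery of Lemma \ref{lem.homeo.para.dym}, but require attention to the cases where $v_-(a)$ may land on the graph.
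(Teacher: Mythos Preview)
Your proposal is correct and follows essentially the same approach as the paper: the paper's proof is the single line ``This follows immediately from the above lemma and Gr\"otzsch's inequality,'' and you have simply unpacked that sentence, invoking Theorem~\ref{thm.dym} for $\sum_i \mathrm{mod}(A^{a_0}_{n_i})=\infty$, transferring via Lemma~\ref{lem.shishikura}, and applying Gr\"otzsch to conclude $\bigcap_n\mathcal{P}_n(a_0)=\{a_0\}$. The worries you flag at the end (that $a_0$ might lie on $\mathcal{X}_n$, or that the nesting could fail) are already disposed of earlier in the paper by Corollary~\ref{cor.para.Misiur} and the corollary preceding Lemma~\ref{lem.shishikura}, so no additional work is needed there.
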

\begin{proof}
This follows immediately from  the above lemma and Grötzsch's inequality.
\end{proof}

\subsection{Renormalizable case}\label{subsec.renor}
In this subsection we always fix a non Misiurewicz parabolic parameter $a_0\in\partial\mathcal{U}\setminus (\{0\}\cup\mathcal{W}(0))$ which is renormalisable. Recall that a set $\textbf{M}'\subset \mathbb{C}$ is called a copy of the Mandelbrot set $\textbf{M}$ if there exists $k\geq 1$ and a homeomorphism $\chi:\textbf{M}'\longrightarrow\textbf{M}$ such that $f_a$ is $k-$renormalisable and $f^k_a$ is quasiconformally conjugated to $z^2+\chi(a)$.

\begin{figure}[H] 
\centering 
\includegraphics[width=0.6\textwidth]{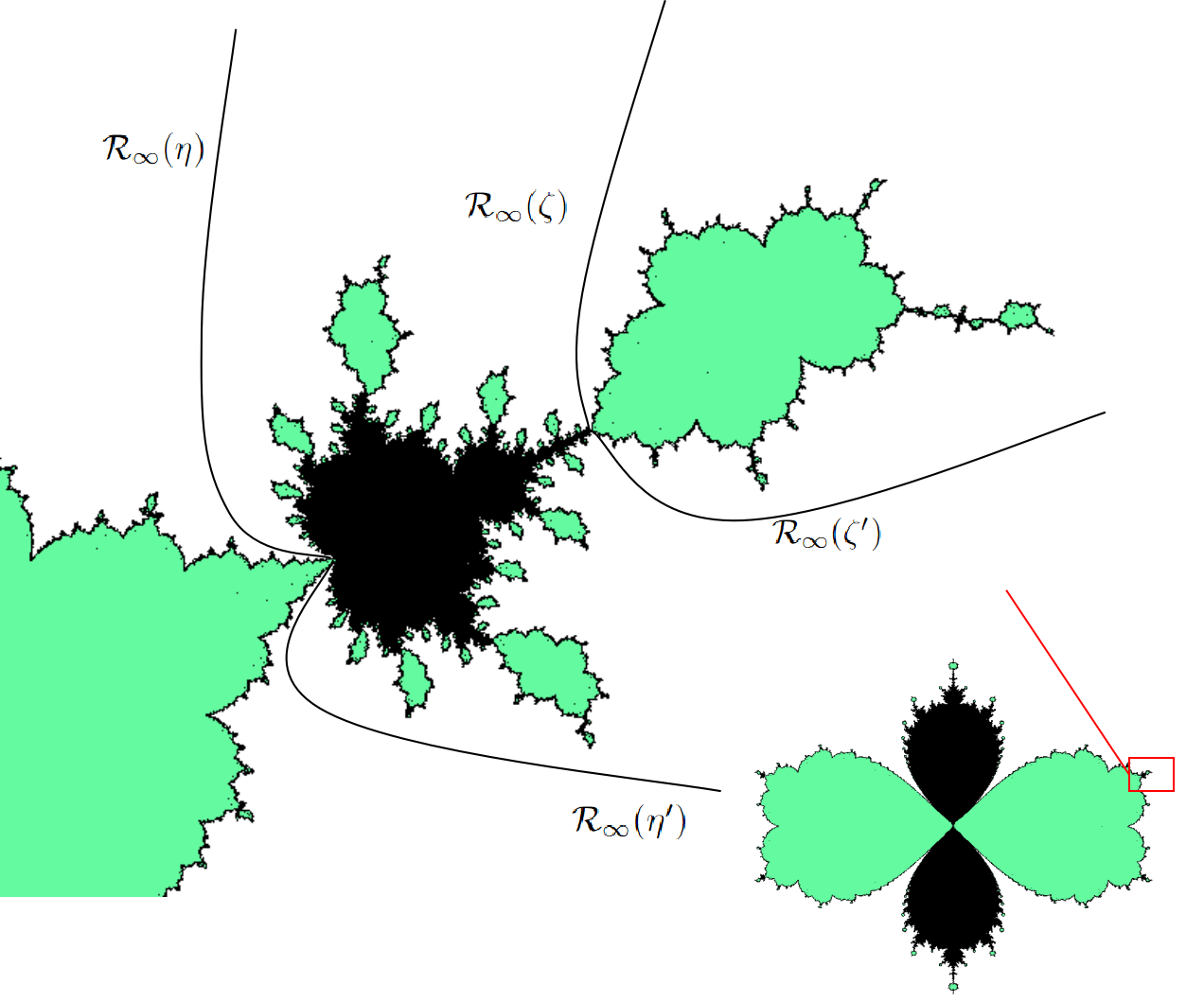} 
\caption{A zoom of a copy of Mandelbrot set $\textbf{M}_{a_0}$ attached at $\partial\mathcal{U}_0$. Here the period of renormalisation is 2. The external rays $\mathcal{R}_{\infty}(\eta),\mathcal{R}_{\infty}(\eta')$ land at the cusp of $\textbf{M}_{a_0}$, separating $\textbf{M}_{a_0}$ with $\mathcal{U}_0$. The external rays $\mathcal{R}_{\infty}(\zeta),\mathcal{R}_{\infty}(\zeta')$ land at a Misiurewicz parabolic parameter, separating a component of $\mathcal{H}_2$ with $\textbf{M}_{a_0}$.} 
\label{fig.para.renor} 
\end{figure}

\begin{figure}[H]\label{fig.dym.renor} 
\centering 
\includegraphics[width=0.5\textwidth]{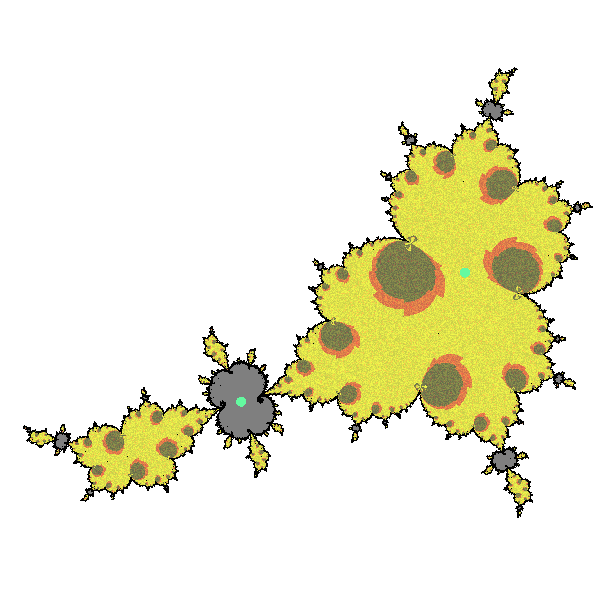} 
\caption{The Julia set $f_{a_0}$ with $a_0$ the cusp of $\textbf{M}_{a_0}$ in Figure \ref{fig.para.renor}. There is a quasiconformal copy of cauliflower (grey) attached at $B^*_{a_0}(0)$.} 
\label{Fig.main2} 
\end{figure}

\begin{proposition}\label{prop.mandel}
$\textbf{M}_{a_0} := \bigcap\mathcal{P}_n(a_0)$ is a copy of the Mandelbrot set. Moreover $\textbf{M}_{a_0}\cap\partial\mathcal{U} = \{a_0\}$.
\end{proposition}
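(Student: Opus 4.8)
The plan is to realize $\textbf{M}_{a_0}$ as the connectedness locus of an analytic family of quadratic-like maps and then invoke the Douady--Hubbard theory of Mandelbrot-like families \cite{ASENS_1985_4_18_2_287_0}. By Theorem~\ref{thm.dym} there are $k\geq 1$ and $N_0$ such that $f_{a_0}^{k}\colon P^{a_0,v}_{m+k}\longrightarrow P^{a_0,v}_{m}$ is quadratic-like of degree $2$ for all $m\geq N_0$, with $\bigcap_m P^{a_0,v}_m=K(f_{a_0}^k)$; moreover the critical point of this restriction is the point of $P^{a_0,v}_{m+k}$ mapped by $f_{a_0}^{k-1}$ onto $c_-(a_0)$, so its critical value is $v_-(a_0)$. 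Fix $m=N\geq N_0$ large enough that $\overline{P^{a_0,v}_{N+k}}\subset P^{a_0,v}_N$ and (using that $\overline{\mathcal{P}_{n+1}(a_0)}\subset\mathcal{P}_n(a_0)$ for $n$ large, established above) $\textbf{M}_{a_0}=\bigcap_n\mathcal{P}_n(a_0)\Subset\mathcal{P}_N(a_0)$. Over the topological disk $\Lambda:=\mathcal{P}_N(a_0)$ I would produce, from the dynamical holomorphic motion $L_N$ of $X^{a_0}_N$ (Lemma~\ref{lem.holomotion.misur}) and a S\l odkowski extension $\widetilde L_N$ of it, the holomorphically varying quadratic-like family $\mathbf{g}=\bigl(g_a:=f_a^{k}\colon P^{a,v}_{N+k}(a)\longrightarrow P^{a,v}_{N}(a)\bigr)_{a\in\Lambda}$ with marked critical value $v_-(a)$, shrinking $\Lambda$ to a slightly smaller disk if needed so that the quadratic-like inclusion $\overline{P^{a,v}_{N+k}(a)}\subset P^{a,v}_N(a)$ holds for all $a\in\Lambda$.

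Next I would identify the connectedness locus of $\mathbf{g}$ with $\textbf{M}_{a_0}$. By Corollary~\ref{cor.criticalpuzzle}, for $a\in\mathcal{P}_{n-1}(a_0)$ one has $a\in\overline{\mathcal{P}_n(a_0)}$ if and only if $v_-(a)\in\overline{C^a_n}$, where $C^a_n$ is bounded by $L_n(\partial P^{a_0,v}_n)$; iterating this, $a\in\bigcap_n\mathcal{P}_n(a_0)$ if and only if $v_-(a)$ never leaves the nested pieces $P^{a,v}_n$, i.e. $v_-(a)\in\bigcap_n P^{a,v}_n=K(g_a)$, i.e. $K(g_a)$ is connected. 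Hence $\textbf{M}_{a_0}=\{a\in\Lambda:\ K(g_a)\text{ is connected}\}$, and the Douady--Hubbard straightening theorem yields a homeomorphism $\chi\colon\textbf{M}_{a_0}\to\textbf{M}$, provided $\mathbf{g}$ is $\mathbb{C}$-analytic and has boundary winding number $1$. The $\mathbb{C}$-analyticity is automatic, the tubing being the quasiconformal motion $\widetilde L_N$, holomorphic off $\partial\Lambda$. For the winding-number condition I would use the comparison between para-objects and dynamical objects already proved: Lemma~\ref{lem.homeo.para.dym} gives, for $n\geq N$, a ray- and equipotential-preserving homeomorphism $H_n\colon\mathcal{P}_n(a_0)\cap\mathcal{X}_{n+1}\to P^{a_0,v}_n\cap X^{n+1}_{a_0}$, $a\mapsto(L^a_{n+1})^{-1}(v_-(a))$, and Lemma~\ref{lem.shishikura} upgrades this near the renormalization annuli to the $K$-quasiconformal homeomorphisms $\widetilde H_n\colon\mathcal{A}^{a_0}_{n}\to A^{a_0}_{n}$ between the para-annulus $\mathcal{P}_n(a_0)\setminus\overline{\mathcal{P}_{n+1}(a_0)}$ and the dynamical annulus $P^{a_0,v}_n\setminus\overline{P^{a_0,v}_{n+1}}$, conjugating the two nested structures. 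These maps have topological degree $1$, which is precisely the assertion that, as $a$ runs once around $\partial\Lambda$, the critical value $v_-(a)$ turns once around $\partial K(g_{a_0})$ relative to the tubing --- the winding-number-$1$ hypothesis. This finishes the first statement.

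For the \emph{moreover}, the inclusion $\supseteq$ is immediate since $a_0\in\mathcal{P}_n(a_0)$ for every $n$ and $a_0\in\partial\mathcal{U}$ by hypothesis. For $\subseteq$, I first observe that $\textbf{M}_{a_0}\cap\mathcal{H}=\emptyset$: if $a\in\textbf{M}_{a_0}$ then $c_-(a)\in K(g_a)=\bigcap_n P^{a,v}_n$, and since $X^a_n$ contains all equipotentials of $B_a(0)$ up to depth $n$ while each $P^{a,v}_n$ meets the far-out equipotential $E^{\infty}_a(r^{1/3^n})$, the pieces $P^{a,v}_n$ eventually avoid any fixed compact subset of $B_a(0)$; hence $c_-(a)\notin B_a(0)$ and $a\notin\mathcal{H}\supset\mathcal{U}$. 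Consequently any $a\in\textbf{M}_{a_0}\cap\partial\mathcal{U}$ lies on $\partial\textbf{M}_{a_0}$, so $\chi(a)\in\partial\textbf{M}$. I would then pin down $\chi(a_0)$ as the root $1/4$ of $\textbf{M}$: approaching $a_0$ from inside $\mathcal{U}\subset\mathcal{H}_k$ forces $c_-(a)$ to be captured into $B_a(0)$, i.e. the critical value of $g_a$ leaves the quadratic-like domain through the arc of $\partial P^{a,v}_N(a)$ bordering $B_a(0)$, and a local analysis of that arc in the straightened picture identifies it with the single external argument $0$ of $\textbf{M}$, whose ray lands at the cusp $1/4$; thus $\chi(\mathcal{U}\cap\Lambda)$ lies in the complementary component of $\textbf{M}$ adjacent to $1/4$, which accumulates on $\partial\textbf{M}$ only at $1/4$. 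Passing to limits, $\chi(\textbf{M}_{a_0}\cap\partial\mathcal{U})\subseteq\{1/4\}$, so $\textbf{M}_{a_0}\cap\partial\mathcal{U}=\{a_0\}$.

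The main obstacle I anticipate is the middle of the second paragraph together with the third: verifying cleanly that the constructed quadratic-like family meets all Douady--Hubbard hypotheses over a genuine topological disk --- in particular that the boundary winding number is exactly $1$, not merely finite, which is where $H_n$ and $\widetilde H_n$ do the real work --- and keeping the choice of $N$ and of $\Lambda$ uniform in $a$. The identification $\chi(a_0)=1/4$ and the ensuing ``escape-sector'' argument are the delicate points of the final statement; the remaining manipulations are bookkeeping with the holomorphic motions already at our disposal.
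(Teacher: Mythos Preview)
Your argument for the first assertion --- setting up the quadratic-like family over $\Lambda=\mathcal{P}_N(a_0)$, identifying its connectedness locus with $\bigcap_n\mathcal{P}_n(a_0)$ via Corollary~\ref{cor.criticalpuzzle}, and reading off winding number $1$ from the degree of $H_n$ (or $\widetilde H_n$) --- is essentially the paper's proof. The paper writes the winding-number step as an explicit homotopy between $a\mapsto f_a^k(w_a)-w_a$ and $a\mapsto H_{p-1}(a)-w_{a_0}$ along $\partial\mathcal{P}_p(a_0)$, but this is just a concrete realization of what you sketch.

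For the \emph{moreover}, your route diverges from the paper and has a genuine gap. The sentence ``$\chi(\mathcal{U}\cap\Lambda)$ lies in the complementary component of $\textbf{M}$ adjacent to $1/4$, which accumulates on $\partial\textbf{M}$ only at $1/4$'' cannot be made sense of: $\mathbb{C}\setminus\textbf{M}$ is connected, and any open sector of external arguments around $0$ accumulates on an arc of $\partial\textbf{M}$, not on the single point $1/4$. Even granting that the internal (parabolic-basin) arc of $\partial P^{a,v}_N$ straightens to a neighbourhood of the $\beta$-fixed point, escape through that arc only pins $\chi(a)$ to an \emph{interval} of external angles about $0$, whose width is fixed once $N$ is fixed. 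So taking a limit $a\to a_1\in\textbf{M}_{a_0}\cap\partial\mathcal{U}$ gives only $\chi(a_1)$ on an arc of $\partial\textbf{M}$, not $\chi(a_1)=1/4$. To rescue this you would need to let the depth go to infinity and track the shrinking of that angular interval --- which is exactly what the paper does, but in parameter space rather than via $\chi$.

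The paper's argument is instead an external-angle computation. For $\mathcal{U}$ of adjacent type, one takes the two external rays $\mathcal{R}_\infty(\eta_n),\mathcal{R}_\infty(\eta_n')$ in $\partial\mathcal{P}_n(a_0)$ that land on $\partial\mathcal{U}$; monotone limits $\eta_n\downarrow\eta$, $\eta_n'\uparrow\eta'$ exist, and transporting by $H_{n-1}$ together with $k$-renormalizability gives $3^k\eta=\eta$, $3^k\eta'=\eta'$. The parameter rays $\mathcal{R}_\infty(\eta),\mathcal{R}_\infty(\eta')$ enter every $\mathcal{P}_n(a_0)$, so their landing points $b,b'$ lie in $\textbf{M}_{a_0}$; the corresponding dynamical rays land on parabolic $k$-cycles, forcing $\chi(b)=\chi(b')=1/4$ and hence $b=b'=a_0$. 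Since $\overline{\mathcal{P}_n(a_0)}\cap\partial\mathcal{U}$ is the nested arc of $\partial\mathcal{U}$ between the landing points of $\mathcal{R}_\infty(\eta_n)$ and $\mathcal{R}_\infty(\eta_n')$, the intersection $\textbf{M}_{a_0}\cap\partial\mathcal{U}$ collapses to $\{a_0\}$. The capture case is then reduced to the adjacent one by pushing forward $i$ steps to reach $\partial\mathcal{U}_0$ and running a parallel angle argument with pre-periodic angles $\zeta,\zeta'$. Your escape-sector heuristic does not distinguish these two cases and, as written, does not yield the required uniqueness.
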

\begin{proof}
First Notice that $\bigcap\mathcal{P}_n(a_0) = \bigcap\overline{\mathcal{P}_{n}(a_0)}$ since $\overline{\mathcal{P}_{n_i+1}(a_0)}\subset \mathcal{P}_{n_i}(a_0)$, where the sequence $(n_i)$ is as in Theorem \ref{thm.dym}. By Theorem \ref{thm.dym}, there exists $N$ such that for $n\geq N$ and $a\in \mathcal{P}_n(a_0)$, $f^k_a(P^{a,v}_n) = P^{a,v}_{n-k}$, where $P^{a,v}_m := L^a_m(\partial P^{a_0,v}_m), m\leq n$ is the puzzle piece containing $v_-(a)$. Consider the analytic family of quadratic maps $\{f^k_a:P^{a,v}_N\longrightarrow P^{a,v}_{N-k},a\in \mathcal{P}_N(a_0)\}$. Set $\textbf{M}_{\textbf{f}} = \{a;\,K(f^k_a|_{P^{a,v}_N}) \text{ is connected}\}$. It is easy to see that $\textbf{M}_{\textbf{f}} = \textbf{M}_{a_0}$. Indeed, if $K(f^k_a|_{P^{a,v}_N})$ is connected but $a\not\in \overline{\mathcal{P}_n(a_0)}$ for some $n$, then by Corollary \ref{cor.criticalpuzzle} $v_-(a)\not\in \overline{C^a_n}$, hence it will eventually escape $P^{a,v}_N$ under iteration of $f^k_a$, a contradiction; conversely, if $a\in \bigcap_n\mathcal{P}_n(a_0)$, then $v_-(a)$ will never escape $P^{a,v}_N$. Thus the Mandelbrot-like families theory of Douady-Hubbard \cite{ASENS_1985_4_18_2_287_0} gives a continuous map $\chi: \mathcal{P}_N(a_0)\longrightarrow \mathbb{C}$ such that $f^k_a$ is quasiconformally conjugated to $z^2+\chi(a)$ and $\chi^{-1}(\textbf{M}) = \textbf{M}_{a_0}$. Take $\mathcal{P}_p(a_0)$ compactly contained in $\mathcal{P}_N(a_0)$ with $\partial\mathcal{P}_p(a_0)$ homeomorphic to a circle. We claim that when $a$ describes $\partial\mathcal{P}_p(a_0)$ once, the curve $f_a^k(w_a)-w_a$ will also describe 0 once, where $w_a$ is the unique critical point of $f_a^k(z)$ in ${P^{a,v}_p}$. In fact, if we regard $\partial\mathcal{P}_p(a_0)$ as the unit circle, then $f_a^k(w_a)-w_a$ is homotopy to $H_{p-1}(a)-w_{a_0}$ by $G(t,a) := L^{a+t(a-a_0)}_{p}(H_{p-1}(a))-w(a+t(a-a_0))$ (notice that $f_a^k(w_a)-w_a = v_-(a)-w_a$). Since $H_{p-1}$ is a homeomorphism from $\partial\mathcal{P}_p(a_0)$ to $\partial P^{a_0,v}_p$, the winding number of $H_{p-1}(a)-w_{a_0}$ is exactly 1. Thus from \cite{ASENS_1985_4_18_2_287_0} we deduce that $\chi$ is actually a homeomorphism.

Finally we prove that $\textbf{M}_{a_0}\cap\partial\mathcal{U} = \{a_0\}$. 
\begin{itemize}
    \item $\mathcal{U}$ is of adjacent type. For $n$ large enough, $\partial\mathcal{P}_n(a_0)$ is a topological circle and consists of two external rays $\mathcal{R}_{\infty}(\eta_n),\mathcal{R}_{\infty}(\eta'_n)$ with $\eta_n\textgreater\eta_n'$ landing at $\partial\mathcal{U}$. It is easy to see that $\eta_n,\eta_n'$ correspond respectively to the biggest and smallest angle of the external rays involved in $\partial \mathcal{U}$. Moreover $\eta_n$ is decreasing and $\eta_n'$ is increasing, hence they converge to some $\eta,\eta'$ respectively. Recall that in Lemma \ref{lem.homeo.para.dym} $H_{n-1}$ gives a homeomorphism between $\partial\mathcal{P}_n(a_0)$ and $\partial{P}^{a_0,v}_n$ preserving the angles of external rays, and since $a_0$ is $k-$renormalisable, we deduce that $3^k\eta_{n+k} = \eta_n$, $3^k\eta'_{n+k} = \eta'_n$. Taking limit in $n$, one sees that $\eta,\eta'$ are both of period $k$. Let $b$ (resp.$b'$) be the landing point of $\mathcal{R}_{\infty}(\eta)$ (resp.$\mathcal{R}_{\infty}(\eta')$). Then $b\in\textbf{M}_{a_0}$ since $\mathcal{R}_{\infty}(\eta)$ intersects every $\mathcal{P}_n(a_0)$. Therefore $R^{\infty}_b(\eta)$ lands at a parabolic $k-$periodic point with multiplier $1$. Thus $\chi(b) = \frac{1}{4}$. For the same reason $\chi(b') = \frac{1}{4}$, hence $b = b' = a_0$.
    \item $\mathcal{U}$ is of capture type. In the dynamical plan of $f_{a_0}$, there exists $i\geq 1$ such that $f^i_{a_0}(P^{a_0,v}_n)$ intersects $B^*_{a_0}(0)$. Since $f_{a_0}$ is renormalisable, $P^{a_0,v}_n$ will intersect $B^*_{a_0}(0)$ for all $n$ large enough. Transfering by $H_n$, $\mathcal{P}_n(a_0)$ intersects the adjacent component $\mathcal{U}_0$. Thus $(\bigcap_n\mathcal{P}_n(a_0))\cap\partial\mathcal{U}_0\not=\emptyset$. By the first step, the intersection is actually a point $a_1\in\partial\mathcal{U}_0$. In particular $\mathcal{P}_n(a_0) = \mathcal{P}_n(a_1)$. Let $\zeta_n\textgreater\zeta_n'$ be the angles such that in $\partial\mathcal{P}_n(a_0)$, $\mathcal{R}_{\infty}(\zeta_n),\mathcal{R}_{\infty}(\zeta_n')$ land at $\partial\mathcal{U}$ and they converge to $\zeta,\zeta'$ respectively. By transferring $\partial\mathcal{P}_{n}(a_0)$ to $\partial P^{a_0,v}_n$ with $H_{n-1}$ one can prove that $d^i\zeta_{n+i} = \eta_n,d^i\zeta'_{n+i} = \eta'_n$. Taking limit in $n$, we get $d^i\zeta= \eta,d^i\zeta'= \eta'$. 
    
    Let $c,c'$ be the landing point of $\mathcal{R}_{\infty}(\zeta),\mathcal{R}_{\infty}(\zeta')$. Clearly $c,c'\in\mathcal{W}(a_1)$, where $\mathcal{W}(a_1)$ is the open region bounded by $\mathcal{R}_{\infty}(\eta),\mathcal{R}_{\infty}(\eta')$. By a standard holomorphic motion argument one can show that $\forall a\in \mathcal{W}(a_1)$, $R^{\infty}_{a}(\eta),R^{\infty}_{a}(\eta')$ land at the same repelling $k-$periodic point. Therefore if one of the two rays $R^{\infty}_{c}(\zeta),R^{\infty}_{c}(\zeta')$ lands at a repelling pre-periodic point, then so does the other one. Thus by Remark \ref{rem.para.land.Misur} and Lemma \ref{lem.para.land.Misiur} we see that $c = c'$. If both $R^{\infty}_{c}(\zeta),R^{\infty}_{c}(\zeta')$ land at a parabolic pre-periodic point, then $\chi(c) = \chi(c') = \frac{1}{4}$ and hence $c = c' = a_1$. In both cases, $\mathcal{R}_{\infty}(\zeta),\mathcal{R}_{\infty}(\zeta')$ seperate $\textbf{M}_{a_0}$ and $\mathcal{U}$. Therefore $(\bigcap_n\mathcal{P}_n(a_0))\cap\partial\mathcal{U} = \{a_0\}$.
\end{itemize}

\end{proof}

\begin{corollary}
Let $a_0\in(\partial\mathcal{U}\setminus\{0\})\cap\mathcal{W}^c(0)$ satisfy the second alternative in Theorem \ref{thm.dym} then $\partial\mathcal{U}$ is locally connected at $a_0$. 
\end{corollary}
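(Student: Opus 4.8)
The plan is to read off local connectivity at $a_0$ directly from Proposition \ref{prop.mandel}. Being a non Misiurewicz parabolic renormalizable parameter outside $\mathcal{W}(0)$, $a_0$ satisfies the hypotheses there, so $\textbf{M}_{a_0}=\bigcap_n\mathcal{P}_n(a_0)=\bigcap_n\overline{\mathcal{P}_n(a_0)}$ (the two intersections agree by the nesting $\overline{\mathcal{P}_{n_i+1}(a_0)}\subset\mathcal{P}_{n_i}(a_0)$ used in its proof) is a copy of the Mandelbrot set with $\textbf{M}_{a_0}\cap\partial\mathcal{U}=\{a_0\}$. Hence $\bigcap_n\bigl(\overline{\mathcal{P}_n(a_0)}\cap\partial\mathcal{U}\bigr)=\{a_0\}$ is a decreasing intersection of compacta, so for every open $V\ni a_0$ one of the sets $\overline{\mathcal{P}_n(a_0)}\cap\partial\mathcal{U}\setminus V$ is already empty; since $a_0\in\mathcal{P}_n(a_0)$ and $\mathcal{P}_n(a_0)$ is open, the traces $\bigl(\mathcal{P}_n(a_0)\cap\partial\mathcal{U}\bigr)_{n\ge N}$ therefore form a neighbourhood basis of $a_0$ in $\partial\mathcal{U}$. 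It only remains to check that these traces can be taken connected for $n$ large.

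For the connectedness I would first show $\mathcal{U}\cap\mathcal{P}_n(a_0)$ is connected. Inside the open set $\mathcal{U}$ the graph $\mathcal{X}_n$ reduces, all external rays, external equipotentials and all internal data of the other components of $\mathcal{H}$ being disjoint from $\mathcal{U}$ by Lemma \ref{lemboundaryH}, to the internal rays $\mathcal{R}_{\mathcal{U}}(\theta)$, $\theta\in\Theta_{n-k_0}$, and the internal equipotential $\mathcal{E}_{\mathcal{U}}(n-k_0)$ of $\mathcal{U}$ itself, where $k_0$ is the index with $\mathcal{U}\subset\mathcal{H}_{k_0}$; transporting by the parametrization $\Phi_{\mathcal{U}}:\mathcal{U}\to B$ (Proposition \ref{prop.para.Hn}, Proposition \ref{PROP_para0}) gives $\mathcal{U}\cap\mathcal{X}_n=\Phi_{\mathcal{U}}^{-1}(G_n)$ for a finite subgraph $G_n\subset B$ made of internal rays and one equipotential of the model $z^2+\frac{1}{4}$, so the components of $\mathcal{U}\setminus\mathcal{X}_n$ are exactly the $\Phi_{\mathcal{U}}^{-1}$-images of the components of $B\setminus G_n$. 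As $a_0\notin\mathcal{X}_n$, the point of $\partial B$ corresponding to $a_0$ lies off $\overline{G_n}$, so the points of $\mathcal{U}$ near $a_0$ all lie in a single such component $V_0\subset\mathcal{P}_n(a_0)$; the task is to exclude a second component of $\mathcal{U}\setminus\mathcal{X}_n$ lying in $\mathcal{P}_n(a_0)$, which would have to be reconnected to $V_0$ through a part of $\partial\mathcal{U}$ sitting in $\mathcal{P}_n(a_0)\setminus\overline{\mathcal{U}}$. This is exactly where $\textbf{M}_{a_0}\cap\partial\mathcal{U}=\{a_0\}$ and the wake description of Proposition \ref{prop.mandel} are used: the two external rays landing at $a_0$ bounding $\partial\mathcal{P}_n(a_0)$ for $n$ large, and in the capture case the rays $\mathcal{R}_{\infty}(\zeta),\mathcal{R}_{\infty}(\zeta')$ separating $\textbf{M}_{a_0}$ from $\mathcal{U}$, confine $\mathcal{P}_n(a_0)\setminus\overline{\mathcal{U}}$ to the $\textbf{M}_{a_0}$-side, which meets $\partial\mathcal{U}$ only at $a_0$, so no reconnection is possible. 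Granting $\mathcal{U}\cap\mathcal{P}_n(a_0)$ connected, $\overline{\mathcal{U}}\cap\mathcal{P}_n(a_0)$ is a connected neighbourhood of $a_0$ in $\overline{\mathcal{U}}$ shrinking to $a_0$, so $\overline{\mathcal{U}}$ is locally connected there; and since $\mathcal{U}$ is a bounded simply connected domain with $\mathcal{U}\cap\mathcal{P}_n(a_0)$ and $\mathcal{P}_n(a_0)\setminus\overline{\mathcal{U}}$ both connected, the trace $\partial\mathcal{U}\cap\mathcal{P}_n(a_0)$ is connected as the common boundary of these two connected open subsets of the topological disk $\mathcal{P}_n(a_0)$ (alternatively, invoke Torhorst's theorem for the Peano continuum $\overline{\mathcal{U}}$). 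This yields local connectivity of $\partial\mathcal{U}$ at $a_0$.

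The routine parts are the compactness reduction to a neighbourhood basis and the passage from $\overline{\mathcal{U}}$ to $\partial\mathcal{U}$. The hard part will be the connectedness of $\mathcal{U}\cap\mathcal{P}_n(a_0)$: one must genuinely control, for $n$ large, how the finite graph $\mathcal{X}_n$ crosses $\partial\mathcal{U}$ near $a_0$ and rule out that $\mathcal{P}_n(a_0)$ loops around the attached Mandelbrot copy $\textbf{M}_{a_0}$. I expect the cleanest way to do this is to transport the combinatorics through the homeomorphism $H_{n-1}$ of Lemma \ref{lem.homeo.para.dym} onto the dynamical puzzle piece $P^{a_0,v}_n$, where the renormalization structure (a quasiconformal copy of the cauliflower attached at $B^*_{a_0}(0)$) makes the picture transparent, treating the adjacent and capture cases uniformly via the two boundary angles $\eta_n>\eta_n'$ from the proof of Proposition \ref{prop.mandel}.
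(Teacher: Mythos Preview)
Your approach is the same as the paper's: invoke Proposition \ref{prop.mandel} to conclude that $(\mathcal{P}_n(a_0)\cap\partial\mathcal{U})_n$ is a basis of connected neighbourhoods of $a_0$ in $\partial\mathcal{U}$. The paper's entire proof is that one sentence.

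Where you diverge is in the amount of justification. The paper treats the connectedness of $\mathcal{P}_n(a_0)\cap\partial\mathcal{U}$ as immediate from what was established inside the proof of Proposition \ref{prop.mandel}: for $n$ large, $\partial\mathcal{P}_n(a_0)$ is a topological circle meeting $\partial\mathcal{U}$ at exactly two Misiurewicz points (the landing points of $\mathcal{R}_\infty(\eta_n),\mathcal{R}_\infty(\eta_n')$ in the adjacent case, and analogously via $\zeta_n,\zeta_n'$ in the capture case), with one boundary arc lying in $\overline{\mathcal{U}}$ (internal rays and equipotential) and the other in $\mathbb{C}\setminus\mathcal{U}$ (external rays and equipotential). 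This already forces $\mathcal{P}_n(a_0)\cap\mathcal{U}$ to be the single sector in $\mathcal{U}$ bounded by those internal rays and equipotential, so your worry about a second component of $\mathcal{U}\setminus\mathcal{X}_n$ sitting inside $\mathcal{P}_n(a_0)$, or about $\mathcal{P}_n(a_0)$ ``looping around'' $\textbf{M}_{a_0}$, is excluded directly by the shape of $\partial\mathcal{P}_n(a_0)$ rather than by a separate argument through $H_{n-1}$. Your outline is not wrong, but it re-derives structure that the proof of Proposition \ref{prop.mandel} has already made explicit; citing that boundary description is the shortcut the paper is implicitly taking.
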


\begin{proof}
By the above proposition $(\mathcal{P}_n(a_0)\cap\partial\mathcal{U})_n$ form a connected basis of $a_0$. 
\end{proof}

\begin{proof}[\textbf{Proof of Theorem~\ref{thm.A}}]
Let $\mathcal{U}$ be a connected component of $\mathcal{H}$. Since $\partial\mathcal{U}$ is locally connected, then any conformal representation $\Psi:\mathbb{D}\longrightarrow\mathcal{U}$ can be extended continuously and surjectively to the boundary. Moreover $\Psi:\partial\mathbb{D}\longrightarrow\partial\mathcal{U}$ is injective: if not, then there exists $a\in\partial\mathcal{U}$ accessible by two rays $\Psi(re^{it_1}),\Psi(re^{it_2})$ which bound a simply connected region containing part of $\partial\mathcal{U}$. This contradicts Lemma \ref{lemboundaryH}.
\end{proof}

\subsection{Global descriptions}\label{sub.sec.global}
 We give a more precise landing property of external rays for parameters on $\partial \mathcal{U}_0$, the connected component of $\mathcal{H}_0$ contained in the right half plan. Notice that it suffices to work in $\overline{S} = \{x+iy;\,x,y\geq 0\}$ by symmetricity of $\mathcal{C}_1$.

\begin{proof}[\textbf{Proof of Theorem~\ref{thm.B}}]
As suggested in the Theorem, we proceed the proof for these three different types of parameters: 
\begin{itemize}
   \item $a_0$ is renormalizable. From the proof of Proposition \ref{prop.mandel} we already know that there are two external rays $\mathcal{R}_{\infty}(\eta),\mathcal{R}_{\infty}(\eta')$ landing at $a_0$ with $\eta\textgreater\eta'$. Moreover these two rays bound a copy of Mandelbrot set $\textbf{M}_{a_0}$. Suppose there is another ray $\mathcal{R}_{\infty}(\zeta)$ landing at $a_0$. Then $\eta\textgreater\zeta\textgreater\eta'$ since from the proof of Proposition \ref{prop.mandel}, $\eta$ (resp. $\eta'$) are decreasing (resp. increasing) limit of $\eta_n$, (resp. $\eta_n'$) and $\mathcal{R}_{\infty}(\eta_n),\mathcal{R}_{\infty}(\eta_n'$ both land at $\partial \mathcal{U}_0$. Suppose $\textbf{M}_{a_0}$ is bounded by $\mathcal{R}_{\infty}(\eta),\mathcal{R}_{\infty}(\zeta)$. Take a rational angle $\zeta'\in(\eta',\zeta)$ such that $R^{\infty}_{a_0}(\zeta')$ lands at some repelling periodic point $x(a_0)$ in $J(f^k_{a_0})$, the Julia set of the renormalizable map. Then $R^{\infty}_{a_0}(\zeta')$ enters every $P^{a_0,v}_{n}$. By Lemma \ref{lem.homeo.para.dym}, the corresponding parameter ray $\mathcal{R}_{\infty}(\zeta')$ enters every $\mathcal{P}_n(a_0)$ and hence converges to some $b\in\textbf{M}_{a_0}$. But $b\not=a_0$, otherwise by Proposition \ref{propland}, $ v_-(a_0) = x(a_0)\in J_{a_0}$. This contradicts the assumption that $\textbf{M}_{a_0}$ is bounded by $\mathcal{R}_{\infty}(\eta),\mathcal{R}_{\infty}(\zeta)$.

    \item $a_0$ is non renormalizable but not Misiurewicz parabolic. By Theorem \ref{thm.dym}, the sequence of dynamical puzzles $P^{a_0,v}_{n}$ shrinks to $v_-(a_0)$. Hence there is some external ray $R^{\infty}_{a_0}(t)$ converging to $v_-(a_0)$ (for example consider the sequence of rays $R^{\infty}_{a_0}(t_n)$ in $P^{a_0,v}_{n}$ such that $t_n$ decreasing to some $t$). By Lemma \ref{lem.homeo.para.dym}, the corresponding parameter ray $\mathcal{R}_{\infty}(t)$ enters every $\mathcal{P}_n(a_0)$ and hence converges to $a_0$. Suppose there is another $\mathcal{R}_{\infty}(t')$ converging to $a_0$. Then again by Lemma \ref{lem.homeo.para.dym}, the corresponding dynamical ray $R^{\infty}_{a_0}(t')$ converges to $v_-(a_0)$. This contradicts the following result:
    \begin{lemma*}[Lemma 6.3 in \cite{Roesch}]
    Given $f_a\in Per_1(1)$, $z\in\partial B_a(0)$. If $c_-(a)\in \partial B_a(0)$, then there is a unique external ray landing at $z$ if and only if $z$ is not in the inverse orbit of $c_-(a)$. 
    \end{lemma*}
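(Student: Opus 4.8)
The plan is to recast the lemma as a statement about the cut points of the filled Julia set $K_a=\overline{B_a(0)}$. In the situation at hand $J_a$ is locally connected and every component of $B_a(0)$ is a Jordan domain (for geometrically finite $f_a$ this is the Tan--Yin theorem already invoked for $\partial B^*_a(0)$; in the non‑renormalizable case it instead comes out of the puzzle machinery behind Theorem~\ref{thm.dym}). For a connected, locally connected $K_a$ and $z\in\partial K_a=J_a$, the number of external rays landing at $z$ equals the number of connected components of $K_a\setminus\{z\}$. So the lemma is equivalent to the assertion that a point $z\in\partial B_a(0)$ is a cut point of $K_a$ if and only if $z$ lies in the inverse orbit of $c_-(a)$.

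First I would record the component structure of $B_a(0)$. Since the companion critical point $c_+(a)$ lies in the interior of $B^*_a(0)$ (it sits on $\partial\Omega_a$ but strictly inside the immediate basin), the map $f_a\colon B^*_a(0)\to B^*_a(0)$ is proper of degree $2$, hence $f_a^{-1}(B^*_a(0))=B^*_a(0)\cup B'$ with $f_a\colon B'\to B^*_a(0)$ a conformal isomorphism; both critical points being thereby accounted for, there are no other bounded periodic Fatou components, every bounded Fatou component lies in $B_a(0)$, and $K_a=\overline{B_a(0)}$. The key local fact is that $c_-(a)$ is the \emph{only} critical point on $\partial B_a(0)$, together with the observation that if two distinct components $W,W'$ of $B_a(0)$ share a boundary point $w$ at which $f_a$ is injective, then $f_a$ carries the two local Fatou sectors at $w$ to two disjoint sectors at $f_a(w)$, and if moreover $f_a(W)=f_a(W')$ both sectors lie in the single Jordan domain $f_a(W)$ — which a Jordan domain cannot touch itself.

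Granting this, the \emph{only if} direction is a pull‑back argument. Near $c_-(a)$ the map is modelled on $\zeta\mapsto\zeta^2$, and $B_a(0)$ fills a single sector at $v_-(a)=f_a(c_-(a))$, so it fills two disjoint sectors at $c_-(a)$; hence $c_-(a)$ is a cut point of $K_a$. If $z\in f_a^{-k}(c_-(a))$ with $k\ge 1$, then $z\ne c_+(a)$ and $z\ne c_-(a)$ (the latter would make $c_-$ periodic, which is excluded), so $f_a$ is a local homeomorphism at $z$ and carries a punctured neighbourhood of $z$ in $K_a$ homeomorphically onto one of $f_a(z)$; since $f_a(z)$ is a cut point by induction, so is $z$. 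For the \emph{if} direction, let $z\in\partial B_a(0)$ be a cut point of $K_a$; then two components $W_1,W_2$ of $B_a(0)$ meet at $z$ (a component touching itself is handled the same way in the branched‑cover picture). Both eventually iterate onto $B^*_a(0)$, so there is a smallest $j\ge 0$ with $f_a^{j+1}(W_1)=f_a^{j+1}(W_2)$; then $f_a^{j}(W_1)\ne f_a^{j}(W_2)$ are distinct components mapped onto a common Jordan domain, so by the fact above $f_a^{j}(z)$ is critical, hence equals $c_-(a)$, whence $z\in f_a^{-j}(c_-(a))$. Contrapositively, $z$ not in the inverse orbit of $c_-(a)$ is not a cut point, and a unique ray lands.

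The hard part will be the structural input I am treating as known: that $J_a$ is locally connected and the components of $B_a(0)$ are Jordan domains even when $c_-(a)$ is not (pre)periodic — precisely the regime outside the Tan--Yin theorem, which must be drawn from the puzzle estimates of Theorem~\ref{thm.dym}, and which is also where the non‑renormalizable hypothesis enters, since in the renormalizable case $K_a$ contains a little filled Julia set and the identity $K_a=\overline{B_a(0)}$ fails. A secondary delicate point is the bookkeeping behind ``cut point $\Leftrightarrow$ pinch of Fatou components'': because $\overline{B_a(0)}$ is a dendrite‑like union of Jordan domains glued only along the backward orbit of $c_-(a)$ it carries no loops, so local and global cut structure agree, but one must argue that two distinct closures $\overline{W_1},\overline{W_2}$ cannot meet in two or more points.
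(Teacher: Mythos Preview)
The paper does not prove this lemma: it is quoted verbatim from \cite{Roesch} and invoked as a black box in the non-renormalizable branch of the proof of Theorem~\ref{thm.B}. There is therefore no in-paper argument against which to compare your proposal.

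Assessed on its own merits, your cut-point translation is the right strategy, and you correctly isolate the expensive structural inputs (local connectivity of $J_a$, Jordan Fatou components, $K_a=\overline{B_a(0)}$ in the non-renormalizable regime) as the real content to be imported from the puzzle machinery. But there is a gap in the \emph{if} direction that goes beyond what you flag. The inference ``$z$ is a cut point of $K_a$ $\Rightarrow$ two components $W_1,W_2$ of $B_a(0)$ meet at $z$'' is not justified: knowing $K_a=\overline{B_a(0)}$ tells you each piece of $K_a\setminus\{z\}$ \emph{contains} Fatou components, not that some component actually has $z$ on its boundary --- a piece could in principle be an accumulation of ever-smaller Jordan disks with $z$ as a limit point lying on the closure of none of them. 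Your final paragraph attempts to close this by appealing to a tree-of-disks picture ``glued only along the backward orbit of $c_-(a)$'', but that is precisely the conclusion of the lemma, so the argument is circular.

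A non-circular repair is to work with the rays directly rather than with Fatou components. If two distinct external rays $R^\infty_a(t_1),R^\infty_a(t_2)$ land at a non-critical $z$, the local homeomorphism $f_a$ sends their two germs at $z$ to two distinct germs at $f_a(z)$; since a single ray has a single landing germ, this forces $3t_1\not\equiv 3t_2$, hence two distinct rays land at $f_a(z)$. Iterating, one obtains a non-degenerate leaf of the invariant lamination over the whole forward orbit, and the standard structure theorem for degree-$d$ invariant laminations (every leaf is an iterated preimage of a critical leaf) then yields $f_a^j(z)=c_-(a)$ for some $j$. That lamination fact --- or its puzzle-theoretic incarnation in \cite{Roesch} --- is the honest substitute for the tree picture you invoke.
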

   
     \item $a_0$ is Misiurewicz parabolic. By Lemma \ref{lemparaland}, there is some  $\mathcal{R}_{\infty}(t)$ with $3^mt = 1$ landing at $a_0$. In the dynamical plan of $f_{a_0}$, $R^{\infty}_{a_0}(t)$ lands at $v_-(a_0)$. Moreover there we showed that $t$ is unique in $\mathbb{Q}/\mathbb{Z}$. Hence if there is another $\mathcal{R}_{\infty}(t')$ landing at $a_0$, then $t'$ is irrational. To fix the idea, let $t'\textgreater t$. Then $\mathcal{R}_{\infty}(t')$ enters every $\mathcal{Q}^+_n$ Take a sequence of $b_n\in\mathcal{R}_{\infty}(t')$ such that $b_n\in\mathcal{Q}^+_n$. Then $(b_n)$ converges to $a_0$ since $\overline{\mathcal{Q}+_n}$ shrinks to $a_0$. Although for a fixed $n$, we didn't construct the graph $Y^{b_n}_k$ for all depth $k$, it can still be defined by taking preimages of $Y^{b_n}_0$, since $t'$ is irrational and $b_n\in\mathcal{R}_{\infty}(t')$ implies that $c_-(b_n)$ is on some external ray with irrational angle, and hence all the rays in $Y^{b_n}_k$ land. The puzzle pieces $(Q^{b_n}_k)^{\pm}$ sharing part of $R^{\infty}_{b_n}(t)$ as boundary are therefore defined for all $k$. Let $R^{\infty}_{b_n}(t^n_k)$ ($t^n_k\textgreater t$) be the external ray in $\partial (Q^{b_n}_k)^+$ landing on $\partial B^*_{b_n}(0)$. 
     From the proof of Proposition \ref{prop.loc.paramisiur} we have seen that $v_-(b_0)\in (Q^{b_n}_n)^{+}$ and thus $R^{\infty}_{b_n}(t')$ enters $(Q^{b_n}_n)^{+}$, $t^n_n\textless t'\textless t$. The following claim tells us that $t^n_k$ is actually independent of $n$.
     \begin{claim*}
     For any $a_1,a_2$ belonging to the same parameter external ray $\mathcal{R}_{\infty}(\theta)$, there exists a quasiconformal mapping $\varphi$ conjugating $f_{a_1}$ to $f_{a_2}$ and preserving the angle of external rays, i.e. $\varphi(R^{\infty}_{a_1}(\eta)) = R^{\infty}_{a_2}(\eta)$.
     \end{claim*}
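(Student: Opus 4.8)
The plan is to build $\varphi$ by a quasiconformal surgery of ``wringing'' type, in the spirit of Branner--Hubbard (compare the analogous argument in \cite{Roesch1}): we deform the complex structure inside the basin of infinity so as to slide $a_1$ along the ray $\mathcal{R}_{\infty}(\theta)$ until it reaches $a_2$, while keeping external angles unchanged. Write $\Phi_{\infty}(a_i)=r_ie^{2\pi i\theta}$ with $r_i>1$, and assume $r_1\le r_2$ (otherwise exchange the two parameters), so that $u:=\log r_2/\log r_1\ge 1$. Since $c_-(a_1)$ escapes, $c_-(a_1)$ is the only critical point of $f_{a_1}$ in $B_{a_1}(\infty)$, so the B\"ottcher coordinate $\phi^{\infty}_{a_1}$ extends to a univalent map, conjugating $f_{a_1}$ to $w\mapsto w^3$, on $V:=\{z:G_{a_1}(z)>G_{a_1}(c_-(a_1))\}$. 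For $s>0$ let $g_s:\mathbb{C}^*\to\mathbb{C}^*$ be the radial wringing map $g_s(w)=|w|^{s-1}w$; a direct computation gives $g_s(w^3)=g_s(w)^3$, $\arg g_s(w)=\arg w$, and $g_s$ is quasiconformal with dilatation tending to $1$ as $s\to 1$.

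Now carry out the surgery. Define a Beltrami form $\sigma_s$ on $\mathbb{C}$ by $\sigma_s=(g_s\circ\phi^{\infty}_{a_1})^{*}\sigma_0$ on $V$, extend it to $B_{a_1}(\infty)=\bigcup_{n\ge 0}f_{a_1}^{-n}(V)$ by imposing $f_{a_1}^{*}\sigma_s=\sigma_s$, and set $\sigma_s=\sigma_0$ on $\mathbb{C}\setminus B_{a_1}(\infty)$. This $\sigma_s$ is $f_{a_1}$-invariant with $\|\sigma_s\|_{\infty}=\|\mu_{g_s}\|_{\infty}<1$, so by the measurable Riemann mapping theorem there is a quasiconformal $\psi_s:\mathbb{C}\to\mathbb{C}$, depending continuously on $s$ with $\psi_1=\mathrm{id}$, solving the Beltrami equation for $\sigma_s$; normalizing $\psi_s$ to fix $0$ and $\infty$ and so that $F_s:=\psi_s\circ f_{a_1}\circ\psi_s^{-1}$ is a monic cubic polynomial, we get $F_s(0)=0$. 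Crucially $\sigma_s=\sigma_0$ on the immediate parabolic basin $B^{*}_{a_1}(0)$, which is disjoint from $B_{a_1}(\infty)$; hence $\psi_s$ restricts to a conformal conjugacy there, so $0$ stays a non-degenerate parabolic fixed point of $F_s$ with multiplier $1$, i.e. $F_s=f_{a(s)}$ for a continuously varying parameter $a(s)$ with $a(1)=a_1$. By construction $g_s\circ\phi^{\infty}_{a_1}\circ\psi_s^{-1}$ is a holomorphic B\"ottcher-type coordinate for $f_{a(s)}$ at $\infty$, hence equals $\zeta(s)\,\phi^{\infty}_{a(s)}$ for a continuous $\zeta(s)\in\{\pm 1\}$ with $\zeta(1)=1$, so $\zeta(s)\equiv 1$ and $\phi^{\infty}_{a(s)}\circ\psi_s=g_s\circ\phi^{\infty}_{a_1}$ on $V$. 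Since $\arg g_s\equiv\mathrm{id}$, this gives $\psi_s(R^{\infty}_{a_1}(\eta))=R^{\infty}_{a(s)}(\eta)$ for every $\eta$, and $\Phi_{\infty}(a(s))=g_s(\Phi_{\infty}(a_1))=r_1^{s}e^{2\pi i\theta}$. Thus $s\mapsto a(s)$ is a path from $a_1$ staying in the component of $\mathcal{R}_{\infty}(\theta)$ through $a_1$, and $\Phi_{\infty}(a(u))=r_2e^{2\pi i\theta}=\Phi_{\infty}(a_2)$; as $\Phi_{\infty}$ is injective on that component (the homeomorphism statement for $\Phi_\infty$ on each quadrant), $a(u)=a_2$, and $\varphi:=\psi_u$ is the desired quasiconformal conjugacy preserving external angles.

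The one genuinely delicate point is the bookkeeping at the parabolic fixed point: one must check that the surgered map stays in the slice $Per_1(1)$, i.e. that $\psi_s$ is conformal on the immediate parabolic basin $B^{*}_{a_1}(0)$ and therefore preserves both the multiplier $1$ and the non-degeneracy (single attracting axis) of the parabolic point at $0$. Everything else --- the commutation $g_s(w^3)=g_s(w)^3$, the $f_{a_1}$-invariance and bounded dilatation of $\sigma_s$, and the continuity argument that pins down $\zeta(s)\equiv 1$ and $a(u)=a_2$ --- is routine.
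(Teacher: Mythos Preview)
Your proof is correct and follows essentially the same route as the paper: a radial ``wringing'' surgery $g_s(w)=|w|^{s-1}w$ pulled back by the B\"ottcher coordinate to produce an $f_{a_1}$-invariant Beltrami form supported on $B_{a_1}(\infty)$, whose straightening yields a map in $Per_1(1)$ with the same external angles and shifted critical equipotential. The paper's argument is terser and simply asserts that the resulting map lies in $Per_1(1)$ and equals $f_{a_2}$; you are more careful on two points the paper glosses over --- the conformality of $\psi_s$ on $B^*_{a_1}(0)$ (hence preservation of the parabolic germ), and the continuity argument pinning down $\zeta(s)\equiv 1$ and $a(u)=a_2$ among the three $\Phi_\infty$-preimages --- which is a genuine improvement in rigor over the paper's sketch.
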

     Thus we have $t^1_k\textless t'\textless t$. Since $t$ is in the inverse orbit of 0 by multiplication by3, there is a smallest $l$ such that $f_{b_1}^l(R^{\infty}_{b_1}(t)) = R^{\infty}_{b_1}(0)$ and $Q_{k-l} := f_{b_1}^l((Q^{b_1}_k)^{+})$ is a puzzle piece at 0. Thus we have $f_{b_1}(Q_{k+1-l}) = Q_{k-l}$. This implies that $3\cdot3^lt^1_{k+1} = 3^lt^1_{k}$. Hence $3^lt^1_{k}$ converges to 0 as $k\to\infty$, so $t^1_k$ converges to $t$ as $k\to\infty$, which leads to $t = t'$, a contradiction.
     
\end{itemize}
 \begin{proof}[proof of the Claim]
     Let $\phi^{\infty}_{a_i}:W_i\longrightarrow \hat{\mathbb{C}}\setminus\overline{\mathbb{D}_{r_i}}$ be the maximum extended Böttcher coordinate of $f_{a_i}$, where $W_i$ is a neighborhood of $\infty$ whose boundary contains $c_-(a_i)$ and $\mathbb{D}_{r_i}$ is the disk with radius $r_i\textgreater 1$ centered at 0. Define $\tau(z) = |z|^{s}\cdot z$, where $s = \frac{log(r_2)-log(r_1)}{log(r_1)}$. Define a $f_{a_1}-$invariant Beltrami differential $\nu$ as follows: $\nu$ is defined to be the pull back of $(\tau\circ\phi^{\infty}_{a_1})^*0$ by $f_{a_1}$ on $B_{a_1}(\infty)$ and set to be 0 elsewhere. Integrate $\mu$ with some proper normalisation to get $\varphi$ such that $\Tilde{f} := \varphi\circ{f_{a_1}}\circ{\varphi^{-1}}\in Per_1(1)$. It is easy to check that $\varphi$ preserves angles of external rays and that the equipotential of $\varphi(v_-(a_1))$ is $3r_2$. Hence $\Tilde{f}= f_{a_2}$.
     \end{proof}
This ends the proof of the Theorem.
\end{proof}

\begin{definition}[Wake]
Let $a_0\in (\partial\mathcal{U}_0\cap{\overline{S}})\setminus\{0\}$ be renormalisable. Its \textbf{wake} $\mathcal{W}(a_0)$ is defined to be the open region bounded by the two corresponding landing external rays which contains the Mandelbrot set copy $\textbf{M}_{a_0}$. The wakes $\mathcal{W}(0),-\mathcal{W}(0)$ are defined as in Definition \ref{def.wake0}.
\end{definition}
\begin{definition}[Limb]
Let $a_0\in \partial\mathcal{U}_0\cap{\overline{S}}$. The \textbf{limb} $\mathcal{L}(a_0)$ at $a_0$ is defined to be $\overline{\mathcal{W}(a_0)}\cap\mathcal{C}_1$ if $a_0$ is renormalisable and $a_0\not=0$; to be $(\overline{\mathcal{W}(0)\cup-\mathcal{W}(0)})\cap\mathcal{C}_1$ if $a_0 = 0$; otherwise to be $\{a_0\}$.
\end{definition}

\begin{proof}[\textbf{Proof of Theorem~\ref{thm.C}}]
    By symmetricity it suffices to do the proof for $\Tilde{\mathcal{C}}_1 := \mathcal{C}_1\cap\overline{S}\cap(\mathcal{W}(0))^c$. For any $n\geq0$, consider all the Misiurewicz parabolic parameters of depth $n$ (that is, $f^n_a(v_-(a)) = 0$) on $\partial\mathcal{U}_0\cap\overline{S}$ and the corresponding unique landing external rays. These rays together with $\partial\overline{\mathcal{U}_0}$ separate $\Tilde{\mathcal{C}}_1\setminus\mathcal{U}_0$ in to several open sectors of depth $n$. Take any $b\in\Tilde{\mathcal{C}}_1\cap{(\overline{\mathcal{U}_0}})^c$. Let $\mathcal{S}_n(b)$ be the sector of depth $n$ containing $b$. Let $\mathcal{R}_{\infty}(t_n),\mathcal{R}_{\infty}(t'_n)$ be the two external rays bounding $\mathcal{S}_n(b)$ and $a_n,a'_n$ their landing point repectively. Clearly $a_n,a'_n$ converges to some $a_0\in\partial\mathcal{U}_0$. Then $a_0$ must be renormalisable. If not, then by Theorem \ref{thm.B}, $t_n,t'_n$ converge to the same angle, which implies that $b = a_0$, a contradiction since we take $b\not\in\overline{\mathcal{U}_0}$.
    Hence $a_0$ is renormalisable, by Theorem \ref{thm.B}, $t_n,t'_n$ converge respectively to the angles of the two external rays landing at $a_0$, and hence $b\in\mathcal{W}(a_0)$.
\end{proof}

\bibliographystyle{plain}
\bibliography{references}

\end{document}